\def\<{\langle}
\def\>{\rangle}
\def\reals{{\mathbb R}}
\def\P{{\mathbb P}}
\def\hbt{\widehat{\boldsymbol \theta}}
\def\pzn{\partial_{Z}}
\def\px{\partial_{X_{k\ell}}}
\def\pxn{\partial_{X}}
\def\wtln{\mathcal{L}_r}
\def\hbtloos{\widehat{\bt}^{(\backslash 1)}}
\def\br{\boldsymbol r}
\def\pproj{\proj^{\perp}_{\bt_0}}
\def\wx{\widebar{x}}
\def\wbx{\widebar{\bx}}
\def\wz{\widebar{z}}
\def\wbz{\widebar{\bz}}
\begin{document}
%\maketitle
\begin{center}
	
	{{\LARGE{ \mbox{High-dimensional logistic regression with missing data:}\\ \mbox{Imputation, regularization, and universality}}}}
	
	\vspace*{.2in}
	
	{\large{
			\begin{tabular}{ccc}
				Kabir Aladin Verchand$^{\star,\circ}$ and Andrea Montanari$^{\dagger,\ddagger}$
			\end{tabular}
	}}
	\vspace*{.2in}
	
	\begin{tabular}{c}
		$^{\star}$Statistical Laboratory, University of Cambridge\\
		$^{\circ}$Schools of Industrial and Systems Engineering, Georgia Institute of Technology\\
		$^{\dagger}$Department of Mathematics, Stanford University\\
		$^{\ddagger}$Department of Statistics, Stanford University
	\end{tabular}
	
	\vspace*{.2in}

	\today
	
	\vspace*{.2in}

\begin{abstract}
	We study high-dimensional, ridge-regularized logistic regression in a setting in which the covariates may be missing or corrupted by additive noise.  When both the covariates and the additive corruptions are independent and normally distributed, we provide exact characterizations of both the prediction error as well as the estimation error.  Moreover, we show that these characterizations are universal: as long as the entries of the data matrix satisfy a set of independence and moment conditions, our guarantees continue to hold.  Universality, in turn, enables the detailed study of several imputation-based strategies when the covariates are missing completely at random.  We ground our study by comparing the performance of these strategies with the conjectured performance---stemming from replica theory in statistical physics---of the Bayes optimal procedure.  Our analysis yields several insights including: \emph{(i)} a distinction between single imputation and a simple variant of multiple imputation and \emph{(ii)} that adding a simple ridge regularization term to single-imputed logistic regression can yield an estimator whose prediction error is nearly indistinguishable from the Bayes optimal prediction error.  We supplement our findings with extensive numerical experiments.
\end{abstract}
\end{center}

%\small
%\addtocontents{toc}{\protect\setcounter{tocdepth}{2}}
%\tableofcontents
%
%\normalsize

\section{Introduction}
\label{sec:introduction}
Statistical methodology is developed under the assumption that the data is fully observed.  In practice, however, this is often not the case.  For instance, data can be missing due to non-response in surveys~\citep{rubin2004multiple}, instruments malfunctioning in scientific investigations~\citep{do2018characterization}, or the integration of multi-modal data~\citep{du2022robust}, to name a few.  

The most prevalent strategy for dealing with missing data consists of an appealing two stage approach in which:  first, the statistician imputes, or fills in, the data to generate one or multiple complete data sets from the observed data; and second, the statistician uses their preferred complete-data method on the imputed datasets.  Such imputation-based methods form the most popular approaches to missing data and software packages that implement these methods are ubiquitous in statistical practice~\citep[see, e.g.,][]{buuren2010mice,su2011multiple}.  The reason for this prevalence is clear: imputation decouples the problem of handling the missing data from the downstream task of estimation or prediction.  Moreover, when the dimension of the parameters $p$ is fixed and the number of samples $n$ tends to infinity,~\citet{wang1998large} developed asymptotic normality theory to enable rigorous comparisons between various imputation-based procedures.  In regression settings, these methods typically rely on an initial estimate of the regression coefficients which is consistent and asymptotically linear, such as \emph{(i)} an estimate obtained from a complete case analysis---in which all samples with missing data are discarded---or  \emph{(ii)} the maximum likelihood estimator.

Unfortunately, in modern large scale and high-dimensional applications, such estimates are difficult to obtain in general.  For instance, when the data is high-dimensional, most of the samples will contain missing data and a complete-case analysis proves untenable.  Moreover, performing maximum likelihood estimation in the presence of missing data typically involves optimizing a non-concave log-likelihood and can suffer from the curse of dimensionality.  Consequently, it is exceedingly important in high dimensions to develop regression procedures which are computationally efficient and simultaneously yield statistically useful results.  Motivated by these issues, researchers have devoted significant effort to developing methods and theory to cope with missing data in the context of high-dimensional sparse linear regression~\citep[e.g.][]{rosenbaum2010sparse, loh2012high, datta2017cocolasso}.  
These methods, while yielding theoretical guarantees in high-dimensions, do not generalize in a straightforward manner to problems with categorical responses, e.g. in generalized linear models, in which different phenomena appear (see Section~\ref{sec:motivating-example} to follow).  %By contrast, the recent results of~\citet{chandrasekher2020imputation} establish optimal consistency rates for single imputation in high-dimensional sparse linear regression.  
Towards understanding the effect of missing data in generalized linear models with categorical responses, we consider a simplified, analytically tractable setting in which the data matrix is random---namely, consisting of i.i.d. entries---and the data is missing completely at random.  We next describe this setting in detail.

\subsection{Problem set-up} \label{sec:problem-set-up}
We consider $n$ covariate response pairs $\{(\bx_i,y_i)\}_{i = 1}^{n}$, where $\bx_i\in \reals^p$ and $y_i\in\{+1,-1\}$ and form the data matrix $\bX \in \mathbb{R}^{n \times p}$ and response vector $\by \in \mathbb{R}^n$ as
\[
\bX = [\bx_1 \;\; \bx_2 \;\; \dots \;\; \bx_n]^{\top} \qquad \text{ and } \qquad \by = [y_1 \;\; y_2 \;\; \dots \;\; y_n]^{\top}.
\]
Throughout, we will assume that the covariates $\bx_i$ consist of i.i.d., zero-mean entries; that is the entries of the data matrix $(X_{ij})_{1 \leq i \leq n, 1 \leq j \leq p}$ are i.i.d. and zero-mean.  The conditional distribution of the response $y_i$ given the covariate vector $\bx_i$ is modeled as 
\begin{align}
	\label{eq:data_gen}
	\P\bigl(y_i = 1 \mid  \langle \bx_i, \bt_0 \rangle \bigr) = \rho'\bigl(\langle \bx_i, \bt_0 \rangle\bigr),
\end{align}
where $\rho: t \mapsto \log(1 + e^t)$ denotes the logistic link function and $\bt_0 \in \mathbb{R}^p$ is a vector of coefficients.  Rather than observing the pair $\bigl(\bX, \by\bigr)$, the statistician instead observes the pair \sloppy\mbox{$\bigl(\bX^{\mathsf{obs}}, \by \bigr) \in \mathbb{R}^{n \times p} \times \mathbb{R}^n$}.  We next describe the different observed data settings considered in the paper. 

\subsubsection{Missing data and error-in-variables models}\label{sec:missing-setup}
We consider two possible observed data models: data which is missing completely at random (MCAR) and a Gaussian error-in-variables model.

\paragraph{Missing completely at random:}
This is the simplest possible missing data model and is parametrized by a scalar $\alpha \in [0, 1]$.  The observed data matrix $\bX^{\mathsf{obs}}$ consists of entries 
\begin{align}
	\label{eq:MCAR}
	X_{ij}^{\mathsf{obs}} = \begin{cases} 
		X_{ij} & \text{ with probability } \alpha, \\
		* & \text{ with probability } 1 - \alpha.
	\end{cases}\tag{MCAR($\alpha$)}
\end{align}
When the observed data is generated in this way, we say that the data is MCAR($\alpha$).  While MCAR forms a strong and often overly simplistic assumption~\citep[see, e.g.][for examples of more realistic settings]{mckennan2020estimation,rubin2004multiple}, we adopt it here in pursuit of studying phenomena that arise in logistic models with missing data.
\hfill $\diamondsuit$

\paragraph{Gaussian error-in-variables:}
Following~\citet{berkson1950there}, we consider an ensemble consisting of covariates corrupted by additive noise.  We parametrize the model by positive scalars $\alpha_c$ and $\alpha_2$ which satisfy the relation $\alpha_c \leq \sqrt{\alpha_2}$ and define the observed data as
\begin{align}\label{eq:intro-gaussian-eiv}
	\bX^{\mathsf{obs}} = \alpha_c \cdot \bX+ \sqrt{\alpha_2 - \alpha_c^2} \cdot \bW,
\end{align}
where the unobserved random matrix $\bW$ is independent of the data $\bX$ and consists of i.i.d. entries $W_{ij} \sim \mathsf{N}(0, 1/p)$.  We note that our interest in this model stems from our proof technique in which we reduce the study~\ref{eq:MCAR} data to an equivalent error-in-variables model.  Consequently, we restrict our treatment to the simple Gaussian model~\eqref{eq:intro-gaussian-eiv}.  We refer the interested reader to~\citet{bickel1987efficient} and~\citet{rudelson2017errors} for theoretical treatments in settings with more general assumptions on the data and noise (although incomparable to our results here).  \hfill $\diamondsuit$

\subsubsection{Imputation methods}
We now describe three common imputation methods: single imputation, prior imputation, and multiple imputation.  Our analysis, presented in Section~\ref{sec:main_res} will provide guarantees for the first two methods, leaving the high-dimensional analysis of multiple imputation to an interesting question for further exploration.  We include its description here for completeness and as a point of comparison.

\paragraph{Single imputation:} This method requires knowledge of the conditional mean of the missing covariate given the observed covariates.  Recalling that we consider data which consists of i.i.d., zero-mean entries, the imputed matrix $\bZ$ consists of the entries
\begin{align}
	\label{eq:single_imp}
	Z_{ij} = \begin{cases} 
		X_{ij} & \text{ if observed}, \\
		0 & \text{ else}.
	\end{cases}\tag{Single imputation}
\end{align}
We note that typical treatments of missing data methodology eschew the use of single imputation in favor of multiple imputation, to be described shortly.  In line with~\citet{josse2019consistency}, we demonstrate in the sequel that this rule of thumb may be misleading in high-dimensions, especially for downstream tasks such as prediction.
\hfill $\diamondsuit$

\paragraph{Prior imputation:} This method requires knowledge of the distribution of the covariates and consists of replacing each missing entry with a fresh draw from the covariates distribution.  In our setting, this consists of drawing a random matrix $\widetilde{\bX}$ from the same distribution as the data matrix $\bX$ and setting the entries of the imputed matrix $\bZ$ as 
\begin{align}
	\label{eq:prior_imp}
	Z_{ij} = \begin{cases} 
		X_{ij} & \text{ if observed}, \\
		\widetilde{X}_{ij} & \text{ else}.
	\end{cases} \tag{Prior imputation}
\end{align}
\hfill $\diamondsuit$

\paragraph{Multiple imputation:} This is the most complicated of the three methods and is the method advocated by traditional statistical methodology~\citep{little1992regression,murray2018multiple}.  Here, a parameter $M$ is fixed and several completed data sets $\bZ_1, \bZ_2, \dots, \bZ_M$ are generated by sampling
\begin{align}\label{eq:multiple-imp}
	\bZ_{k} \sim \P\bigl\{\bX \in \cdot \; \mid \bX^{\mathsf{obs}}, \by, \widehat{\bt}\bigr\}, \qquad \text{ for all } \qquad k \in \{1, 2, \dots, M\}, \tag{Multiple imputation}
\end{align}
where $\widehat{\bt}$ is an initial estimate: typically either a consistent asymptotically linear estimate or a single draw from the posterior given the observed data~\citep[\S 3.1]{wang1998large}.
When the downstream analysis is performed, the preferred regression method is used to generate estimates $\widehat{\bt}_1, \dots, \widehat{\bt}_M$ and the resulting estimates are aggregated to obtain the final estimate.  \hfill $\diamondsuit$
%\bigskip

Once the imputed matrix $\bZ$ is set, we estimate the coefficients $\bt$ using ridge-regularized logistic regression.  That is, given a regularization parameter $\lambda$, we minimize the cost \sloppy\mbox{$\mathcal{L}_n:\mathbb{R}^p \rightarrow \mathbb{R}$}, defined as
\begin{subequations}
	\begin{align}
		\label{eq:loss}
		\mathcal{L}_n(\bt; \bZ, \lambda) = \frac{1}{n} \sum_{i=1}^{n} \rho(-y_i \langle \bz_i, \bt \rangle) + \frac{\lambda}{2p}\norm{\bt}_2^2,
	\end{align}
	to obtain the estimate
	\begin{align}
		\label{eq:estimate}
		\widehat{\bt}(\bZ, \lambda) = \argmin_{\bt \in \mathbb{R}^p}\; \mathcal{L}_n(\bt; \bZ, \lambda).
	\end{align}
\end{subequations}
We note that the dependence on the imputed matrix $\bZ$ is made explicit as we will consider this estimator for several different choices of the matrix $\bZ$.

\subsection{A motivating example} \label{sec:motivating-example}
In order to motivate our treatment, we consider a small simulation study.  First, we consider the situation in low dimensions.  In particular, we run a simulation in which the dimension is fixed as $p=2$, the probability with which an entry is observed is fixed as $\alpha = 0.85$, the data matrix $\bX$ consists of entries $(X_{ij})_{i \leq n, j \leq p} \overset{\mathsf{i.i.d.}}{\sim} \mathsf{N}(0, 1/2)$, and the ground truth is fixed as $\bt_0 = (1, 1)$.  The number of samples $n$ is varied from $n=20$ to $n=20000$.  We then simulate the observed data to be~\ref{eq:MCAR} and form the imputed data matrix $\bZ$ by using one of three methods:~\ref{eq:single_imp},~\ref{eq:prior_imp}, or complete cases.  We subsequently minimize the loss $\mathcal{L}_n$~\eqref{eq:loss} without regularization ($\lambda = 0$) to obtain estimates $\widehat{\bt}$ and measure the angular error $\angle\bigl(\widehat{\bt}, \bt_0 \bigr)$ as well as the mean squared error $\| \widehat{\bt} - \bt_0 \|_2^2$.  We repeat this sequence independently $1000$ times.  The results are plotted in Figure~\ref{fig:intro-low-dim}.  
\begin{figure*}[!h]
	%	\centering
	\begin{subfigure}[b]{0.45\textwidth}
		%		\centering
		\centerline{\begin{tikzpicture}
	\begin{axis}[
		%		title={Regularization strength vs. angle error of single imputation, $\alpha=0.7,
		%			\delta=10, R=1$},
		xlabel={Sample size $n$},
		ylabel={Angle error},
		ymode=log,
		xmode=log,
		legend pos=north east,
		legend style={font=\small,fill=none},
		ymajorgrids=true,
		grid style=grid,
		enlargelimits=false,
		%xmin=-5,xmax=5,
		%axis y line*=left,
		]
		
		\addplot[
		color=CadetBlue,
		mark=triangle,
		mark size=2pt,
		line width=1.25pt   
		]
		table[x=delta,y=avg_angle] {data/intro-low-dim.dat};
		\legend{Single imputation}
		
		\addplot[
		color=ForestGreen,
		mark=triangle,
		mark size=2pt,
		line width=1.25pt   
		]
		table[x=delta,y=avg_angle_mi] {data/intro-low-dim.dat};
		\addlegendentry{Complete cases}
		
		\addplot[
		color=RoyalBlue,
		mark=triangle,
		mark size=2pt,
		line width=1.25pt   
		]
		table[x=delta,y=avg_angle_mi_prior] {data/intro-low-dim.dat};
		\addlegendentry{Prior imputation}

		\addplot+[name path=A-mi-prior,RoyalBlue!20, no markers] table[x=delta,y=lower_angle_mi_prior] {data/intro-low-dim.dat};
		\addplot+[name path=B-mi-prior,RoyalBlue!20, no markers] table[x=delta,y=upper_angle_mi_prior] {data/intro-low-dim.dat};
		
		\addplot[RoyalBlue!20] fill between[of=A-mi-prior and B-mi-prior];
		
		\addplot+[name path=A-mi,ForestGreen!20, no markers] table[x=delta,y=lower_angle_mi] {data/intro-low-dim.dat};
		\addplot+[name path=B-mi,ForestGreen!20, no markers] table[x=delta,y=upper_angle_mi] {data/intro-low-dim.dat};
		
		\addplot[ForestGreen!20] fill between[of=A-mi and B-mi];

		\addplot+[name path=A-si,CadetBlue!20, no markers] table[x=delta,y=lower_angle] {data/intro-low-dim.dat};
		\addplot+[name path=B-si,CadetBlue!20, no markers] table[x=delta,y=upper_angle] {data/intro-low-dim.dat};
		
		\addplot[CadetBlue!20] fill between[of=A-si and B-si];
		
	\end{axis}
\end{tikzpicture}}
		\caption{Angle error as a function of $n$}    
		%		\label{subfig:overlay-fix-alpha}
	\end{subfigure}
	\hfill
	\begin{subfigure}[b]{0.45\textwidth}  
		%		\centering 
		\centerline{\begin{tikzpicture}
	\begin{axis}[
		%		title={Regularization strength vs. angle error of single imputation, $\alpha=0.7,
		%			\delta=10, R=1$},
		xlabel={Sample size $n$},
		ylabel={Mean squared error},
		ymode=log,
		xmode=log,
		legend pos=north east,
		legend style={font=\small,fill=none},
		ymajorgrids=true,
		grid style=grid,
		enlargelimits=false,
		%xmin=-5,xmax=5,
		%axis y line*=left,
		]
		
		\addplot[
		color=CadetBlue,
		mark=triangle,
		mark size=2pt,
		line width=1.25pt   
		]
		table[x=delta,y=avg_mse] {data/intro-low-dim.dat};
		\legend{Single imputation}
		
		\addplot[
		color=ForestGreen,
		mark=triangle,
		mark size=2pt,
		line width=1.25pt   
		]
		table[x=delta,y=avg_mse_mi] {data/intro-low-dim.dat};
		\addlegendentry{Complete cases}
		
		\addplot[
		color=RoyalBlue,
		mark=triangle,
		mark size=2pt,
		line width=1.25pt   
		]
		table[x=delta,y=avg_mse_mi_prior] {data/intro-low-dim.dat};
		\addlegendentry{Prior imputation}

		\addplot+[name path=A-mi-prior,RoyalBlue!20, no markers] table[x=delta,y=lower_mse_mi_prior] {data/intro-low-dim.dat};
		\addplot+[name path=B-mi-prior,RoyalBlue!20, no markers] table[x=delta,y=upper_mse_mi_prior] {data/intro-low-dim.dat};
		
		\addplot[RoyalBlue!20] fill between[of=A-mi-prior and B-mi-prior];
		
		\addplot+[name path=A-mi,ForestGreen!20, no markers] table[x=delta,y=lower_mse_mi] {data/intro-low-dim.dat};
		\addplot+[name path=B-mi,ForestGreen!20, no markers] table[x=delta,y=upper_mse_mi] {data/intro-low-dim.dat};
		
		\addplot[ForestGreen!20] fill between[of=A-mi and B-mi];

		\addplot+[name path=A-si,CadetBlue!20, no markers] table[x=delta,y=lower_mse] {data/intro-low-dim.dat};
		\addplot+[name path=B-si,CadetBlue!20, no markers] table[x=delta,y=upper_mse] {data/intro-low-dim.dat};
		
		\addplot[CadetBlue!20] fill between[of=A-si and B-si];
		
	\end{axis}
\end{tikzpicture}}
		\caption{MSE as a function of $n$}
		%		\label{subfig:difference-fix-alpha}
	\end{subfigure}
	\caption{A comparison of several different imputation methods in low dimensions ($p = 2$).  Triangular marks denote the average over $1000$ independent trials and the shaded regions represent the inter-quartile range.  In contrast with the linear model, in which single imputation yields a consistent estimator~\citep{chandrasekher2020imputation}, in the logistic model, single imputation is only able to identify the subspace in which $\bt_0$ lies.}  
%		Panel (a) suggests that all three imputation strategies lead to consistent estimation in angle error whereas panel (b) suggests that in mean squared error, only a complete case analysis is consistent.  In particular, this suggests that the imputation based estimators are only able to identify the subspace in which $\bt_0$ lies, whereas a complete case analysis is able to identify $\bt_0$ itself.} 
	\label{fig:intro-low-dim}
\end{figure*}
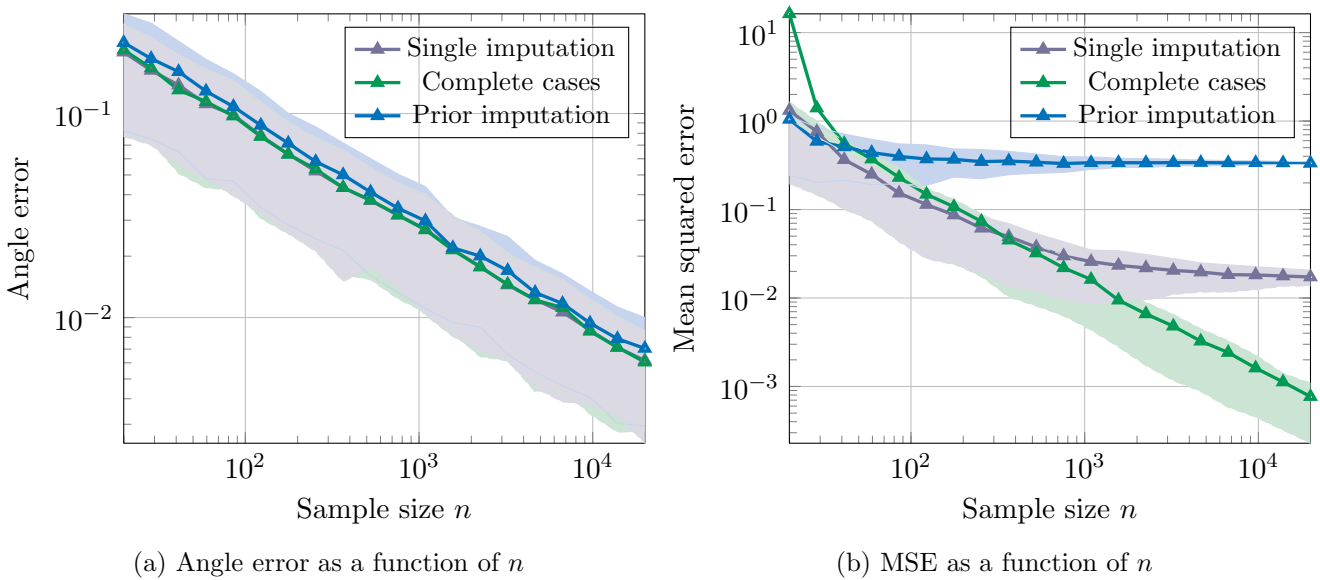

Figure~\ref{fig:intro-low-dim}(a) suggests that regardless of which of the three strategies is used, the logistic regression estimator is consistent in angle error.  By contrast, Figure~\ref{fig:intro-low-dim}(b) suggests that in mean square error, the complete cases estimator is consistent, whereas both imputation estimators are inconsistent.  This distinguishes the behavior in the logistic model from that of the linear model in which single imputation leads to a consistent estimator~\citep[see, e.g.,][]{chandrasekher2020imputation}.  On the other hand, for certain downstream tasks such as prediction, the error is governed by the angle; in such tasks, Figure~\ref{fig:intro-low-dim}(a) suggests that in low dimensions, the three strategies are interchangeable.

It is natural to wonder now whether the situation changes in high dimensions where the number of parameters may be comparable to the number of samples.  To this end, we consider the dimension $p=500$ and set the number of samples as $n=1500$.  We keep the probability of observing an entry as $\alpha=0.85$ and set the ground truth as $\bt_0 = \bm{1}$ (the all ones vector) to ensure that the the ratio $\| \bt_0 \|_2/\sqrt{p}$ remains the same in both experiments.  The data is simulated to be~\ref{eq:MCAR}.  Note that a complete case analysis is now completely infeasible as the probability that a sample contains no missing entries is $\simeq 5 \cdot 10^{-36}$.  Thus, we impute the matrix $\bZ$ using either single imputation or prior imputation.  We then vary the regularization parameter $\lambda$ and perform logistic regression~\eqref{eq:loss} to obtain an estimate~\eqref{eq:estimate}.  We again repeat this independently $1000$ times.  The results are plotted in Figure~\ref{fig:intro-high-dim}.  They are compared with the (conjectured) Bayes optimal errors (see Section~\ref{sec:numerical-illustration} for further detail).
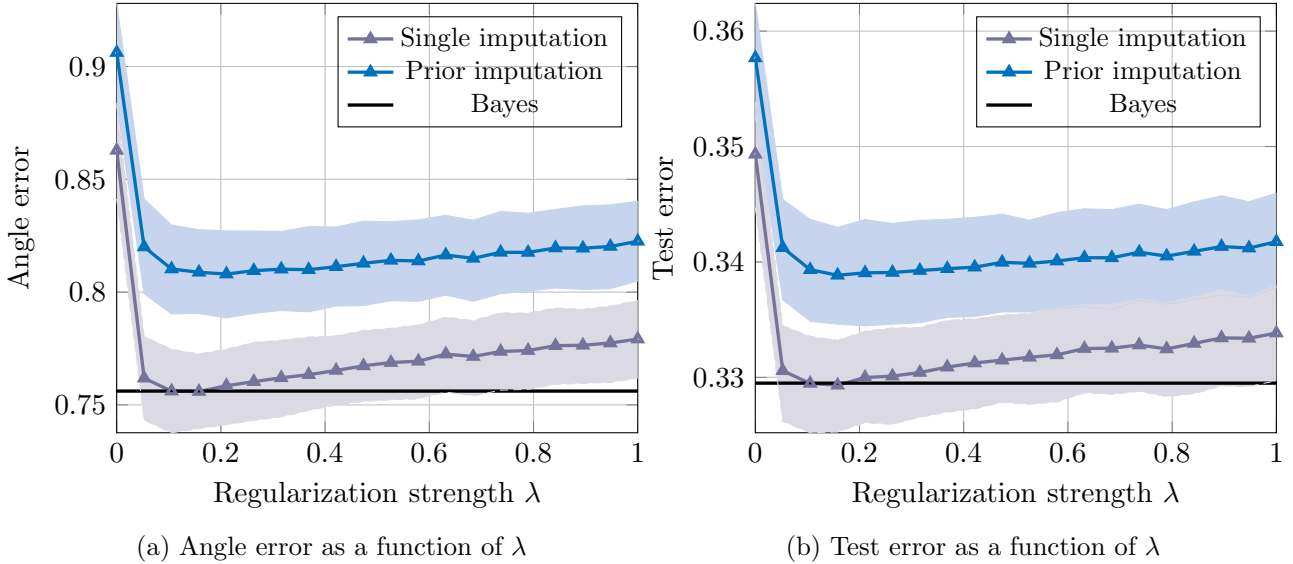
\begin{figure*}[!h]
	%	\centering
	\begin{subfigure}[b]{0.47\textwidth}
		%		\centering
		\centerline{\begin{tikzpicture}
	\begin{axis}[
		%		title={Regularization strength vs. angle error of single imputation, $\alpha=0.7,
		%			\delta=10, R=1$},
		xlabel={Regularization strength $\lambda$},
		ylabel={Angle error},
		%ymode=log,
		xmin=0, xmax=1,
		xtick={0, 0.2, 0.4, 0.6, 0.8,1},
		legend pos=north east,
		legend style={font=\small,fill=none},
		ymajorgrids=true,
		grid style=grid,
		enlargelimits=false,
		%xmin=-5,xmax=5,
		%axis y line*=left,
		]
		
		\addplot[
		color=CadetBlue,
		mark=triangle,
		mark size=2pt,
		line width=1.25pt   
		]
		table[x=lam,y=avg_angle] {data/regularization-data-mi-vs-si-larger-lam.dat};
		\legend{Single imputation}
		
		\addplot[
		color=RoyalBlue,
		mark=triangle,
		mark size=2pt,
		line width=1.25pt   
		]
		table[x=lam,y=avg_angle_mi] {data/regularization-data-mi-vs-si-larger-lam.dat};
		\addlegendentry{Prior imputation}
		
%		\addplot[
%		color=Salmon,
%		mark=*,
%		line width=1.25pt,
%		mark size=1.5pt   
%		]
%		table[x=lam,y=exact_angle_mi] {data/regularization-data-mi-vs-si-larger-lam.dat};
%		\addlegendentry{Exact: prior imputation}
%		
%		\addplot[
%		color=Maroon,
%		mark=*,
%		mark size=1.5pt,
%		line width=1.25pt
%		]
%		table[x=lam,y=exact_angle] {data/regularization-data-mi-vs-si-larger-lam.dat};
%		\addlegendentry{Exact: single imputation}
		
		\addplot[mark=none, black, line width=1.25pt, samples=2] {0.7561208504075206};
		\addlegendentry{Bayes}

		\addplot+[name path=A-mi,RoyalBlue!20, no markers] table[x=lam,y=lower_angle_mi] {data/regularization-data-mi-vs-si-larger-lam.dat};
		\addplot+[name path=B-mi,RoyalBlue!20, no markers] table[x=lam,y=upper_angle_mi] {data/regularization-data-mi-vs-si-larger-lam.dat};
		
		\addplot[RoyalBlue!20] fill between[of=A-mi and B-mi];

		\addplot+[name path=A-si,CadetBlue!20, no markers] table[x=lam,y=lower_angle] {data/regularization-data-mi-vs-si-larger-lam.dat};
		\addplot+[name path=B-si,CadetBlue!20, no markers] table[x=lam,y=upper_angle] {data/regularization-data-mi-vs-si-larger-lam.dat};
		
		\addplot[CadetBlue!20] fill between[of=A-si and B-si];

		% \addplot[
		%    color=blue,
		%    mark=*,
		%    mark size=1.25pt   
		%    ]
		%    table[x=lam,y=gen_R_1] {data/reg_data.dat};
		%    \addlegendentry{$R=1$}
		
		%\addplot [name path=upper,draw=none] table[x=alpha,y=max_err]
		%    {data/IdentityGaussianBig.dat};
		%\addplot [name path=lower,draw=none] table[x=alpha,y=min_err]
		%    {data/IdentityGaussianBig.dat};
		%\addplot [fill=black!10] fill between[of=upper and lower];
		%
		%\addplot [
		%    domain=0.5:0.99, 
		%    samples=100, 
		%    color=black,
		%    line width=1.5pt
		%]
		%    {0.498149264028527105 * sqrt{(1-x)/(x)}};
		%    \addlegendentry{$\lambda \sqrt{s}$: $\sqrt{\frac{\left(1 - \alpha\right)s \log{p}}{\alpha n}}$}
		
	\end{axis}
\end{tikzpicture}}
		\caption{Angle error as a function of $\lambda$}    
		%		\label{subfig:overlay-fix-alpha}
	\end{subfigure}
	\hfill
	\begin{subfigure}[b]{0.47\textwidth}  
		%		\centering 
		\centerline{\begin{tikzpicture}
	\begin{axis}[
		%		title={Regularization strength vs. angle error of single imputation, $\alpha=0.7,
		%			\delta=10, R=1$},
		xlabel={Regularization strength $\lambda$},
		ylabel={Test error},
		%ymode=log,
		xmin=0, xmax=1,
		xtick={0, 0.2, 0.4, 0.6, 0.8,1},
		legend pos=north east,
		legend style={font=\small,fill=none},
		ymajorgrids=true,
		grid style=grid,
		enlargelimits=false,
		%xmin=-5,xmax=5,
		%axis y line*=left,
		]
		
		\addplot[
		color=CadetBlue,
		mark=triangle,
		mark size=2pt,
		line width = 1.25pt   
		]
		table[x=lam,y=avg_gen] {data/regularization-data-mi-vs-si-larger-lam.dat};
		\legend{Single imputation}
		
		\addplot[
		color=RoyalBlue,
		mark=triangle,
		mark size=2pt,
		line width = 1.25pt
		]
		table[x=lam,y=avg_gen_mi] {data/regularization-data-mi-vs-si-larger-lam.dat};
		\addlegendentry{Prior imputation}
		
%		\addplot[
%		color=Maroon,
%		mark=*,
%		mark size=1.5pt,
%		line width = 1.25pt
%		]
%		table[x=lam,y=exact_gen] {data/regularization-data-mi-vs-si-larger-lam.dat};
%		\addlegendentry{Exact: single imputation}
%		
%		\addplot[
%		color=Salmon,
%		mark=*,
%		mark size=1.5pt,
%		line width = 1.25pt   
%		]
%		table[x=lam,y=exact_gen_mi] {data/regularization-data-mi-vs-si-larger-lam.dat};
%		\addlegendentry{Exact: prior imputation}
		
		\addplot[mark=none, black, line width=1.25pt, samples=2] {0.32949744648618856};
		\addlegendentry{Bayes}

		\addplot+[name path=A-mi,RoyalBlue!20, no markers] table[x=lam,y=lower_gen_mi] {data/regularization-data-mi-vs-si-larger-lam.dat};
		\addplot+[name path=B-mi,RoyalBlue!20, no markers] table[x=lam,y=upper_gen_mi] {data/regularization-data-mi-vs-si-larger-lam.dat};
		
		\addplot[RoyalBlue!20] fill between[of=A-mi and B-mi];

		\addplot+[name path=A-si,CadetBlue!20, no markers] table[x=lam,y=lower_gen] {data/regularization-data-mi-vs-si-larger-lam.dat};
		\addplot+[name path=B-si,CadetBlue!20, no markers] table[x=lam,y=upper_gen] {data/regularization-data-mi-vs-si-larger-lam.dat};
		
		\addplot[CadetBlue!20] fill between[of=A-si and B-si];

		% \addplot[
		%    color=blue,
		%    mark=*,
		%    mark size=1.25pt   
		%    ]
		%    table[x=lam,y=gen_R_1] {data/reg_data.dat};
		%    \addlegendentry{$R=1$}
		
		%\addplot [name path=upper,draw=none] table[x=alpha,y=max_err]
		%    {data/IdentityGaussianBig.dat};
		%\addplot [name path=lower,draw=none] table[x=alpha,y=min_err]
		%    {data/IdentityGaussianBig.dat};
		%\addplot [fill=black!10] fill between[of=upper and lower];
		%
		%\addplot [
		%    domain=0.5:0.99, 
		%    samples=100, 
		%    color=black,
		%    line width=1.5pt
		%]
		%    {0.498149264028527105 * sqrt{(1-x)/(x)}};
		%    \addlegendentry{$\lambda \sqrt{s}$: $\sqrt{\frac{\left(1 - \alpha\right)s \log{p}}{\alpha n}}$}
		
	\end{axis}
\end{tikzpicture}}
		\caption{Test error as a function of $\lambda$}
		%		\label{subfig:difference-fix-alpha}
	\end{subfigure}
	\caption{A comparison of single imputation and prior imputation in high-dimensions ($p=500, n=1500$).  Both are compared with the conjectured Bayes optimal error (see Section~\ref{sec:numerical-illustration}).  Triangular marks denote the average over $1000$ independent trials and shaded regions represent the inter-quartile range.} 
	\label{fig:intro-high-dim}
\end{figure*}
%\begin{figure*}[!h]
%	%	\centering
%	\begin{subfigure}[b]{0.47\textwidth}
%		%		\centering
%		\centerline{\input{figs/test_error_alpha07_R4.tex}}
%		\caption{Test error as a function of $\delta$}    
%		%		\label{subfig:overlay-fix-alpha}
%	\end{subfigure}
%	\hfill
%	\begin{subfigure}[b]{0.47\textwidth}  
%		%		\centering 
%		\centerline{\input{figs/angle_error_alpha07_R4.tex}}
%		\caption{Angle error as a function of $\delta$}
%		%		\label{subfig:difference-fix-alpha}
%	\end{subfigure}
%	\caption{A comparison of the Bayes error with the optimally regularized single imputation error.  The probability of observing an entry is fixed as $\alpha=0.7$ and the radius of the problem is fixed as $R = 4$, whereas the ratio of samples to dimensions $\delta$ is varied.  Triangular marks denote the empirical average of the empirical error, dashed maroon lines denote the exact single imputation error, and dashed black lines denote the Bayes error.  The shaded region denotes the inter-quartile range.} 
%	\label{fig:intro-high-dim}
%\end{figure*}

Figure~\ref{fig:intro-high-dim} reveals a surprising phenomenon.  In low dimensions, the complete case analysis strictly dominates both imputation estimators in mean square error and the three methods perform equivalently with respect to the angle error.  By contrast, in high dimensions, the simple regularized single imputation estimator can achieve nearly the Bayes optimal angle and test errors.  Moreover, there is a separation between the two imputation estimators.  In Section~\ref{sec:main_res}, we will provide exact asymptotic expressions for the errors of both imputation estimators which match the empirical performance observed in Figure~\ref{fig:intro-high-dim}.  

\subsection{Contributions and paper outline}\label{sec:contributions}
We next summarize our results.
\begin{description}
	\item[Characterization of the risk in the error-in-variables model.] We obtain an asymptotically exact characterization of the risk of ridge-regularized logistic regression when the observed covariates stem from a Gaussian error-in-variables model.  Moreover, we complement this characterization with concentration inequalities which provide non-asymptotic bounds on the fluctuations around the asymptotic risk.  See Section~\ref{subsec:error-in-var} for precise statements.
	\item[Universality.]  We prove that for all pairs of matrices $\bX$ and $\bZ$ belonging to a certain universality class, the characterization of the risk in the error-in-variables model continues to hold.  This yields an asymptotic characterization of the risk for ridge-regularized logistic regression when covariates are missing completely at random and the imputation strategy follows either single imputation or prior imputation.  As a special case, our result implies universality of the risk of ridge-regularized logistic regression when the data is perfectly observed~\citep[cf.][]{sur2019modern,salehi2019impact}.  See Section~\ref{subsec:missing-data-main} for precise statements.
	\item[The effect of regularization.] We evaluate our predicted formulas and compare them with the conjectured Bayes optimal risk, defined precisely in Section~\ref{subsec:bayes}.  See Section~\ref{sec:numerical-illustration} for further details.
\end{description}

To expand on the last point, our results reveal a crucial role played by regularization, even in a moderate data regime when $n/p\gtrsim 10$:
\begin{itemize}
	\item If no regularization is employed, $\lambda=0$, then---in agreement with classical arguments---conditional mean imputation is underestimating the variability in the covariates. Consequently, the estimate of $\bt_0$ is overconfident (see Figure~\ref{fig:empirical-single-vs-prior}).
	\item If regularization is added, with optimally tuned $\lambda>0$, this problem is not only alleviated but nearly entirely eliminated:
	the resulting estimate is nearly as accurate as the (conjectured) Bayes-optimal estimate (see Figure~\ref{fig:fixalpha}).
	\item Prior imputation appears to alleviate the overconfidence problem as well (see Figure~\ref{fig:empirical-single-vs-prior}). One way to think about this effect is that the randomly drawn entries are effectively adding noise to the conditional-mean covariates matrix. Covariate noise is known to  have similar consequences as ridge   regularization~\citep{bishop1995training}.
	\item Finally, there is a separation in performance between different imputation strategies.  In particular, even though prior imputation utilizes more knowledge about the covariates' distribution than single imputation, it can lead to inferior prediction error.
\end{itemize}

The remainder of the paper is organized as follows.  In Section~\ref{sec:main_res}, we present our main results:  Section~\ref{subsec:error-in-var} presents sharp results for the Gaussian error-in-variables model and Section~\ref{subsec:missing-data-main} extends these results to a much larger universality class which contains both~\ref{eq:prior_imp} and~\ref{eq:single_imp}.  In Section~\ref{sec:numerical-illustration}, we provide extensive numerical illustrations as well as a conjectured characterization of the Bayes error.  In Section~\ref{sec:proofs}, we provide the proofs of our main results.  Finally, we provide discussion in Section~\ref{sec:discussion}.  Our appendices contain omitted proofs as well as additional numerical evidence.  

\subsection{Related work}

\paragraph{Missing data:} Regression with missing data has been studied for decades and a comprehensive review can be found in the book of~\citet{little2014statistical}.  In the low-dimensional setting,~\citet{wang1998large} provide asymptotic guarantees for multiple imputation.  Most relevant to our treatment are the specialized, high-dimensional methods designed for sparse linear regression.  A subset of these---all of which provide high dimensional consistency results---include~\citep{rosenbaum2010sparse, loh2012high, chen2013noisy, datta2017cocolasso, wang2017rate}.  Recently,~\citet{chandrasekher2020imputation} studied single-imputation for high-dimensional sparse linear regression and obtained optimal consistency rates of both the LASSO and the square-root LASSO without modification.  As the simulation study in Section~\ref{sec:motivating-example} suggests, single-imputation based estimators exhibit genuinely distinct behavior in the linear model and the logistic model we study here.  Moving beyond regression, several other models have been studied in higher dimension including PCA~\citep{zhu2019high,yan2021inference}, covariance estimation~\citep{lounici2014high}, changepoint detection~\citep{xie2012change,follain2021high}, and nonparametric classification~\citep{sell2024nonparametric}, to name a few.  However (to the best of our knowledge), existing theoretical treatments within the missing data literature have not covered estimation in generalized linear models.

More broadly, there has been a flurry of work on imputation methodology in recent years~\citep[see, e.g.,][]{zhao2020missing,you2020handling,bertsimas2018predictive}.  
As mentioned in Section~\ref{sec:missing-setup}, the~\ref{eq:MCAR} assumption is simplistic in nature and moving beyond it forms an important theoretical problem.  We refer the interested reader to two interesting papers in this direction.  First,~\citet{agarwal2021causal} focus on imputation methodology for low rank data and develop matrix completion--based imputation strategies, establishing guarantees under very general missingness mechanisms (including data which is missing not at random).  In a distinct direction,~\citet{berrett2022optimal} consider the problem of verifying the MCAR assumption and develop a test to determine whether data is missing at random or not.

\paragraph{Exact asymptotics with Gaussian data:} A substantial literature characterizes the asymptotic properties of
high-dimensional M-estimators in the proportional asymptotics in which both the number of parameters $p$ and the number of samples $n$
diverge. A subset of relevant papers include~\citep{bayati2011lasso,amelunxen2013,donoho2016high,el2018impact,reeves2016replica,thrampoulidis2015regularized,miolane2021distribution,sur2019modern,salehi2019impact}.  Our proofs rely on the CGMT (convex Gaussian min-max theorem), a tight application of Gordon's minimax theorem for Gaussian processes.  Gordon's original theorem~\citep{gordon1985some,Gordon1988} is a
Gaussian comparison inequality for the minimization-maximization of
two related Gaussian processes.
In a line of work initiated by~\citet{stojnic2013framework} and formalized by~\citet{thrampoulidis2015regularized}, the comparison inequality was shown to be tight when the underlying Gaussian process is convex-concave.
This observation has led to several works establishing exact asymptotics for high-dimensional convex procedures, 
including general penalized M-estimators in linear regression~\citep{thrampoulidis2015regularized,thrampoulidis2018} and binary classification~\citep{dengmodel,Montanari2019,Liang2020APH}.  Moving beyond studying specific procedures,~\citet{barbier2019optimal} provide an asymptotic characterization of the Bayes error in the proportional, asymptotic regime under a Gaussian data assumption (see Section~\ref{sec:numerical-illustration} for further discussion).   Utilizing the aforementioned characterization of the Bayes error in conjunction with the CGMT,~\citet{aubin2020generalization} consider using the logistic loss as a surrogate risk when the true labels were generated according to the perceptron and---similarly to what we show in Section~\ref{sec:numerical-illustration}---demonstrate that this procedure nearly achieves the Bayes optimal error (in a setting where the entries are completely observed).

\paragraph{Universality:}
Our proofs of universality rely on the Lindeberg principle~\citep{lindeberg1922neue}, which was formalized by~\citet{chatterjee2006generalization} and has proven extremely successful in deriving universality properties. It allows to prove universality for expectations of functions of independent random variables, as long as the functions are sufficiently smooth
(typically a bound on the third derivative is required).
Implementing this approach requires approximating the object of interest with such an expectation.  The works~\citep{korada2011applications, montanari2017universality} developed a technique to leverage the Lindeberg principle in the context of linear regression.  More recently,~\citet{han2022universality} leveraged the Lindeberg principle in conjunction with the CGMT to establish exact asymptotics and universality for a set of regularized regression estimates in the high dimensional linear model.  Related universality
results in high-dimensional statistics were proven in~\citet{bayati2015universality,oymak2018universality}.  Recently,~\citet{hu2022universality,montanari2022empirical} moved beyond the independent entries assumption and proved universality for empirical risk minimization.  We emphasize that our proofs follow the well established strategy developed and employed by the sequence of previous work listed above; our treatment departs from this line of work as we obtain universality when the data matrix $\bX$ used to generate the responses as well as the data matrix $\bZ$ used for inference can be different.  

\subsection{Notation}
We use bold-face lower-case letters to denote vectors $(\bw, \bv, \dots)$ and bold-face upper-case letters to denote matrices $(\bX, \bZ, \dots)$.  
We will make use of the Orlicz norm of a random variable \sloppy\mbox{$
\norm{X}_{\psi} := \inf\bigl\{t > 0: \E \psi(\lvert X \rvert/t) \leq 1\bigr\}$},
where $\psi:\mathbb{R}_{\geq 0} \rightarrow \mathbb{R}_{\geq 0}$ is a convex and strictly increasing function such that $\psi(0) = 0$.  We will make particular use of the sub-Gaussian norm, taking $\psi_2: u \mapsto \exp(u^2) - 1$ and the sub-exponential norm, taking $\psi_1: u \mapsto \exp(\lvert u \rvert) - 1$.  Throughout the paper, $c, C$ denote constants that may change line to line.  We will additionally use the asymptotic notation $a_n \gtrsim b_n$ to denote $a_n \geq Cb_n$, $a_n \lesssim b_n$ to denote $a_n \leq Cb_n$ and $a_n \asymp b_n$ to denote $a_n \lesssim b_n$ and $a_n \gtrsim b_n$, for sequences $\{a_n\}, \{b_n\}$.  
For a convex function $f$ and scalar $\gamma \geq 0$, we will denote the Moreau envelope of $f$ by $
M_f(x; \gamma) := \min_{u \in \mathbb{R}} \left\{f(u) + \frac{\gamma}{2}(u - x)^2\right\}$,
and denote the proximal operator by $
\prox_f(x; \gamma) := \argmin_{u \in \mathbb{R}}\left\{f(u) + \frac{\gamma}{2}(u - x)^2\right\}$.
For a linear subspace $S$ of $\mathbb{R}^p$ we will denote by $\proj_{S}$ the projection onto $S$ and by $\proj_{S}^{\perp}$ the projection onto the subspace orthogonal to $S$.  Finally, for a sequence of random variables $X_n$,  we will denote by $X_n \overset{P}{\rightarrow} X$ convergence in probability, in the limit where $n/p \rightarrow \delta$ and $n,p \rightarrow \infty$.

% Main result
\section{Main results}
\label{sec:main_res}
We now describe our main results.  We begin by considering the Gaussian error-in-variables ensemble in Section~\ref{subsec:error-in-var}.  We then prove our main universality result in Section~\ref{subsec:missing-data-main} and detail several consequences for imputation based methods. 

\subsection{Logistic regression with error-in-variables}
\label{subsec:error-in-var}
We begin by defining the Gaussian error-in-variables ensemble, which was informally introduced in Section~\ref{sec:missing-setup}.
\begin{definition}[Gaussian error-in-variables]
	\label{def:gaussian-eiv}
	Let $\bX$ and $\bW$ be independent random matrices with entries $(X_{ij})_{i \leq n, j \leq p} \overset{\mathsf{i.i.d.}}{\sim} \mathsf{N}(0, 1/p)$ and $(W_{ij})_{i \leq n, j \leq p} \overset{\mathsf{i.i.d.}}{\sim} \mathsf{N}(0, 1/p)$.  Then, for positive constants $\alpha_c$ and $\alpha_2$ such that $\alpha_c \leq \sqrt{\alpha_2}$, the Gaussian error-in-variables matrix is defined as
	\[
	\bZ = \alpha_c \bX+ \sqrt{\alpha_2 - \alpha_c^2} \bW.
	\]
\end{definition}
%This ensemble derives its name from the interpretation of the matrix $\bX$ as a data matrix, used to generate the responses, and of $\bZ$ as an observed data matrix, corrupted by additive errors.  We emphasize that although such models have been treated in greater generality~\citep[see, e.g.,][]{bickel1987efficient,rudelson2017errors}, restricting our attention to a simpler, stylized model, allows us to prove \emph{sharp} performance guarantees.  
As mentioned in the introduction, we use this model primarily as a theoretical tool to enable the study of missing data models.  We next require a regularity assumption governing the regularization parameter $\lambda$, the ratio of samples to dimensions $\delta = n/p$ and $R$, the norm of the re-scaled ground truth coefficients $\bt_0/\sqrt{p}$. 
\begin{assumption}[Parameter regularity]
	\label{asm:regularity}
	The regularization strength $\lambda$, ratio $\delta = n/p$, radius $R$, and covariance parameters $\alpha_c, \alpha_2$ are bounded below by an absolute, positive constant $K_1$ and above by an absolute, positive constant $K_2$.  Moreover, the ground truth coefficients $\bt_0 \in \mathbb{R}^p$ satisfy $\| \bt_0 \|_2/\sqrt{p} = R$.
\end{assumption}

We next define an asymptotic loss, which captures the asymptotic behavior of the the ridge-regularized loss $\mathcal{L}_n$~\eqref{eq:loss}.
\begin{definition}[Asymptotic loss]
	\label{def:asymploss}
	Consider problem parameters which satisfy Assumption~\ref{asm:regularity} and let $(Z_1, Z_2, G)$ denote a triple of i.i.d. standard Gaussian random variables.  Define the random variable $Y$, whose conditional distribution given $Z_1$ is 
	\begin{align}
		\label{def:Y_dist}
		Y \mid Z_1 = \begin{cases} +1 \qquad \text{ with probability } \qquad  \E_{G}\Bigl\{\rho'\Bigl(\frac{\alpha_c}{\sqrt{\alpha_2}}RZ_1 + \sqrt{1 - \frac{\alpha_c^2}{\alpha_2}}RG\Bigr)\Bigr\}\\
			-1 \qquad\text{ else}.
		\end{cases}
	\end{align}
	Additionally, for a pair of scalars $(\sigma, \xi) \in \mathbb{R}^2$, define the random variable $V(Z_1, Z_2)$ as
	\[
	V(Z_1, Z_2) := \xi R \sqrt{\alpha_2} Z_1 + \sigma \sqrt{\alpha_2} Z_2.
	\]
	The \emph{asymptotic loss} $L: \mathbb{R}_{\geq 0} \times \mathbb{R} \times \mathbb{R}_{\geq 0} \rightarrow \mathbb{R}$ is defined as 
	\begin{align}
		\label{eq:asymploss}
		L(\sigma, \xi, \gamma) = \frac{\lambda (\sigma^2 + \xi^2 R^2)}{2} - \frac{\alpha_2 \gamma \sigma^2}{2\delta} + \E\Bigl\{\min_{u \in \mathbb{R}}\Bigl[ \rho(-Yu) + \frac{\gamma}{2} \cdot \bigl(u - V(Z_1, Z_2)\bigr)^2\Bigr]\Bigr\}.
	\end{align}
\end{definition}

With these preliminaries in hand, we now collect several useful properties of the asymptotic loss in the next lemma, which will allow us to state the main result of this section.  We provide the proof of this lemma in Appendix~\ref{sec:properties-asymptotic}.  
\begin{lemma} 
	\label{lem:structural_L}
	Under Assumption~\ref{asm:regularity}, the asymptotic loss $L$~\eqref{eq:asymploss} satisfies the following properties.  
	\begin{enumerate} 
		\item[(a)] The map $\Psi: (\sigma, \xi) \mapsto \max_{\gamma \geq 0}\; L(\sigma, \xi, \gamma)$ is $\lambda \cdot (1 \wedge R^2)$--strongly convex on the domain $[0, \infty) \times \mathbb{R}$.%for $(\sigma, \xi )\in \mathbb{R}_{\geq 0} \times \mathbb{R}$.
		\item[(b)] There exists a positive constant $\gamma_0$, depending only on $K_1, K_2$ such that
		\[
		\min_{\sigma \geq 0, \xi \in \mathbb{R}}\max_{\gamma \geq 0}\; L(\sigma, \xi, \gamma) = \min_{\sigma \geq 0, \xi \in \mathbb{R}}\max_{\gamma \geq \gamma_0}\; L(\sigma, \xi, \gamma)
		\]
		\item[(c)] There exists a unique triplet $(\sigma_{\star}, \xi_{\star}, \gamma_{\star})$ such that, for all $\sigma \in \mathbb{R}_{\geq 0}, \xi \in \mathbb{R}, \gamma \in \mathbb{R}_{\geq 0}$,
		\[
		L(\sigma, \xi, \gamma_{\star}) \leq L(\sigma_{\star}, \xi_{\star}, \gamma_{\star}) \leq L(\sigma_{\star}, \xi_{\star}, \gamma).
		\]
		Moreover, $(\sigma_{\star}, \xi_{\star}, \gamma_{\star})$ is identified as the unique solution to the following system of equations
		\begin{align}
			\label{eq:system}
			\sigma^2 &= \frac{\delta}{\alpha_2} \E\left\{(1 + Y)\cdot\Bigl[\prox_{\rho}\bigl(V(Z_1, Z_2); \gamma\bigr) - V(Z_1, Z_2)\Bigr]^2 \right\}\nonumber\\
			0 &= -\frac{\gamma \sigma \alpha_2}{\delta} + \sigma \lambda + \sqrt{\alpha_2} \cdot \E\Bigl\{(1 + Y)\cdot Z_2 \cdot \rho'\Bigl(\prox_{\rho}\bigl(V(Z_1, Z_2); \gamma\bigr)\Bigr)\Bigr\}\\
			0 &= \xi \lambda R^2 + R\sqrt{\alpha_2}\cdot \E\Bigl\{(1 + Y) \cdot Z_1 \cdot \rho'\Bigl(\prox_{\rho}\bigl(V(Z_1, Z_2); \gamma\bigr)\Bigr)\Bigr\}.\nonumber
		\end{align}
	\end{enumerate}
\end{lemma}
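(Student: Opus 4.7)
The plan is to exploit Fenchel duality on the inner minimization, transport the dual supremum outside the expectation, and then perform the maximization over $\gamma$ in closed form; this yields a representation of $\Psi$ as a strongly convex quadratic plus a pointwise supremum of functions affine in $(\sigma,\xi)$. Writing $g_Y(u):=\rho(-Yu)$ with Fenchel conjugate $g_Y^{*}$, the inner envelope admits the dual form $\min_u\{g_Y(u)+\tfrac{\gamma}{2}(u-V)^2\} = \sup_v\{vV - g_Y^{*}(v) - v^2/(2\gamma)\}$. By measurable selection, $\sup_v$ passes outside the expectation as a supremum over measurable $v(\cdot)$; the convex--concave structure (convex in $v$, concave in $\gamma\geq 0$) justifies swapping this supremum with $\max_{\gamma\geq 0}$. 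The inner maximization of $-\tfrac{\alpha_2\gamma\sigma^2}{2\delta} - \mathbb{E}[v^2]/(2\gamma)$ over $\gamma\geq 0$ is elementary and produces $-\sigma\sqrt{\alpha_2 \mathbb{E}[v^2]/\delta}$ for $\sigma\geq 0$, giving
\[
\Psi(\sigma,\xi) = \frac{\lambda(\sigma^2 + \xi^2 R^2)}{2} + \sup_{v(\cdot)} \Big\{\mathbb{E}[vV - g_Y^{*}(v)] - \sigma\sqrt{\alpha_2 \mathbb{E}[v^2]/\delta}\Big\}.
\]
For each fixed $v(\cdot)$, the bracket is affine in $(\sigma,\xi)$ on $[0,\infty)\times\mathbb{R}$ (since $V$ is linear in $(\sigma,\xi)$ and $\sigma\mapsto\sigma\sqrt{\cdot}$ is linear on $[0,\infty)$); hence the supremum is convex, and adding the $\lambda(1\wedge R^2)$-strongly convex quadratic yields part (a).

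\paragraph{Part (b): uniform lower bound on $\gamma_{\star}$.} By (a), the outer minimizer $(\sigma_{\star},\xi_{\star})$ exists and is unique on $[0,\infty)\times\mathbb{R}$. Comparing $\Psi(\sigma_{\star},\xi_{\star}) \leq \sup_{\gamma} L(0,0,\gamma)=\log 2$ with the strong-convexity inequality $\Psi(0,0) - \Psi(\sigma_{\star},\xi_{\star}) \geq \tfrac{\lambda(1\wedge R^2)}{2}(\sigma_{\star}^2 + \xi_{\star}^2)$ yields explicit bounds $\sigma_{\star},\,|\xi_{\star}|\leq C(K_1,K_2)$. At the saddle point the first-order condition in $\gamma$ reads $\alpha_2\sigma_{\star}^2/\delta = \mathbb{E}[(u_{\star}-V)^2]$, where $u_{\star}$ is the prox of $\rho(-Y\cdot)$ at $V$. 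Because the minimizer of the logistic loss sits at $\pm\infty$, $u_{\star}$ diverges logarithmically as $\gamma\to 0^+$ and $(u_{\star}-V)^2$ grows without bound; combined with the $O(1)$ upper bound on $\sigma_{\star}^2$, this forces $\gamma_{\star}\geq \gamma_0(K_1,K_2) > 0$.

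\paragraph{Part (c): uniqueness of the saddle point and the system.} Uniqueness of $(\sigma_{\star},\xi_{\star})$ is immediate from (a). For uniqueness of $\gamma_{\star}$, the map $\gamma\mapsto L(\sigma_{\star},\xi_{\star},\gamma)$ is strictly concave on $(0,\infty)$ because $\rho''>0$ renders each Moreau envelope inside the expectation strictly concave in $\gamma$. The three stationarity conditions $\partial_\sigma L = \partial_\xi L = \partial_\gamma L = 0$ are obtained by differentiating under the expectation and applying the envelope theorem, which gives $\partial_V M_{g_Y}(V;\gamma) = \gamma(V-u_{\star})$ and $\partial_\gamma M_{g_Y}(V;\gamma) = \tfrac{1}{2}(u_{\star}-V)^2$. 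To rewrite the conditions in the stated form involving $\prox_{\rho}(V;\gamma)$ weighted by $(1+Y)$, I use the change of variable $u_{\star}(V;\gamma,Y) = -Y\,\prox_{\rho}(-YV;\gamma)$ together with the reflection symmetries $(Z_1,Y)\stackrel{d}{=}(-Z_1,-Y)$ and $Z_2\stackrel{d}{=}-Z_2$ inherent in the Gaussian error-in-variables model, which collapse the $Y=-1$ contributions into the $Y=+1$ case and produce the factor $(1+Y)$.

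\paragraph{Main obstacle.} The principal technical crux lies in part (a): rigorously justifying the chain of interchanges among $\sup_{v}$, $\mathbb{E}$, and $\max_{\gamma}$ requires either Sion's minimax theorem (after establishing coercivity of $g_Y^{*}$ and compactness of the relevant sublevel sets) or an approximation argument via truncation of $v$. A secondary difficulty, in (c), is carefully tracking the symmetrization that collapses the two $Y$-cases into the $(1+Y)$-weighted $\prox_{\rho}$ form; this step leans on the logistic-loss identity $\rho(u)-\rho(-u)=u$ and on the conditional symmetry of $(Z_1,Y)$.
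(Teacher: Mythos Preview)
Your proposal is largely correct but proceeds differently from the paper in part~(a), and has a minor gap in part~(b).

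\medskip
\noindent\textbf{Part (a): different route, both valid.} The paper writes $\Psi = \tfrac{\lambda}{2}(\sigma^2 + \xi^2 R^2) + \Phi$ and shows convexity of $\Phi$ by lifting the inner minimization to $U\in L^2(\mathbb{Q})$, i.e.\ $\Phi(\sigma,\xi)=\max_{\gamma\geq \underline{\gamma}_M}\min_{U\in L^2}\{\E[\rho(-YU)]+\tfrac{\gamma}{2}\E(U-V)^2 - \tfrac{\alpha_2\gamma\sigma^2}{2\delta}\}$, and then verifies convexity directly via the definition by taking a convex combination of two points and their associated minimizers $U_\gamma^{(1)},U_\gamma^{(2)}$. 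Your Fenchel-dual route is a genuine alternative: it yields an explicit representation of $\Psi$ as a strongly convex quadratic plus a pointwise supremum of affine functions, which is arguably cleaner. One remark: the interchange you flag as the ``principal technical crux'' is in fact trivial here, because after pulling $\sup_v$ outside the expectation you are commuting two \emph{suprema} ($\sup_{v(\cdot)}$ and $\max_{\gamma\geq 0}$), which requires no minimax theorem. The only nontrivial step is $\E[\sup_v(\cdot)]=\sup_{v(\cdot)}\E[\cdot]$, and that follows from a standard measurable selection since $g_Y^*$ is finite on the compact interval $\{v: -Yv\in[0,1]\}$ and the integrand is strictly concave in $v$.

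\medskip
\noindent\textbf{Part (b): small gap.} Your argument bounds $(\sigma_\star,\xi_\star)$ and then uses the first-order condition $\alpha_2\sigma_\star^2/\delta=\E[(u_\star-V)^2]$ together with the logarithmic blow-up of $u_\star$ as $\gamma\downarrow 0$. This is the same mechanism as the paper's Lemma~\ref{lem:useful-asymptotic-loss}(b), and once $\gamma_\star\geq\gamma_0$ is known at the saddle, the equality in~(b) does follow from the saddle inequality $L(\sigma,\xi,\gamma_\star)\geq L(\sigma_\star,\xi_\star,\gamma_\star)$. The gap is that you tacitly assume the $\gamma$-argmax at $(\sigma_\star,\xi_\star)$ is finite, which requires $\sigma_\star>0$; if $\sigma_\star=0$ then $\partial_\gamma L\geq 0$ for all $\gamma$ and no finite maximizer exists. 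The paper handles this separately via Lemma~\ref{lem:useful-asymptotic-loss}(d), which shows $\argmin_\sigma\max_\gamma L(\sigma,\xi,\gamma)\geq c_M'>0$ by checking that $\partial_\sigma[\max_\gamma L]<0$ for small $\sigma$.

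\medskip
\noindent\textbf{Part (c).} Your argument matches the paper's: uniqueness of $(\sigma_\star,\xi_\star)$ from (a), strict concavity of $\gamma\mapsto L(\sigma_\star,\xi_\star,\gamma)$ via $\rho''>0$, and first-order stationarity. Your description of the symmetrization producing the $(1+Y)$ factor is correct in spirit; the paper simply asserts that ``straightforward calculation verifies'' the system without spelling this out.
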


We emphasize that the triplet $(\sigma_{\star}, \xi_{\star}, \gamma_{\star})$ depends on the problem parameters $\lambda$, $\delta$, $\alpha_c$, $\alpha_2$, and $R$.  Taking $\alpha_c = \alpha_2 = 1$, we recover the system of equations derived by~\citet[Equation 16]{salehi2019impact}, which were derived in the context of high dimensional, ridge-regularized logistic regression with fully observed data.  We also note that setting the regularization strength $\lambda = 0$ (and keeping the setting $\alpha_c = \alpha_2 = 1$) recovers the system of equations derived by~\citet[Equation 5]{sur2019modern}.  

Before stating the main proposition, we define the following shorthand: For any vector $\bt \in \mathbb{R}^p$, we write its parallel and orthogonal components as
\begin{align}
	\label{def:shorthand-xi-sigma}
	\xi(\bt) = \frac{\< \bt, \bt_0 \>}{R^2 p} \qquad \text{ and } \qquad \sigma(\bt) = \frac{1}{\sqrt{p}} \| \pproj \bt \|_2.
\end{align}
\iffalse
Before stating the main proposition, it will be useful to define several shorthands, which we will use throughout the remainder of the paper.  First, for any vector $\bt \in \mathbb{R}^p$, we write its parallel and orthogonal components as
\begin{align}
	\label{def:shorthand-xi-sigma}
	\xi(\bt) = \frac{\< \bt, \bt_0 \>}{R^2 p} \qquad \text{ and } \qquad \sigma(\bt) = \frac{1}{\sqrt{p}} \| \pproj \bt \|_2.
\end{align}
Additionally, given any function $\phi: \mathbb{R}^2 \rightarrow \mathbb{R}$, we define the map $\psi: \mathbb{R}^p \rightarrow \mathbb{R}$ as
\begin{align}
	\label{eq:psi}
	\psi(\bt) = \phi\bigl(\sigma(\bt), \xi(\bt)\bigr).
\end{align}
\fi
We turn now to the main result on the error-in-variables model, whose proof we provide in Section~\ref{sec:proofs}.
\begin{proposition}
	\label{prop:sharp}
	Under Assumption~\ref{asm:regularity}, let the random matrices $\bX, \bG \in \mathbb{R}^{n \times p}$ belong to the $\eivdef$ ensemble with $(X_{ij})_{1 \leq i \leq n, 1 \leq j \leq p} \overset{\mathsf{i.i.d.}}{\sim} \mathsf{N}(0, 1/p)$, and assume that the labels $\by$ are generated from the data matrix $\bX$ and the ground truth $\bt_0$ according to the logistic model~\eqref{eq:data_gen}.  There exists a tuple of positive constants $(c_0, c, C)$, depending only on $K_1, K_2$ such that the following hold.
	\begin{itemize}
		\item[(a)] For every $0 < \epsilon \leq c_0$, the estimator $\widehat{\bt}(\bG; \lambda)$~\eqref{eq:estimate} satisfies
		\begin{align}
			\label{ineq:prop_gordon_phi}
			\pr\Bigl\{ \bigl \lvert \sigma\bigl(\widehat{\bt}(\bG; \lambda)\bigr) - \sigma_{\star} \bigr \rvert \vee \bigl \lvert \xi\bigl(\widehat{\bt}(\bG; \lambda)\bigr)- \xi_{\star} \bigr \rvert \geq \epsilon \Bigr\} \leq \frac{C}{\epsilon^{6}}\exp\Bigl\{-c\min(n\epsilon^{4}, n\epsilon^2)\Bigr\}.
		\end{align}
	\item[(b)] For every $\epsilon > 0$, the loss $\mathcal{L}_n(\cdot; \bG, \lambda)$~\eqref{eq:loss} satisfies
	\begin{align}
		\label{ineq:prop_gordon_loss}
		\pr\Bigl\{\bigl\lvert \min_{\bt \in \mathbb{R}^p} \mathcal{L}_n(\bt; \bG, \lambda) - L(\sigma_{\star}, \xi_{\star}, \gamma_{\star}) \bigr \rvert \geq \epsilon\Bigr\} \leq \frac{C}{\epsilon^{3}}\exp\Bigl\{-c\min(n\epsilon^2, n \epsilon)\Bigr\}.
	\end{align}
	\end{itemize}
%	
%	
%	
%	
%	
%	Further, let the tuple $(\sigma_{\star}, \xi_{\star}, \gamma_{\star})$ denote the unique solution to the system of equations~\eqref{eq:system}.  
%	
%	
%	
%	
%	Consider the loss function $\mathcal{L}_n$~\eqref{eq:loss} as well as its minimizer
%	\[
%	\widehat{\bt} = \argmin_{\bt \in \mathbb{R}^p}\; \mathcal{L}_n(\bt; \bZ, \lambda).
%	\]
%	There exists a tuple of positive constants $(c_0, c, C)$, depending only on the problem parameters of Assumption~\ref{asm:regularity} and the pair of scalars $(\alpha_c, \alpha_2)$, such that the following hold.
%	\begin{itemize}
%		\item[(a)] For every $\epsilon$ satisfying $0 < \epsilon \leq c_0$,
%		\begin{align}
%				\label{ineq:prop_gordon_phi}
%				\pr\Bigl\{\bigl \|\bigl(\sigma\bigl(\widehat{\bt}\bigr), \xi\bigl(\widehat{\bt}\bigr)\bigr) - (\sigma_{\star}, \xi_{\star})\bigr \|_{\infty} \geq \epsilon \Bigr\} \leq \frac{C}{\epsilon^{6}}\exp\Bigl\{-c\min(n\epsilon^{4}, n\epsilon^2)\Bigr\}.
%		\end{align}
%		\item[(b)] For every $\epsilon > 0$, 
%		\begin{align}
%			\label{ineq:prop_gordon_loss}
%			\pr\Bigl\{\bigl\lvert \min_{\bt \in \mathbb{R}^p} \mathcal{L}_n(\bt; \bG, \lambda) - L(\sigma_{\star}, \xi_{\star}, \gamma_{\star}) \bigr \rvert \geq \epsilon\Bigr\} \leq \frac{C}{\epsilon^{3}}\exp\Bigl\{-c\min(n\epsilon^2, n \epsilon)\Bigr\}.
%		\end{align}
%	\end{itemize}
\end{proposition}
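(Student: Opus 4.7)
My plan is to apply the Convex Gaussian Min-Max Theorem (CGMT) in a non-asymptotic form (in the spirit of Miolane--Montanari and Thrampoulidis et al.), after a reduction that exploits the Gaussian structure of the error-in-variables ensemble. I first decompose $\bt = \xi\bt_0 + \bt_\perp$ with $\bt_\perp \perp \bt_0$, so that $\sigma(\bt) = \|\bt_\perp\|_2/\sqrt{p}$. Writing $\bG = \alpha_c\bX + \sqrt{\alpha_2-\alpha_c^2}\,\bW$, two structural observations drive the analysis: \emph{(i)} $\langle \bg_i,\bt_\perp\rangle$ is independent of $y_i$ (since $y_i$ depends only on $\langle\bx_i,\bt_0\rangle$, which is uncorrelated with both $\langle\bx_i,\bt_\perp\rangle$ and $\langle\bw_i,\bt_\perp\rangle$) and is distributed $\mathcal{N}(0,\alpha_2\sigma^2)$; and \emph{(ii)} $\langle\bg_i,\bt_0\rangle$ is jointly Gaussian with $\langle\bx_i,\bt_0\rangle$ in such a way that, under the rescaling $Z_1 := \langle\bg_i,\bt_0\rangle/(R\sqrt{\alpha_2})$, the conditional law of $y_i$ given $Z_1$ is exactly the law of $Y$ in~\eqref{def:Y_dist}.

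Next I dualize via Fenchel conjugation, writing $\rho(-y_i\langle\bg_i,\bt\rangle) = \max_{u_i\in[0,1]}\{-u_i y_i \langle \bg_i,\bt\rangle - \rho^*(u_i)\}$, which recasts the minimization of $\mathcal{L}_n$ as a convex--concave saddle-point problem bilinear in $\bG$ and the sign-flipped dual vector with entries $y_i u_i$. I condition on $\bG\bt_0$ and $\by$; the remaining randomness in $\bG$ acts only on the orthogonal complement of $\bt_0$, where its entries are i.i.d.\ $\mathcal{N}(0,\alpha_2/p)$ and independent of the conditioning. Applying CGMT to this conditional bilinear form replaces it by the Gordon auxiliary term involving two independent Gaussian vectors $\bg\in\mathbb{R}^n$ and $\bh\in\mathbb{R}^{p-1}$. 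Scalarizing the auxiliary problem---by optimizing out the direction of $\bt_\perp$, introducing a Lagrange multiplier $\gamma$ for the dual-norm constraint, and solving the resulting separable inner minimization in closed form via the proximal operator of $\rho$---yields a three-variable min-max whose integrand is an empirical average of $n$ i.i.d.\ Moreau-envelope terms. A law of large numbers (quantified by concentration of measure) identifies this average with the asymptotic loss $L(\sigma,\xi,\gamma)$ of~\eqref{eq:asymploss}, whose unique saddle point is $(\sigma_\star,\xi_\star,\gamma_\star)$ by Lemma~\ref{lem:structural_L}.

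The tail rates follow from two concentration inputs. First, the empirical auxiliary loss concentrates uniformly on compact $(\sigma,\xi,\gamma)$-sets around $L$ at the rate $\exp(-c\min(n\epsilon^2,n\epsilon))$, via Gaussian concentration applied to the Lipschitz Moreau envelope combined with a standard sub-exponential empirical-process/net argument; CGMT transfers this to the primal loss, yielding part (b). For part (a), I exploit the $\lambda(1\wedge R^2)$-strong convexity of $\Psi$ from Lemma~\ref{lem:structural_L}(a): any feasible $(\sigma,\xi)$ with $\Psi(\sigma,\xi)-\Psi(\sigma_\star,\xi_\star)\leq c\epsilon^2$ satisfies $|\sigma-\sigma_\star|\vee|\xi-\xi_\star|\lesssim\epsilon$, so the event in~\eqref{ineq:prop_gordon_phi} is implied by an objective-value fluctuation of order $\epsilon^2$, which by the preceding step has probability at most $\exp(-cn\epsilon^4)$; this is the source of the $n\epsilon^4$ in the exponent. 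I expect the main obstacle to be the compactness preparation required for a \emph{quantitative} CGMT: one must establish a priori that $\hat{\bt}$ and the associated dual variables are bounded with high probability at a rate depending only on $K_1,K_2$, and that the outer maximization in $\gamma$ can be restricted to $[\gamma_0,\infty)$ as afforded by Lemma~\ref{lem:structural_L}(b) so that the CGMT hypotheses (convex--compact dual domain) are met. Maintaining quantitative uniformity in these reductions is also what will enable the Lindeberg-based universality extension in Section~\ref{subsec:missing-data-main}.
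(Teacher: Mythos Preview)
Your proposal is correct and follows essentially the same route as the paper: the paper conditions on $\bG\proj_{\bt_0}\bt$ and $\by$, applies the CGMT to the bilinear form in $\bG\pproj$ (using the constraint-Lagrangian formulation $\bu=\bG\bt$ rather than Fenchel duality on $\rho$, but the two are equivalent here), scalarizes to the auxiliary loss $L_n(\sigma,\xi,\gamma)$, restricts $\gamma$ to a compact interval via a separate lemma, and then combines uniform concentration of $L_n$ around $L$ with the strong convexity of $\Psi$ to convert an $\epsilon^2$ objective gap into an $\epsilon$ parameter gap---exactly as you outline. Your identification of the compactness preparation (a priori bounds on $\hat\bt$ and restriction of $\gamma$) as the main technical hurdle matches precisely the role played by Lemmas~\ref{lem:l2_norm_theta_hat} and~\ref{claim:constrain-gamma} in the paper.
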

Note that part (a) shows that both the `orthogonal' component $\sigma\bigl(\widehat{\bt}(\bG, \lambda)\bigr)$ as well as the `signal' component $\xi\bigl(\widehat{\bt}(\bG, \lambda)\bigr)$ deviate from the quantities $\sigma_{\star}$ and $\xi_{\star}$, respectively, with fluctuations on the order $\widetilde{\mathcal{O}}(n^{-1/4})$.  On the other hand, part (b) implies that the \emph{minimum} of the loss function $\mathcal{L}_n$ deviates from the minimum of the asymptotic loss $L$ with fluctuations on the order $\widetilde{\mathcal{O}}(n^{-1/2})$.  

Assuming the regularization strength $\lambda > 0$, the proposition improves upon~\citet[][Theorem 2]{sur2019modern} (who consider the unregularized case) and~\citet[][Theorem 2]{salehi2019impact} in two directions.  First, it provides guarantees in the situation when the data matrix $\bX$ used to generate the labels and the data matrix $\bZ$ used for estimation are different.  Second, it provides a non-asymptotic characterization of the error---such a characterization is necessary in order to provide a quantitative universality statement in the sequel.  We note that two recent papers~\citep{chandrasekher2021sharp,loureiro2021capturing} provide similar non-asymptotic guarantees for generalized linear models, although neither considers a mismatch in the data matrices used to generate the labels and to perform estimation.  Similarly to these works, our proof leverages the CGMT (convex Gaussian min-max theorem)~\citep{thrampoulidis2015regularized} and employs a strategy developed by~\citet{miolane2021distribution} to obtain non-asymptotic control.  

Having established guarantees for the error-in-variables model, we next describe our main results for models with missing data.

\subsection{Universality of the logistic regression error}
\label{subsec:missing-data-main}
In this section, we provide sharp performance guarantees of the logistic regression estimator under a significantly larger set of data matrices.  The central structure underlying this phenomenon is the $(\alpha_c, \alpha_2)$--universality class, defined presently. 
\begin{definition}[$(\alpha_c, \alpha_2)$--universality class]
	\label{def:universality} Let $\alpha_c, \alpha_2$ be positive scalars which satisfy the inequality $\alpha_c \leq \sqrt{\alpha_2}$.  Consider random matrices $\bX \in \mathbb{R}^{n \times p}$ and $\bZ\in \mathbb{R}^{n \times p}$.  We say that the pair of random matrices $(\bX, \bZ)$ belongs to the $(\alpha_c, \alpha_2)$--universality class if the pairs of random variables $\{(X_{ij}, Z_{ij})\}_{i \leq n, j \leq p}$ are mutually independent and further satisfy the following:
	\begin{align*}
		(i)\;\; \E\big\{ X_{ij} \bigr\} = \E\bigl\{Z_{ij} \bigr\} = 0, \quad (ii)\;\; \E\bigl\{Z_{ij}^2\bigr\} = \frac{\alpha_2}{p}, \quad (iii)\;\; \E\bigl\{X_{ij} Z_{ij}\bigr\} = \frac{\alpha_c}{p},\\
		(iv)\;\; \E\bigl\{X_{ij}^2\bigr\} = \frac{1}{p},\quad \text{ and } \quad (v)\;\; \max\Bigl( \norm{X_{ij}}_{\psi_2}, \norm{Z_{ij}}_{\psi_2}\Bigr) \leq \frac{K_3}{\sqrt{p}},
	\end{align*}
	where $K_3$ is a constant which may depend on $\alpha_c, \alpha_2$.
\end{definition}
Our results hold on this class of data matrices under one more regularity assumption on the ground truth coefficients $\bt_0$, which ensures that the true coefficients are not too concentrated in a small set of coordinates.  

\begin{assumption}[Spread]
	\label{asm:groundtruth}
	For positive parameters $K_4$ and $\tau < 1/6$, we have
	\[
	\| \bt_0 \|_{\infty} \leq K_4 \cdot n^{1/6 - \tau}.
	\]
\end{assumption}

Taken together, Assumptions~\ref{asm:regularity} and~\ref{asm:groundtruth} imply that our guarantees hold provided for parameters $\bt_0$ contained in the set $\Theta_{R, \tau, K} \subseteq \mathbb{R}^p$, defined as
\begin{align}\label{eq:parameter-set}
\bt_0 \in \Theta_{R, \tau, K} := \Bigl\{ \bt \in \mathbb{R}^{p}: \| \bt \|_2 = R \sqrt{p} \quad \text{ and } \quad \| \bt \|_{\infty} \leq K n^{1/6 - \tau}\Bigr\}.
\end{align}
We defer further commentary on Assumption~\ref{asm:groundtruth} until after the statement of our main theorem.  We are now poised to state our main theorem, whose proof we provide in Section~\ref{sec:proofs}.
\begin{theorem}
	\label{thm:main} 
	Under Assumptions~\ref{asm:regularity} and~\ref{asm:groundtruth}, let the pair of random matrices $\bX, \bZ \in \mathbb{R}^{n \times p}$ belong to the $\univclass$, and assume that the labels $\by$ are generated from the data matrix $\bX$ and the ground truth $\bt_0$ according to the logistic model~\eqref{eq:data_gen}.  The estimator $\widehat{\bt}(\bZ, \lambda)$~\eqref{eq:estimate} satisfies
	\[
	\sup_{\lambda \in [K_1, K_2]} \Bigl\{ \bigl \lvert \sigma\bigl(\widehat{\bt}(\bZ, \lambda)\bigr) - \sigma_{\star} \bigr \rvert \vee \bigl \lvert \xi\bigl(\widehat{\bt}(\bZ, \lambda)\bigr)- \xi_{\star} \bigr \rvert \Bigr\} \overset{P}{\rightarrow} 0.
	\]
%	
%	 Consider the minimizer of the logistic loss~\eqref{eq:loss}
%	\[
%	\widehat{\bt}(\lambda) = \argmin_{\bt \in \mathbb{R}^p}\; \mathcal{L}_n(\bt; \bZ, \lambda),
%	\]
%	and let the tuple $(\sigma_{\star}(\lambda), \xi_{\star}(\lambda), \gamma_{\star}(\lambda))$ denote the unique solution to the system of equations~\eqref{eq:system}.  Then, 
%	\[
%	\sup_{\lambda \in [\lambda_{\min}, \lambda_{\max}]}\; \Bigl \|\Bigl(\sigma\bigl(\widehat{\bt}(\lambda)\bigr), \xi\bigl(\widehat{\bt}(\lambda)\bigr)\Bigr) - \bigl(\sigma_{\star}(\lambda), \xi_{\star}(\lambda)\bigr)\Bigr \|_{\infty} \overset{\mathsf{P}}{\rightarrow} 0.
%	\] 
\end{theorem}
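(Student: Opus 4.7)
The plan is to transfer Proposition~\ref{prop:sharp} from a Gaussian reference pair $(\bX^G, \bG)$ (with $X^G_{ij} \sim \mathsf{N}(0, 1/p)$ and $\bG = \alpha_c \bX^G + \sqrt{\alpha_2 - \alpha_c^2}\, \bW$) to the general pair $(\bX, \bZ)$ via Lindeberg-style interpolation, replacing the pairs $(X_{ij}, Z_{ij})$ one entry at a time by independent Gaussians matched in first and second moments. The conditions in Definition~\ref{def:universality} are precisely the covariances of such a Gaussian pair, so the first two terms in the Taylor expansion of the interpolated functional vanish and only third-order remainders need to be accumulated.

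The first step is a reduction of the conclusion on $\sigma(\widehat{\bt})$ and $\xi(\widehat{\bt})$ to universality of a scalar free-energy-type functional. For each compactly supported, smooth $\phi:\mathbb{R}^2 \to \mathbb{R}$, define
\[
F_{\epsilon}(\bM, \by) = \min_{\bt \in \mathbb{R}^p}\Bigl\{\mathcal{L}_n(\bt; \bM, \lambda) + \epsilon \cdot \phi\bigl(\sigma(\bt), \xi(\bt)\bigr)\Bigr\}.
\]
The strong convexity from Lemma~\ref{lem:structural_L}(a) together with Proposition~\ref{prop:sharp} implies that on the Gaussian side $F_{\epsilon}(\bG, \by) - F_0(\bG, \by) = \epsilon \phi(\sigma_\star, \xi_\star) + o_P(1)$ while $F_{\epsilon}(\bZ, \by) - F_0(\bZ, \by) = \epsilon \phi(\sigma(\widehat{\bt}(\bZ, \lambda)), \xi(\widehat{\bt}(\bZ, \lambda))) + o_P(1)$ by a sandwich using the perturbed and unperturbed minimizers. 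Consequently, universality of the minimum value $F_{\epsilon}(\cdot, \by)$ between $(\bX, \bZ)$ and $(\bX^G, \bG)$ for a sufficient family of $\phi$ is equivalent to the stated convergence of $(\sigma(\widehat{\bt}(\bZ, \lambda)), \xi(\widehat{\bt}(\bZ, \lambda)))$ to $(\sigma_\star, \xi_\star)$.

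To push this universality through, I would smooth the min by a soft-min $F_{\epsilon, \beta} = -(\beta p)^{-1}\log \int_{\|\bt\|_2 \leq C\sqrt{p}} \exp\bigl(-\beta p \cdot (\mathcal{L}_n^{\epsilon}(\bt))\bigr)\, d\bt$, with the a-priori radius $C$ coming from the ridge term, and note that $F_{\epsilon, \beta}$ approximates $F_{\epsilon}$ to within $o_P(1)$ for $\beta$ growing slowly in $p$. Interpolating the $np$ pairs $(X_{ij}, Z_{ij})$ one at a time, the third-order Lindeberg remainder is bounded by $K_3^3 / p^{3/2}$ times the supremum in the pair $(X_{ij}, Z_{ij})$ of the third partial derivatives of $F_{\epsilon, \beta}$. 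Those derivatives reduce, through the Gibbs-measure representation, to expectations of polynomials in $y_i$, $\langle \bz_i, \bt\rangle$, and the uniformly bounded $\rho^{(k)}$, $k\leq 3$; their supremum on the ridge-constrained ball is polynomial in $\beta$ but independent of $p$. Summing $np$ such contributions and choosing $\beta$ a small power of $p$ yields an aggregate Lindeberg error of $o(1)$.

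The main technical obstacle is the dependence of $y_i$ on $\bX$ through~\eqref{eq:data_gen}: moving $X_{ij}$ shifts $\langle \bx_i, \bt_0\rangle$ and so alters the conditional law of $y_i$. I would decouple this by introducing i.i.d.\ uniform auxiliaries $U_1, \dots, U_n$ independent of $(\bX, \bZ)$ and writing $y_i = 2\cdot \mathbb{I}\{U_i \leq \rho'(\langle \bx_i, \bt_0\rangle)\} - 1$, then replacing the indicator by a $C^3$ surrogate $\widetilde{y}_i$ on a thin transition layer, so that the loss becomes a genuinely smooth functional of $(\bX, \bZ, U)$. The label-mediated contribution of $X_{ij}$ to the third derivative of $F_{\epsilon, \beta}$ carries a factor $(\bt_0)_j^3$; Assumption~\ref{asm:groundtruth} gives $\|\bt_0\|_\infty^3 \lesssim n^{1/2 - 3\tau}$, and this is exactly the margin needed for the summed label-derived Lindeberg error to remain $o(1)$, which is where the constraint $\tau > 0$ is used. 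Once universality of $F_{\epsilon, \beta}$ is in hand, Proposition~\ref{prop:sharp} supplies the value on the Gaussian side; uniformity over $\lambda \in [K_1, K_2]$ follows from a covering argument using the Lipschitz dependence of $\widehat{\bt}(\bZ, \lambda)$ on $\lambda$, itself a consequence of the strong convexity of the ridge-regularized loss.
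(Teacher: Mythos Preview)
Your plan mirrors the paper's proof closely: the perturbation by a test function to reduce convergence of $(\sigma(\widehat{\bt}),\xi(\widehat{\bt}))$ to universality of the minimum value is exactly Lemma~\ref{lem:universality-main-text}; the uniform-$U$ representation of the labels with a smooth surrogate is the function $F$ in~\eqref{prop:F_prop} and the loss $\mathcal{L}_r^{\mathsf{smooth}}$ in~\eqref{def:L_tilde}; the soft-min smoothing and the Lindeberg swap on the pairs $(X_{ij},Z_{ij})$ are Lemmas~\ref{lem:smoothing-label-min}--\ref{lem:third-deriv} and Theorem~\ref{thm:lindeberg_correlation}; and the covering over $\lambda$ is Lemma~\ref{lem:lipschitz-minimizers}. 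One cosmetic difference: the paper smooths the minimum by a log-sum over a finite $\epsilon$-net of a carefully chosen set, not a log-integral over the $\ell_2$ ball.

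There is one genuine gap. You claim the third partial derivatives of the soft-min are bounded ``on the ridge-constrained ball \ldots\ independent of $p$.'' This is false on the $\ell_2$ ball alone: $\partial_{Z_{k\ell}}\mathcal{L}_n(\bt)$ carries a factor $\theta_\ell$, and on $\{\|\bt\|_2\le C\sqrt{p}\}$ one only has $|\theta_\ell|\le C\sqrt{p}$, which makes the accumulated Lindeberg remainder diverge. The paper closes this by first proving, via leave-one-coordinate-out and leave-one-sample-out arguments (Lemma~\ref{lem:l2_norm_theta_hat}(b),(c)), that the minimizer satisfies $\|\widehat{\bt}\|_\infty\lesssim (\log n)^{3/2}(\|\bt_0\|_\infty\vee\log n)$ and $\|\bZ\widehat{\bt}\|_\infty\lesssim\sqrt{\log n}$, and then restricting the soft-min to the set $\mathbb{T}$ in~\eqref{eq:T-def} which encodes these $\ell_\infty$ bounds. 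On $\mathbb{T}$ the derivative in $Z_{k\ell}$ is controlled by $\|\bt\|_\infty\lesssim(\log n)^{3/2}\|\bt_0\|_\infty$, and this is precisely how the factor $\|\bt_0\|_\infty^3$ in the bound of Lemma~\ref{lem:third-deriv} arises and why the exponent $1/6-\tau$ in Assumption~\ref{asm:groundtruth} is the right threshold. Without this domain restriction your third-derivative estimate, and hence the whole Lindeberg step, does not go through.
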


In words, this theorem establishes an asymptotic equivalence of the error of the logistic regression estimator when performed with any pair of data matrices from the $(\alpha_c, \alpha_2)$--universality class.  This error is measured by the maximum deviation of the `parallel' and `orthogonal' components from their asymptotic counterparts.  As an immediate corollary, note that for any continuous function $\phi: \mathbb{R}^2 \rightarrow \mathbb{R}$, the following holds
\[
\sup_{\lambda \in [K_1, K_2]}\; \Bigl \lvert \phi\Bigl(\sigma\bigl(\widehat{\bt}(\lambda)\bigr), \xi\bigl(\widehat{\bt}(\lambda)\bigr)\Bigr) - \phi\bigl(\sigma_{\star}(\lambda), \xi_{\star}(\lambda)\bigr)\Bigr \rvert \overset{P}{\rightarrow} 0.
\] 

We turn now to consequences for imputation-based methods.  Suppose that the data matrix $X$ was used to generate the labels and that the mechanism by which data is missing is~\ref{eq:MCAR}.  If the imputed matrix $\bZ^{\mathsf{si}}$ is formed according to the~\ref{eq:single_imp} strategy, then the pair $(\bX, \bZ^{\mathsf{si}})$ belongs to the $(\alpha, \alpha)$--universality class. On the other hand, if the imputed matrix $\bZ^{\mathsf{pi}}$ is formed according to the~\ref{eq:prior_imp} strategy, then the pair $(\bX, \bZ^{\mathsf{pi}})$ belongs to the $(\alpha, 1)$--universality class.  As a concrete application of the formulas developed in Theorem~\ref{thm:main}, Figure~\ref{fig:empirical-single-vs-prior} (see Section~\ref{sec:numerical-illustration}) demonstrates a particular setting of $(\alpha, R, \delta)$ in which the single imputation-based estimator strictly outperforms the prior imputation-based estimator. 

Some remarks on specific aspects of the theorem are in order.  First, note that the $(\alpha_c, \alpha_2)$--universality class on which this theorem holds is defined with respect to a \emph{pair} of random matrices.  This is required as the data matrix used to generate the labels will be different from that used for inference.  By contrast, universality classes are typically defined with respect to a single random matrix and require only parts \emph{(i)}, \emph{(ii)}, and \emph{(v)} of Definition~\ref{def:universality}~\citep[see, e.g.][]{chatterjee2006generalization,tao2011random}.  Our definition generalizes these notions.  Indeed, specifying $\alpha_2 = \alpha_c = 1$, an immediate corollary of Theorem~\ref{thm:main} is universality for the error of the ridge-regularized logistic regression estimator.  

Second, we note that an assumption such as Assumption~\ref{asm:groundtruth}---which bounds the largest coordinate of the ground-truth $\bt_0$---is necessary.  To see this, set $\alpha_2 = \alpha_c = 1$ and consider the matrices $\bX$, consisting of i.i.d. Rademacher entries, and $\bZ$, consisting of i.i.d. Gaussian entries.  Further, let $\bt_0 = R \sqrt{p} \cdot \be_1$, where $\be_1$ is the first standard basis vector.  It can be seen in this scenario that the limits of $\sigma\bigl(\widehat{\bt}(\bX, \lambda)\bigr)$ and $\sigma\bigl(\widehat{\bt}(\bZ, \lambda)\bigr)$ do not coincide (and similarly for the respective quantities $\xi(\cdot)$)\footnote{See~\citet[\S 4]{montanari2022empirical} for related discussion.}.  While we have not attempted to obtain the sharpest possible scaling of the maximum coordinate,  Assumption~\ref{asm:groundtruth} suffices for many ground-truth vectors of interest.  For instance, Assumption~\ref{asm:groundtruth} is satisfied when the coordinates are i.i.d. from a light-tailed distribution (e.g. sub-Gaussian or sub-exponential) or even from a heavy-tailed distribution (e.g. Pareto with shape parameter $\alpha < 6$).  

Finally, our proof relies on the Lindeberg principle~\citep{lindeberg1922neue,chatterjee2006generalization}.  Our task differs from these prototypical applications as the quantity of interest is defined only implicitly as the minimizer of a convex function.  To overcome this obstacle, we employ a strategy developed by~\citet{montanari2017universality} to pass from the study of the minimizer of a convex function to the minimum of a convex function.  In turn, we approximate the minimum with an exponential smoothing and apply the Lindeberg principle to the smoothed minimum.  Carefully handling the approximation errors yields the result.  The proof is provided in detail in Section~\ref{sec:proofs}.  

\section{Numerical illustrations}\label{sec:numerical-illustration}
This section is organized as follows.  First, in Section~\ref{subsec:bayes}, we discuss Bayes estimation, providing a conjecture for the Bayes lower bound.  Then, in Section~\ref{subsec:bayes-numerical}, we provide a detailed numerical study comparing the Bayes prediction error lower bound with the characterization provided by Theorem~\ref{thm:main} for single imputation.  Finally, in Section~\ref{subsec:effect-of-reg}, we focus on a particular parameter setting and investigate the effect of regularization.

\subsection{Bayes estimation in generalized linear models}\label{subsec:bayes}
We begin by drawing a connection between our model of interest---in which the ground truth $\bt_0$ is deterministic---and the Bayesian setting in which a prior on the ground truth is assumed.  To this end, we note that in the Gaussian error-in-variables model, the rotational invariance of the Gaussian distribution implies that the error of any equivariant procedure is the same for any $\bt_0$ with the same norm $\| \bt_{0} \|_2 = R\sqrt{p}$.  Consequently, this error is the same for a random $\bt_{0}$ drawn uniformly on the sphere in dimension $p$ of radius $R\sqrt{p}$ and by the Hunt--Stein theorem~\citep[Theorem 9.2]{lehmann2006theory}, the optimal error for this prior provides a lower bound on the minimax error of \emph{any} (not necessarily equivariant) procedure.  In turn, since as the dimension $p$ grows, the uniform prior is well approximated by a (scaled) standard Gaussian prior, we compute the minimax risk as the Bayes risk with a Gaussian prior.  We refer the interested reader to~\citet[\S 3]{dicker2016ridge} for related discussion in the linear model.

With this connection in hand, we next recall known results on Bayes optimal procedures in high-dimensional generalized linear models before specializing to the Gaussian error-in-variables model in Section~\ref{sec:bayes-eiv}, where we additionally make a conjecture for the missing data model considered here.  

\subsubsection{Bayes estimation and the replica symmetric potential}\label{sec:bayes}
We now recall some known results concerning Bayes optimality in high-dimensional generalized linear models.  Our starting point is~\citet{barbier2019optimal}, whose results we specialize to our setting.  The authors consider the situation in which the ground truth $\bt_0$ consists of i.i.d. coordinates drawn from a distribution $P_{\theta}$ such that $\EE_{P_{\theta}} \Theta^2 = \varrho^2$.
The labels are then generated from the ground truth $\bt_0$ and a data matrix $\bX$ consisting of i.i.d. entries (not necessarily Gaussian) according to the conditional probability mass function $P_{Y}(\cdot \mid \langle \bx, \bt_0 \rangle)$.  With these in hand, the authors define the \emph{replica symmetric potential} $f_{RS}: \mathbb{R}^2 \rightarrow \mathbb{R}$ as 
\begin{subequations}
	\begin{align}\label{eq:RS-potential}
		f_{RS}(q, r; \varrho) = \psi(r) + \delta \cdot \Psi(q; \varrho)  - \frac{rq}{2},
	\end{align}
	where we recall $\delta = n/p$ and the functions $\psi: \mathbb{R} \rightarrow \mathbb{R}$ and $\Psi: \mathbb{R} \rightarrow \mathbb{R}$ are defined as
	\begin{align}
		\psi(r) &= \E_{\Theta, G} \log{ \E_{\Theta_1}\Bigl\{ \exp\Bigl(r \cdot \Theta \Theta_1 + \sqrt{r} \cdot \Theta_1 G - \frac{r \Theta_1^2}{2} \Bigr)\Bigr\}} \qquad \text{ and }\\
		\Psi(q; \varrho) &= \E_{V, W, \widetilde{Y}} \log{ \E_{W_1}\Bigl\{P_Y\bigl(\widetilde{Y} \mid \sqrt{q} \cdot V + \sqrt{\varrho^2 - q} \cdot W_1\bigr)\Bigr\} }, \label{eq:Psi-RS}
	\end{align}
\end{subequations}
where the tuple of random variables $(G, V, W, W_1)$ are i.i.d. standard Gaussian, the pair of random variables $(\Theta, \Theta_1)$ are i.i.d. draws from the distribution $P_{\theta}$ and the random variable $\widetilde{Y}$ is distributed as \sloppy \mbox{$\widetilde{Y} \sim P_Y(\cdot \mid \sqrt{q} \cdot V + \sqrt{\varrho^2- q} \cdot W)$}.  
Equipped with these preliminary notions,~\citet[Proposition 1]{barbier2019optimal} show that the variational problem 
\begin{align} \label{eq:variational-RS}
	\inf_{q \in [0, \varrho^2]}\sup_{r \geq 0}\; f_{RS}(q, r; \varrho),
\end{align}
admits a unique minimizer $q_{\star}$---denoted the optimal overlap.  This value derives its name from the fact that if $\bt$ denotes a sample from the posterior distribution, the quantity \sloppy\mbox{$\lvert \frac{1}{p}\langle \bt_0, \bt \rangle - q_{\star} \rvert \overset{P}{\rightarrow} 0$}~\citep[see, e.g.,][Theorem 4]{barbier2019optimal}.  In turn,~\citet[Theorem 4]{barbier2019optimal} implies that the Bayes prediction error can be computed from the overlap $q_{\star}$.  

\subsubsection{Error-in-variables model}\label{sec:bayes-eiv}
Unfortunately, the extensive results of~\citet{barbier2019optimal} do not cover the case studied here as the noise in the model depends on the norm of the estimator.  As is the case when we studied the ridge-regularized logistic regression estimator, we consider the Gaussian error-in-variables model of Definition~\ref{def:gaussian-eiv}.  Since we are mostly interested in the connection with missing data, we set the parameters $\alpha_c = \alpha_2 = \alpha$.  We will use the normalized data $\widetilde{\bz} = \bz/\sqrt{\alpha}$.  Exploiting orthogonality of the minimimum mean square estimator and its error, we write the conditional distribution of a label as
\begin{align} \label{eq:eiv-cond-dist}
P_Y\bigl(y \mid \widetilde{\bz}, \bt \bigr) = \E_G\Bigl\{ \rho'\bigl(y \cdot \sqrt{\alpha} \cdot \langle \widetilde{\bz}, \bt \rangle  + y \cdot \| \bt \|_2 \cdot \sqrt{\frac{1 - \alpha}{p}} \cdot G \bigr)\Bigr\},
\end{align}
where $G \sim \mathsf{N}(0, 1)$, and is independent of both $\bt$ as well as $\widetilde{\bz}$.  We thus define the conditional distribution $P_Y^{(\alpha)}$ as 
\begin{align}\label{eq:missing-cond-dist}
	P_Y^{(\alpha)}\bigl(y \mid \langle \widetilde{\bz}, \bt \rangle) =  \E_G\Bigl\{ \rho'\bigl(y \cdot \sqrt{\alpha} \cdot  \langle \widetilde{\bz}, \bt \rangle + y \cdot \varrho \cdot \sqrt{1 - \alpha} \cdot G \bigr)\Bigr\},
\end{align}
where we recall that $\varrho^2$ is the second moment of the distribution $P_{\theta}$.  We have the following corollary of the results in~\citet{barbier2019optimal}.
\begin{corollary}
		\label{conj:bayes-conj}
		Let $\bt_0 \in \mathbb{R}^p$ consist of coordinates drawn i.i.d. from the distribution $P_{\theta}$.  Assume the pair of matrices $(\bX, \bZ)$ belong to the $(\alpha, \alpha)$-universality class and use the data matrix $\bX$ as well as the ground truth $\bt_0$ to generate the labels $\by$ according to the logistic model~\eqref{eq:data_gen}.  Use the conditional distribution $P_Y^{(\alpha)}$~\eqref{eq:missing-cond-dist} to define the function $\Psi$~\eqref{eq:Psi-RS}.  Let $\pi(\mathrm{d} \bt_0 \mid \bZ, \by)$ denote the conditional distribution of $\bt_0$ given the observed data. The following hold.  
		\begin{itemize}
			\item[(a)] The variational problem~\eqref{eq:variational-RS} with $P_Y \equiv P_Y^{(\alpha)}$ admits a unique minimizer $q_{\star}$.
			\item[(b)] Let $\bt \sim \pi(\cdot \mid \bZ, \by)$.  Then, $
			\frac{1}{p} \lvert \langle \bt, \bt_0 \rangle \rvert \overset{\mathsf{P}}{\rightarrow} q_{\star}$.
			\item[(c)] Let $\widehat{\bt} = \int_{\mathbb{R}^d} \bt\, \pi(\mathrm{d}\bt \mid \bZ, \by)$.  Then, $\frac{1}{p}\mathbb{E}\bigl\{ \| \bt_0 - \widehat{\bt} \|_2^2\bigr\} \rightarrow \varrho  - q_{\star}$. 
		\end{itemize}
	\end{corollary}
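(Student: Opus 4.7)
The strategy is to reduce the corollary to the general Bayes asymptotics of~\citet{barbier2019optimal}. In the Gaussian error-in-variables model with $\alpha_c = \alpha_2 = \alpha$, set the normalized design $\widetilde{\bZ} = \bZ/\sqrt{\alpha}$. By Gaussian conjugacy, $\bx_i \mid \widetilde{\bz}_i \sim \mathsf{N}(\sqrt{\alpha}\,\widetilde{\bz}_i, \frac{1-\alpha}{p}\bI)$, so $\langle \bx_i, \bt_0 \rangle \mid \widetilde{\bz}_i$ is Gaussian with mean $\sqrt{\alpha}\langle \widetilde{\bz}_i, \bt_0\rangle$ and variance $(1-\alpha)\|\bt_0\|_2^2/p$. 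Integrating the logistic link~\eqref{eq:data_gen} against this Gaussian yields the effective label law
\begin{equation*}
\widetilde{P}_Y(y \mid \widetilde{\bz}_i, \bt_0) = \E_G\bigl\{\rho'\bigl(y\sqrt{\alpha}\langle \widetilde{\bz}_i, \bt_0\rangle + y\sqrt{(1-\alpha)/p}\,\|\bt_0\|_2\, G\bigr)\bigr\},
\end{equation*}
which coincides with $P_Y^{(\alpha)}$ after replacing $\|\bt_0\|_2/\sqrt{p}$ by its deterministic prior limit $\varrho$. Since $\|\bt_0\|_2^2/p \to \varrho^2$ almost surely under $P_\theta^{\otimes p}$ and (by the Nishimori identity) the posterior concentrates on vectors of the same normalized squared norm, this substitution is asymptotically harmless.

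With the effective GLM in hand, claim (a) follows from~\citet[Proposition 1]{barbier2019optimal}: the logistic link yields a smooth, non-degenerate $P_Y^{(\alpha)}$ and $P_\theta$ has a finite second moment, so $q \mapsto \sup_{r \geq 0} f_{RS}(q, r; \varrho)$ is strictly convex on $[0, \varrho^2]$ and admits a unique minimizer $q_\star$. Claim (b) is~\citet[Theorem 4]{barbier2019optimal} applied to the effective GLM, with the absolute value stemming from the sign symmetry $\bt_0 \mapsto -\bt_0$ left intact by the model. Claim (c) then follows by orthogonality: writing $\widehat{\bt} = \E[\bt_0 \mid \bZ, \by]$, the Nishimori identity gives $\frac{1}{p}\E\|\widehat{\bt}\|_2^2 = \frac{1}{p}\E\langle \widehat{\bt}, \bt_0\rangle$, so
\begin{equation*}
\tfrac{1}{p}\E\|\bt_0 - \widehat{\bt}\|_2^2 = \tfrac{1}{p}\E\|\bt_0\|_2^2 - \tfrac{1}{p}\E\langle \widehat{\bt}, \bt_0\rangle \longrightarrow \varrho^2 - q_\star,
\end{equation*}
with convergence of the overlap provided by (b) and uniform integrability from the finite second moment of $P_\theta$.

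The principal obstacle is the extension from jointly Gaussian $(\bX, \bZ)$ to an arbitrary pair in the $(\alpha, \alpha)$-universality class. For this one needs a universality result for the (normalized) mutual information $\frac{1}{p} I(\bt_0;\bZ,\by)$ across designs with matching second moments; once established, the I-MMSE identity transfers the characterization of $q_\star$ from the Gaussian case to the full class. A standard Lindeberg/interpolation argument along the lines of~\citet{korada2011applications} or~\citet{hu2022universality} should suffice, although one must carry along the dependence of the effective noise variance on $\|\bt_0\|_2^2/p$ and exploit its concentration to close the argument.
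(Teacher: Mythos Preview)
The paper does not supply its own proof of this corollary; it simply asserts it as a direct consequence of the results of \citet{barbier2019optimal} once the effective channel $P_Y^{(\alpha)}$ has been identified. Your proposal is therefore filling in steps the paper leaves implicit, and the reduction you outline---pass from $\bX$ to the normalized observed design $\widetilde{\bZ}$, integrate out the Gaussian conditional to obtain the effective GLM channel, replace $\|\bt_0\|_2/\sqrt{p}$ by its almost-sure limit $\varrho$, and then invoke Proposition~1 and Theorem~4 of \citet{barbier2019optimal}---is exactly the intended route. Your derivation of part~(c) from part~(b) via the Nishimori identity is likewise standard and correct.

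Two comments on the universality paragraph. First, the paper itself treats the extension beyond the Gaussian error-in-variables pair as a \emph{conjecture}, not a proven fact: in the text immediately following the corollary it says the result characterizes the Bayes error in the error-in-variables model but not directly in the missing-data model, and that the latter extension is conjectured. So your identification of universality as ``the principal obstacle'' is apt, but you should not expect a Lindeberg argument to close it routinely---the paper does not claim to have done so. Second, your framing of the obstacle is slightly off target. The difficulty is not that $\widetilde{\bZ}$ is non-Gaussian---\citet{barbier2019optimal} already covers i.i.d.\ non-Gaussian designs---but rather that outside the Gaussian error-in-variables case the conditional law $\bx_i \mid \bz_i$ is no longer Gaussian, and hence the effective label channel $P(y_i \mid \bZ, \bt_0)$ need not coincide with $P_Y^{(\alpha)}$. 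A universality argument therefore has to control a change in the \emph{channel}, not merely in the design, which is a genuinely different (and harder) interpolation.
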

	We use the corollary to compute the Bayes prediction error as well as angular error, beginning with the prediction error.  To this end, let $(\bx^{\mathsf{new}}, \bz^{\mathsf{new}})$ denote a new pair from a distribution in the $(\alpha, \alpha)$-universality class and let $\widetilde{\bz}^{\mathsf{new}} = \bz^{\mathsf{new}}/\sqrt{\alpha}$.  With $P_Y$ as in~\eqref{eq:eiv-cond-dist}, the estimator with optimal prediction error is given by the maximizer of the posterior marginals
			\[
			\widehat{Y}^{\mathsf{bayes}}(\widetilde{\bz}^{\mathsf{new}}) = \argmax_{y \in \{ \pm 1\}}\; \int_{\mathbb{R}^d} P_Y(y \mid \widetilde{\bz}^{\mathsf{new}}, \bt) \, \pi(\mathrm{d} \bt \mid \bZ, \by).
			\]
			The estimator $	\widehat{Y}^{\mathsf{bayes}}$ achieves prediction error 
			\[
			\pr\bigl\{ Y^{\mathsf{new}} \neq \widehat{Y}^{\mathsf{bayes}}(\widetilde{\bz}^{\mathsf{new}}) \bigr\} \overset{\mathsf{P}}{\rightarrow} 2\E\biggl\{ \rho'(\varrho G) \cdot \Phi\biggl(-\frac{\alpha q_{\star} G}{\sqrt{\alpha q_{\star}\varrho^2 - \alpha^2 q_{\star}^2}}\biggr)\biggr\},
			\]
			where $Y^{\mathsf{new}}$ denotes a fresh label generated from the ground truth $\bt_0$ and new data $\bx^{\mathsf{new}}$ according to the logistic model~\eqref{eq:data_gen}.
	The estimator with optimal error in the angular metric is
			\[
			\widehat{\bt}^{\mathsf{angle}} = \argmin_{\bt \in \mathbb{R}^d}\; \int_{\mathbb{R}^d} \cos^{-1}\biggl(\frac{\langle \bt, \bt' \rangle}{\| \bt \|_2 \| \bt' \|_2}  \biggr)\, \pi(\mathrm{d} \bt' \mid \bZ, \by),
			\]
			and it achieves angular error
			\[
			\cos^{-1}\biggl(\frac{\langle \widehat{\bt}^{\mathsf{angle}}, \bt_0 \rangle}{\| \widehat{\bt}^{\mathsf{angle}} \|_2 \| \bt_0 \|_2}  \biggr) \overset{\mathsf{P}}{\rightarrow} \cos^{-1}\Bigl(\sqrt{q_{\star}}/\varrho\Bigr).
			\]
		We emphasize that while the corollary above characterizes the Bayes error in the error-in-variables model, it does not directly characterize the Bayes error in the missing data model of interest as the conditional distribution differs from that in~\eqref{eq:eiv-cond-dist}.  Nonetheless, we conjecture that the same characterization holds for the missing data model considered here.  In line with the discussion at the beginning of the section, we remark that this conjecture equivalently forms a conjecture on the exact minimax risk over the set $\Theta_{R, \tau, K}$~\eqref{eq:parameter-set}.  In the next section, we will set $\varrho = R$ and compare these conjectured formulas with the performance of optimally regularized logistic regression.

\subsection{Comparison of optimal regularized logistic regression with Bayes lower bound} \label{subsec:bayes-numerical}
Given a fresh sample $\bz^{\mathsf{new}}$ and an estimator $\bt \in \mathbb{R}^p$, we compute the prediction error of the estimator using the two dimensional state variables $\sigma(\bt)$ and $\xi(\bt)$ as 
\[
\pr\bigl\{Y^{\mathsf{new}} \neq \mathsf{sgn}(\langle \bz^{\mathsf{new}}, \bt \rangle)\bigr\} = \phi_{\mathsf{test}}\bigl(\sigma(\bt), \xi(\bt)\bigr),
\]
where we have defined the function $\phi_{\mathsf{test}}: \mathbb{R}^2 \rightarrow \mathbb{R}$ as 
\begin{align}
	\label{eq:phi-test}\phi_{\mathsf{test}}(\sigma, \xi) := 2\E\biggl\{\rho'(RG) \cdot \Phi\biggl(-\frac{\xi R \alpha_c G}{\sqrt{\sigma^2\alpha_2 + (\alpha_2 - \alpha_c^2) \xi^2 R^2}}\biggr)\biggr\}.
\end{align}
Recalling the ridge-regularized logistic regression estimator $\widehat{\bt}(\bZ, \lambda)$~\eqref{eq:estimate}, we apply Theorem~\ref{thm:main} to obtain
\[
\phi_{\mathsf{test}}\bigl( \sigma(\widehat{\bt}(\bZ, \lambda)), \xi(\widehat{\bt}(\bZ, \lambda) \bigr) \overset{P}{\rightarrow} \phi_{\mathsf{test}}(\sigma_{\star}(\lambda), \xi_{\star}(\lambda)).
\]

In order to compare the optimal test error of single-imputed, ridge-regularized logistic regression with the Bayes lower bound, we first specify a triple of parameters $(\alpha, \delta, R)$, where $\alpha$ denotes the probability with which an entry is missing, $\delta = n/p$ denotes the ratio of samples to dimensions, and $R$ denotes the re-scaled norm of the ground truth $\| \bt_0 \|_2/\sqrt{p}$.  Additionally, we specify the distribution $P_{\theta}$ (as in Section~\ref{sec:bayes}) to denote the Gaussian distribution with mean zero and variance $R^2$.  This allows us to specify the replica symmetric potential~\eqref{eq:RS-potential} and optimize the replica symmetric potential to obtain an asymptotic overlap $q_{\star}$.  We subsequently compute the Bayes optimal test error via Corollary~\ref{conj:bayes-conj}.

On the other hand, in order to compute the optimal test error of ridge-regularized logistic regression, we define the function $T: \mathbb{R}_{+} \rightarrow \mathbb{R}$ as
\begin{align} \label{eq:T-func}
T(\lambda) = \phi_{\mathsf{test}}\bigl(\sigma_{\star}(\lambda), \xi_{\star}(\lambda)\bigr),
\end{align}
and compute the optimal test error as $\min_{\lambda \in \mathbb{R}_+} T(\lambda)$.  In order to evaluate the function $T$, it is necessary to compute the quantities $\sigma_{\star}(\lambda)$ and $\xi_{\star}(\lambda)$, which we do by solving the system of equations~\eqref{eq:system}.  

Figure~\ref{fig:fixalpha} fixes the probability of observing an entry $\alpha = 0.704$ and evaluates the Bayes optimal test error as well as the optimally regularized test error of logistic regression for several different values of the parameters $R$ and $\delta$.  Figure~\ref{fig:fixalpha}(a) plots contour lines of the two quantities overlayed.  As is evident from the plot, the two values are nearly indistinguishable visually.  Indeed, Figure~\ref{fig:fixalpha}(b) zooms in and plots contour lines of the difference between the two, which is of the order $10^{-5}$.  In Appendix~\ref{sec:additional-numerical-experiments}, we provide several more plots in different parameter regimes to further corroborate these observations.  We remark also that such an in depth comparison is made possible by the exact expressions in Theorem~\ref{thm:main} and Conjecture~\ref{conj:bayes-conj}, which can be evaluated quickly.  

\begin{figure*}[!h]
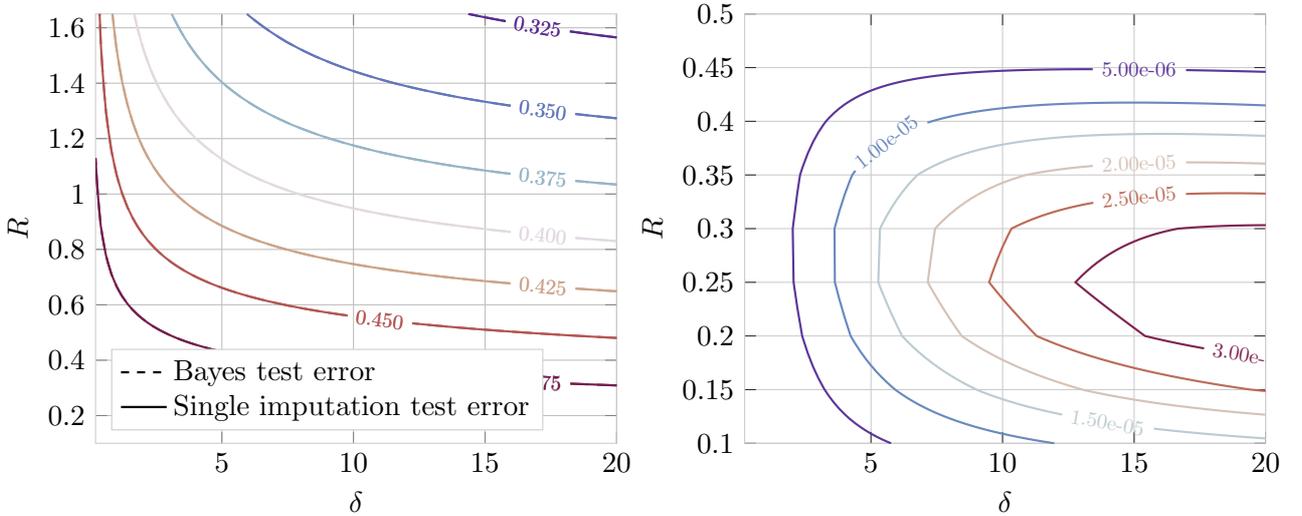

%	\centering
	\begin{subfigure}[b]{0.47\textwidth}
%		\centering
		\centerline{\input{figs/overlay-bayes-si-fix-alpha.tex}}
		\caption{Overlay of the two errors}    
		\label{subfig:overlay-fix-alpha}
	\end{subfigure}
	\hfill
	\begin{subfigure}[b]{0.47\textwidth}  
%		\centering 
		\centerline{\input{figs/difference-bayes-si-fix-alpha.tex}}
		\caption{Difference between the errors}
		\label{subfig:difference-fix-alpha}
	\end{subfigure}
	\caption{A comparison of the test error of optimally ridge-regularized logistic regression with the (conjectured) Bayes' optimal test error.  The probability of an observing an entry is set as $\alpha=0.704$ and the contour plots are generated by numerically evaluating the asymptotic expressions for several values of the parameters $R$ (the radius of the problem) and $\delta$ (the ratio of samples to dimension).} 
	\label{fig:fixalpha}
\end{figure*}

We next empirically validate Theorem~\ref{thm:main} and plot the empirical evaluation on the same axis as the high dimensional asymptotics in~\eqref{eq:system}  as well as the Bayes error.  In addition to plotting the test error, we plot the angular error.  For a given estimator $\bt \in \mathbb{R}^p$, we have
\[
\angle(\bt, \bt_0) = \cos^{-1}\biggl(\frac{\langle \bt, \bt_0 \rangle}{\| \bt \|_2 \| \bt_0 \|_2}\biggr) = \phi_{\mathsf{angle}}(\sigma(\bt), \xi(\bt)),
\]
where we define the function $\phi_{\mathsf{angle}}:\mathbb{R}^2 \rightarrow \mathbb{R}$ as
\begin{align}
	\label{eq:phi-angle}
	\phi_{\mathsf{angle}}(\sigma, \xi) := \cos^{-1}\biggl(\frac{\xi R}{\sqrt{\sigma^2 + \xi^2 R^2}}\biggr).
\end{align}
We consider a setting with dimension $p = 600$ and vary the sample size $n$ (thereby varying the parameter $\delta$).  We specify the probability of missing an entry as $\alpha = 0.7$ and simulate the missingness mechanism MCAR($\alpha$).  We then recall the function $T$~\eqref{eq:T-func} and run ridge-regularized logistic with regularization strength $\lambda$ set as
\[
\lambda_{\mathsf{opt}} = \argmin_{\lambda \in \mathbb{R}_{+}}\; T(\lambda).
\]
That is, we use the asymptotic characterization provided by Theorem~\ref{thm:main} to perform model selection offline and then perform classification with the pre-selected model---bypassing the use of cross-validation.  We repeat this for $150$ independent trials.  Figure~\ref{fig:empirical-vary-delta} plots the results of this experiment.  Once more we observe that the Bayes error and optimally-regularized single imputation error are nearly indistinguishable.  Moreover, both of these exact expressions are nearly indistinguishable from the average empirical error.  

\begin{figure*}[!h]
	%	\centering
	\begin{subfigure}[b]{0.47\textwidth}
		%		\centering
		\centerline{\begin{tikzpicture}
	\begin{axis}[
		%    width=0.49\textwidth,
		%    height=0.49\textwidth,
%		title={Test error, $\alpha=0.7, R=4$},
		xlabel={$\delta = n/p$},
		ylabel={Test error},
		legend pos=north east,
		legend style={font=\small,fill=none},
		ymajorgrids=true,
		grid style=grid,
		]
		
		\addplot[
		color=Black,
		%		mark=square*,
		dotted,
		line width = 1.25pt,
		mark size=1.25pt,
		]
		table[x=delta,y=bayes_gen] {data/generalization_data_40.dat};
		\legend{Bayes}
		
		\addplot[
		color=Maroon,
%		mark=*,
		dashed,
		line width = 1.25pt,
		mark size=1.25pt,
		]
		table[x=delta,y=SI_gen] {data/generalization_data_40.dat};
		\addlegendentry{Single imputation (theory)}

		\addplot[
		only marks,
		mark repeat = 3,
		color=CadetBlue,
		mark=triangle,
		mark size=3pt,
		line width = 1.25pt   
		]
		table[x=delta,y=emp_gen] {data/emp_gen_mse_delta_10_40_v2.dat};
		\addlegendentry{Single imputation (empirical)}
		
		\addplot+[name path=A-gen,CadetBlue!20, no markers] table[x=delta,y=lower_gen] {data/emp_gen_mse_delta_10_40_v2.dat};
		\addplot+[name path=B-gen,CadetBlue!20, no markers] table[x=delta,y=upper_gen] {data/emp_gen_mse_delta_10_40_v2.dat};
		
		\addplot[CadetBlue!20] fill between[of=A-gen and B-gen];

		%\addplot [name path=upper,draw=none] table[x=alpha,y=max_err]
		%    {data/IdentityGaussianBig.dat};
		%\addplot [name path=lower,draw=none] table[x=alpha,y=min_err]
		%    {data/IdentityGaussianBig.dat};
		%\addplot [fill=black!10] fill between[of=upper and lower];
		%
		%\addplot [
		%    domain=0.5:0.99, 
		%    samples=100, 
		%    color=black,
		%    line width=1.5pt
		%]
		%    {0.498149264028527105 * sqrt{(1-x)/(x)}};
		%    \addlegendentry{$\lambda \sqrt{s}$: $\sqrt{\frac{\left(1 - \alpha\right)s \log{p}}{\alpha n}}$}
		
%		\coordinate (insetPosition) at (rel axis cs:0.95,0.315);
	\end{axis}
%	\begin{axis}[at={(insetPosition)},
%		anchor={outer south east},
%		footnotesize,
%		width=0.475\textwidth,
%		height=0.4\textwidth,
%		legend style={font=\tiny}]
%		%small plot
%		\addplot[
%		color=BrickRed,
%		mark=*,
%		mark size=1.25pt   
%		]
%		table[x=delta,y=diff] {data/generalization_data.dat};
%		\addlegendentry{Difference}
%	\end{axis}
\end{tikzpicture}}
		\caption{Test error as a function of $\delta$}    
%		\label{subfig:overlay-fix-alpha}
	\end{subfigure}
	\hfill
	\begin{subfigure}[b]{0.47\textwidth}  
		%		\centering 
		\centerline{\begin{tikzpicture}
	\begin{axis}[
		%    width=0.8\textwidth,
		%    height=0.375\textwidth,
%		title={Angle error, $\alpha=0.7, R=4$},
		xlabel={$\delta = n/p$},
		ylabel={Angle error},
		legend pos=north east,
		ymajorgrids=true,
		grid style=grid,
		legend style={font=\small,fill=none}
		]
		
		\addplot[
		color=black,
		%mark=*,
		line width = 1.25pt,
		dotted,
		mark size=1.25pt   
		]
		table[x=delta,y=bayes_angle] {data/angle_data_40.dat};
		\legend{Bayes}
		
		\addplot[
		color=Maroon,
		%mark=*,
		line width = 1.25pt,
		dashed,
		mark size=1.25pt   
		]
		table[x=delta,y=si_angle] {data/angle_data_40.dat};
		\addlegendentry{Single imputation (theory)}

		\addplot[
		only marks,
		mark repeat = 3,
		color=CadetBlue,
		mark=triangle,
		line width=1.25pt,
		mark size=3pt   
		]
		table[x=delta,y=emp_angle] {data/emp_gen_mse_delta_10_40_v2.dat};
		\addlegendentry{Single imputation (empirical)}
		
		\addplot+[name path=A-angle,CadetBlue!20, no markers] table[x=delta,y=lower_angle] {data/emp_gen_mse_delta_10_40_v2.dat};
		\addplot+[name path=B-angle,CadetBlue!20, no markers] table[x=delta,y=upper_angle] {data/emp_gen_mse_delta_10_40_v2.dat};
		
		\addplot[CadetBlue!20] fill between[of=A-angle and B-angle];

		%\addplot [name path=upper,draw=none] table[x=alpha,y=max_err]
		%    {data/IdentityGaussianBig.dat};
		%\addplot [name path=lower,draw=none] table[x=alpha,y=min_err]
		%    {data/IdentityGaussianBig.dat};
		%\addplot [fill=black!10] fill between[of=upper and lower];
		%
		%\addplot [
		%    domain=0.5:0.99, 
		%    samples=100, 
		%    color=black,
		%    line width=1.5pt
		%]
		%    {0.498149264028527105 * sqrt{(1-x)/(x)}};
		%    \addlegendentry{$\lambda \sqrt{s}$: $\sqrt{\frac{\left(1 - \alpha\right)s \log{p}}{\alpha n}}$}
		
%		\coordinate (insetPosition) at (rel axis cs:0.95,0.25);
	\end{axis}
%	\begin{axis}[at={(insetPosition)},
%		anchor={outer south east},
%		footnotesize,
%		legend style={font=\tiny},
%		width=0.475\textwidth,
%		height=0.45\textwidth
%		]
%		%small plot
%		\addplot[
%		color=BrickRed,
%		mark=*,
%		mark size=1.25pt   
%		]
%		table[x=delta,y=diff] {data/angle_data.dat};
%		\addlegendentry{Difference}
%	\end{axis}
\end{tikzpicture}}
		\caption{Angle error as a function of $\delta$}
%		\label{subfig:difference-fix-alpha}
	\end{subfigure}
	\caption{A comparison of the Bayes error with the optimally regularized single imputation error.  The probability of observing an entry is fixed as $\alpha=0.7$ and the radius of the problem is fixed as $R = 4$, whereas the ratio of samples to dimensions $\delta$ is varied.  Triangular marks denote the empirical average of the empirical error, dashed maroon lines (barely visible) denote the exact single imputation error, and dashed black lines denote the Bayes error.  The shaded region denotes the inter-quartile range.} 
	\label{fig:empirical-vary-delta}
\end{figure*}
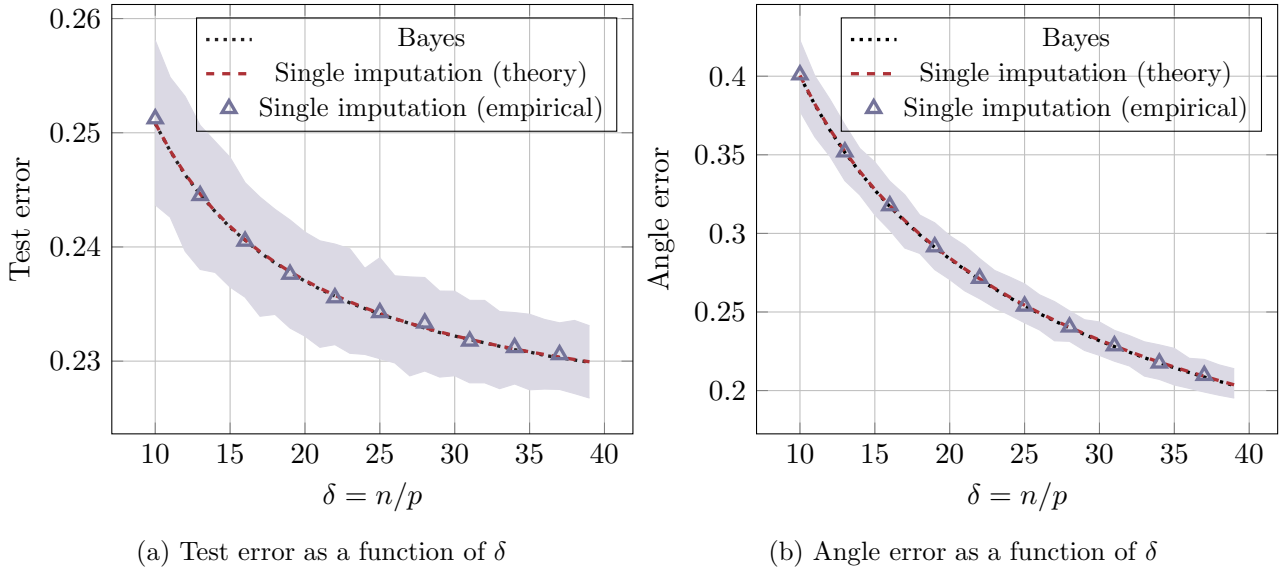

\subsection{The effect of regularization: prior imputation vs. single imputation}\label{subsec:effect-of-reg}
It is natural to wonder whether other simple strategies of handling missing data can match the Bayes optimal performance as well.  We demonstrate that this is not the case, even for procedures which use more knowledge of the covariates than single imputation.  In particular, we demonstrate that prior imputation---which uses full knowledge of the covariates' distribution---can perform significantly worse than single imputation, which uses only knowledge of the mean of the covariates.

The experimental set-up is as follows.  We fix the dimension $p=500$, the number of samples $n=1500$ (so that $\delta = 3$), and the radius of the problem $R = 2$.  The probability of observing an entry is set to be $\alpha = 0.85$ and we simulate data which is~\ref{eq:MCAR}.  Then, we vary the regularization strength $\lambda$ and run ridge regularized logistic regression with one of two data matrices: either formed using~\ref{eq:single_imp} or formed using~\ref{eq:prior_imp}.  We repeat this procedure $1000$ times.  The results are plotted in Figure~\ref{fig:empirical-single-vs-prior}.  We remark on two specific aspects of this simulation.  First, we note that for both imputation strategies, the unregularized estimator is significantly sub-optimal, and regularization alleviates the over-confidence problem in both situations.  Second, we note that regardless of the regularization strength, there is a non-negligible gap between single imputation and prior imputation.  These two aspects are present in both test error, illustrated in Figure~\ref{fig:empirical-single-vs-prior}(a) as well as angle error, illustrated in Figure~\ref{fig:empirical-single-vs-prior}(b).

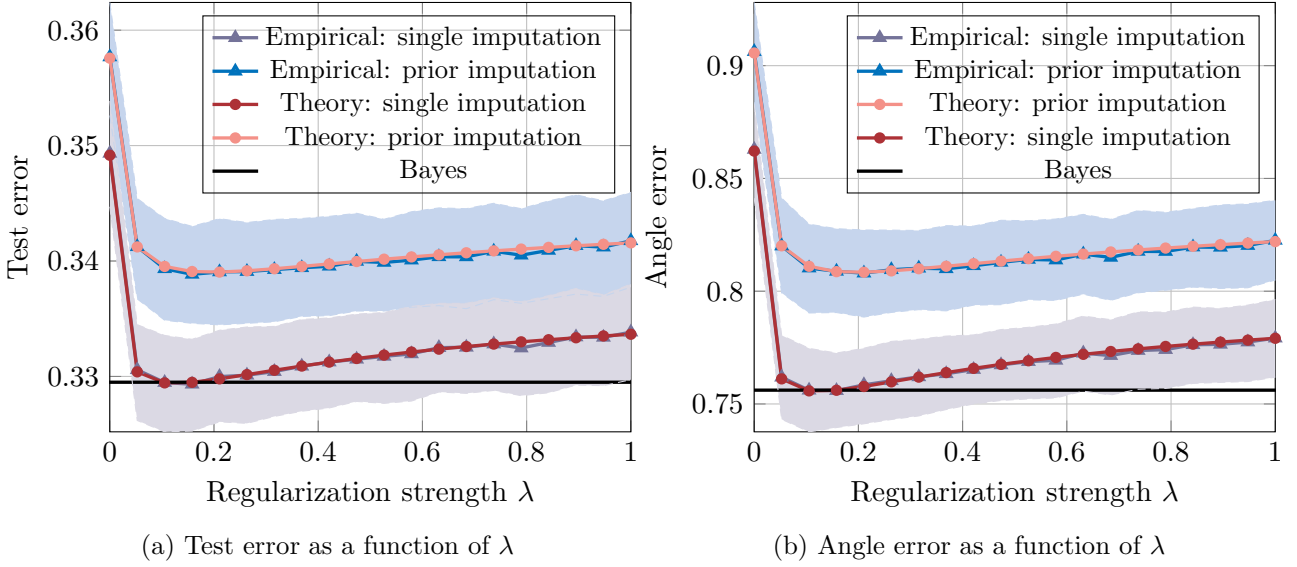
\begin{figure*}[!h]
	%	\centering
	\begin{subfigure}[b]{0.47\textwidth}
		%		\centering
		\centerline{\begin{tikzpicture}
	\begin{axis}[
		%		title={Regularization strength vs. angle error of single imputation, $\alpha=0.7,
		%			\delta=10, R=1$},
		xlabel={Regularization strength $\lambda$},
		ylabel={Test error},
		%ymode=log,
		 xmin=0, xmax=1,
		xtick={0, 0.2, 0.4, 0.6, 0.8,1},
		legend pos=north east,
		legend style={font=\small,fill=none},
		ymajorgrids=true,
		grid style=grid,
		enlargelimits=false,
		%xmin=-5,xmax=5,
		%axis y line*=left,
		]
		
		\addplot[
		color=CadetBlue,
		mark=triangle,
		mark size=2pt,
		line width = 1.25pt   
		]
		table[x=lam,y=avg_gen] {data/regularization-data-mi-vs-si-larger-lam.dat};
		\legend{Empirical: single imputation}
		
		\addplot[
		color=RoyalBlue,
		mark=triangle,
		mark size=2pt,
		line width = 1.25pt
		]
		table[x=lam,y=avg_gen_mi] {data/regularization-data-mi-vs-si-larger-lam.dat};
		\addlegendentry{Empirical: prior imputation}
		
		\addplot[
		color=Maroon,
		mark=*,
		mark size=1.5pt,
		line width = 1.25pt
		]
		table[x=lam,y=exact_gen] {data/regularization-data-mi-vs-si-larger-lam.dat};
		\addlegendentry{Theory: single imputation}
		
		\addplot[
		color=Salmon,
		mark=*,
		mark size=1.5pt,
		line width = 1.25pt   
		]
		table[x=lam,y=exact_gen_mi] {data/regularization-data-mi-vs-si-larger-lam.dat};
		\addlegendentry{Theory: prior imputation}
		
		 \addplot[mark=none, black, line width=1.25pt, samples=2] {0.32949744648618856};
		\addlegendentry{Bayes}

		\addplot+[name path=A-mi,RoyalBlue!20, no markers] table[x=lam,y=lower_gen_mi] {data/regularization-data-mi-vs-si-larger-lam.dat};
		\addplot+[name path=B-mi,RoyalBlue!20, no markers] table[x=lam,y=upper_gen_mi] {data/regularization-data-mi-vs-si-larger-lam.dat};
		
		\addplot[RoyalBlue!20] fill between[of=A-mi and B-mi];

		\addplot+[name path=A-si,CadetBlue!20, no markers] table[x=lam,y=lower_gen] {data/regularization-data-mi-vs-si-larger-lam.dat};
		\addplot+[name path=B-si,CadetBlue!20, no markers] table[x=lam,y=upper_gen] {data/regularization-data-mi-vs-si-larger-lam.dat};
		
		\addplot[CadetBlue!20] fill between[of=A-si and B-si];

		% \addplot[
		%    color=blue,
		%    mark=*,
		%    mark size=1.25pt   
		%    ]
		%    table[x=lam,y=gen_R_1] {data/reg_data.dat};
		%    \addlegendentry{$R=1$}
		
		%\addplot [name path=upper,draw=none] table[x=alpha,y=max_err]
		%    {data/IdentityGaussianBig.dat};
		%\addplot [name path=lower,draw=none] table[x=alpha,y=min_err]
		%    {data/IdentityGaussianBig.dat};
		%\addplot [fill=black!10] fill between[of=upper and lower];
		%
		%\addplot [
		%    domain=0.5:0.99, 
		%    samples=100, 
		%    color=black,
		%    line width=1.5pt
		%]
		%    {0.498149264028527105 * sqrt{(1-x)/(x)}};
		%    \addlegendentry{$\lambda \sqrt{s}$: $\sqrt{\frac{\left(1 - \alpha\right)s \log{p}}{\alpha n}}$}
		
	\end{axis}
\end{tikzpicture}}
		\caption{Test error as a function of $\lambda$}    
		%		\label{subfig:overlay-fix-alpha}
	\end{subfigure}
	\hfill
	\begin{subfigure}[b]{0.47\textwidth}  
		%		\centering 
		\centerline{\begin{tikzpicture}
	\begin{axis}[
%		title={Regularization strength vs. angle error of single imputation, $\alpha=0.7,
%			\delta=10, R=1$},
		xlabel={Regularization strength $\lambda$},
		ylabel={Angle error},
		%ymode=log,
		xmin=0, xmax=1,
		xtick={0, 0.2, 0.4, 0.6, 0.8,1},
		legend pos=north east,
		legend style={font=\small,fill=none},
		ymajorgrids=true,
		grid style=grid,
		enlargelimits=false,
		%xmin=-5,xmax=5,
		%axis y line*=left,
		]
		
		\addplot[
		color=CadetBlue,
		mark=triangle,
		mark size=2pt,
		line width=1.25pt   
		]
		table[x=lam,y=avg_angle] {data/regularization-data-mi-vs-si-larger-lam.dat};
		\legend{Empirical: single imputation}
		
		\addplot[
		color=RoyalBlue,
		mark=triangle,
		mark size=2pt,
		line width=1.25pt   
		]
		table[x=lam,y=avg_angle_mi] {data/regularization-data-mi-vs-si-larger-lam.dat};
		\addlegendentry{Empirical: prior imputation}
		
		\addplot[
		color=Salmon,
		mark=*,
		line width=1.25pt,
		mark size=1.5pt   
		]
		table[x=lam,y=exact_angle_mi] {data/regularization-data-mi-vs-si-larger-lam.dat};
		\addlegendentry{Theory: prior imputation}
		
		\addplot[
		color=Maroon,
		mark=*,
		mark size=1.5pt,
		line width=1.25pt
		]
		table[x=lam,y=exact_angle] {data/regularization-data-mi-vs-si-larger-lam.dat};
		\addlegendentry{Theory: single imputation}
		
		\addplot[mark=none, black, line width=1.25pt, samples=2] {0.7561208504075206};
		\addlegendentry{Bayes}

		\addplot+[name path=A-mi,RoyalBlue!20, no markers] table[x=lam,y=lower_angle_mi] {data/regularization-data-mi-vs-si-larger-lam.dat};
		\addplot+[name path=B-mi,RoyalBlue!20, no markers] table[x=lam,y=upper_angle_mi] {data/regularization-data-mi-vs-si-larger-lam.dat};
		
		\addplot[RoyalBlue!20] fill between[of=A-mi and B-mi];

		\addplot+[name path=A-si,CadetBlue!20, no markers] table[x=lam,y=lower_angle] {data/regularization-data-mi-vs-si-larger-lam.dat};
		\addplot+[name path=B-si,CadetBlue!20, no markers] table[x=lam,y=upper_angle] {data/regularization-data-mi-vs-si-larger-lam.dat};
		
		\addplot[CadetBlue!20] fill between[of=A-si and B-si];

		% \addplot[
		%    color=blue,
		%    mark=*,
		%    mark size=1.25pt   
		%    ]
		%    table[x=lam,y=gen_R_1] {data/reg_data.dat};
		%    \addlegendentry{$R=1$}
		
		%\addplot [name path=upper,draw=none] table[x=alpha,y=max_err]
		%    {data/IdentityGaussianBig.dat};
		%\addplot [name path=lower,draw=none] table[x=alpha,y=min_err]
		%    {data/IdentityGaussianBig.dat};
		%\addplot [fill=black!10] fill between[of=upper and lower];
		%
		%\addplot [
		%    domain=0.5:0.99, 
		%    samples=100, 
		%    color=black,
		%    line width=1.5pt
		%]
		%    {0.498149264028527105 * sqrt{(1-x)/(x)}};
		%    \addlegendentry{$\lambda \sqrt{s}$: $\sqrt{\frac{\left(1 - \alpha\right)s \log{p}}{\alpha n}}$}
		
	\end{axis}
\end{tikzpicture}}
		\caption{Angle error as a function of $\lambda$}
		%		\label{subfig:difference-fix-alpha}
	\end{subfigure}
	\caption{A comparison of the regularized single imputation and prior imputation errors.  The probability of observing an entry is fixed as $\alpha=0.7$, the radius of the problem is fixed as $R=1$, and the ratio of samples to dimensions is fixed as $\delta = 10$.  Triangular marks denote averages of the empirical error and solid circles denote exact expressions evaluated via Theorem~\ref{thm:main}.  The shaded regions correspond to inter-quartile ranges.} 
	\label{fig:empirical-single-vs-prior}
\end{figure*}

% Proofs
\section{Proofs} \label{sec:proofs}
This section is primarily dedicated to the proofs of Theorem~\ref{thm:main} and Proposition~\ref{prop:sharp}, which are provided in Sections~\ref{sec:proof-thm-main} and~\ref{sec:proof-prop-sharp}, respectively.  Both Theorem~\ref{thm:main} and Proposition~\ref{prop:sharp} rely on parts of the following lemma which characterizes geometric properties of the estimator~\eqref{eq:estimate}. We provide its proof in Appendix~\ref{sec:geometric-prop}.
\begin{lemma}
	\label{lem:l2_norm_theta_hat}
	Under the setting of Theorem~\ref{thm:main}, there exists a tuple of positive constants $(M_0, M_1, M_2, M_3)$, depending only on $(K_1, K_2)$, such that the estimator $\widehat{\bt}(\bZ, \lambda)$~\eqref{eq:estimate} satisfies the following properties.
	\begin{itemize}
		\item[(a)] With probability at least $1 - 2e^{-n}$, both $\| \bZ\|_{\mathsf{op}} \leq M_0$ and $\bigl \| \widehat{\bt}(\bZ, \lambda) \bigr \|_2 \leq M_1 \sqrt{p}$.
		\item[(b)] With probability at least $1 - 6/n$, the maximum entry of the estimator $\widehat{\bt}$ is bounded as
		\[
		 \bigl \| \widehat{\bt}(\bZ, \lambda) \bigr\|_{\infty} \leq M_2 (\log{n})^{3/2} \cdot \bigl( \bigl \| \bt_0 \bigr \|_{\infty} \vee \log{n} \bigr)
		\]
		\item[(c)] With probability at least $1 - 2/(\delta \cdot n)$, the maximum entry of the product $\bZ\widehat{\bt}$ is bounded as
		\[
		\max_{j \in \{1, 2, \dots, n\}}\; \bigl\lvert \bigl \langle \bz_{j}, \widehat{\bt}(\bZ, \lambda) \bigr \rangle \bigr \rvert \leq M_3 \sqrt{\log{n}}.
		\]
	\end{itemize}
\end{lemma}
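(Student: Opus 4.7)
My plan is to treat the three parts sequentially, with parts (a) and (c) being relatively standard and part (b) requiring a careful leave-one-column-out argument whose subtlety is the primary source of difficulty.

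For part (a), the operator norm bound is immediate from classical non-asymptotic random matrix theory: since the entries of $\bZ$ are independent, zero-mean, and sub-Gaussian with $\|Z_{ij}\|_{\psi_2} \leq K_3/\sqrt{p}$, a net argument yields $\|\bZ\|_{\mathsf{op}} \leq C(\sqrt{\alpha_2}+\sqrt{\alpha_2}/\sqrt{\delta})$ with probability at least $1-2e^{-n}$, which is bounded by some $M_0$ by Assumption~\ref{asm:regularity}. For the $\ell_2$ bound on $\widehat{\bt}$, I would proceed deterministically from optimality: $\mathcal{L}_n(\widehat{\bt};\bZ,\lambda) \leq \mathcal{L}_n(\mathbf{0};\bZ,\lambda) = \rho(0) = \log 2$, and since the logistic loss is non-negative, $\tfrac{\lambda}{2p}\|\widehat{\bt}\|_2^2 \leq \log 2$, giving $\|\widehat{\bt}\|_2 \leq \sqrt{2p\log 2/K_1}$.

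For part (c), I would use a leave-one-sample-out estimator $\widehat{\bt}^{(-j)} = \argmin_{\bt}\{\tfrac{1}{n}\sum_{i\neq j}\rho(-y_i\langle \bz_i,\bt\rangle) + \tfrac{\lambda}{2p}\|\bt\|_2^2\}$, which is independent of the pair $(\bz_j,y_j)$. Write $\langle \bz_j,\widehat{\bt}\rangle = \langle \bz_j, \widehat{\bt}^{(-j)}\rangle + \langle \bz_j, \widehat{\bt}-\widehat{\bt}^{(-j)}\rangle$. Conditional on $\widehat{\bt}^{(-j)}$, the first term is sub-Gaussian with parameter $O(K_3 \|\widehat{\bt}^{(-j)}\|_2/\sqrt{p}) = O(M_1 K_3)$ by part (a), so a union bound over $j \in [n]$ yields $\max_j |\langle \bz_j, \widehat{\bt}^{(-j)}\rangle| \lesssim \sqrt{\log n}$ with probability $\geq 1 - 1/(\delta n)$. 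The second term uses strong convexity of $\mathcal{L}_n^{(-j)}$ with parameter $\lambda/p$: since $\widehat{\bt}$ satisfies $\nabla \mathcal{L}_n^{(-j)}(\widehat{\bt}) = \tfrac{1}{n} y_j \bz_j \rho'(-y_j\langle\bz_j,\widehat{\bt}\rangle)$, whose norm is $\lesssim \|\bz_j\|_2/n$, we obtain $\|\widehat{\bt}-\widehat{\bt}^{(-j)}\|_2 \lesssim p\|\bz_j\|_2/(\lambda n) = O(\|\bz_j\|_2)$; Cauchy--Schwarz then gives $|\langle \bz_j,\widehat{\bt}-\widehat{\bt}^{(-j)}\rangle| \lesssim \|\bz_j\|_2^2 = O(1)$ with high probability, which is dominated by the $\sqrt{\log n}$ term.

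For part (b), I would develop a leave-one-column-out analysis. Define $\widehat{\bt}^{(\backslash k)} = \argmin_{\bt:\bt_k=0} \mathcal{L}_n(\bt;\bZ,\lambda)$, which is a function of $\bZ$ only through columns other than $k$, and of $\by$. The KKT optimality condition reads
\[
\widehat{\bt}_k \;=\; -\frac{p}{\lambda n}\sum_{i=1}^n y_i z_{ik}\rho'\bigl(-y_i\langle\bz_i,\widehat{\bt}\rangle\bigr),
\]
which I split as a ``decoupled'' piece using $\widehat{\bt}^{(\backslash k)}$ and a Taylor-remainder piece involving $\rho''$ and $\widehat{\bt}-\widehat{\bt}^{(\backslash k)}$. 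For the decoupled piece, I would further split $y_i$ into a component independent of $x_{ik}$ and a component whose contribution is driven by the cross-moment $\alpha_c(\bt_0)_k/p$; conditional sub-Gaussian concentration of $\{z_{ik}\}_i$ yields a $\sqrt{\log n}$ factor on the first component and a factor $\lesssim \|\bt_0\|_\infty$ on the second. For the Taylor-remainder, I would use strong convexity of $\mathcal{L}_n$ on the subspace (parameter $\lambda/p$) together with a first-order stationarity comparison to obtain $\|\widehat{\bt}-\widehat{\bt}^{(\backslash k)}\|_2 \lesssim |\widehat{\bt}_k|\cdot \|\bZ\|_{\mathsf{op}}/\sqrt{\lambda}$, absorb the resulting term $\epsilon|\widehat{\bt}_k|$ into the left-hand side, and close a self-bounding inequality. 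A union bound over $k \in [p]$ then yields the claim, with the $(\log n)^{3/2}$ polylog factor arising from the combination of sub-Gaussian tail bounds at three places in the chain.

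The principal obstacle will be part (b), specifically the need to decouple $\bz_{(k)}$ from $\by$ in the ``decoupled'' term despite the non-trivial correlation between $\bx_{(k)}$ and $\bz_{(k)}$ (second moment $\alpha_c/p$). Resolving this correlation cleanly—tracking how $(\bt_0)_k$ enters via the covariance structure without losing the polylog scaling—is the delicate step, and is precisely what makes $\|\bt_0\|_\infty$ appear in the final bound.
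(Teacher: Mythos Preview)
Your treatments of parts (a) and (c) are correct and essentially match the paper; in fact your comparison-to-zero argument for $\|\widehat{\bt}\|_2$ is slightly simpler than the paper's KKT-based bound.

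For part (b), the overall leave-one-column-out strategy is right, but two steps in your plan do not go through as stated. First, the self-bounding inequality does not close. After Taylor-expanding $\rho'(-y_i\langle\bz_i,\widehat{\bt}\rangle)$ around $\widehat{\bt}^{(\backslash k)}$ and using $\|\widehat{\bt}-\widehat{\bt}^{(\backslash k)}\|_2 \lesssim |\widehat{\bt}_k|/\sqrt{\lambda}$, the remainder contributes a term of the form $c\,|\widehat{\bt}_k|$ with $c \asymp M_0\sqrt{\alpha_2}/(\lambda^{3/2}\sqrt{\delta})$, which is a fixed constant depending only on $K_1,K_2,K_3$ and is \emph{not} generically below $1$. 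The paper avoids this circularity by working instead with the partially minimized function $L_1(t)=\min_{\theta_2,\dots,\theta_p}\mathcal{L}_n((t,\theta_2,\dots,\theta_p))$: strong convexity of $L_1$ gives $|\widehat{\theta}_k|\le (2p/\lambda)|L_1'(0)|$, and $L_1'(0)$ is already expressed at $\widehat{\bt}^{(\backslash k)}$, so no term proportional to $|\widehat{\theta}_k|$ ever reappears on the right.

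Second, your ``decoupled'' term still carries residual dependence on $\bz_{(k)}$ that splitting $y_i$ alone does not remove. Even after replacing $y_i$ by a surrogate label independent of $x_{ik}$, the estimator $\widehat{\bt}^{(\backslash k)}$ itself is fitted with the \emph{true} labels $\by$, hence depends on the whole column $\bx_{(k)}$ and therefore on $\bz_{(k)}$ through the $(\alpha_c/p)$-correlation. To get conditional sub-Gaussian concentration of $\sum_i z_{ik}\rho'(\langle \bz_i^{(-k)},\cdot\rangle)$ in the column $\bz_{(k)}$ you need a second surrogate: the paper introduces an auxiliary estimator $\widehat{\btv}(\boldsymbol 0)$ fitted with labels $\widetilde{y}_i(0)$ (the label generated with $X_{ik}$ replaced by $0$), which is genuinely independent of $\bz_{(k)}$, and then controls $\|\widehat{\bt}^{(\backslash k)}-\widehat{\btv}(\boldsymbol 0)\|_2$ by bounding how many labels flip when $X_{ik}$ is zeroed out (this is a binomial-type count with mean $\lesssim \|\bt_0\|_\infty^2$, handled by Bernstein). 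That second decoupling is where the $\|\bt_0\|_\infty$ factor ultimately enters with the stated polylog.
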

Equipped with this lemma, we turn now to the proof of Theorem~\ref{thm:main}

\subsection{Proof of Theorem~\ref{thm:main}} \label{sec:proof-thm-main}
We begin with some useful notation.  First, given any function $\phi: \mathbb{R}^2 \rightarrow \mathbb{R}$, we define the map $\psi: \mathbb{R}^p \rightarrow \mathbb{R}$ as
\begin{align}
	\label{eq:psi}
	\psi(\bt) = \phi\bigl(\sigma(\bt), \xi(\bt)\bigr).
\end{align}
We note that if $\phi$ is $1$-Lipschitz, straightforward computation implies that $\psi$ is $C/\sqrt{p}$ Lipschitz.  
Additionally, for all $\lambda \in \mathbb{R}_{\geq 0}$, define the function $\phi_{\star}: \mathbb{R}_{\geq 0} \rightarrow \mathbb{R}$ as $
\phi_{\star}(\lambda) = \phi\bigl(\sigma_{\star}(\lambda), \xi_{\star}(\lambda)\bigr)$.  
Note that it suffices to prove that the following holds for all $1$-Lipschitz functions $\phi: \mathbb{R}^2 \rightarrow \mathbb{R}$
\begin{align}\label{ineq:uniform-psi}
\sup_{\lambda \in [K_1, K_2]}\; \Bigl \lvert \psi\Bigl(\widehat{\bt}(\bZ, \lambda)\Bigr) - \phi_{\star}(\lambda)\Bigr \rvert \overset{P}{\rightarrow} 0,
\end{align}
since Theorem~\ref{thm:main} then follows upon taking the coordinate projections $\phi_{1}: (\sigma, \xi) \mapsto \sigma$ and $\phi_2: (\sigma, \xi) \mapsto \xi$.  The proof of the convergence relation~\eqref{ineq:uniform-psi} follows by first establishing non-asymptotic control of the deviations (pointwise in $\lambda$) and subsequently extending this to the interval $\lambda \in [K_1, K_2]$ via a straightforward approximation argument. 

\paragraph{Step 1: Pointwise (in $\lambda$) control.}  The crux of this step is the following lemma---whose proof  utilizes a perturbation strategy due to~\citet{montanari2017universality} in conjunction with the Lindeberg method~\citep{lindeberg1922neue,chatterjee2006generalization} and is provided in Section~\ref{sec:proof-universality-main-text}---which bounds the deviations in Eq.~\eqref{ineq:uniform-psi} by corresponding deviations for members of the $\mathrm{\eivdef}$ ensemble.
\begin{lemma}\label{lem:universality-main-text}
	Assume the setting of Theorem~\ref{thm:main} and let the random matrices $\widetilde{\bX}, \bG \in \mathbb{R}^{n \times p}$ belong to the $\eivdef$ ensemble with $(\widetilde{X}_{ij})_{i \leq n, j \leq p} \overset{\mathsf{i.i.d.}}{\sim} \mathsf{N}(0, 1/p)$.  Use $\widetilde{\bX}$ and the ground truth $\bt_0$ to draw labels $\widetilde{\by}$ and use these to define the loss $\mathcal{L}_n(\cdot; \bG, \lambda)$~\eqref{eq:loss} and its minimizer $\widehat{\bt}(\bG, \lambda)$~\eqref{eq:estimate}.  For any $1$-Lipschitz function $\phi$, there exists a triplet of positive constants $(c, C_1, C_2)$ depending only on $(K_1, K_2, K_3, K_4, \tau)$ such that
	\begin{align*}
		\pr\Bigl\{\bigl\lvert \psi\bigl(\widehat{\bt}(\bZ, \lambda)\bigr) - \phi_{\star}(\lambda)\bigr\rvert \geq C_1n^{-\tau/12}\Bigr\} &\leq \; 2\pr\biggl\{\Big\lvert \min_{\bt \in \mathbb{R}^p} \mathcal{L}_n(\bt; \bG, \lambda) - \mathfrak{M}_{\star}(\lambda)\Bigr\rvert \geq  c n^{-\tau/12}\biggr\}\\
		&+ \pr\biggl\{ \big\lvert \psi\bigl(\widehat{\bt}(\bG, \lambda)\bigr) - \phi_{\star}(\lambda)\bigr\rvert \geq cn^{-\tau/12}\biggr\} + C_2n^{-\tau/6} (\log{n})^2,
	\end{align*}
where we have set $\mathfrak{M}_{\star}(\lambda) = L\bigl(\sigma_{\star}(\lambda), \xi_{\star}(\lambda), \gamma_{\star}(\lambda)\bigr)$.
\end{lemma}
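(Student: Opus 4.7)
The plan is to convert $\psi(\widehat{\bt}(\bZ,\lambda))$, which is a non-smooth function of the data matrix, into a linear combination of two \emph{minimum values} of smoothly-perturbed convex losses, to which the Lindeberg exchange principle can be applied. For a small scale $\epsilon>0$, set
\[
\wtl_{\pm}(\bt;\bZ,\lambda,\epsilon) := \mathcal{L}_n(\bt;\bZ,\lambda)\pm \epsilon\,\psi(\bt).
\]
Since the ridge term renders $\mathcal{L}_n(\,\cdot\,;\bZ,\lambda)$ at least $\lambda/p$-strongly convex and $\psi$ is $C/\sqrt p$-Lipschitz, a short first-order optimality argument bounds the displacement of the perturbed minimizer by $\|\widehat{\bt}^{\pm}-\widehat{\bt}(\bZ,\lambda)\|_2 \lesssim \epsilon/\sqrt p$, which in turn yields the identity
\[
\epsilon\,\psi(\widehat{\bt}(\bZ,\lambda)) = \pm\bigl[\min_{\bt\in\mathbb{R}^p}\wtl_{\pm}(\bt;\bZ,\lambda,\epsilon) - \min_{\bt\in\mathbb{R}^p}\mathcal{L}_n(\bt;\bZ,\lambda)\bigr] + O(\epsilon^2/p),
\]
and the same identity holds verbatim with $\bZ$ replaced by $\bG$. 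It therefore suffices to establish universality between $\bZ$ and $\bG$ of the minimum values $\min\wtl_{\pm}$ and $\min\mathcal{L}_n$ up to an additive error of order $\epsilon\cdot n^{-\tau/12}$; dividing by $\epsilon$ then recovers $\psi(\widehat{\bt})$.

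To make these minima smooth functions of the matrix entries, I would pass through the exponential smoothing
\[
\mathcal{L}_{r}^{\pm}(\bZ,\lambda,\epsilon) := -\frac{1}{r}\log\int_{\|\bt\|_2\leq M_1\sqrt{p}} e^{-r\,\wtl_{\pm}(\bt;\bZ,\lambda,\epsilon)}\,d\bt,
\]
which, after localizing to the ball supplied by Lemma~\ref{lem:l2_norm_theta_hat}(a), approximates $\min\wtl_{\pm}$ to $O(p\log(np)/r)$ via Laplace's method and strong convexity. The partial derivatives of $\mathcal{L}_r^{\pm}$ in the entries $(X_{ij},Z_{ij})$ can be written as Gibbs expectations of polynomials in $t_j$, $\langle\bz_i,\bt\rangle$, and $y_i$; the $\ell^\infty$ bound on $\widehat{\bt}$ from Lemma~\ref{lem:l2_norm_theta_hat}(b) together with the bound $\max_j|\langle\bz_j,\widehat{\bt}\rangle|\leq M_3\sqrt{\log n}$ from Lemma~\ref{lem:l2_norm_theta_hat}(c) control the effective support of the Gibbs measure, so all derivatives up to third order are bounded by $\mathrm{polylog}(n)$. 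Replacing one pair $(X_{ij},Z_{ij})$ at a time by an independent Gaussian pair $(\widetilde X_{ij},G_{ij})$ with matching first and second moments---whose existence is guaranteed by the $\univclass$ assumption---and Taylor-expanding to third order with moment matching up to second order, the cumulative Lindeberg error is of order $np\cdot (K_3/\sqrt p)^3\cdot r^3\cdot\mathrm{polylog}(n)$. Balancing this against the smoothing error $p\log n/r$, the perturbation error $\epsilon/p$, and the inverse-$\epsilon$ factor needed to extract $\psi$, the choices $\epsilon\asymp n^{-\tau/6}$ and $r$ a fixed polynomial in $n$ deliver the advertised $n^{-\tau/12}$ rate. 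The expectation-level Lindeberg estimate is upgraded to a probability statement by applying the comparison to a smooth Lipschitz test function $f_\eta$ approximating the indicator of a deviation event, combined with Gaussian and sub-Gaussian concentration of $\mathcal{L}_r^{\pm}$ around its mean.

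The main technical hurdle lies in coupling the label distributions across the Lindeberg swap: the labels $\by$ are generated from $\bX$ via~\eqref{eq:data_gen}, so replacing $X_{ij}$ also alters the conditional law of $y_i$. Since $y_i$ depends on $\bx_i$ only through $\langle\bx_i,\bt_0\rangle$, Assumption~\ref{asm:groundtruth}'s spread condition $\|\bt_0\|_\infty \leq K_4 n^{1/6-\tau}$ together with a Berry--Esseen estimate shows that the law of $\langle\bx_i,\bt_0\rangle$ under the universality-class distribution and under the Gaussian law differ in Kolmogorov distance by $O(\|\bt_0\|_\infty/\|\bt_0\|_2) = O(n^{-\tau/6}(\log n)^2)$, and this coupling defect furnishes the deterministic additive error $C_2 n^{-\tau/6}(\log n)^2$ on the right-hand side of the lemma. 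Once the four sources of error---perturbation, smoothing, Lindeberg swap, and label coupling---are chained, a standard triangle-inequality decomposition on the Gaussian side, using the rewritten perturbation identity $\psi(\widehat{\bt}(\bG,\lambda)) = \epsilon^{-1}[\min\wtl_{+}(\bG) - \min\mathcal{L}_n(\bG)] + O(\epsilon/p)$, bounds the deviation event at $\bZ$ by deviation events at $\bG$ for $\min\mathcal{L}_n$ (which appears with the factor $2$ because both perturbations $\pm$ must be controlled) and for $\psi(\widehat{\bt}(\bG,\lambda))$; these are exactly the first two summands appearing in the statement.
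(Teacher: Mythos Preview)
Your high-level plan---perturb $\mathcal{L}_n$ by $\pm\epsilon\psi$, pass to a free-energy smoothing of the minimum, apply a Lindeberg swap on the pair $(X_{ij},Z_{ij})$, and then unwind the perturbation on the Gaussian side to recover the two probability terms---is exactly the paper's strategy, following \citet{montanari2017universality}. The reduction at the end, producing one $\psi(\widehat{\bt}(\bG))$ term and two $\min\mathcal{L}_n(\cdot;\bG)$ terms, matches the paper's bookkeeping.

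The substantive gap is your handling of the labels. You correctly observe that $y_i$ is a non-smooth function of $X_{ij}$, so the Taylor expansion underlying the Lindeberg swap cannot be carried out directly in $X_{ij}$. But your proposed remedy---a Berry--Esseen bound on the law of $\langle\bx_i,\bt_0\rangle$---does not repair this. Berry--Esseen compares \emph{distributions} of the full sum; it says nothing about smoothness of $y_i$ as a function of a single coordinate $X_{ij}$, and so cannot control the $\partial_X$-derivatives that appear in the Lindeberg remainder. (The computation $\|\bt_0\|_\infty/\|\bt_0\|_2 = O(n^{-\tau/6}(\log n)^2)$ is also incorrect---it is $O(n^{-1/3-\tau})$---and the paper's $n^{-\tau/6}$ rate does not arise from a CLT at all.) The paper's fix is different and essential: represent $y_i = \mathrm{sgn}\bigl(\rho'(\langle\bx_i,\bt_0\rangle)-U_i\bigr)$ for $U_i\sim\mathsf{Unif}[0,1]$ and replace the sign by $F\bigl(r^{-1}[U_i-\rho'(\langle\bx_i,\bt_0\rangle)]\bigr)$ for a $C^3$ function $F$, thereby making the entire loss $C^3$ in $X_{ij}$. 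The chain rule then produces factors $r^{-1}\theta_{0,\ell}$ in each $\partial_X$-derivative, so that $\|\bt_0\|_\infty^3/r^3$ enters the third-derivative bound; balancing the label-smoothing error ($\sqrt r$) against the Lindeberg remainder and the inverse temperature $\beta$ is what yields the $n^{-\tau/6}$ rate. Your sketch has only one smoothing parameter (the inverse temperature), no label smoothing, and consequently no mechanism by which Assumption~\ref{asm:groundtruth} enters.

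A smaller technical point: integrating the Gibbs weight over the $\ell_2$-ball does not by itself confine the Gibbs measure to the region where $\|\bt\|_\infty$ and $\|\bZ\bt\|_\infty$ are polylogarithmic, and those constraints are precisely what make the $\partial_Z$-derivatives of the loss polylogarithmic rather than polynomial in $n$. The paper avoids this by summing over an $\epsilon$-net of the set $\mathbb{T}$ in which all three constraints of Lemma~\ref{lem:l2_norm_theta_hat} are hard-wired; your appeal to the ``effective support'' of the Gibbs measure would need a separate argument.
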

%\noindent Continuing, we invoke Lemma~\ref{lem:universality-main-text} to obtain the inequality
%\begin{align*}
%	\pr\Bigl\{\bigl\lvert \psi\bigl(\widehat{\bt}(\bZ, \lambda)\bigr) - \phi_{\star}(\lambda)\bigr\rvert \geq C_1 n^{-\tau/12}\Bigr\} \leq \; &2\pr\biggl\{\Big\lvert \min_{\bt \in \mathbb{R}^p} \mathcal{L}_n(\bt; \bG, \lambda) - L_{\star}(\lambda)\Bigr\rvert \geq  c n^{-\tau/12}\biggr\}\\
%	&+ \pr\biggl\{ \big\lvert \psi\bigl(\widehat{\bt}(\bG, \lambda)\bigr) - \phi_{\star}(\lambda)\bigr\rvert \geq cn^{-\tau/12}\biggr\} + C_2n^{-\tau/6} \cdot (\log{n})^2.
%\end{align*}
\noindent Continuing, we apply Lemma~\ref{lem:universality-main-text} in conjunction with Proposition~\ref{prop:sharp}, with $\epsilon = c n^{-\tau/12}$, and take $n$ large enough to obtain the inequality
%Next, we apply Proposition~\ref{prop:sharp}, with $\epsilon = c \cdot n^{-\tau/12}$, to bound both terms on the RHS.  Taking $n$ large enough, we deduce the inequality
\begin{align} \label{ineq:pointwise-lambda}
	\pr\biggl\{\bigl\lvert \psi\bigl(\widehat{\bt}(\bZ, \lambda)\bigr) - \phi_{\star}(\lambda)\bigr\rvert \geq C_1 \cdot n^{-\tau/12}\biggr\} \leq C_2n^{-\tau/6} \cdot (\log{n})^2.
\end{align}

\paragraph{Step 2: Uniform control over $\lambda \in [K_1, K_2]$.}
We require the following lemma, which provides estimates of the Lipschitz constants of the maps $\lambda \mapsto \phi_{\star}(\lambda)$ and $\lambda \mapsto \psi(\widehat{\bt}(\bZ, \lambda))$.  We defer its proof to Section~\ref{subsec:proof-lipschitz-minimizers}.
\begin{lemma}
	\label{lem:lipschitz-minimizers}
	Assume the setting of Theorem~\ref{thm:main}.  Let $\phi: \mathbb{R}^2 \rightarrow \mathbb{R}$ be a $1$-Lipschitz function and use it to define the function $\psi$~\eqref{eq:psi}.  There exists a pair of positive constants $(L_1, L_2)$, depending only on $(K_1, K_2, K_3, K_4, \tau)$ such that the following hold.
	\begin{itemize}
		\item[(a)] With probability at least $1 - 2e^{-n}$, the  map $\lambda \mapsto \psi\bigl(\widehat{\bt}(\bZ, \lambda)\bigr)$ is $L_1$-Lipschitz on $[K_1, K_2]$.  
		\item[(b)] The map $\lambda \mapsto \phi\bigl(\sigma_{\star}(\lambda), \xi_{\star}(\lambda)\bigr)$ is $L_2$-Lipschitz on $[K_1, K_2]$.
	\end{itemize}
\end{lemma}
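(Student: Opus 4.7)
The plan is to prove both parts by the same template: use strong convexity of the relevant objective in the decision variable together with the simple, \emph{linear} dependence of the objective on $\lambda$ (via the ridge term), and then control the resulting sensitivity bound by a uniform norm bound on the minimizer. Part (a) works with the finite-dimensional loss $\mathcal{L}_n(\cdot; \bZ, \lambda)$, which is $(\lambda/p)$-strongly convex; part (b) works with the asymptotic loss $\Psi(\cdot; \lambda) = \max_{\gamma \geq 0} L(\cdot, \cdot, \gamma; \lambda)$, which is $\lambda(1 \wedge R^2)$-strongly convex by Lemma~\ref{lem:structural_L}(a).

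For part (a), fix $\lambda_1, \lambda_2 \in [K_1, K_2]$ and set $\bt_i = \widehat{\bt}(\bZ, \lambda_i)$. By strong convexity applied at each minimizer,
\[
\mathcal{L}_n(\bt_2; \bZ, \lambda_1) - \mathcal{L}_n(\bt_1; \bZ, \lambda_1) \geq \tfrac{\lambda_1}{2p}\|\bt_2 - \bt_1\|_2^2, \qquad \mathcal{L}_n(\bt_1; \bZ, \lambda_2) - \mathcal{L}_n(\bt_2; \bZ, \lambda_2) \geq \tfrac{\lambda_2}{2p}\|\bt_2 - \bt_1\|_2^2.
\]
Adding these and using $\mathcal{L}_n(\bt; \bZ, \lambda_1) - \mathcal{L}_n(\bt; \bZ, \lambda_2) = \tfrac{\lambda_1 - \lambda_2}{2p}\|\bt\|_2^2$ yields, after rearrangement, $\|\bt_1 - \bt_2\|_2 \leq \tfrac{|\lambda_1 - \lambda_2|(\|\bt_1\|_2 + \|\bt_2\|_2)}{\lambda_1 + \lambda_2}$. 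The same adding trick applied only to the objective-value inequalities shows that $\lambda \mapsto \|\widehat{\bt}(\bZ, \lambda)\|_2$ is nonincreasing, hence uniformly bounded on $[K_1, K_2]$ by $\|\widehat{\bt}(\bZ, K_1)\|_2 \leq M_1\sqrt{p}$, which holds with probability at least $1 - 2e^{-n}$ by Lemma~\ref{lem:l2_norm_theta_hat}(a). Since $\psi$ is $C/\sqrt{p}$-Lipschitz (a direct computation from \eqref{def:shorthand-xi-sigma}--\eqref{eq:psi} using Assumption~\ref{asm:regularity}), we conclude $|\psi(\bt_1) - \psi(\bt_2)| \leq (C M_1 / K_1)|\lambda_1 - \lambda_2|$, which gives the desired $L_1$.

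For part (b), note that the $\lambda$-dependence of $L$ is entirely carried by the ridge term: $L(\sigma, \xi, \gamma; \lambda_1) - L(\sigma, \xi, \gamma; \lambda_2) = \tfrac{\lambda_1 - \lambda_2}{2}(\sigma^2 + \xi^2 R^2)$, so the same identity holds for $\Psi$. Replaying the strong-convexity argument with $\Psi(\cdot; \lambda_i)$ in place of $\mathcal{L}_n(\cdot; \bZ, \lambda_i)$ gives
\[
\|(\sigma_\star(\lambda_1), \xi_\star(\lambda_1)) - (\sigma_\star(\lambda_2), \xi_\star(\lambda_2))\|_2 \;\leq\; \frac{|\lambda_1 - \lambda_2|}{(\lambda_1 + \lambda_2)(1 \wedge R^2)} \cdot C \cdot \sup_{\lambda \in [K_1, K_2]}\bigl(\sigma_\star(\lambda) + R^2 |\xi_\star(\lambda)|\bigr).
\]
What remains is to show the supremum on the right is finite. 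This is read off from the fixed-point characterization in Lemma~\ref{lem:structural_L}(c): using $|\rho'| \leq 1$, the third equation forces $|\xi_\star| \lambda R^2 \leq C R \sqrt{\alpha_2}$, whence $|\xi_\star| \leq C\sqrt{\alpha_2}/(R K_1)$; using $|\mathrm{prox}_\rho(v; \gamma) - v| = |\rho'(\mathrm{prox}_\rho(v;\gamma))|/\gamma \leq 1/\gamma$, the first equation, together with the uniform lower bound $\gamma_\star \geq \gamma_0$ supplied by Lemma~\ref{lem:structural_L}(b), gives $\sigma_\star^2 \leq 2\delta/(\alpha_2 \gamma_0^2)$. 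Both bounds are uniform on $[K_1, K_2]$. Since $\phi$ is $1$-Lipschitz, the desired constant $L_2$ follows.

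The main obstacle is the uniform boundedness of $(\sigma_\star, \xi_\star)$ in part (b): without it, the strong-convexity sensitivity bound is vacuous. Fortunately, this is where Lemma~\ref{lem:structural_L}(b)--(c) does exactly the right work for us, so the only delicate step is ensuring that the constants $\gamma_0$ and the resulting bounds depend only on $(K_1, K_2)$ and not on $\lambda$ itself; this is guaranteed because the whole parameter range $[K_1, K_2]$ satisfies Assumption~\ref{asm:regularity}. The monotonicity argument used to uniformize the norm bound in part (a) is analogous and standard.
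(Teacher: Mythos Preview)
Your proof is correct and follows essentially the same template as the paper's: strong convexity of the objective, linearity of the $\lambda$-dependence through the ridge term, and a uniform bound on the minimizer's norm. The only noteworthy difference is in part (b), where you bound $(\sigma_\star,\xi_\star)$ directly from the fixed-point system in Lemma~\ref{lem:structural_L}(c) (using $|\rho'|\le 1$ and $\gamma_\star\ge\gamma_0$), whereas the paper instead uses strong convexity of $\Psi$ together with the elementary bound $\Psi(0,0;\lambda)\le\log 2$; both routes yield constants depending only on $(K_1,K_2)$, and your symmetrized ``add both strong-convexity inequalities'' variant (with the monotonicity observation for part (a)) is a clean alternative to the paper's one-sided version.
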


\noindent Equipped with this lemma, we proceed with the proof.  Let $\mathcal{N}_{\lambda}$ denote an $\epsilon$-net of the interval $[\lambda_{\min}, \lambda_{\max}]$ so that for each \sloppy\mbox{$\lambda \in [\lambda_{\min}, \lambda_{\max}]$}, there exists $\lambda_i \in \mathcal{N}_{\lambda}$ such that $\lvert \lambda - \lambda_i \rvert \leq \epsilon$.  Now note the decomposition
\begin{align}
	\label{decomp:unif_lambda}
	\psi\bigl(\widehat{\bt}(\bZ, \lambda)\bigr) - \phi_{\star}(\lambda) = T_1 + T_2 + T_3,
\end{align}
where 
\begin{align*}
	T_1 = \underbrace{\psi\bigl(\widehat{\bt}(\bZ, \lambda)\bigr) - \psi\bigl(\widehat{\bt}(\bZ, \lambda_i)\bigr)}_{\text{approximation error}}&,\quad T_2 = \underbrace{\psi\bigl(\widehat{\bt}(\bZ, \lambda_i)\bigr) - \phi_{\star}(\lambda_i)}_{\text{fluctuation}}, \quad \text{ and } \quad T_3 = \underbrace{\phi_{\star}(\lambda_i) - \phi_{\star}(\lambda)}_{\text{approximation error}}.
\end{align*}
\iffalse
In words, the term $T_1$ denotes the approximation error of the map $\lambda \mapsto \psi\bigl(\widehat{\bt}(\bZ, \lambda)\bigr)$ on the net $\mathcal{N}_{\lambda}$; the term $T_2$ denotes the fluctuation of the random variable $\psi(\widehat{\bt}(\bZ, \lambda_i))$ around its typical value; and the term $T_3$ denotes the approximation error of the map $\lambda \mapsto \phi_{\star}(\lambda)$ on the net $\mathcal{N}_{\lambda}$.
\fi
Next, we invoke Lemma~\ref{lem:lipschitz-minimizers} to obtain the inequalities $\lvert T_1 \rvert \leq L_1 \epsilon$ and $\lvert T_3 \rvert \leq L_2 \epsilon$.  Towards bounding the fluctuation term $T_2$ uniformly on the net $\mathcal{N}_{\lambda}$, we define the event
\[
\mathcal{A} = \bigcap_{\lambda \in \mathcal{N}_{\lambda}}\Bigl\{\bigl\lvert \psi\bigl(\widehat{\bt}(\bZ, \lambda)\bigr) - \phi_{\star}(\lambda)\bigr\rvert \leq \epsilon\Bigr\}.
\]
Setting $\epsilon = C_1 n^{-\tau/12}$ and subsequently applying the union bound in conjunction with the inequality~\eqref{ineq:pointwise-lambda} yields the inequality
\[\pr\{\mathcal{A}\} \geq 1 - C\epsilon^{-1}n^{-\tau/6} \cdot (\log{n})^2 \geq 1 - C_2n^{-\tau/12} \cdot (\log{n})^2
\]
Consequently, on event $\mathcal{A}$, we obtain the upper bound $\lvert T_2 \rvert \leq \epsilon$.  Putting the pieces together, we obtain the inequality
\begin{align} \label{ineq:uniform-lambda}
	\pr\biggl\{\sup_{\lambda \in [\lambda_{\min}, \lambda_{\max}]}\left \lvert \psi\bigl(\widehat{\bt}(\bZ, \lambda)\bigr) - \phi_{\star}(\lambda)  \right \rvert \geq C' \cdot n^{-\tau/12}\biggr\} \leq C n^{-\tau/12} \cdot (\log{n})^2.
\end{align}
Taking $n \rightarrow \infty$ yields the inequality~\eqref{ineq:uniform-psi}, which concludes the proof. \qed

\iffalse
\paragraph{Conclusion.}
It remains to extend the result to all continuous functions $\phi: \mathbb{R}^2 \rightarrow \mathbb{R}$.  Specializing the inequality in the above display to the $\ell_2$ norm $\phi(\sigma, \xi) = \sqrt{\sigma^2 + \xi^2}$, we invoke the inequality~\eqref{ineq:uniform-lambda} to obtain the inequality
\begin{align*}
	\pr\biggl\{\sup_{\lambda \in [\lambda_{\min}, \lambda_{\max}]}\Bigl(\bigl \lvert \sigma\bigl(\widehat{\bt}(\bZ, \lambda)\bigr) - \sigma_{\star}(\lambda)\bigr \rvert^2 +  \bigl \lvert \xi\bigl(\widehat{\bt}(\bZ, \lambda)\bigr) - \xi_{\star}(\lambda) \bigr\rvert^2\Bigr)^{1/2} \geq (L_1 \vee L_2) \cdot  n^{-3\tau/28}\biggr\} \leq Cn^{-3\tau/28}.
\end{align*}
Consider now $\phi: \mathbb{R}^2 \rightarrow \mathbb{R}$ continuous.  Invoking the inequality in the display above in conjunction with the definition of continuity and subsequently taking the limit $n \rightarrow \infty$, we obtain the limit
\[
\sup_{\lambda \in [\lambda_{\min}, \lambda_{\max}]}\Bigl \lvert \phi\Bigl(\sigma\bigl(\widehat{\bt}(\bZ, \lambda)\bigr), \xi\bigl(\widehat{\bt}(\bZ, \lambda)\bigr)\Bigr) - \phi_{\star}(\lambda) \Bigr \rvert \overset{\mathsf{P}}{\rightarrow} 0,
\]
which completes the proof. \qed
\fi

\subsubsection{Proof of Lemma~\ref{lem:universality-main-text}}
\label{sec:proof-universality-main-text}
Throughout the proof of this lemma, we will suppress the dependence on $\lambda$ and write $\widehat{\bt}(\cdot, \lambda) = \widehat{\bt}(\cdot)$ as well as $\phi_{\star}(\lambda) = \phi_{\star}$ when the context is clear.
Next, let the tuple of constants $(M_1, M_2, M_3)$ be as in Lemma~\ref{lem:l2_norm_theta_hat} and define the set
\begin{align}\label{eq:T-def}
	\hspace*{-0.5cm}\mathbb{T} = \Bigl\{\bt \in \mathbb{R}^{p}:\; &\norm{\bt}_2 \leq M_1\sqrt{n}, \; \norm{\bt}_{\infty} \leq M_2  (\log{n})^2 (\| \bt_0 \|_{\infty} \vee 1), \; \| \bZ\bt\|_{\infty} \leq M_3\sqrt{\log{n}}\Bigr\}.
\end{align}
Note that by Lemma~\ref{lem:l2_norm_theta_hat}, with probability at least $1 - c/n$, the minimum of $\mathcal{L}_n$ over $\mathbb{R}^n$ is achieved over $\mathbb{T}$; that is, $
\min_{\bt \in \mathbb{R}^p}\; \mathcal{L}_n(\bt; \bZ, \lambda) = \min_{\bt \in \mathbb{T}}\; \mathcal{L}_n(\bt; \bZ, \lambda)$. 
Now, we pursue a strategy of~\citet{montanari2017universality}, which studies universality of the loss function itself and then transfers this control to functions of the minimizers.  In particular, we introduce the perturbed loss
\begin{align}
	\label{eq:perturbed-loss}
	\mathfrak{L}_n(\bt; s, \bZ, \lambda) = \mathcal{L}_n(\bt; \bZ, \lambda) + s \psi(\bt),
\end{align}
and define the minimum value (over the set $\mathbb{T}$) of this perturbed loss $\mathfrak{M}_\mathbb{T}$ as well as its associated difference quotient $\mathfrak{D}_\mathbb{T}$ as 
%\begin{subequationss}
	\begin{align}\label{eq:perturbed-difference}
		\mathfrak{M}_\mathbb{T}(s; \bZ) := \min_{\bt \in \mathbb{T}}\; \mathfrak{L}_n(\bt; s, \bZ, \lambda), \quad \text{ and } \quad \mathfrak{D}_\mathbb{T}(s, \bZ) := \frac{1}{s} \bigl( \mathfrak{M}_\mathbb{T}(s; \bZ) - \mathfrak{M}_\mathbb{T}(0; \bZ) \bigr).
	\end{align}
%\end{subequations}
Note that $\lim_{s \downarrow 0} \mathfrak{D}_\mathbb{T}(s; \bZ) = \psi(\widehat{\bt}(\bZ))$ is the derivative---evaluated at zero---of the perturbed minimum $\mathfrak{M}_\mathbb{T}$, and we will use the following bound of a finite differences approximation 
\begin{align}\label{ineq:finite-diff}
0 \leq \psi\bigl(\widehat{\bt}(\bZ)\bigr) - \mathfrak{D}_\mathbb{T}(s; \bZ) \leq \frac{Cs}{\lambda}, \qquad \text{ for } s > 0.
\end{align}
We take this inequality for granted for the time being, deferring its proof to the end of the section.  For notational convenience, during this proof we will write
\begin{align}\label{eq:m-star}
\mathfrak{M}_{\star} := L\bigl(\sigma_{\star}(\lambda), \xi_{\star}(\lambda), \gamma_{\star}(\lambda)\bigr).
\end{align}
We additionally require the following lemma, which provides two sided bounds on the distribution function of the random variable $\mathfrak{M}_\mathbb{T}(s; \bZ)$ in terms of the distribution function of the random variable $\mathfrak{M}_\mathbb{T}(s; \bG)$.  We provide the proof, which relies on the Lindeberg principle~\citep{chatterjee2006generalization,lindeberg1922neue}, in Appendix~\ref{sec:proof-universality}.
\begin{lemma}
	\label{lem:universality}
	Under the setting of Lemma~\ref{lem:universality-main-text}, there exists a positive constant $C$, depending only on $(K_1, K_2, K_3, K_4, \tau)$, such that for any scalars $\lvert s \rvert \leq 1$ and $t \in \mathbb{R}$, the following bounds hold.
	\begin{align*}
		\pr\Bigl\{\mathfrak{M}_\mathbb{T}(s; \bZ) \leq t \Bigr\} &\leq \pr\left\{\mathfrak{M}_\mathbb{T}(s; \bG)  \leq t + n^{-\tau/6} \right\} + Cn^{-\tau/6} \cdot (\log{n})^2, \quad \text{ and }\\
		\pr\Bigl\{\mathfrak{M}_\mathbb{T}(s; \bZ) \geq t \Bigr\} &\leq \pr\left\{\mathfrak{M}_\mathbb{T}(s; \bG) \geq t - n^{-\tau/6} \right\} + Cn^{-\tau/6} \cdot (\log{n})^2.
	\end{align*}
\end{lemma}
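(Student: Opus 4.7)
The proof will follow the Lindeberg exchange strategy, comparing $(\bX, \bZ)$ to $(\widetilde{\bX}, \bG)$ entry-pair by entry-pair. (We must swap the $\bX$ entries too, because the labels $\by$ depend on $\bX$.) The four steps I will carry out are: (i) replace the (non-smooth) minimum $\mathfrak{M}_\mathbb{T}$ by an exponentially smoothed surrogate, (ii) average out the label randomness to obtain a function of $(\bX, \bZ)$ that is smooth in both arguments, (iii) swap the pairs $(X_{ij}, Z_{ij})$ for $(\widetilde{X}_{ij}, G_{ij})$ one at a time and bound the third-order Taylor remainders using the matched first two moments and the sub-Gaussian control, and (iv) convert the resulting comparison of smooth test functions into the claimed CDF bound.

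\textbf{Smoothing.} For parameters $\beta, h > 0$ to be chosen later, set
\[
\widehat{\mathfrak{M}}_\beta(s; \by, \bZ) := -\beta^{-1} \log \int_\mathbb{T} e^{-\beta \mathfrak{L}_n(\bt; s, \bZ, \lambda)}\, d\bt,
\]
and let $\chi_h$ be a smooth nondecreasing sandwich of $\mathbf{1}\{\cdot \leq t\}$ with $\chi_h(x - h) \leq \mathbf{1}\{x \leq t\} \leq \chi_h(x + h)$ and $\|\chi_h^{(k)}\|_\infty \lesssim h^{-k}$ for $k \leq 3$. Standard Laplace-type bounds together with $\log \vol(\mathbb{T}) \lesssim p \log n$ give $|\widehat{\mathfrak{M}}_\beta - \mathfrak{M}_\mathbb{T}| \lesssim p \log(n)/\beta$ uniformly in $(\by, \bZ)$. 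Since the labels enter only through signs $y_i \in \{\pm 1\}$, I will integrate them out using the conditional mass function $\pr\{y_i = 1 \mid \bx_i\} = \rho'(\langle \bx_i, \bt_0 \rangle)$ and study
\[
H_\beta(\bX, \bZ) := \E\bigl[\chi_h\bigl(\widehat{\mathfrak{M}}_\beta(s; \by, \bZ) - t\bigr) \,\big|\, \bX, \bZ\bigr],
\]
which depends smoothly on both matrices.

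\textbf{Lindeberg exchange and derivative bounds.} By Definition~\ref{def:universality}, the pairs $(X_{ij}, Z_{ij})$ are jointly independent across $(i,j)$ and match $(\widetilde{X}_{ij}, G_{ij})$ in all first- and second-order moments, including the cross-moment $\alpha_c/p$. Swapping a single pair at a time and Taylor-expanding $H_\beta$ to third order annihilates the first two contributions, leaving a per-swap error bounded by $\|\partial^3 H_\beta\|_\infty \cdot (\E|X|^3 + \E|Z|^3) \lesssim \|\partial^3 H_\beta\|_\infty \cdot p^{-3/2}$; summing over $np$ coordinates yields a total error $\lesssim \sqrt{n/p}\cdot \|\partial^3 H_\beta\|_\infty$. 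To bound the third derivatives, $\partial_{Z_{k\ell}} \widehat{\mathfrak{M}}_\beta$ is the Gibbs expectation of $\partial_{Z_{k\ell}} \mathfrak{L}_n = -\frac{y_k t_\ell}{n}\rho'(-y_k \langle \bz_k, \bt \rangle)$, which over $\mathbb{T}$ is controlled by $n^{-5/6 - \tau}(\log n)^{3/2}$ via Lemma~\ref{lem:l2_norm_theta_hat}(b)--(c) together with Assumption~\ref{asm:groundtruth}; higher derivatives of $\widehat{\mathfrak{M}}_\beta$ in $Z_{k\ell}$ cost at most a factor of $\beta$ each (Gibbs variances and cumulants), and the analogous derivatives in $X_{k\ell}$ are handled through the smoothness of $\rho'(\langle \bx_k, \bt_0 \rangle)$ together with $\|\bt_0\|_\infty \lesssim n^{1/6 - \tau}$. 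The outer chain rule on $\chi_h \circ \widehat{\mathfrak{M}}_\beta$ introduces the factor $\|\chi_h^{(3)}\|_\infty \lesssim h^{-3}$.

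\textbf{Balancing, CDF conversion, and main obstacle.} I will choose $\beta$ just large enough that the smoothing error $p\log(n)/\beta$ is $\ll h$ and then set $h \asymp n^{-\tau/6}$; the constraint $\tau < 1/6$ in Assumption~\ref{asm:groundtruth} is exactly what makes the aggregate Lindeberg error $o(n^{-\tau/6}(\log n)^2)$. The sandwich property gives
\[
\pr\{\mathfrak{M}_\mathbb{T}(s; \bZ) \leq t\} \leq \E\bigl[\chi_h(\mathfrak{M}_\mathbb{T}(s; \bZ) - t + h)\bigr] \leq \E[H_\beta(\bX, \bZ)] + (\text{smoothing error}),
\]
and the Lindeberg comparison transfers this (up to the Lindeberg error) to the same quantity with $(\widetilde{\bX}, \bG)$, which is in turn bounded by $\pr\{\mathfrak{M}_\mathbb{T}(s; \bG) \leq t + n^{-\tau/6}\}$; the reverse inequality is symmetric. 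The hardest step will be the derivative bookkeeping in Step~3: tracking the $\beta$-dependence of $\partial^k \widehat{\mathfrak{M}}_\beta$ (Gibbs cumulants) with enough precision to absorb the $h^{-3}$ factor from $\chi_h$, and dealing with the fact that the set $\mathbb{T}$ itself depends on $\bZ$ through the $\|\bZ \bt\|_\infty \lesssim \sqrt{\log n}$ constraint. One natural workaround is to replace $\mathbb{T}$ inside the log-integral by a slightly larger deterministic outer set on which the derivative bounds are clean, then invoke Lemma~\ref{lem:l2_norm_theta_hat} to argue that $\widehat{\bt}$ lies in the original $\mathbb{T}$ with probability at least $1 - c/n$, so that the soft-min is unaffected up to negligible error. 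The spread Assumption~\ref{asm:groundtruth} enters precisely to keep the per-coordinate sensitivity of $\mathfrak{L}_n$ of order $n^{-5/6 - \tau}(\log n)^C$, rather than the worst-case $n^{-1/2}$ that a spiky $\bt_0$ would produce.
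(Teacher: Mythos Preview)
Your overall plan matches the paper's: smooth the indicator, soft-min the loss, and run a paired Lindeberg swap on $(X_{ij},Z_{ij})$ using the matched first two (including cross) moments from Definition~\ref{def:universality}. The minor difference---your continuous log-integral over $\mathbb{T}$ versus the paper's log-sum over a finite $\epsilon$-net $\mathbb{T}_\epsilon$---is cosmetic; both produce smoothing error $\sim n\log n/\beta$. The substantive difference is how you handle the labels. The paper does \emph{not} average them out: it keeps the auxiliary uniforms $U_i$ and replaces each $y_i$ by the smooth surrogate $F\bigl(r^{-1}[U_i-\rho'(\langle\bx_i,\bt_0\rangle)]\bigr)$, obtaining a loss $\mathcal{L}_r^{\mathsf{smooth}}$ that is jointly smooth in $(\bX,\bZ)$ and whose $X$-derivatives are computed mechanically by the chain rule, each picking up a factor $r^{-1}$ (Lemmas~\ref{lem:smoothing-label-min}--\ref{lem:third-deriv}); the final balance is $\beta=n^{1+\tau/3}$, $r=n^{-2\tau/3}(\log n)^{3/2}$, $k=n^{\tau/6}$.

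Your label-averaging route is a legitimate alternative---it eliminates $r$ entirely---but your sketch has a real gap in the $X$-derivative bookkeeping. After averaging, the only $X_{k\ell}$-dependence of $H_\beta$ is through $p_k=\rho'(\langle\bx_k,\bt_0\rangle)$, so
\[
\partial_{X_{k\ell}}^{q}H_\beta=(\partial_{X_{k\ell}}^{q}p_k)\cdot\E_{\by^{(-k)}}\bigl[\chi_h\bigl(\widehat{\mathfrak M}_\beta^{\,y_k=+1}\bigr)-\chi_h\bigl(\widehat{\mathfrak M}_\beta^{\,y_k=-1}\bigr)\bigr].
\]
The first factor is indeed $\lesssim\|\bt_0\|_\infty^{q}\lesssim n^{q(1/6-\tau)}$, as you say, but that alone is fatal: with only the trivial bound $|\chi_h|\le 1$ on the bracket, the pure-$X$ third derivative contributes a Lindeberg error of order $n\cdot p^{-1/2}\cdot\|\bt_0\|_\infty^{3}\asymp n^{1-3\tau}$, which diverges for $\tau<1/3$. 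What saves you is that flipping a single label perturbs $\mathfrak L_n$ by at most $(2/n)|\langle\bz_k,\bt\rangle|\lesssim\sqrt{\log n}/n$ on $\mathbb{T}$, so the soft-min moves by at most that amount and the bracket is $\lesssim h^{-1}\sqrt{\log n}/n$. This is precisely where Lemma~\ref{lem:l2_norm_theta_hat}(c) is actually needed, and it is not a chain-rule step through $\chi_h\circ\widehat{\mathfrak M}_\beta$ (note $\widehat{\mathfrak M}_\beta$ has \emph{no} $\bX$-dependence at all). Your phrase ``the analogous derivatives in $X_{k\ell}$ are handled through the smoothness of $\rho'$'' misses this mechanism. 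Once this single-label sensitivity bound is inserted, your balance does close with $h\asymp n^{-\tau/6}$ and $\beta$ slightly above $n^{1+\tau/6}$.

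On the $\bZ$-dependence of $\mathbb{T}$: you are right to flag it, and the paper is equally silent---its derivative formulas in Section~\ref{sec:proof-third-deriv} treat the net $\mathbb{T}_\epsilon$ as fixed. The natural fix for both routes is to freeze the domain at the hybrid matrix with the $(k,\ell)$ entry zeroed during the Taylor expansion, since a one-entry perturbation $|Z_{k\ell}\theta_\ell|\lesssim n^{-1/3-\tau}(\log n)^{C}$ is far below the $\sqrt{\log n}$ scale of the constraint.
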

\noindent Equipped with these tools, we proceed to the proof of Lemma~\ref{lem:universality-main-text}.  We decompose the difference $\psi\bigl(\widehat{\bt}(\bZ)\bigr) - \phi_{\star}$ as 
\begin{align*}	%\label{eq:universality_decomp}
	\psi\bigl(\widehat{\bt}(\bZ)\bigr) - \phi_{\star} = \bigl( \psi\bigl(\widehat{\bt}(\bZ)\bigr) - \mathfrak{D}_\mathbb{T}(s; \bZ)\bigr) +  \bigl( \mathfrak{D}_\mathbb{T}(s; \bZ) - \phi_{\star}\bigr).
\end{align*}
Subsequently, we apply the triangle inequality in conjunction with the finite differences approximation~\eqref{ineq:finite-diff} to obtain the upper bound
\begin{align}	\label{ineq:main-univ-decomp}
	\bigl \lvert \psi\bigl(\widehat{\bt}(\bZ)\bigr) - \phi_{\star} \bigr \rvert \leq \frac{Cs}{\lambda} + \bigl\lvert \mathfrak{D}_\mathbb{T}(s; \bZ) -\phi_{\star}\bigr\rvert.
\end{align}
Focusing on the second term, we add and subtract the quantity $\mathfrak{M}_{\star}/s$~\eqref{eq:m-star}, expand $\mathfrak{D}_\mathbb{T}$ according to its definition~\eqref{eq:perturbed-difference}, and apply the union bound to obtain the inequality
\begin{align*}%\label{ineq:D-decomp-ineq}
	\hspace*{-0.415cm}\pr\Bigl\{ \bigl \lvert \mathfrak{D}_\mathbb{T}(s; \bZ) - \phi_{\star} \bigr \rvert \geq t \Bigr\} \leq \pr\biggl\{\frac{1}{s} \big\lvert \mathfrak{M}_{\mathbb{T}}(s; \bZ) - \mathfrak{M}_{\star} - s\phi_{\star}\bigr\rvert \geq \frac{t}{2}\biggr\} + \pr\biggl\{\frac{1}{s}\bigl\lvert \mathfrak{M}_{\mathbb{T}}(0; \bZ) - \mathfrak{M}_{\star} \bigr \rvert \geq \frac{t}{2}\biggr\}.
\end{align*}
Applying Lemma~\ref{lem:universality} yields the pair of upper bounds
%\begin{subequations}
	\begin{align*}
		\pr\biggl\{\frac{1}{s} \big\lvert \mathfrak{M}_\mathbb{T}(s; \bZ) - \mathfrak{M}_{\star} - s\phi_{\star}\bigr\rvert \geq \frac{t}{2}\biggr\} &\leq \pr\biggl\{\frac{1}{s} \big\lvert \mathfrak{M}_\mathbb{T}(s; \bG) - \mathfrak{M}_{\star} - s\phi_{\star}\bigr\rvert \geq \frac{t}{2}  - \frac{n^{-\tau/6}}{s}\biggr\} + \epsilon_n, \\%\label{ineq:Dns-ineq-a}\\
		\pr\biggl\{\frac{1}{s}\bigl\lvert \mathfrak{M}_\mathbb{T}(0; \bZ) - \mathfrak{M}_{\star} \bigr \rvert \geq \frac{t}{2}\biggr\} &\leq \pr\biggl\{\frac{1}{s} \big\lvert \mathfrak{M}_\mathbb{T}(0; \bG) - \mathfrak{M}_{\star}\bigr\rvert \geq \frac{t}{2} - \frac{n^{-\tau/6}}{s}\biggr\} + \epsilon_n,%\label{ineq:Dns-ineq-b}
	\end{align*}
%\end{subequations}
where we have let $\epsilon_n = Cn^{-\tau/6} \cdot (\log{n})^2$.
We focus our attention on the first of the pair of inequalities and apply the triangle inequality in conjunction with the inequality~\eqref{ineq:finite-diff} to obtain
\begin{align*}
	\frac{1}{s} \big\lvert \mathfrak{M}_\mathbb{T}(s; \bG) - \mathfrak{M}_{\star} - s\phi_{\star}\bigr\rvert &\leq \bigl \lvert \mathfrak{D}_n(s; \bG) - \psi\bigl(\widehat{\bt}(\bG)\bigr) \bigr \rvert + \frac{1}{s} \bigl \lvert \mathfrak{M}_\mathbb{T}(0; \bG) - \mathfrak{M}_{\star} \bigr \rvert + \bigl \lvert \psi\bigl(\widehat{\bt}(\bG)\bigr) - \phi_{\star} \bigr \rvert\\
	&\leq \frac{Cs}{\lambda} + \frac{1}{s} \bigl \lvert \mathfrak{M}_\mathbb{T}(0; \bG) - \mathfrak{M}_{\star} \bigr \rvert + \bigl \lvert \psi\bigl(\widehat{\bt}(\bG)\bigr) - \phi_{\star} \bigr \rvert,
\end{align*}
 Combining the previous three displays, we deduce the inequality
\begin{align*}
	\pr\biggl\{ \bigl \lvert \mathfrak{D}_{\mathbb{T}}(s; \bZ) - \phi_{\star} \bigr \rvert \geq \frac{t}{2} \biggr\} \leq \;&2\pr\biggl\{\frac{1}{s} \big\lvert \mathfrak{M}_\mathbb{T}(0; \bG) - \mathfrak{M}_{\star}\bigr\rvert \geq \frac{t}{12} - \frac{n^{-\tau/6}}{3s}\biggr\}\\
	&+ \pr\biggl\{ \big\lvert \psi\bigl(\widehat{\bt}(\bG)\bigr) - \phi_{\star}\bigr\rvert \geq \frac{t}{12} - \frac{n^{-\tau/6}}{3s}\biggr\} + Cn^{-\tau/6} \cdot (\log{n})^2,
\end{align*}
as long as the pair of scalars $(s, t)$ satisfy the inequality $t \geq Cs/\lambda$.  Combining the bound of the previous display with the inequality~\eqref{ineq:main-univ-decomp} yield
\begin{align*}
	\pr\Bigl\{\bigl \lvert \psi\bigl(\widehat{\bt}(\bZ)\bigr) - \phi_{\star} \bigr \rvert \geq t\Bigr\} \leq \;&2\pr\biggl\{\frac{1}{s} \big\lvert \mathfrak{M}_\mathbb{T}(0; \bG) - \mathfrak{M}_{\star}\bigr\rvert \geq \frac{t}{12} - \frac{n^{-\tau/6}}{3s}\biggr\}\\
	&+ \pr\biggl\{ \big\lvert \psi\bigl(\widehat{\bt}(\bG)\bigr) - \phi_{\star}\bigr\rvert \geq \frac{t}{12} - \frac{n^{-\tau/6}}{3s}\biggr\} + Cn^{-\tau/6} \cdot (\log{n})^2,
\end{align*}
which again holds for $t \geq Cs/\lambda$.   The conclusion follows upon recognizing that \sloppy\mbox{$
	\mathfrak{M}_\mathbb{T}(0; \bG) = \min_{\bt \in \mathbb{T}}\; \mathcal{L}_n(\bt; \bG, \lambda) = \min_{\bt \in \mathbb{R}^p} \; \mathcal{L}_n(\bt; \bG, \lambda)$},
substituting the definition of $\mathfrak{M}_{\star}$~\eqref{eq:m-star}, and setting $t = C\lambda^{-1/2}n^{-\tau/12}$ and $s = \lambda^{1/2}n^{-\tau/12}$ for a large enough constant $C$.  It remains to prove the inequality~\eqref{ineq:finite-diff}.
\vspace{-0.2cm}
\paragraph{Proof of the finite differences approximation~\eqref{ineq:finite-diff}.} We prove the lower bound before turning our attention to the upper bound. 

\smallskip
\noindent\underline{Non-negativity.}  Let $\bt_s$ denote any minimizer of the perturbed loss $\mathfrak{L}_n(\bt; s, \bZ, \lambda)$ and expand
\[
\psi\bigl(\widehat{\bt}(\bZ) \bigr) - \mathfrak{D}_\mathbb{T}(s, \bZ) = \frac{1}{s} \cdot \bigl[ \mathfrak{L}_n(\widehat{\bt}(\bZ); s, \bZ, \lambda) - \mathfrak{L}_n(\bt_s; s, \bZ, \lambda)\bigr] \geq 0,
\]
where the final inequality follows since $\bt_s$ minimizes the loss $\mathfrak{L}_n(\cdot; s, \bZ, \lambda)$.

\medskip
\noindent\underline{Upper bound.} Again, let $\bt_s$ denote any minimizer of the perturbed loss $\mathfrak{L}_n(\bt; s, \bZ, \lambda)$ and expand
\begin{align} \label{ineq:perturbation-lemma-ineq1}
	s \cdot \bigl[ \psi\bigl(\widehat{\bt}(\bZ) \bigr) - \mathfrak{D}_\mathbb{T}(s, Z) \bigr] &= \mathcal{L}_n(\widehat{\bt}(\bZ); \bZ, \lambda) - \mathcal{L}_n(\bt_s; \bZ, \lambda) + s\psi\bigl(\widehat{\bt}(\bZ) \bigr) - s \psi(\bt_s)\nonumber\\
	&\leq s \cdot \bigl[ \psi\bigl(\widehat{\bt}(\bZ) \bigr) -  \psi(\bt_s) \bigr] \leq \frac{Cs}{\sqrt{p}} \| \widehat{\bt}(\bZ) - \bt_s \|_2,
\end{align}
where the penultimate inequality follows since $\widehat{\bt}(\bZ)$ minimizes $\mathcal{L}_n$, and the final inequality follows since the Lipschitz constant of $\psi$ is bounded as $C/\sqrt{p}$.  It remains to bound the quanity $\| \widehat{\bt}(\bZ) - \bt_s \|_2$.  To this end, note the lower bound $
0 \leq \mathfrak{L}_n(\widehat{\bt}(\bZ)) - \mathfrak{L}_n(\bt_s)$. 
Expanding and applying the Lipschitz nature of the the function $\psi$ yields
\[
0 \leq \mathcal{L}_n(\widehat{\bt}(\bZ)) - \mathcal{L}_n(\bt_s) + Cs\cdot 	\frac{\| \widehat{\bt}(\bZ) - \bt_s \|_2}{\sqrt{p}}.
\]
Re-arranging and applying strong convexity of the function $\mathcal{L}_n$ yields the upper bound
\begin{align}\label{ineq:perturbation-lemma-ineq2}
	\frac{1}{\sqrt{p}}\| \widehat{\bt}(\bZ) - \bt_s \|_2 \leq \frac{Cs}{\lambda}.
\end{align}
To conclude, substitute the inequality~\eqref{ineq:perturbation-lemma-ineq2} into the inequality~\eqref{ineq:perturbation-lemma-ineq1} and re-arrange. \qed

\subsection{Proof of Proposition~\ref{prop:sharp}}\label{sec:proof-prop-sharp}
We begin by introducing some notation and a few technical tools in Section~\ref{sec:preliminaries-prop-sharp}.  Then, in Subsection~\ref{subsec:proof-prop-sharp-a}, we prove Proposition~\ref{prop:sharp}(a).  Proposition~\ref{prop:sharp}(b) follows from a nearly identical argument to that of part (a), so we omit it for brevity.

\subsubsection{Preliminaries} \label{sec:preliminaries-prop-sharp}
Throughout this section, we will consider the regularization strength $\lambda$ as fixed.  Let us first define the auxiliary loss $\ell_n: \mathbb{R}^p \times \mathbb{R}_{\geq 0} \rightarrow \mathbb{R}$ as
\begin{samepage}
\begin{align}
	\label{def:aux_loss}
	\ell_n(\bt, \gamma) = \;&\frac{\lambda \norm{\bt}_2^2}{2p}  - \frac{\alpha_2\gamma \bigl\| \pproj \bg\bigr\|_2^2 \norm{\proj_{\bt_0}^{\perp}\bt}_2^2}{2np}\nonumber\\
	&+ \frac{1}{n} \sum_{i=1}^{n}\min_{u_i \in \mathbb{R}}\Bigl\{\rho(-y_i u_i) + \frac{\gamma}{2}\Bigl(u_i - \left(\bG \proj_{\bt_0} \bt\right)_{i} - \frac{\sqrt{\alpha_2}\norm{\proj_{\bt_0}^{\perp}\bt}_2h_i}{\sqrt{p}}\Bigr)^2\Bigr\},
\end{align}
\end{samepage}
where $(h_1, h_2, \dots, h_n) = \bh \sim \mathsf{N}(0, \bI_n)$ and $\bg \sim \mathsf{N}(0, \bI_p)$ are independent of each other as well as of all other randomness in the problem.  
%\begin{align}
%	\label{def:shorthand-xi-sigma}
%	\xi(\bt) = \frac{\< \bt, \bt_0 \>}{R^2 p} \qquad \text{ and } \qquad \sigma(\bt) = \frac{1}{\sqrt{p}} \| \pproj \bt \|_2,
%\end{align}
We introduce the notation $Z_{1, i} = \frac{(\bG \proj_{\bt_0} \bt)_{i}}{\sqrt{\alpha_2} \xi R}$ and  $Z_{2, i} = h_i$
where \sloppy\mbox{$(Z_{1, i})_{1 \leq i \leq n} \overset{\mathsf{i.i.d.}}{\sim} \mathsf{N}(0, 1)$}, $(Z_{2, i})_{1 \leq i \leq n} \overset{\mathsf{i.i.d.}}{\sim} \mathsf{N}(0, 1)$ and $(Z_{1, i})_{1 \leq i \leq n}$ are independent of $(Z_{2, i})_{1 \leq i \leq n}$.  Using these, we define the scalarized auxiliary loss $L_n: \mathbb{R}_{\geq 0} \times \mathbb{R}\times \mathbb{R}_{\geq 0} \rightarrow \mathbb{R}$ as 
\begin{align}
	\label{def:aux_scalar}
	L_n(\sigma, \xi, \gamma) \ =\ &\frac{\lambda (\sigma^2 + \xi^2 R^2)}{2} - \frac{\alpha_2 \gamma \sigma^2\bigl \| \pproj \bg\bigr\|_2^2}{2n} \nonumber\\
	&+ \frac{1}{n} \sum_{i=1}^{n} \min_{u_i \in \mathbb{R}}\left\{\rho(-y_i u_i) + \frac{\gamma}{2}\left(u_i - \sqrt{\alpha_2} \xi R Z_{1, i} - \sqrt{\alpha_2} \sigma Z_{2, i}\right)^2\right\}.
\end{align}
Note that (i.) under the change of variables~\eqref{def:shorthand-xi-sigma}, \sloppy\mbox{$
	\ell_n(\bt, \gamma) =  L_n(\sigma(\bt), \xi(\bt), \gamma)$} and (ii.) the large $n$ limit of $L_n$ coincides with the asymptotic loss $L$~\eqref{eq:asymploss}.  The latter point will be made precise in Lemma~\ref{lem:costconcentration} to follow.  

We next define error sets $\mathbb{D}_{\epsilon}$ and $\mathcal{D}_{\epsilon}$, which we will use to deduce properties of the minimizers of the loss functions $\mathcal{L}_n$ and $L_n$. 
\begin{definition}[Error set]\label{def:error_sets}
	Consider problem parameters satisfying Assumption~\ref{asm:regularity}.  For any scalar $\epsilon \in [0, 1)$, the \emph{error set} $\mathbb{D}_{\epsilon} \subseteq \mathbb{R}^p$ and its scalarized counterpart $\mathcal{D}_{\epsilon} \subseteq{R}^2$ are defined as
	\[
	\mathbb{D}_{\epsilon} = \Bigl\{\bt \in \mathbb{R}^p: \lvert \xi(\bt) - \xi_{\star} \rvert \vee   \lvert \sigma(\bt)- \sigma_{\star} \rvert \geq \epsilon  \Bigr\} \quad \text{ and } \quad 	\mathcal{D}_{\epsilon} = \Bigl\{(\sigma, \xi) \in \mathbb{R}^2: \lvert \xi - \xi_{\star} \rvert \vee \lvert \sigma - \sigma_{\star} \rvert  \geq \epsilon\Bigr\}.
	\]
\end{definition}
\noindent With these preliminary definitions in hand, we state two technical lemmas which we will use throughout.  The first connects the loss function $\mathcal{L}_n$~\eqref{eq:loss} to the auxiliary loss $\ell_n$~\eqref{def:aux_loss}. 
\begin{lemma}
	\label{lem:ao}
	Consider the setting of Proposition~\ref{prop:sharp}.  There exists a pair of positive constants $(c, C)$ depending only on $(K_1, K_2)$ such that if $D$ denotes either (i) the set $D = \mathbb{B}_2(C)$ or (ii) the set $D = \mathbb{D}_{\epsilon} \cap \mathbb{B}_2(C)$ as in Definition~\ref{def:error_sets}, then for all $t \in \mathbb{R}$, 
		\begin{align}
		\label{ineq:ao_ineq1}
		\pr\Bigl\{\min_{\bt \in D}\; \mathcal{L}_n(\bt) \leq t\Bigr\} &\leq 2\pr\Bigl\{\min_{\bt \in D}\max_{\gamma \geq 0 }\; \ell_n(\bt, \gamma) \leq t\Bigr\} .
	\end{align}
	Moreover, if $D = \mathbb{B}_2(C)$, we have in addition
	\begin{align}
		\label{ineq:ao_ineq2}
		\pr\Bigl\{\min_{\bt \in D}\; \mathcal{L}_n(\bt) \geq t \Bigr\} &\leq 2\pr\Bigl\{\min_{\bt \in D}\max_{\gamma \geq 0}\; \ell_n(\bt, \gamma) \geq t\Bigr\}.
	\end{align}
\end{lemma}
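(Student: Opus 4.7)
The plan is to deduce Lemma~\ref{lem:ao} from the convex Gaussian min-max theorem (CGMT) applied to the bilinear form $\bv^\top \bG \bt$ produced after dualizing the logistic loss. First, I would rewrite $\mathcal{L}_n$ in min-max form by introducing $\bu = \bG\bt$ with Lagrange multiplier $\bv/n$:
\begin{equation*}
\mathcal{L}_n(\bt;\bG,\lambda) \;=\; \min_{\bu\in\reals^n}\max_{\bv\in\reals^n}\;\frac{1}{n}\sum_{i=1}^n\rho(-y_i u_i) + \frac{\lambda}{2p}\|\bt\|_2^2 + \frac{1}{n}\bv^\top(\bG\bt - \bu).
\end{equation*}
Since $|\rho'|\le 1$, any optimal $\bv$ has coordinates bounded by $1$, so $\bv$ may be restricted to a large compact convex set without changing the value; this is the compactness CGMT requires.

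Next, I would isolate the signal direction. Write $\bt = \xi\bt_0 + \bt_\perp$ with $\bt_\perp\perp\bt_0$ and pick an orthonormal basis $U\in\reals^{p\times(p-1)}$ of $\{\bt_0\}^\perp$, so that $\bt_\perp = U\bs$. The matrix $\bG U$ has i.i.d.\ $\NORMAL(0,\alpha_2/p)$ entries and is independent of the signal-direction projection $\bG\bt_0/\|\bt_0\|_2$, which is in turn the only part of $\bG$ jointly distributed with $\bX\bt_0/\|\bt_0\|_2$ and hence with $\by$. Conditioning on $(\bG\bt_0,\by)$ therefore isolates a clean i.i.d.\ Gaussian block $\bG U$. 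Applying Gordon's Gaussian comparison to $\bv^\top(\bG U)\bs$ conditional on this sigma algebra replaces it by
\begin{equation*}
\sqrt{\alpha_2/p}\,\bigl(\|\bt_\perp\|_2\,\bh^\top\bv + \|\bv\|_2\,(\pproj\bg)^\top\bt_\perp\bigr),\qquad \bh\sim\NORMAL(0,\bI_n),\ \bg\sim\NORMAL(0,\bI_p)\ \text{independent.}
\end{equation*}

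The third step is to simplify the auxiliary problem to $\ell_n$. Minimizing over the direction of $\bt_\perp$ at fixed norm aligns it against $\pproj\bg$, giving $(\pproj\bg)^\top\bt_\perp = -\|\pproj\bg\|_2\|\bt_\perp\|_2$. Setting $\bz := \xi\bG\bt_0 + \sqrt{\alpha_2/p}\,\|\bt_\perp\|_2\,\bh - \bu$, the $\bv$-maximization then reduces to $\sup_\bv\{n^{-1}\bv^\top\bz - n^{-1}\sqrt{\alpha_2/p}\,\|\bt_\perp\|_2\|\pproj\bg\|_2\|\bv\|_2\}$, which equals $0$ when $\|\bz\|_2 \le \sqrt{\alpha_2/p}\,\|\bt_\perp\|_2\|\pproj\bg\|_2$ and $+\infty$ otherwise. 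Introducing a Lagrange multiplier $\gamma\ge 0$ for this norm constraint and minimizing each $u_i$ via a Moreau envelope produces exactly $\ell_n(\bt,\gamma)$ after the substitution $\|\bt_\perp\|_2 = \sigma\sqrt p$.

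Finally, case (i) with $D = \mathbb{B}_2(C)$ is convex, and after $\max$-ing over $\gamma$ the scalarized objective is $(\sigma,\xi)$-convex by Lemma~\ref{lem:structural_L}(a); thus the full CGMT applies in both directions, yielding both~\eqref{ineq:ao_ineq1} and~\eqref{ineq:ao_ineq2}. For case (ii) with $D = \mathbb{D}_\epsilon\cap\mathbb{B}_2(C)$, the domain is the intersection of a ball with the complement of a ball in state space and is non-convex; only Gordon's one-sided comparison survives, yielding~\eqref{ineq:ao_ineq1} alone. The main technical obstacle is bookkeeping: one must verify that the compactification of $\bv$ is lossless, that conditioning on $(\bG\bt_0,\by)$ truly isolates an i.i.d.\ Gaussian block (using orthogonal-direction independence of Gaussians and the joint Gaussianity of $(\bG,\bX)$ in the error-in-variables ensemble), and that the convex-concavity required for the full CGMT is preserved through the dualization and subsequent Lagrangianization in case (i).
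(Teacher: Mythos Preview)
Your plan is essentially the paper's proof: dualize the constraint $\bu=\bG\bt$, compactify $\bv$, split $\bt$ along $\bt_0$ and its orthogonal complement, condition on $(\bG\bt_0,\by)$ to isolate an independent Gaussian block, apply Gordon/CGMT, then simplify via Cauchy--Schwarz on $\bv$, minimize over the direction of $\bt_\perp$, and Lagrangianize the resulting norm constraint with $\gamma\ge 0$. The paper compactifies slightly differently (via the $n^{-1/2}$-Lipschitz constant of $F_n$ and a tightness argument with an $\varepsilon$-event that is sent to zero at the end), and it cites \citet[Lemma~8]{kammoun2021precise} to justify minimizing over the direction of $\bt_\perp$ inside the min--max, which works precisely because both admissible sets $D$ are rotationally symmetric in that direction.

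One small correction: your justification for the two-sided inequality in case (i) is misplaced. The full CGMT applies because $D=\mathbb{B}_2(C)$ is convex and the \emph{primary} objective is jointly convex in $(\bt,\bu)$ and concave (linear) in $\bv$; it is this convex--concave structure of the original min--max problem that yields~\eqref{ineq:ao_ineq2}, not the convexity of the scalarized map $(\sigma,\xi)\mapsto\max_{\gamma\ge 0}L(\sigma,\xi,\gamma)$ from Lemma~\ref{lem:structural_L}(a). The latter is a downstream property used elsewhere, not the hypothesis CGMT needs.
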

\noindent The proof of this lemma---which we provide in Section~\ref{subsubsec:proof_lemma_ao}---relies on the CGMT~\citep{thrampoulidis2018}, a sharpening of Gordon's classical Gaussian comparison inequality~\citep{gordon1985some,Gordon1988}.  

The next lemma---whose proof can be found in Section~\ref{subsubsec:proof_cost_concentration}---provides pointwise and uniform concentration inequalities connecting the empirical loss $L_n$ to the asymptotic loss $L$.

\begin{lemma}
	\label{lem:costconcentration}
	Assume the setting of Proposition~\ref{prop:sharp} and let $M$ denote a positive constant.  There exists a pair of positive constants $(c, C)$, depending only on $(K_1, K_2, M)$ such that the following hold for all $\epsilon \in [0, 1)$.
	\begin{itemize}
		\item[(a)] For all $\gamma > 0$, $\sigma \in [0, M]$, and $\xi \in [-M, M]$, 
			\begin{align}
			\label{term:pointwise_concentration}
			\pr\Bigl\{\bigl \lvert L_n(\sigma, \xi, \gamma) - L(\sigma, \xi, \gamma) \bigr \rvert \geq \epsilon \Bigr\} \leq C\exp\Bigl\{-c\min\bigl(n\epsilon^2 \gamma^{-2}, n\epsilon \gamma^{-1}\bigr)\Bigr\}.
		\end{align}
	\item[(b)] For $\bar{\gamma} \geq 0$, define the set $
	\mathcal{C} = \Bigl\{(\sigma, \xi, \gamma) \in \mathbb{R}^3: 0\leq \sigma \leq M, -M \leq \xi \leq M, \gamma \in [0, \bar{\gamma}] \Bigr\}$.
	We have the uniform bound
		\begin{align}
		\label{term:uniform_concentration}
		\pr\Bigl\{\sup_{\mathcal{C}}\; \bigl\lvert L_n(\sigma, \xi, \gamma) - L(\sigma, \xi, \gamma)\bigr\rvert \geq \epsilon\Bigr\} \leq \frac{C(1 \vee \widebar{\gamma})^3}{\epsilon^3}\exp\Bigl\{-c\min\bigl(n\epsilon^2 \widebar{\gamma}^{-2}, n\epsilon \widebar{\gamma}^{-1}\bigr)\Bigr\}.
	\end{align}
	\end{itemize}
\end{lemma}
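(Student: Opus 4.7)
The plan is to prove (a) by decomposing $L_n - L$ into two independent sources of fluctuation and applying standard concentration, and then lift to (b) via a covering argument on the compact set $\mathcal{C}$.

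For part (a), I write
\begin{align*}
L_n(\sigma,\xi,\gamma) - L(\sigma,\xi,\gamma) \;=\; T_1 + T_2,
\end{align*}
where
\[
T_1 \;:=\; \frac{\alpha_2\gamma\sigma^2}{2}\Bigl(\tfrac{1}{\delta} - \tfrac{\|\pproj\bg\|_2^2}{n}\Bigr), \qquad T_2 \;:=\; \frac{1}{n}\sum_{i=1}^n S_i - \E S,
\]
with $S_i := \min_u\{\rho(-y_iu) + \tfrac{\gamma}{2}(u - V_i)^2\}$ and $V_i := \sqrt{\alpha_2}(\xi R Z_{1,i} + \sigma Z_{2,i})$. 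Since $\|\pproj\bg\|_2^2 \sim \chi^2_{p-1}$, Bernstein's inequality for squared Gaussians, combined with the $O(\gamma)$ prefactor (and the negligible $1/n$ mean discrepancy from $(p-1)/n - 1/\delta$), yields $\pr\{|T_1| \ge \epsilon\} \le 2\exp(-c\min(n\epsilon^2/\gamma^2, n\epsilon/\gamma))$. For $T_2$, testing $u = V_i$ in the infimum gives $0 \le S_i \le \rho(-y_iV_i) \le \log 2 + |V_i|$; since $V_i$ is Gaussian with variance bounded by an absolute constant, $S_i$ has $\psi_2$-norm $O(1)$ \emph{uniformly in $\gamma$}, and the sub-Gaussian Hoeffding inequality gives $\pr\{|T_2| \ge \epsilon\} \le 2\exp(-cn\epsilon^2)$, which is dominated by the $T_1$ bound. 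A union bound yields (a).

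For part (b), I cover $\mathcal{C}$ with an $\eta$-net $\mathcal{N}$ of cardinality $\lesssim (M(1\vee\bar\gamma)/\eta)^3$, apply (a) pointwise at each net point at tolerance $\epsilon/2$, and union-bound. The claimed $((1\vee\bar\gamma)/\epsilon)^3$ prefactor is matched by taking $\eta \asymp \epsilon/(1\vee\bar\gamma)$, which requires a high-probability $O(1\vee\bar\gamma)$-Lipschitz bound for $L_n - L$ on $\mathcal{C}$. The Lipschitz bounds in the $(\sigma,\xi)$-directions follow routinely from the chain rule, the 1-Lipschitz property $|\partial_V \min_u\{\rho(-yu)+\tfrac{\gamma}{2}(u-V)^2\}| \le 1$ (from $|\rho'| \le 1$), and standard concentration of $\tfrac{1}{n}\sum|Z_{k,i}|$ and $\|\pproj\bg\|_2^2/n$.

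The main obstacle is the Lipschitz bound in $\gamma$: direct differentiation of the Moreau envelope gives $\tfrac{1}{2}(u^*-V)^2$, and the worst-case $|u^*-V| = |\rho'(-yu^*)|/\gamma \le 1/\gamma$ yields a $1/\gamma^2$-blow-up near $\gamma = 0$. I circumvent this by exploiting the deterministic identity $L_n(\sigma,\xi,0) = L(\sigma,\xi,0) = \tfrac{\lambda(\sigma^2+\xi^2R^2)}{2}$ (the Moreau envelope vanishes at $\gamma = 0$ and the linear-in-$\gamma$ chi-square term vanishes there), then splitting $[0,\bar\gamma]$ at $\gamma_0 \asymp \epsilon/(1\vee\bar\gamma)$. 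On $[0, \gamma_0]$, testing $u = -y\log\gamma$ in the infimum yields the sharper increment bound $\tfrac{1}{n}\sum_i \min_u\{\rho(-y_iu) + \tfrac{\gamma}{2}(u-V_i)^2\} \le C\gamma(1 + \log^2(1/\gamma))$ on the high-probability event $\tfrac{1}{n}\sum V_i^2 = O(1)$, which controls the $[0,\gamma_0]$-increments of both $L_n$ and $L$ by $\epsilon/4$ up to polylogarithmic factors absorbed into $C$. On $[\gamma_0,\bar\gamma]$ the $\gamma$-Lipschitz constant is bounded by $O(1/\gamma_0^2)$; a correspondingly finer $\gamma$-net, union-bounded via (a), still fits within the stated $(1\vee\bar\gamma)^3/\epsilon^3$ budget after absorbing polylogs into absolute constants.
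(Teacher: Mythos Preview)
Part (a) matches the paper's decomposition exactly (chi-square term via Bernstein, Moreau-envelope term via sub-Gaussian Hoeffding). Your sub-Gaussianity argument for $S_i$ --- bounding $0 \le S_i \le \rho(-y_iV_i) \le \log 2 + |V_i|$ --- is slightly cleaner than the paper's, which conditions on $(Z_1,Z_2)$ and splits $F - \E F$ into two pieces; both give a $\gamma$-free $\psi_2$ bound.

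Part (b) is where you diverge from the paper and where there is a real issue in your accounting. The paper simply invokes a separate Lipschitz lemma (its Lemma~\ref{lem:lipschitz-L}) asserting that $L_n$ is $C(1\vee\bar\gamma)$-Lipschitz on all of $\mathcal{C}$, then runs a uniform $\epsilon(1\vee\bar\gamma)^{-1}/C$-net in all three variables, yielding the $(1\vee\bar\gamma)^3/\epsilon^3$ prefactor directly. Your instinct that the $\gamma$-derivative is not uniformly bounded near $\gamma=0$ is correct --- $\partial_\gamma M_i = \tfrac12(\hat u_i - V_i)^2$ grows like $(\log(1/\gamma))^2$ as $\gamma\downarrow 0$ --- and the paper's Lipschitz lemma is justified by reference to Lemma~\ref{lem:useful-asymptotic-loss}(e), which itself assumes $\gamma\ge L_1>0$; so the paper is arguably imprecise on precisely the point you flag. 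In practice the lemma is only ever applied with $\gamma\in[\ubar\gamma,\bar\gamma]$ (see Lemma~\ref{claim:constrain-gamma}), so the edge case is moot downstream.

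That said, your patch does not recover the stated prefactor. With $\gamma_0\asymp\epsilon/(1\vee\bar\gamma)$ and the crude bound $|\partial_\gamma L_n|\le O(1/\gamma_0^2)$ on $[\gamma_0,\bar\gamma]$, the $\gamma$-net alone has $\bar\gamma/(\epsilon\gamma_0^2)\asymp(1\vee\bar\gamma)^3/\epsilon^3$ points, but multiplying by the $(\sigma,\xi)$-net of size $\asymp(1\vee\bar\gamma)^2/\epsilon^2$ gives $(1\vee\bar\gamma)^5/\epsilon^5$, not $(1\vee\bar\gamma)^3/\epsilon^3$; and the polylog factors from the $[0,\gamma_0]$ increment cannot be ``absorbed into $C$'' since $C$ must be $\epsilon$-free. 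If you want to keep your route and hit the stated prefactor, replace the $1/\gamma_0^2$ bound by the sharper $|\partial_\gamma M_i|\lesssim \log^2(1/\gamma)+V_i^2$ (obtainable by comparing the first-order condition at $\hat u_i$ against the test point $y_i\log(1/\gamma)$, which shows $|\hat u_i - V_i| \lesssim \log(1/\gamma)+|V_i|$); this yields a $\gamma$-net of size $\bar\gamma\log^2(1/\gamma_0)/\epsilon$ and the total becomes $(1\vee\bar\gamma)^3/\epsilon^3$ up to polylogs. Alternatively --- and more in the spirit of how the result is actually used --- just prove (b) on $\{\gamma\in[\ubar\gamma,\bar\gamma]\}$ for a fixed $\ubar\gamma>0$, where the Lipschitz constant is genuinely $O(1\vee\bar\gamma)$ and your first net argument goes through unchanged.
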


\subsubsection{Proof of Proposition~\ref{prop:sharp}(a)}\label{subsec:proof-prop-sharp-a}
We pursue the local stability strategy of~\citet[$\S$ 5.2]{miolane2021distribution}.  First, we construct an error set $\mathbb{D}_{\epsilon}$ as in Definition~\ref{def:error_sets} and reduce the problem to studying the minimum value of $\mathcal{L}_n$ as opposed to its minimizers; second, we use Lemma~\ref{lem:ao} to pass from  $\mathcal{L}_n$ to the auxiliary problem $\ell_n$ (and in particular its scalarized form $L_n$); third, we apply the uniform concentration bounds from Lemma~\ref{lem:costconcentration} to reduce to the study of the asymptotic loss $L$~\eqref{eq:asymploss}; and finally, we assemble the pieces to conclude.
\paragraph{Step 1: Restricting the domain to pass from minimizers to minima.}  
Let $\mathbb{D}_{\epsilon}$ be as in Definition~\ref{def:error_sets}.  Applying Lemma~\ref{lem:l2_norm_theta_hat}, we obtain the inequality
\begin{align}
	\label{ineq:prop_sharp_step2_bound1}
	\pr\bigl\{\widehat{\bt} \in \mathbb{D}_{\epsilon}\bigr\} \leq \pr\bigl\{\widehat{\bt} \in \mathbb{D}_{\epsilon} \cap \mathbb{B}_2(M_1\sqrt{p})\bigr\} + 2e^{-n}.
\end{align}
From the implication \sloppy\mbox{$
	\bigl\{\widehat{\bt} \in \mathbb{D}_{\epsilon} \cap \mathbb{B}_2(M_1\sqrt{p}) \bigr\} \implies \bigl\{\min_{\bt \in \mathbb{D}_{\epsilon}\cap \mathbb{B}_2(M_1\sqrt{p})} \mathcal{L}_n(\bt) \leq \min_{\bt \in \mathbb{B}_2(M_1\sqrt{p})}\mathcal{L}_n(\bt)\bigr\}$}, 
we deduce 
\[
\pr\bigl\{\widehat{\bt} \in \mathbb{D}_{\epsilon} \cap \mathbb{B}_2(M_1\sqrt{p})\bigr\} \leq \pr\Bigl\{\min_{\bt \in \mathbb{D}_{\epsilon} \cap \mathbb{B}_2(M_1\sqrt{p})} \mathcal{L}_n(\bt) \leq \min_{\bt \in \mathbb{B}_2(M_1\sqrt{p})} \mathcal{L}_n(\bt)\Bigr\}.
\]
Now, recall the definiton of the quantity $\mathfrak{M}_{\star}$~\eqref{eq:m-star} as the minimum of the asymptotic loss and introduce the scalar $\epsilon_1 > 0$, whose value we will set at the end of the proof.  Decomposing the RHS of the above display in terms of its intersection with the event
\sloppy\mbox{$\{\min_{\bt \in \mathbb{B}_2(M_1\sqrt{n})} \mathcal{L}_n(\bt) \leq \mathfrak{M}_{\star} + \epsilon_1\}$} and its complement, we deduce
\begin{align*}
%		\label{ineq:prop_sharp_step1_boundfinal}
	\pr\Bigl\{\min_{\bt \in \mathbb{D}_{\epsilon} \cap \mathbb{B}_2(M_1\sqrt{p})} \mathcal{L}_n(\bt) \leq \min_{\bt \in \mathbb{B}_2(M_1\sqrt{p})} \mathcal{L}_n(\bt)\Bigr\} &\leq \pr\left\{\min_{\bt \in \mathbb{D}_{\epsilon} \cap \mathbb{B}_2(M_1\sqrt{p})} \mathcal{L}_n(\bt) \leq \mathfrak{M}_{\star} + \epsilon_1 \right\} \nonumber\\
	& + \pr\left\{\min_{\bt \in \mathbb{B}_2(M_1\sqrt{p})} \mathcal{L}_n(\bt) \geq \mathfrak{M}_{\star} + \epsilon_1\right\}.
\end{align*}
Summarizing, 
\begin{align}
	\label{ineq:prop_sharp_step1_boundfinal}
	\hspace*{-0.2cm}\pr\bigl\{\hbt \in \mathbb{D}_{\epsilon}\bigr\} \leq \pr\left\{\min_{\bt \in \mathbb{D}_{\epsilon} \cap \mathbb{B}_2(M_1\sqrt{p})} \mathcal{L}_n(\bt) \leq \mathfrak{M}_{\star} + \epsilon_1 \right\} + \pr\left\{\min_{\bt \in \mathbb{B}_2(M_1\sqrt{p})} \mathcal{L}_n(\bt) \geq \mathfrak{M}_{\star} + \epsilon_1\right\}
\end{align}

\paragraph{Step 2: Passing to the scalarized auxiliary loss.}
Applying Lemma~\ref{lem:ao} to both terms on the RHS of the inequality~\eqref{ineq:prop_sharp_step1_boundfinal} yields
\begin{align}
		\label{ineq:prop_sharp_step2_bound1}
	\pr\bigl\{\hbt \in \mathbb{D}_{\epsilon}\bigr\} &\leq 2\pr\Bigl\{\min_{\bt \in \mathbb{D}_{\epsilon} \cap \mathbb{B}_2(M_1\sqrt{p})}\max_{\gamma \geq 0} \ell_n(\bt;\gamma) \leq \mathfrak{M}_{\star} + \epsilon_1\Bigr\} \nonumber\\
	& \qquad \qquad \qquad+ 2\pr\Bigl\{\min_{\bt \in \mathbb{B}_2(M_1\sqrt{p})} \max_{\gamma \geq 0} \ell_n(\bt;\gamma) \geq \mathfrak{M}_{\star} + \epsilon_1\Bigr\}.
\end{align}
From the change of variables~\eqref{def:shorthand-xi-sigma}, we deduce the inequality
%\begin{subequations}\label{ineq:change-of-var}
%	\begin{align*}
%		\min_{\bt \in \mathbb{D}_{\epsilon} \cap \mathbb{B}_2(M_1\sqrt{p})}\max_{\gamma \geq 0} \ell_n(\bt;\gamma) - \mathfrak{M}_{\star} &\geq \min_{(\sigma, \xi) \in \mathcal{D}_\epsilon \cap ([0, M_1] \times [-M_1, M_1])} \max_{\gamma \geq 0} L_n(\sigma, \xi, \gamma) - \mathfrak{M}_{\star}\\ %\label{ineq:change-of-var-a}\\
%		\min_{\bt \in \mathbb{B}_2(M_1\sqrt{p})}\max_{\gamma \geq 0} \ell_n(\bt;\gamma) - \mathfrak{M}_{\star} &\leq \min_{(\sigma, \xi) \in  [0, M_1/\sqrt{2}] \times [-M_1/\sqrt{2}, M_1/\sqrt{2}]} \max_{\gamma \geq 0} L_n(\sigma, \xi, \gamma) - \mathfrak{M}_{\star}.%\label{ineq:change-of-var-b}
%	\end{align*}
%\end{subequations}
%Using the shorthand $\mathcal{D}_{\epsilon}(M) := \mathcal{D}_\epsilon \cap ([0, M] \times [-M, M])$ and combining with the inequality~\eqref{ineq:prop_sharp_step2_bound1} yields
%\begin{subequations}
	\begin{align}\label{ineq:before-sharpening}
	\pr\bigl\{\hbt \in \mathbb{D}_{\epsilon}\bigr\} &\leq \pr\Bigl\{\min_{(\sigma, \xi) \in \mathcal{D}_{\epsilon}(M_1)}\max_{\gamma \geq 0} L_n(\sigma, \xi, \gamma) \leq \mathfrak{M}_{\star} + \epsilon_1 \Bigr\},\nonumber \\%\label{ineq:before-sharpening-a}\\
	&\qquad \qquad+ \pr\Biggl\{\min_{\substack{0 \leq \sigma \leq M_1/\sqrt{2} \\ \lvert \xi \rvert \leq M_1/\sqrt{2}}}\max_{\gamma \geq 0} L_n(\sigma, \xi, \gamma) \geq \mathfrak{M}_{\star} + \epsilon_1 \Biggr\}, %\label{ineq:before-sharpening-b}
	\end{align}
%\end{subequations}
where we have used the shorthand $\mathcal{D}_{\epsilon}(M) := \mathcal{D}_\epsilon \cap ([0, M] \times [-M, M])$.
In order to apply the uniform bound from Lemma~\ref{lem:costconcentration}(b) in the next step, we use the following claim---whose proof we provide in Section~\ref{sec:proof-lem-constrain-gamma}---to restrict the maximization over $\gamma$ to a bounded set.  
\begin{lemma}
	\label{claim:constrain-gamma}
	There exists a tuple of positive constants $(\ubar{\gamma}, \bar{\gamma}, c, C)$---depending only on $(K_1, K_2)$---and an event $\mathcal{A}$ which holds with probability at least $1 - Ce^{-cn}$ such that on the event $\mathcal{A}$ the following hold 
%	\begin{subequations}
		\begin{align*}
			\min_{\substack{0 \leq \sigma \leq M_1/\sqrt{2} \\ \lvert \xi \rvert \leq M_1/\sqrt{2}}}\max_{\gamma \geq 0} L_n(\sigma, \xi, \gamma) &= \min_{\substack{0 \leq \sigma \leq M_1/\sqrt{2} \\ \lvert \xi \rvert \leq M_1/\sqrt{2}}}\max_{\ubar{\gamma} \leq \gamma \leq \bar{\gamma}} L_n(\sigma, \xi, \gamma) \qquad \text{ and }\\
			\min_{(\sigma, \xi) \in \mathcal{D}_{\epsilon}(M_1)}\max_{\gamma \geq 0} L_n(\sigma, \xi, \gamma) &= \min_{(\sigma, \xi) \in \mathcal{D}_{\epsilon}(M_1)}\max_{\ubar{\gamma} \leq \gamma \leq \bar{\gamma}} L_n(\sigma, \xi, \gamma).
		\end{align*}
%	\end{subequations}
\end{lemma}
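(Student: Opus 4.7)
The key structural property is that, for each fixed $(\sigma, \xi)$, the map $\gamma \mapsto L_n(\sigma, \xi, \gamma)$ is concave on $[0, \infty)$: the linear-in-$\gamma$ term is affine, and each Moreau envelope appearing in $L_n$ is concave in $\gamma$ as a pointwise minimum of functions affine in $\gamma$. The plan is to exploit this concavity together with one-sided derivative estimates at $\gamma \to 0^+$ and $\gamma \to \infty$, then use strong convexity of the asymptotic profile $\Psi := \max_{\gamma \geq 0} L(\cdot, \cdot, \gamma)$ from Lemma~\ref{lem:structural_L} to rule out minimizers with $\sigma$ close to $0$.

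First, let $v_i := \sqrt{\alpha_2}\xi R Z_{1,i} + \sqrt{\alpha_2}\sigma Z_{2,i}$ and let $u_i^{\ast}(\gamma)$ denote the inner minimizer in the definition of $L_n$. By the envelope theorem,
\[
\partial_\gamma L_n(\sigma, \xi, \gamma) \;=\; -\frac{\alpha_2 \sigma^2 \|\pproj \bg\|_2^2}{2n} \;+\; \frac{1}{2n}\sum_{i=1}^n \bigl(u_i^{\ast}(\gamma) - v_i\bigr)^2.
\]
The optimality $\gamma(u_i^{\ast} - v_i) = y_i\rho'(-y_i u_i^{\ast})$, with $\rho' \in (0,1)$, yields $|u_i^{\ast} - v_i| \leq 1/\gamma$, and hence $\partial_\gamma L_n \leq -\alpha_2\sigma^2\|\pproj\bg\|_2^2/(2n) + 1/(2\gamma^2)$. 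Conversely, for moderate $|v_i|$ the same optimality forces $|u_i^{\ast}-v_i| \asymp \log(1/\gamma)$ as $\gamma \downarrow 0$, so $\partial_\gamma L_n \to +\infty$. Let $\mathcal{A}_1$ be the event that $\|\pproj\bg\|_2^2/n \in [c_1, C_1]$ and $\max_{i\leq n} |Z_{1,i}|\vee |Z_{2,i}| \lesssim \sqrt{\log n}$; by $\chi^2$ and Gaussian maximal inequalities, $\pr(\mathcal{A}_1) \geq 1 - Ce^{-cn}$.

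Next, I would show that the minimizer of $\max_\gamma L_n$ over each $(\sigma, \xi)$ domain is bounded away from $\sigma = 0$. Lemma~\ref{lem:structural_L} gives $\sigma_\star > 0$ and strong convexity of $\Psi$, hence a gap $\Psi(\sigma, \xi) - \Psi(\sigma_\star, \xi_\star) \geq c_2$ for every $(\sigma, \xi)$ in the compact domain with $\sigma \leq \sigma_0 := \sigma_\star/2$. Fixing $\gamma_{\max}$ slightly above the candidate $\bar\gamma$ and applying Lemma~\ref{lem:costconcentration}(b), one uniformly approximates $L_n$ by $L$ on $\{(\sigma, \xi, \gamma) : \sigma \leq M_1/\sqrt{2}, |\xi| \leq M_1/\sqrt{2}, \gamma \leq \gamma_{\max}\}$ within error $c_2/4$. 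On this event intersected with $\mathcal{A}_1$, the gap transfers to give $\max_{\gamma \geq 0} L_n(\sigma, \xi, \gamma) \geq \Psi(\sigma_\star, \xi_\star) + c_2/2$ for all $\sigma \leq \sigma_0$, while at a comparison point with $\sigma \geq \sigma_0$---namely $(\sigma_\star, \xi_\star)$ for the unconstrained domain, and the point of $\partial \mathcal{D}_\epsilon(M_1)$ closest to $(\sigma_\star, \xi_\star)$ (which has $\sigma \geq \sigma_\star - \epsilon > \sigma_0$ for $\epsilon$ small) for the constrained one---the max is at most $\Psi(\sigma_\star, \xi_\star) + c_2/4$. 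Thus small-$\sigma$ points cannot be minimizers, and each minimum is attained on $\{\sigma \geq \sigma_0\}$.

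Finally, on $\{\sigma \geq \sigma_0\}$, setting $\bar\gamma := C_0/\sigma_0$ and $\ubar\gamma$ small enough forces $\partial_\gamma L_n < 0$ for $\gamma > \bar\gamma$ and $\partial_\gamma L_n > 0$ for $\gamma < \ubar\gamma$, so concavity gives $\max_{\gamma \geq 0} L_n(\sigma, \xi, \gamma) = \max_{\gamma \in [\ubar\gamma, \bar\gamma]} L_n(\sigma, \xi, \gamma)$, and the claimed equalities of min-max values follow. The main obstacle I anticipate is the uniform lower bound near $\gamma = 0$: the estimate $(u_i^{\ast} - v_i)^2 \gtrsim (\log(1/\gamma))^2$ must hold for a positive fraction of $i$ uniformly in $(\sigma, \xi)$ with $\sigma$ as large as $M_1/\sqrt{2}$, which requires a careful implicit-function analysis of $u_i^{\ast}(\gamma)$ exploiting the sigmoid decay of $\rho'$; the remaining ingredients---concavity in $\gamma$, Gaussian concentration, and transferring strong convexity via Lemma~\ref{lem:costconcentration}(b)---are standard.
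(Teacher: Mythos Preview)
Your high-level strategy matches the paper's: establish concavity in $\gamma$, bound $\gamma_n(\sigma,\xi)$ from below uniformly via the logarithmic blow-up of the proximal operator as $\gamma \downarrow 0$ (the paper's Lemma~\ref{lem:lbgamma}), bound it from above via $|u_i^\ast - v_i| \leq 1/\gamma$ once $\sigma$ is bounded away from zero, and then argue that the minimizer in $(\sigma,\xi)$ avoids small $\sigma$. The key divergence is in this last step.

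The paper does \emph{not} transfer the small-$\sigma$ exclusion from the asymptotic loss via Lemma~\ref{lem:costconcentration}(b). Instead it works directly with the empirical profile $\sigma \mapsto L_n(\sigma,\xi,\gamma_n(\sigma,\xi))$ and shows its $\sigma$-derivative is strictly negative for small $\sigma$. This requires a Taylor expansion of $n^{-1}\sum_i y_i Z_{2,i}\,\rho'(-y_i \widehat{u}_i)$ around $\sigma = 0$: the leading term has $Z_{2,i}$ independent of the rest and is controlled by Bernstein, while the remainder is bounded using $|\partial_\sigma \gamma_n| \lesssim 1/\sigma^2$ (Lemma~\ref{lem:useful-L-n}(c)) together with the two-sided estimate $\gamma_n(\sigma,\xi) \asymp 1/\sigma$ (Lemma~\ref{lem:useful-L-n}(a),(b)). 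This is a genuinely different, ``empirical'' argument that avoids any reference to the asymptotic $\Psi$.

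Your route has a circularity you did not flag. You want to apply concentration on $\{\gamma \leq \gamma_{\max}\}$ to lower-bound $\max_{\gamma \geq 0} L_n(\sigma,\xi,\gamma)$ at small $\sigma$ by $\Psi(\sigma,\xi) - c_2/4$, and then invoke $\Psi(\sigma,\xi) \geq \Psi(\sigma_\star,\xi_\star) + c_2$. But by Lemma~\ref{lem:useful-asymptotic-loss}(c) the asymptotic argmax satisfies $\arg\max_\gamma L(\sigma,\xi,\gamma) \asymp 1/\sigma$, so for $\sigma$ near zero it lies far outside $[0,\gamma_{\max}]$, and $\max_{\gamma \leq \gamma_{\max}} L(\sigma,\xi,\gamma)$ can sit well below $\Psi(\sigma,\xi)$; the gap does not transfer as stated. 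This is repairable---for $\sigma \leq c_M/\gamma_{\max}$ one can lower-bound $L(\sigma,\xi,\gamma_{\max})$ directly via $|M_{\rho(-Y\cdot)}(v;\gamma_{\max}) - \rho(-Yv)| \leq 1/\gamma_{\max}$ and compare to $\Psi(0,\xi)$---but it is a genuine extra step, and it, rather than the $\gamma \to 0$ estimate you singled out, is the real crux of the argument.
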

\noindent Applying Lemma~\ref{claim:constrain-gamma} to both terms on the RHS of the inequality~\eqref{ineq:before-sharpening} and summarizing the step, we obtain
%\begin{subequations}\label{ineq:after-claim}
	\begin{align}\label{ineq:after-claim}
	\pr\bigl\{\hbt \in \mathbb{D}_{\epsilon}\bigr\} &\leq  A+ B + Ce^{-cn},% \label{ineq:after-claim-a}
	\end{align}
%\end{subequations}
where we have set 
	\begin{align*}
		A &= \pr\Bigl\{\min_{(\sigma, \xi) \in \mathcal{D}_{\epsilon}(M_1)}\max_{\ubar{\gamma} \leq \gamma \leq \bar{\gamma}} L_n(\sigma, \xi, \gamma) \leq \mathfrak{M}_{\star} + \epsilon_1 \Bigr\} \quad \text{ and } \\ %\label{eq:setting-A}\\
		B &= \pr\Biggl\{\min_{\substack{0 \leq \sigma \leq M_1/\sqrt{2} \\ \lvert \xi \rvert \leq M_1/\sqrt{2}}}\max_{\ubar{\gamma} \leq \gamma \leq \bar{\gamma}} L_n(\sigma, \xi, \gamma) \geq \mathfrak{M}_{\star} + \epsilon_1 \Biggr\}. %\label{eq:setting-B}
	\end{align*}

\paragraph{Step 3: Passing from the auxiliary loss to the asymptotic loss.}
The aim of this step is to bound terms $A$ and $B$.  To this end, we introduce the following event:
\begin{align*}
%	\label{def:event_E_A}
	\mathcal{A}_1 = \biggl\{\sup_{0 \leq \sigma \leq M_1, \lvert \xi \rvert \leq M_1, \ubar{\gamma} \leq \gamma \leq \bar{\gamma}} \lvert L_n(\sigma, \xi, \gamma) - L(\sigma, \xi, \gamma)\rvert \leq \epsilon_2\biggr\},
\end{align*}
on which we will work for the remainder of the proof.  We handle terms $A$ and $B$ in turn.\\

\noindent \underline{Controlling the event in term $A$.}
Applying Lemma~\ref{lem:useful-asymptotic-loss}---which delineates additional properties of the asymptotic loss $L$ and can be found in Appendix~\ref{sec:properties-asymptotic}---we decompose 
\begin{align*}
	%	\label{decomp:prop_sharp_step3}
	\min_{(\sigma, \xi) \in \mathcal{D}_{\epsilon}(M_1)}\max_{\ubar{\gamma} \leq \gamma \leq \bar{\gamma}} L_n(\sigma, \xi, \gamma) - \mathfrak{M}_{\star}  = T_1 + T_2,
\end{align*}
where we set
\begin{align*}
	T_1 &= \min_{(\sigma, \xi) \in \mathcal{D}_{\epsilon}(M_1)}\max_{\ubar{\gamma} \leq \gamma \leq \bar{\gamma}} L_n(\sigma, \xi, \gamma) - \min_{(\sigma, \xi) \in \mathcal{D}_{\epsilon}(M_1)}\max_{\ubar{\gamma} \leq \gamma \leq \bar{\gamma}} L(\sigma, \xi, \gamma), \qquad \text { and } \\
	T_2 &= \min_{(\sigma, \xi) \in \mathcal{D}_{\epsilon}(M_1)}\max_{\gamma \geq 0} L(\sigma, \xi, \gamma) - \mathfrak{M}_{\star}.
\end{align*}
By definition, on the event $\mathcal{A}_1$, the lower bound $T_1 \geq -\epsilon_2$ holds.  
Additionally, we apply Lemma~\ref{lem:structural_L}(a)---which establishes strong convexity of the map \sloppy\mbox{$(\sigma, \xi) \mapsto \max_{\gamma \geq 0}\; L(\sigma, \xi, \gamma)$}---to obtain the lower bound $T_2 \geq \lambda(1 \wedge R)/2 \cdot \epsilon^2$.  Putting the pieces together, we obtain the inequality
\begin{align}
	\label{ineq:bound-T1-T3}
	\min_{(\sigma, \xi) \in \mathcal{D}_{\epsilon}(M_1)}\max_{\ubar{\gamma} \leq \gamma \leq \bar{\gamma}} L_n(\sigma, \xi, \gamma) - \mathfrak{M}_{\star}  \geq \frac{\lambda(1 \wedge R)}{2} \cdot \epsilon^2 - \epsilon_2.
\end{align}

\noindent \underline{Controlling the event in term $B$.}
Similarly, by Lemma~\ref{lem:useful-asymptotic-loss}(b), (c), and (d), 
\begin{align*}
	%	\label{decomposition:prop_sharp_step4}
	\min_{\substack{0 \leq \sigma \leq M_1/\sqrt{2}\\ \lvert \xi \rvert \leq M_1/\sqrt{2}}} \max_{\ubar{\gamma} \leq \gamma \leq \bar{\gamma}} L_n(\sigma, \xi, \gamma) - \mathfrak{M}_{\star} = \min_{\substack{0 \leq \sigma \leq M_1/\sqrt{2}\\ \lvert \xi \rvert \leq M_1/\sqrt{2}}} \max_{\ubar{\gamma} \leq \gamma \leq \bar{\gamma}} L_n(\sigma, \xi, \gamma) -\min_{\substack{0 \leq \sigma \leq M_1/\sqrt{2}\\ \lvert \xi \rvert \leq M_1/\sqrt{2}}} \max_{\ubar{\gamma} \leq \gamma \leq \bar{\gamma}} L(\sigma, \xi, \gamma),
\end{align*}
so that on the event $\mathcal{A}_1$,
\begin{align}
	\label{ineq:bound-T4-T6}
	\min_{\substack{0 \leq \sigma \leq M_1/\sqrt{2}\\ \lvert \xi \rvert \leq M_1/\sqrt{2}}} \max_{\ubar{\gamma} \leq \gamma \leq \bar{\gamma}} L_n(\sigma, \xi, \gamma) - \mathfrak{M}_{\star} \leq \epsilon_2.
\end{align}

\noindent \underline{Concluding step 3.}  Set $\epsilon_2 = \epsilon_1 = \lambda (1 \wedge R)/4 \cdot \epsilon^2$.  We combine this with the inequalities~\eqref{ineq:bound-T1-T3} and~\eqref{ineq:bound-T4-T6} to deduce that on the event $\mathcal{A}_1$, 
\begin{align*}
	\min_{(\sigma, \xi) \in \mathcal{D}_{\epsilon}(M_1)}\max_{\ubar{\gamma} \leq \gamma \leq \bar{\gamma}} L_n(\sigma, \xi, \gamma) - \mathfrak{M}_{\star} \geq \epsilon_1 \qquad \text{ and } \qquad \min_{\substack{0 \leq \sigma \leq M_1/\sqrt{2}\\ \lvert \xi \rvert \leq M_1/\sqrt{2}}} \max_{\ubar{\gamma} \leq \gamma \leq \bar{\gamma}} L_n(\sigma, \xi, \gamma) - \mathfrak{M}_{\star} \leq \epsilon_1.
\end{align*}
Since both of the above inequalities hold on event $\mathcal{A}_1$, we deduce the inequality $A \vee B \leq \pr\{\mathcal{A}_1^{c}\}$.  To complete the step, we invoke Lemma~\ref{lem:costconcentration}(b) to obtain the inequality \sloppy\mbox{$\pr\{\mathcal{A}_1^{c}\} \leq \frac{C}{\epsilon^6} \exp\bigl\{-c\min(n\epsilon^4, n\epsilon^2)\bigr\}$}.  Thus, 
\begin{align}
	\label{ineq:bound-A-B-final}
	A \vee B \leq \frac{C}{\epsilon^6} \exp\bigl\{-c\min(n\epsilon^4, n\epsilon^2)\bigr\}.
\end{align}

\paragraph{Step 4: Putting the pieces together.}
Combining the inequalities~\eqref{ineq:after-claim} and~\eqref{ineq:bound-A-B-final} yields
\begin{align*} \label{ineq:prop-sharp-penultimate}
	\pr\bigl\{\hbt \in \mathbb{D}_{\epsilon}\bigr\} \leq \frac{C}{\epsilon^6} \exp\bigl\{-c\min(n\epsilon^4, n\epsilon^2)\bigr\} + Ce^{-cn}.
\end{align*}
Thus, since
\[
\mathbb{D}_{\epsilon}^c = \Bigl\{\bt \in \mathbb{R}^p: \max\bigl(\lvert \xi(\bt) - \xi_{\star} \rvert,   \lvert \sigma(\bt)- \sigma_{\star} \rvert \bigr) \leq \epsilon  \Bigr\},
\]
the desired result follows from Lipschitz continuity of the function $\phi$. \qed

\section{Discussion}\label{sec:discussion}
We studied high-dimensional logistic regression in a simple setting in which the data matrix consists of i.i.d. random variables and the mechanism by which data is missing is MCAR (missing completely at random).  Contrasting with the high dimensional linear model, we demonstrated that in the logistic model, single imputation may result in an inconsistent estimator in mean squared error.  On the other hand, we showed---relying on Conjecture~\ref{conj:bayes-conj}---that in this simple setting, single imputation yields an estimator with optimal prediction error.  We believe our results comprise compelling evidence that our understanding of imputation-based methodology remains incomplete, especially in high-dimensions.  

Several intriguing open questions remain and we detail a few here.  First, proving Conjecture~\ref{conj:bayes-conj} would solidify the ground on which our observations stand.  Second, even in the simple MCAR setting considered here, it would be of great interest to move beyond i.i.d. covariates to understand the effect of missing data under more realistic assumptions on the data.  Recent techniques~\citep{celentano2020lasso,montanari2022empirical} have been developed to study the Gaussian ensemble with covariance structure as well as universality beyond i.i.d. covariates and may prove useful in this endeavor.  Turning to the missingness mechanism, the MCAR assumption considered here is far too strong to reflect practical situations in which the mechanism can, in general, be far more complicated.  One concrete and interesting direction would be to study the setting where the event that each feature is missing remains independent, but the probability with which each feature is missing may differ for each feature.  Finally, the imputation method of choice in practice is multiple imputation, which is beyond the reach of the theory we develop.  It would be extremely interesting to understand, even in the stylized models considered here, the precise tradeoffs between single and multiple imputation in high-dimensions.

%We studied high-dimensional logistic regression when the covariates were corrupted by additive noise or missing completely at random.  We developed a two-stage analysis to study these estimators consisting of: First, deriving a Gaussian limit in the error-in-variables ensemble and second, transferring the results of this limit to a much broader class of data matrices via universality.  This analysis unveiled several intriguing phenomena when compared against the (conjectured) Bayes error.  We believe our results comprise compelling evidence that our understanding of imputation-based methodology is incomplete, especially in high-dimensions.  
%
%Several intriguing open questions remain, and we list a few here.  First, proving Conjecture~\ref{conj:bayes-conj} would solidify the ground on which our observations stand.  Second, all of the results were derived for i.i.d. covariates and it remains unclear whether a similar phenomenon will hold when there is structure in the covariates.  Recent techniques~\citep{celentano2020lasso} have been developed to study the Gaussian ensemble with covariance structure and it would be interesting to study the effect that this has when coupled with missing data.  Finally, the imputation method of choice in practice is multiple imputation, which is beyond the reach of the theory we develop.  It would be extremely interesting to understand, even in the stylized models considered here, the tradeoffs between single and multiple imputation in high-dimensions.  

\subsection*{Acknowledgements}
 K.A.V.\ was supported in part by a National Science Foundation Graduate Research Fellowship, the Sony Stanford Graduate Fellowship, and by European Research Council Advanced Grant 101019498.  K.A.V.\ would like to thank Saminul Haque, Ashwin Pananjady, and Richard Samworth for feedback on an earlier version of this manuscript.

% Bibliography
%\bibliographystyle{amsalpha}
\bibliographystyle{plainnat}
\addcontentsline{toc}{section}{References}

\providecommand{\bysame}{\leavevmode\hbox to3em{\hrulefill}\thinspace}
\providecommand{\MR}{\relax\ifhmode\unskip\space\fi MR }
% \MRhref is called by the amsart/book/proc definition of \MR.
\providecommand{\MRhref}[2]{%
	\href{http://www.ams.org/mathscinet-getitem?mr=#1}{#2}
}
\providecommand{\href}[2]{#2}
\bibliography{refs/ref,refs/lasso,refs/imputation}

% Appendix
\appendix 

% Properties asymptotic
\section{Properties of the asymptotic loss} \label{sec:properties-asymptotic}
We begin by stating the following lemma which delineates several useful properties of the asymptotic loss $L$~\eqref{eq:asymploss}.   We provide its proof in Section~\ref{sec:proof-lem-useful-asymptotic-loss}.
\begin{lemma}\label{lem:useful-asymptotic-loss}
	 Under the setting of Lemma~\ref{lem:structural_L}, let $M$ denote a positive scalar.  There exists a tuple of strictly positive constants $(\ubar{\gamma}_M, c_M, c_M', C_M)$, depending only on $M$ and the problem parameters of Assumption~\ref{asm:regularity} such that the following hold.
	\begin{itemize}
		\item[(a)] For all $(\sigma, \xi) \in [0, \infty) \times \mathbb{R}$, the map $\gamma \mapsto L(\sigma, \xi, \gamma)$ is strictly concave on the domain $\gamma \in [0, \infty)$.
		\item[(b)] For all pairs $(\sigma, \xi) \in [0, M] \times [-M, M]$, $\argmax_{\gamma \geq 0} L(\sigma, \xi, \gamma) \geq \ubar{\gamma}_M$.
%		\item[(b)] For all pairs $(\sigma, \xi) \in [0,M] \times [-M, M]$, the partial derivative with respect to $\gamma$ satisfies
%		\begin{align*}
%			\frac{\partial}{\partial \gamma} \;L(\sigma, \xi, \gamma) > 0 & \qquad \text{ if } \gamma \leq \ubar{\gamma}_M, \text{ and }\\
%			\frac{\partial}{\partial \gamma} \;L(\sigma, \xi, \gamma)  \leq -\frac{\alpha_2 \sigma^2}{2\delta} & \qquad \text{ if } \gamma \geq \frac{1}{\sigma} \sqrt{\frac{\delta}{2\alpha_2}}
%		\end{align*}
		\item[(c)] For all pairs $(\sigma, \xi) \in [0, M] \times [-M, M]$,
		\[
		\frac{c_M}{\sigma} \leq \argmax_{\gamma \in [0, \infty)} \; L(\sigma, \xi, \gamma) \leq \frac{C_M}{\sigma}.
		\]
		\item[(d)] For any $\xi \in [-M, M]$, $\argmin_{\sigma \geq 0} \max_{\gamma \geq 0} L(\sigma, \xi, \sigma) \geq c_M'$. 
		\item[(e)] Let $L_1$ and $U_1$ denote fixed positive constants and consider the set $\mathcal{C} = \{(\sigma, \xi, \gamma) \in \mathbb{R}^3: \sigma \in [0, M], \xi \in [-M, M], \gamma \in [L_1, U_1]\}$.  There exists a constant $C_{M, L_1, U_1}$ depending only on the problem parameters, $M, L_1$, and $U_1$, such that the function $L$ is $C_{M, L_1, U_1}$--Lipschitz over the domain $\mathcal{C}$.
%		\item[(d)] For all $\gamma_{\ell} \leq \ubar{\gamma}_M, \gamma_{u}\geq 2\delta \cdot (\lambda + \alpha_2)/\alpha_2$ and $\xi \in [-M, M]$, with $C_M$ as in part (c),
%		\[
%		%		\begin{cases}
%			%			\frac{\partial}{\partial \sigma} \; \Bigl\{ \max_{\gamma_{\ell} \leq \gamma \leq \gamma_u}\; L(\sigma, \xi, \gamma) \Bigr\} < 0. & \text{ if } \sigma \in [0, C_M/\gamma_u]\\
%			%			\frac{\partial}{\partial \sigma} \; \Bigl\{ \max_{\gamma_{\ell} \leq \gamma \leq \gamma_u}\; L(\sigma, \xi, \gamma) \Bigr\} < -\frac{c_M'}{2} & \text{ if } \sigma \in [C_M/\gamma_u, c_M'].
%			%		\end{cases}
%		\frac{\partial}{\partial \sigma} \; \Bigl\{ \max_{\gamma_{\ell} \leq \gamma \leq \gamma_u}\; L(\sigma, \xi, \gamma) \Bigr\} <
%		\begin{cases}
%			0 &\text{ if } \sigma \in [0, C_M/\gamma_u]\\
%			-\frac{c_M'}{2} & \text{ if } \sigma \in [C_M/\gamma_u, c_M'].
%		\end{cases}
%		\]
	\end{itemize}
\end{lemma}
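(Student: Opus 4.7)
The plan is to reduce every claim to standard Moreau envelope calculus. Define
\[
\mathsf{m}(v;\gamma,Y) := \min_{u \in \mathbb{R}}\bigl\{\rho(-Yu) + \tfrac{\gamma}{2}(u - v)^2\bigr\}, \qquad u^\star := \prox_{\rho(-Y\cdot)}(v;\gamma),
\]
so that $L(\sigma, \xi, \gamma) = \tfrac{\lambda(\sigma^2 + \xi^2 R^2)}{2} - \tfrac{\alpha_2 \gamma \sigma^2}{2\delta} + \E[\mathsf{m}(V;\gamma,Y)]$ with $V = \xi R \sqrt{\alpha_2} Z_1 + \sigma \sqrt{\alpha_2} Z_2$. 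Throughout I would invoke the envelope identities $\partial_\gamma \mathsf{m} = \tfrac{1}{2}(u^\star - v)^2$ and $\partial_v \mathsf{m} = \gamma(v - u^\star) = -Y\rho'(-Y u^\star)$, together with $\rho'(t) \in (0,1)$, $\rho''(t) \in (0, 1/4]$ and strict convexity of $\rho$.

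Part (a) is immediate: $\mathsf{m}$ is concave in $\gamma$ as an infimum of affine functions, and strict concavity follows from the fact that implicit differentiation of the stationarity condition yields $\partial_\gamma u^\star = (v - u^\star)/(\rho''(-Yu^\star) + \gamma)$, so $u^\star$ moves strictly toward $v$ as $\gamma$ increases, making $\partial_\gamma \mathsf{m} = (u^\star - v)^2/2$ strictly decreasing. The linear-in-$\gamma$ term $-\alpha_2\gamma\sigma^2/(2\delta)$ preserves strict concavity after expectation. For (c), the first-order condition in $\gamma$ collapses to $\E[(u^\star - V)^2] = \alpha_2 \sigma^2/\delta$, and the upper bound $\gamma^\star \leq \sqrt{\delta/\alpha_2}/\sigma$ follows immediately from $\gamma |u^\star - V| = |\rho'(-Yu^\star)| < 1$.

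For the lower bound in (c), I would show that $\E[\rho'(-Yu^\star)^2] \geq c_M$ uniformly for $(\sigma, \xi) \in [0,M]\times[-M,M]$ and $\gamma$ bounded below. Conditioning on the event $\{|V| \leq C_M\}$, which has positive probability uniformly in the allowed range of $(\sigma, \xi)$, the proximal identity gives $|u^\star| \leq |V| + 1/\gamma$, and on this bounded set $\rho'(-Yu^\star)$ is bounded away from zero. Combined with the FOC this yields $c_M/\gamma^{\star 2} \leq \alpha_2\sigma^2/\delta$, i.e., $\gamma^\star \geq c_M/\sigma$. Part (b) then follows immediately: for $\sigma \in [0, M]$, $\gamma^\star \geq c_M/M =: \ubar{\gamma}_M$.

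Part (d) is the main obstacle. The envelope theorem applied to $g(\sigma, \xi) := \max_\gamma L$ yields the outer first-order condition $\lambda\sigma - \alpha_2 \gamma^\star \sigma/\delta + \sqrt{\alpha_2}\,\E[-Y\rho'(-Yu^\star)Z_2] = 0$. By (c) the first two terms are $O(\sigma)$, so I need to lower-bound $|\E[Y\rho'(-Yu^\star)Z_2]|$ as $\sigma \downarrow 0$. At $\sigma = 0$, $V$ is independent of $Z_2$, so $u^\star$ is independent of $Z_2$ and the covariance vanishes; a Stein-type expansion in $\sigma$ produces a leading coefficient proportional to $\sqrt{\alpha_2}\,\E[\rho''(-Yu^\star)(1 - \gamma\partial_v u^\star)]$ at $\sigma = 0$, which is strictly positive and bounded below in terms of $M$ by the same conditioning argument used in (c). This forces $\sigma^\star \geq c'_M$; the technical difficulty is controlling the exchange of differentiation and expectation rigorously and handling the case where $|\xi|$ approaches $0$ or $M$. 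Finally, for (e), each partial derivative of $L$ is bounded on $\mathcal{C}$: $|\partial_v \mathsf{m}| \leq 1$, $|\partial_\gamma \mathsf{m}| \leq 1/(2L_1^2)$, and $V$ is affine in $(\sigma, \xi)$ with Gaussian coefficients of bounded second moment. Cauchy--Schwarz inside the expectation, together with Lipschitz continuity of the affine and quadratic pieces, yields a Lipschitz constant polynomial in $M$, $L_1^{-1}$ and $U_1$.
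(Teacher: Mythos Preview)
Your treatment of parts (a), (c) upper bound, and (e) matches the paper's approach closely. There are two genuine issues, one minor and one central.

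\textbf{Circularity in (b)/(c).} Your lower bound $\E[\rho'(-Yu^\star)^2]\ge c_M$ is derived under the proviso ``$\gamma$ bounded below,'' which is exactly (b); you then deduce (b) from (c). The paper breaks this loop by proving (b) directly: it shows that for $\gamma$ sufficiently small the proximal point drifts to $\pm\infty$ like $\tfrac14\log\gamma$ (their Lemma on $\prox_\rho$), so $(u^\star-V)^2\gtrsim\log^2\gamma$ on a set of uniformly positive probability, forcing $\partial_\gamma L>0$ there. Once (b) is in hand independently, your route to (c) is fine and is essentially the paper's.

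\textbf{Part (d): the orders of magnitude are reversed.} You write that ``by (c) the first two terms are $O(\sigma)$'' and then try to extract a nonvanishing contribution from the covariance $\E[Y\rho'(-Yu^\star)Z_2]$ via a Taylor expansion in $\sigma$. But (c) says $\gamma^\star\sigma\in[c_M,C_M]$, so $-\alpha_2\gamma^\star\sigma/\delta$ is \emph{bounded away from zero}, not $O(\sigma)$: it is the term that does all the work. Conversely, a single application of Gaussian integration by parts in $Z_2$ (recall $Y$ depends only on $Z_1$) gives
\[
-\sqrt{\alpha_2}\,\E\bigl[Y\rho'(-Yu^\star)Z_2\bigr]
=\alpha_2\sigma\,\E\Bigl[\frac{\gamma^\star}{\gamma^\star+\rho''(-Yu^\star)}\Bigr]\in[0,\alpha_2\sigma],
\]
so the covariance term is $O(\sigma)$ uniformly on $[0,M]\times[-M,M]$. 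Combining, $\partial_\sigma\bigl(\max_\gamma L\bigr)\le(\lambda+\alpha_2)\sigma-\alpha_2 c_M/\delta$, which is negative on an explicit interval $(0,c'_M]$. This is the paper's argument, and it sidesteps entirely the issues you flag about exchanging differentiation and expectation and about endpoints in $\xi$: no Taylor expansion in $\sigma$ is needed, and the bound is uniform because the Stein identity is exact.
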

Equipped with this lemma, we prove each part of Lemma~\ref{lem:structural_L} in turn.  
\paragraph{Proof of Lemma~\ref{lem:structural_L}(a)}
We decompose $\Psi$ into a quadratic component and a non-quadratic component $\Phi$:
\[
\Psi(\sigma, \xi) =  \frac{\lambda(\xi^2 R^2 + \sigma^2)}{2}  + \underbrace{\max_{\gamma \geq 0} \Bigl\{ - \frac{\alpha_2 \gamma \sigma^2}{2\delta} + \E\Bigl\{\min_{u \in \mathbb{R}}\Bigl[ \rho(-Yu) + \frac{\gamma}{2} \cdot \bigl(u - V(Z_1, Z_2)\bigr)^2\Bigr]\Bigr\} \Bigr\} }_{\Phi(\sigma, \xi)}.
\]
It suffices to show that for any $M > 0$, $\Phi$ is convex on the domain $[0, M] \times [-M, M]$; accordingly, the remainder of the proof is dedicated to establishing this.
Before proceeding, we note that by Lemma~\ref{lem:useful-asymptotic-loss}(b), there exists a constant $\ubar{\gamma}_M$ such that $\Phi$ admits the equivalent characterization
\[
\Phi(\sigma, \xi) = \max_{\gamma \geq \ubar{\gamma}_M} \Bigl\{ - \frac{\alpha_2 \gamma \sigma^2}{2\delta} + \E\Bigl\{\min_{u \in \mathbb{R}}\Bigl[ \rho(-Yu) + \frac{\gamma}{2} \cdot \bigl(u - V(Z_1, Z_2)\bigr)^2\Bigr]\Bigr\} \Bigr\}.
\]
Continuing, we adopt the strategy of~\citet[Propostion 5.1]{Montanari2019} and pass to an infinite dimensional optimization problem.  To this end, let $\mathbb{X}$ denote the set of all measurable functions from $\mathbb{R}^3 \rightarrow \mathbb{R}$, $\mathbb{Q}$ denote the joint distribution of the triplet $(Z_1, Z_2, Y)$ and $\mathbb{L} = L^2(\mathbb{Q})$ denote the space of square integrable (with respect to $\mathbb{Q}$) functions.  Next, for $\gamma \geq 0$, define the function $g_{\gamma}: \mathbb{X} \rightarrow \mathbb{R}$ as $g_{\gamma}(U; \sigma, \xi) = \mathbb{E}[\rho(-YU)] + \frac{\gamma}{2} \mathbb{E}\bigl(U - V(Z_1, Z_2)\bigr)^2$.  Note that, for $\gamma \geq \ubar{\gamma}$, $g_{\gamma}$ is infinite on $\mathbb{X} \setminus \mathbb{L}$ and, on $\mathbb{L}$, $g_{\gamma}$ is lower semicontinuous.  Moreover, since, for $\gamma \geq \ubar{\gamma}$, $g_{\gamma}$ is strongly convex, it admits a unique minimizer in $\mathbb{L}$~\citep[][Corollary 11.17]{bauschke2017convex}.  Hence, 
\begin{align*}
	\Phi(\sigma, \xi)&= \max_{\gamma \geq \ubar{\gamma}_M}\; \biggl\{-\frac{\alpha_2 \gamma \sigma}{2 \delta} + \inf_{U \in \mathbb{X}}\Bigl[ \E\{\rho(-Y U)\} + \frac{\gamma}{2}\mathbb{E}\bigl(U - V(Z_1, Z_2)\bigr)^2\Bigr]\biggr\}\\
	& = \max_{\gamma \geq \ubar{\gamma}_M}\; \biggl\{-\frac{\alpha_2 \gamma \sigma}{2 \delta} + \min_{U \in \mathbb{L}}\Bigl[ \E\{\rho(-Y U)\} + \frac{\gamma}{2}\mathbb{E}\bigl(U - V(Z_1, Z_2)\bigr)^2\Bigr]\biggr\}.
\end{align*}
Now, let $U_{\gamma}(\sigma, \xi) = \argmin_{U\in \mathbb{L}} \{g_{\gamma}(U; \sigma, \xi)\}$.  Next, consider the points $(\sigma_1, \xi_1) \in [0, M] \times [-M, M]$ and $(\sigma_2, \xi_2) \in [0, M] \times [-M, M]$ and, for each $\gamma \geq \ubar{\gamma}_M$, associate to them the minimizers 
\[
U_{\gamma}^{(1)} = \argmin_{U\in \mathbb{L}} \{g_{\gamma}(U; \sigma_1, \xi_1)\}, \qquad \text{ and } \qquad U^{(2)}_{\gamma} = \argmin_{U \in \mathbb{L}} \; \{g_{\gamma}(U; \sigma_2, \xi_2)\}.
\]
Additionally, consider the convex combinations
\[
(\sigma_t, \xi_t) = (t \sigma_1 + (1 - t) \sigma_2, t\xi_1 + (1 - t) \xi_2), \qquad \text{ and } \qquad U^{(t)}_{\gamma} = tU^{(1)}_{\gamma} + (1 - t) U^{(2)}_{\gamma}.
\]
We conclude by noting the inequality
\begin{align*}
	\Phi(\sigma_t, \xi_t) &\overset{\1}{\leq} \max_{\gamma \geq \ubar{\gamma}} \biggl\{-\frac{\alpha_2 \gamma \sigma_t}{2\delta} + g_{\gamma}\bigl(U_{\gamma}^{(t)}; \sigma_t, \xi_t\bigr) \biggr\}\\
	&\overset{\2}{\leq} \max_{\gamma \geq \ubar{\gamma}} \biggl\{-\frac{t\alpha_2 \gamma \sigma_1}{2\delta} + tg_{\gamma}\bigl(U_{\gamma}^{(1)}; \sigma_1, \xi_1\bigr) +  \frac{(t-1)\alpha_2 \gamma \sigma_2}{2\delta} + (1 - t)g_{\gamma}\bigl(U_{\gamma}^{(2)}; \sigma_2, \xi_2\bigr)\biggr\}\\
	&\overset{\3}{\leq} t \cdot \Phi(\sigma_1, \xi_1) + (1 - t) \cdot \Phi (\sigma_2, \xi_2),
\end{align*}
where step $\1$ follows since $U^{(t)}_{\gamma} \in \mathbb{L}$, step $\2$ follows from convexity of the function $\rho$ and the squared norm $\| \cdot \|_{\mathbb{L}}^2$, and step $\3$ follows by definition of $U^{(1)}_{\gamma}$ and $U^{(2)}_{\gamma}$.  Convexity of $\Phi$ follows by definition. \qed

\paragraph{Proof of Lemma~\ref{lem:structural_L}(b)}
We show that there exists a constant $M$ such that it suffices to restrict the minimization to components with magnitude at most $M$.  Once this is established, we apply Lemma~\ref{lem:useful-asymptotic-loss}(b) and set $\gamma_0 = \ubar{\gamma}_M$, and subsequently combine Lemmas~\ref{lem:useful-asymptotic-loss}(d) and (c) in sequence to conclude.  
By definition, we obtain the upper bound
\[
\Psi(0, 0) = \Phi(0, 0) = \max_{\gamma \geq 0} \EE\Bigl\{\min_{u \in \mathbb{R}} \rho(-Y u) + \frac{\gamma}{2} u^2\Bigr\} \leq \log{2},
\]
where the final inequality follows by taking $u = 0$ in the inner minimization.  On the other hand, taking $\gamma = 0$ implies that for any $\sigma, \xi$, $\Phi(\sigma, \xi) \geq 0$, whence we obtain the lower bound $\Psi(\sigma, \xi) \geq \frac{\lambda (\xi^2 R^2 + \sigma^2)}{2}$.  Subsequently, 
set $M = 2\sqrt{\frac{\log{2}}{\lambda (R^2 \wedge 1)}}$.
Thus, for any $\sigma \geq M$ or $\lvert \xi \rvert \geq M$, the conclusion follows from the chain of inequalities $
\Psi(\sigma, \xi) > \log{2} \geq \Psi(0, 0)$. 
\qed

\paragraph{Proof of Lemma~\ref{lem:structural_L}(c)}
First, note that by definition
\[
\min_{\sigma \geq 0, \xi \in \mathbb{R}}\max_{\gamma \geq 0}\; L(\sigma, \xi, \gamma) = \min_{\sigma \geq 0, \xi \in \mathbb{R}}\; \Psi(\sigma, \xi).
\]
By part (a), $\Psi$ is strongly convex and consequently admits a unique minimizer $(\sigma_{\star}, \gamma_{\star})$.  Moreover, by Lemma~\ref{lem:useful-asymptotic-loss}(a), the function $\gamma \mapsto L(\sigma_{\star}, \xi_{\star}, \gamma)$ is strictly concave and thus admits a unique minimizer $\gamma_{\star}$.  To conclude, note that the function $L$ is continuously differentiable, strongly convex in its first two arguments and strictly concave in its last argument, whence the triple $(\sigma_{\star}, \xi_{\star}, \gamma_{\star})$ is identified by the first-order stationary conditions.  Straightforward calculation verifies that the system of equations~\eqref{eq:system} correspond to the first-order stationary conditions. \qed

\subsection{Proof of Lemma~\ref{lem:useful-asymptotic-loss}}\label{sec:proof-lem-useful-asymptotic-loss}
We note that the proofs of parts (a), (b), and (c) follow in an analogous manner to parts (a) and (b) of Lemma~\ref{lem:useful-L-n} (see Section~\ref{sec:proof-lem-useful-L-n}), so for brevity we omit their proofs.  

\paragraph{Proof of Lemma~\ref{lem:useful-asymptotic-loss}(d)}
Let $\gamma(\sigma, \xi)$ denote the maximizer of the map $\gamma \mapsto L(\sigma, \xi, \gamma)$ over the domain $\gamma \in [0, \infty)$ and let $\widehat{u} = \prox_{\rho(-Y\cdot)}(\xi R \sqrt{\alpha_2} Z_1 + \sigma\sqrt{\alpha_2}Z_2; \gamma)$.  We apply Lemma~\ref{lem:derivative_prox} to compute the partial derivative directly, obtaining
\begin{align*}
	\frac{\partial}{\partial \sigma} \Bigl\{ \max_{\gamma \geq 0}\; L(\sigma, \xi, \gamma)\Bigr\} &= \lambda \sigma - \frac{\alpha_2 \gamma(\sigma, \xi) \sigma}{\delta} - \sqrt{\alpha_2} \E\{Y \rho'(-Y \widehat{u}) Z_2\}\\
	&= \lambda \sigma - \frac{\alpha_2 \gamma(\sigma, \xi) \sigma}{\delta} + \E\biggl\{\frac{\alpha_2 \sigma \gamma(\sigma, \xi)}{\gamma(\sigma, \xi) + \rho''(-Y \widehat{u})}\biggr\},
\end{align*}
where the final step follows from Gaussian integration by parts~\citep[Lemma 7.2.3]{vershynin2018high}.
In turn, applying Lemma~\ref{lem:useful-asymptotic-loss}(c) yields
\[
\frac{\partial}{\partial \sigma} \Bigl\{ \max_{\gamma \geq 0}\; L(\sigma, \xi, \gamma)\Bigr\} \leq \lambda \sigma - \frac{\alpha_2 c_M}{\delta} + \E\biggl\{\frac{\alpha_2 \sigma \gamma(\sigma, \xi)}{\gamma(\sigma, \xi) + \rho''(-Y \widehat{u})}\biggr\} \leq  \lambda \sigma - \frac{\alpha_2 c_M}{\delta} + \alpha_2 \sigma,
\]
where in the final inequality we used the bound $\gamma(\sigma, \xi)/(\gamma(\sigma, \xi) + \rho''(-Y \widehat{u})) \leq 1$.  Consequently,
\begin{align}\label{ineq:outcome-case2}
	\frac{\partial}{\partial \sigma} \Bigl\{ \max_{\gamma \geq 0}\; L(\sigma, \xi, \gamma)\Bigr\} \leq - \frac{\alpha_2 c_M}{2\delta}, \qquad \text{ for all } \sigma \leq  \frac{\alpha_2 c_M}{2\delta}.
\end{align}
We conclude by setting $c_M' = \alpha_2 c_M/(2\delta)$. \qed

\paragraph{Proof of Lemma~\ref{lem:useful-asymptotic-loss}(e)}
We bound the partial derivatives of $L$~\eqref{eq:asymploss} with respect to each of the variables $\sigma, \xi$ and $\gamma$ in turn.

\smallskip
\noindent \underline{Bounding the partial derivative with respect to $\sigma$.}
Let $
	X^{(1)} = -\sqrt{\alpha_2} Y \rho'(-Y \widehat{u})Z_{2}$.
Subsequently, apply Lemma~\ref{lem:derivative_prox}(b) in conjunction with the triangle inequality to obtain
\begin{align*}
%	\label{ineq:bound-partial-sigma1}
	\Bigl \lvert \frac{\partial}{\partial \sigma} L(\sigma, \xi, \gamma)\Bigr\rvert = \Bigl \lvert \lambda \sigma - \frac{\alpha_2 \gamma \sigma}{\delta} - \E \{ X^{(1)}\} \Bigr \rvert \leq \lambda \sigma + \frac{\alpha_2 U_1 \sigma}{\delta} + \lvert \E \{X^{(1)}\} \rvert.
\end{align*}
Applying Jensen's inequality, we deduce the inequality
\begin{align}
	\label{ineq:bound-expec-X1}
	\lvert \E\{ X ^{(1)}\} \rvert \leq \E\{ \lvert X^{(1)} \rvert\} \leq \sqrt{\alpha_2}\E \{\lvert Z_2 \rvert \} \leq C.
\end{align}
Combining the two previous displays with the assumption $\sigma \leq M$ yields the upper bound
$\sup_{(\sigma, \xi, \gamma) \in \mathcal{C}}\bigl \lvert \frac{\partial}{\partial \sigma} L(\sigma, \xi, \gamma)\bigr\rvert \leq C (1 \vee U_1).$

\smallskip
\noindent \underline{Bounding the partial derivative with respect to $\xi$.}
Let $X^{(2)} = -\sqrt{\alpha_2} R Y \rho'(-Y \widehat{u})Z_{1}$,
and apply Lemma~\ref{lem:derivative_prox} followed by the triangle inequality to obtain the upper bound \sloppy\mbox{$
	\bigl \lvert \frac{\partial}{\partial \xi} L(\sigma, \xi, \gamma)\bigr\rvert \leq \lambda R^2 \lvert \xi \rvert + \lvert \E \{X^{(2)}\} \rvert$.}
Following identical steps as in the chain of inequalities~\eqref{ineq:bound-expec-X1} yields the bound $\lvert \E\{ X^{(2)}\} \rvert \leq C$.  Consequently, we deduce the desired upper bound$
	\sup_{(\sigma, \xi, \gamma) \in \mathcal{C}} \bigl \lvert \frac{\partial}{\partial \xi} L(\sigma, \xi, \gamma)\bigr\rvert \leq C$.

\smallskip
\noindent \underline{Bounding the partial derivative with respect to $\gamma$.}
We define $
	X^{(3)} = \frac{1}{2\gamma^2} \rho'(-Y\widehat{u})$,
and apply Lemma~\ref{lem:derivative_prox}(c) followed by the triangular inequality, to obtain the upper bound $
	\bigl \lvert \frac{\partial}{\partial \gamma} L(\sigma, \xi, \gamma)\bigr\rvert \leq \frac{\alpha_2 \sigma^2}{2\delta} + \lvert \E \{X^{(3)}\} \rvert$.
Following the same logic as in the chain of inequalities~\eqref{ineq:bound-expec-X1}, we deduce $
\lvert \E\{X^{(3)}\} \rvert \leq \frac{1}{2L_1^2} \leq C
$.  Consequently, we obtain the uniform bound \sloppy\mbox{$
	\sup_{(\sigma, \xi, \gamma) \in \mathcal{C}} \bigl \lvert \frac{\partial}{\partial \gamma} L(\sigma, \xi, \gamma)\bigr\rvert \leq C$.}

\smallskip
\noindent \underline{Putting the pieces together.} Combining the uniform upper bounds, we immediately obtain $
\sup_{(\sigma, \xi, \gamma) \in \mathcal{C}} \norm{\nabla L(\sigma, \xi, \gamma)}_2 \leq C(1 \vee \widebar{\gamma})$, 
which implies the result.

% Error in variables auxiliary
\section{Auxiliary proofs for the error-in-variables model}
This appendix is organized as follows: in Section~\ref{subsubsec:proof_lemma_ao}, we prove Lemma~\ref{lem:ao}; in Section~\ref{subsubsec:proof_cost_concentration}, we prove Lemma~\ref{lem:costconcentration}; and in Section~\ref{sec:proof-lem-useful-L-n}, we prove Lemma~\ref{lem:useful-L-n}.

% Proof ao
\subsection{Proof of Lemma~\ref{lem:ao}}
\label{subsubsec:proof_lemma_ao}
The proof follows two steps: First, we apply the CGMT~\citep{thrampoulidis2018} to relate $\mathcal{L}_n$~\eqref{eq:loss} to a simpler problem; and second, we show that with high probability, this simpler problem coincides with the auxiliary loss $\ell_n$~\eqref{def:aux_loss}.

\paragraph{Step 1: Reduction to the auxiliary problem via the CGMT.}
In order to apply the CGMT, we must express the minimization of $\mathcal{L}_n$~\eqref{eq:loss} as a variational problem over convex, compact sets.  To this end, define the function $F_n: \mathbb{R}^n \times \mathbb{R}^p \rightarrow \mathbb{R}$ as 
\begin{align}\label{eq:define-F_n}
	F_n(\bu, \bt) = \frac{1}{n} \sum_{i=1}^{n} \rho(u_i) + \frac{\lambda}{2p} \| \bt \|_2^2. 
\end{align}
Following the proof of~\citet[Corollary 5.1]{miolane2021distribution}, we note that by tightness, for any $\varepsilon > 0$, there exists a constant $M$, depending on $\varepsilon$, such that the event
\[
\mathcal{A} = \Bigl\{ \|\bg \|_2 \leq M \quad \text{ and } \quad  \| \bh \|_2 \leq M\Bigr\} \bigcap  \Bigl\{ \| \bG \|_{\mathsf{op}} \leq M \quad \text{ and } \quad \| \widehat{\bt} \|_2 \leq M\Bigr\},
\]
satisfies $\pr\{\mathcal{A}\} \geq 1 - \varepsilon$.  On the event $\mathcal{A}$, we note the relation
\[
\min_{\bt \in D}\; \mathcal{L}_n(\bt) \;= \min_{\substack{\bt \in D \\ \bu \in \mathbb{B}_2(M)}} \Bigl\{F_n(-\by \odot \bu; \bt) \quad \text{ s.t. } \quad \bG \bt = \bu\Bigr\}.
\]
A straightforward calculation---which we provide at the end of the subsection---establishes that $\bu \mapsto F_n(\bu; \bt)$ is $n^{-1/2}$--Lipschitz.  Consequently, for any $s > 0$, we obtain the inequality
\[
\max_{\| \bv \|_2 = s} \Bigl\{F_n(-\by \odot \bu; \bt) + \langle \bv, \bG \bt - \bu \rangle \Bigr\} \geq F_n(-\by \odot \bG \bt; \bt) + \left(s - \frac{1}{\sqrt{n}}\right)  \norm{\bu - \bG \bt}_2.
\]
Evidently, in order to enforce the constraint $\bG \bt = \bu$, it is unnecessary to maximize over $\mathbb{R}^n$; the constraint is enforced by maximizing $\bv \in \mathbb{B}_2(r^{-1}n^{-1/2})$, for any $r$ satisfies $0 < r < 1$.  Thus, on the event $\mathcal{A}$, 
\begin{align}
	%	\label{eq:constrain-minmax-compact}
	\min_{\bt \in D}\; \mathcal{L}_n(\bt)\ &=  \min_{\substack{\bt \in D \\ \bu \in \mathbb{B}_2(M)}}\max_{\bv \in \mathbb{B}_2(r^{-1}n^{-1/2})} \Bigl\{F_n(-\by \odot \bu; \bt)+ \langle \bv, \bG \bt - \bu \rangle \Bigr\}.\nonumber
\end{align}
Having written the minimization of $\mathcal{L}_n$ as a variational problem over compact sets, it remains only to handle the correlation between the labels $\by$ and the data $\bG$.  To this end, we consider the orthogonal decomposition $\bt = \proj_{\bt_0} \bt + \pproj \bt$ and define the intermediate loss
\begin{align}
	%	\label{def:breve-L}
	\widetilde{\mathcal{L}}_n(\bt) := &\min_{\bu \in \mathbb{B}_2(M)}\max_{\bv \in \mathbb{B}_2(r^{-1}n^{-1/2})} \Bigl\{F_n(-\by \odot \bu; \bt) + \langle \bv, 
	\widetilde{\bG} \proj_{\bt_0}\bt\rangle + \langle \bv, \widetilde{\bG} \proj^{\perp}_{\bt_0}\bt\rangle - \langle \bv, \bu \rangle\Bigr\},\nonumber,
\end{align}
where $\widetilde{\bG}$ is an independent copy of $\bG$.  Note that on the event $\mathcal{A}$, we have the distributional equality $\min_{\bt \in D}\; \mathcal{L}_n(\bt) \overset{d}{=} \min_{\bt \in D} \;\widetilde{\mathcal{L}}_n(\bt)$.  Thus, defining $\widetilde{\ell}_n: \mathbb{R}^p  \rightarrow \mathbb{R}$ as
\begin{align*}
%	\label{def:breve_ell}
	\widetilde{\ell}_n(\bt)\ :=  \min_{\bu \in \mathbb{B}_2(M)}\max_{\bv \in \mathbb{B}_2(r^{-1}n^{-1/2})} \Bigl\{&F_n(-\by \odot \bu; \bt)+ \langle \bv, \bG \proj_{\bt_0}\bt\rangle \nonumber\\
	&+ \sqrt{\frac{\alpha_2}{p}}\norm{\bv}_2 \langle \bg, \proj_{\bt_0}^{\perp}\bt \rangle + \sqrt{\frac{\alpha_2}{p}}\bigl \|\proj_{\bt_0}^{\perp}\bt \bigr \|_2\langle \bh, \bv \rangle - \langle \bv, \bu \rangle\Bigr\},
\end{align*}
and applying the CGMT~\citep[see, e.g.,][Theorem 3]{thrampoulidis2018} yields 
\begin{align} \label{ineq:step1-cgmt-proof}
\pr\Bigl\{ \min_{\bt \in D}\; \mathcal{L}_n(\bt) \leq t\Bigr\} \leq 2\pr\Bigl\{ \min_{\bt \in D}\; \widetilde{\ell}_n(\bt) \leq t\Bigr\} + \varepsilon.
\end{align}

\paragraph{Step 2: Passing from the intermediate problem $\widetilde{\ell}_n$ to $\ell_n$~\eqref{def:aux_loss}.}
Collecting terms and applying the Cauchy--Schwarz inequality yields
\begin{align*}
	%	\label{def:breve_ell}
	\widetilde{\ell}_n(\bt)\ =  \min_{\bu \in \mathbb{B}_2(M)}\max_{0 \leq \gamma \leq r^{-1}} \Bigl\{F_n(-\by \odot \bu; \bt)+
	\frac{\gamma}{\sqrt{n}} \Bigl( \sqrt{\frac{\alpha_2}{p}}\langle \bg, \pproj \bt \rangle + \Bigl \| \sqrt{\frac{\alpha_2}{p}}\|\pproj \bt\|_2 \bh + \bG \proj_{\bt_0}\bt - \bu \Bigr\|_2\Bigr)\Bigr\}.
\end{align*}
Now, note that the RHS of the above display depends on the direction of the vector $\pproj \bt$ only through the term $\gamma \sqrt{\alpha_2/np} \langle \bg, \pproj \bt\rangle$.  Thus, for sets $D$ of the form $\mathbb{B}_2(C)$ or $\mathbb{D}_{\epsilon} \cap \mathbb{B}_2(C)$, we may first minimize over the direction of $\pproj \bt$~\citep[by, e.g.,][Lemma 8]{kammoun2021precise} to obtain 
\begin{align}
		\label{eq:opt-perp-theta}
	\min_{\bt \in D}\; \widetilde{\ell}_n(\bt) &=  \min_{\bt \in D}\min_{\bu \in \mathbb{B}_2(M)}\max_{0 \leq \gamma \leq \frac{1}{r}} \Bigl\{F_n(-\by \odot \bu; \bt) \nonumber\\
	&\qquad + \frac{\gamma}{\sqrt{n}} \Bigl( -\sqrt{\frac{\alpha_2}{p}} \|\pproj \bg\|_2 \| \pproj \bt \|_2 + \Bigl \| \sqrt{\frac{\alpha_2}{p}}\|\pproj \bt\|_2 \bh + \bG \proj_{\bt_0}\bt - \bu \Bigr\|_2\Bigr)\Bigr\}.
\end{align}
Next, consider the ball 
\begin{align}\label{eq:ball-proof-gordon}
B := \Bigl\{\bu \in \mathbb{R}^d:  \Bigl \|\bu - \sqrt{\frac{\alpha_2}{p}}\|\pproj \bt\|_2 \bh - \bG \proj_{\bt_0}\bt  \Bigr\|_2 \leq  \sqrt{\frac{\alpha_2}{p}}  \|\pproj \bg\|_2 \| \pproj \bt \|_2 \Bigr\},
\end{align}
and note that if the inner maximization over $\gamma$ were over the constraint $\gamma \geq 0$, the problem~\eqref{eq:opt-perp-theta} would be equivalent to imposing the further constraint $\bu \in B$. 

  For any $\bu \in \mathbb{B}_2(M)$, let $\widebar{\bu}$ denote its projection onto $B$. Since $F_n$ is Lipschitz continuous with constant $1/\sqrt{n}$ and $\widebar{\bu}$ lies on the boundary of the ball $B$, 
\begin{align*}
F_n(-\by \odot \bu; \bt) + \frac{\gamma}{\sqrt{n}} \Bigl( -\sqrt{\frac{\alpha_2}{p}} \|\pproj \bg\|_2 \| \pproj \bt \|_2 &+ \Bigl \| \sqrt{\frac{\alpha_2}{p}}\|\pproj \bt\|_2 \bh + \bG \proj_{\bt_0}\bt - \bu \Bigr\|_2\Bigr) \\
& \qquad \qquad \qquad \geq F_n(-\by \odot \widebar{\bu}; \bt) + \frac{\gamma - 1}{\sqrt{n}} \| \bu - \widebar{\bu} \|_2.
\end{align*}
Consequently, the constraint is enforced as long as $r < 1$ and we deduce 
\begin{align*}
%	\label{eq:opt-perp-theta}
	\min_{\bt \in D}\; \widetilde{\ell}_n(\bt) &=  \min_{\bt \in D}\min_{\bu \in \mathbb{B}_2(M)}\max_{\gamma \geq 0} \Bigl\{F_n(-\by \odot \bu; \bt) \\
	&\qquad + \frac{\gamma}{n} \Bigl( -\frac{\alpha_2}{p} \|\pproj \bg\|_2^2 \| \pproj \bt \|_2^2 + \Bigl \| \sqrt{\frac{\alpha_2}{p}}\|\pproj \bt\|_2 \bh + \bG \proj_{\bt_0}\bt - \bu \Bigr\|_2^2\Bigr)\Bigr\},
\end{align*}
where we have additionally used the fact that squaring both sides of the inequality in the definition of the ball $B$~\eqref{eq:ball-proof-gordon} forms an equivalent definition.  Applying Sion's minimax inequality~\citep[Corollary 3.3]{sion1958general} to interchange minimization over $\bu$ and maximization over $\gamma$ and expanding $F_n$ yields
\begin{align*}
	%	\label{eq:opt-perp-theta}
	\min_{\bt \in D}\; \widetilde{\ell}_n(\bt) &=  \min_{\bt \in D} \max_{\gamma \geq 0}  \Bigl\{ \frac{\lambda}{2p} \| \bt \|_2^2 -\frac{\gamma \alpha_2}{2np} \|\pproj \bg\|_2^2 \| \pproj \bt \|_2^2 \\
	&+ \min_{\bu \in \mathbb{B}_2(M)} \Bigl[\frac{1}{n} \sum_{i=1}^{n} \rho(-y_i u_i) + \frac{\gamma}{2} \Bigl(u_i - (\bG\proj_{\bt_0} \bt)_i - \frac{\sqrt{\alpha_2} \| \pproj \bt \|_2}{\sqrt{p}} h_i\Bigr)^2 \Bigr]\Bigr\}.
\end{align*}
Now, on the event $\mathcal{A}$, the quantities $(\bG\proj_{\bt_0} \bt)_i$ and $h_i$ are bounded for all $i \in [n]$, so that---inflating $M$ if necessary---the minimum over $\bu$ is obtained by minimizing each coordinate separately:
\begin{align*}
	%	\label{eq:opt-perp-theta}
	\min_{\bt \in D}\; \widetilde{\ell}_n(\bt) &=  \min_{\bt \in D} \max_{\gamma \geq 0}  \Bigl\{ \frac{\lambda}{2p} \| \bt \|_2^2 -\frac{\gamma \alpha_2}{2np} \|\pproj \bg\|_2^2 \| \pproj \bt \|_2^2 \\
	&+ \frac{1}{n} \sum_{i=1}^{n} \min_{u_i \in \mathbb{R}} \Bigl[\rho(-y_i u_i) + \frac{\gamma}{2} \Bigl(u_i - (\bG\proj_{\bt_0} \bt)_i - \frac{\sqrt{\alpha_2} \| \pproj \bt \|_2}{\sqrt{p}} h_i\Bigr)^2 \Bigr]\Bigr\}.
\end{align*}

Noting that RHS of the above display is $\min_{\bt \in D} \ell_n(\bt)$ and combining with the inequality~\eqref{ineq:step1-cgmt-proof} yields
\[
\pr\Bigl\{ \min_{\bt \in D}\; \mathcal{L}_n(\bt) \leq t\Bigr\} \leq 2\pr\Bigl\{ \min_{\bt \in D}\; \ell_n(\bt) \leq t\Bigr\} + 2e^{-n} +  \varepsilon.
\]
Since $\varepsilon$ was arbitrary, the proof is complete upon taking $\varepsilon \downarrow 0$.   It remains to show the deferred proof of the Lipschitz continuity of $F_n$.

\medskip
\noindent \underline{Establishing Lipschitz continuity of $F_n$.}  Let $\bx, \by \in \mathbb{R}^{n}$ and $x(t) = t\bx + (1 - t) \by$ for $t \in [0, 1]$.  Then,
\[
\lvert F_n(\bx; \bt) - F_n(\by; \bt)\rvert = \left\lvert\int_{0}^{1}\frac{\mathrm{d}}{\mathrm{d}t}F_n(x(t))\mathrm{d}t\right\rvert = \frac{1}{n}\left\lvert \int_{0}^{1} \langle \bx - \by, \rho'(t\bx + (1 - t) \by)\rangle \mathrm{d}t \right\rvert \leq \frac{\norm{\bx - \by}_2}{\sqrt{n}},
\]
where the last inequality follows by applying the triangle inequality in conjunction with the Cauchy--Schwarz inequality and the uniform bound $\sup_{t \in \mathbb{R}}\lvert \rho'(t) \rvert \leq 1$. \qed

% Proof cost concentration
\subsection{Proof of Lemma~\ref{lem:costconcentration}}\label{subsubsec:proof_cost_concentration}
This subsection is organized in the following way.  We prove the point-wise inequality, Lemma~\ref{lem:costconcentration}(a), in Section~\ref{sec:proof-costconcentration-a}.  We then prove the uniform inequality, Lemma~\ref{lem:costconcentration}(b), in Section~\ref{sec:proof-costconcentration-b}.  We will additionally require the following lemma, whose proof is a straightforward extension of Lemma~\ref{lem:useful-asymptotic-loss}(e), and is consequently omitted for brevity.
\begin{lemma}
	\label{lem:lipschitz-L}
	Consider the setting of Lemma~\ref{lem:costconcentration} and recall the set $\mathcal{C}$ as well as the constant $\bar{\gamma}$ therein.  There exists a pair of positive constants $(c, C)$ depending only on $(K_1, K_2)$ such that $L_n$~\eqref{def:aux_scalar} is $C(1 \vee \bar{\gamma})$--Lipschitz on the domain $\mathcal{C}$.
\end{lemma}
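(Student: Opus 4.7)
The strategy mirrors that of Lemma~\ref{lem:useful-asymptotic-loss}(e): I compute each partial derivative of $L_n$ in closed form via the envelope theorem (Lemma~\ref{lem:derivative_prox}) applied to the inner one-dimensional minimizations, and then bound each derivative uniformly over $\mathcal{C}$. The one genuinely new ingredient is that $L_n$ is random, so the derivative estimates will contain empirical Gaussian quantities in place of the expectations used in Lemma~\ref{lem:useful-asymptotic-loss}(e), and these must be controlled on a high-probability event.

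Concretely, for each $i$ denote $v_i = \sqrt{\alpha_2} \xi R Z_{1,i} + \sqrt{\alpha_2} \sigma Z_{2,i}$ and let $\hat u_i = \prox_{\rho(-y_i\cdot)}(v_i;\gamma)$, so that the first-order condition gives $\gamma(\hat u_i - v_i) = y_i\rho'(-y_i\hat u_i)$. Applying Lemma~\ref{lem:derivative_prox} termwise yields
\begin{align*}
\tfrac{\partial}{\partial\sigma}L_n &= \lambda\sigma - \tfrac{\alpha_2\gamma\sigma\|\pproj\bg\|_2^2}{n} - \tfrac{\sqrt{\alpha_2}}{n}\sum_{i=1}^n y_i\rho'(-y_i\hat u_i)Z_{2,i},\\
\tfrac{\partial}{\partial\xi}L_n &= \lambda R^2\xi - \tfrac{\sqrt{\alpha_2}R}{n}\sum_{i=1}^n y_i\rho'(-y_i\hat u_i)Z_{1,i},\\
\tfrac{\partial}{\partial\gamma}L_n &= -\tfrac{\alpha_2\sigma^2\|\pproj\bg\|_2^2}{2n} + \tfrac{1}{2n}\sum_{i=1}^n(\hat u_i - v_i)^2.
\end{align*}
Using the uniform bound $\sup_{t\in\mathbb{R}}|\rho'(t)|\leq 1$ together with the first-order condition (which implies $(\hat u_i - v_i)^2 \leq 1/\gamma^2 \leq 1/L_1^2$ on $\mathcal{C}$), the Cauchy--Schwarz inequality gives
\[
\Bigl|\tfrac{1}{n}\sum_{i=1}^n y_i\rho'(-y_i\hat u_i)Z_{k,i}\Bigr| \leq \frac{1}{\sqrt n}\cdot\frac{\|(Z_{k,i})_{i\leq n}\|_2}{\sqrt n},\qquad k\in\{1,2\}.
\]
Combining these estimates with $\sigma\leq M$, $|\xi|\leq M$, and $\gamma\in[L_1,U_1]$ reduces the proof to showing that, on a high-probability event, the empirical Gaussian quantities $\|\pproj\bg\|_2^2/n$ and $\|(Z_{k,i})_{i\leq n}\|_2^2/n$ are bounded by constants depending only on $(K_1,K_2)$.

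For this last step I invoke standard $\chi^2$ concentration~\citep[][e.g.~Example~2.11]{vershynin2018high}: there is an event $\mathcal{E}$ with $\pr\{\mathcal{E}\}\geq 1 - Ce^{-cn}$ on which $\|\pproj\bg\|_2^2/n \leq 2/\delta$ and $\|(Z_{k,i})_{i\leq n}\|_2^2/n \leq 2$ for $k\in\{1,2\}$. Assembling the three partial-derivative estimates on $\mathcal{E}$ yields
\[
\sup_{(\sigma,\xi,\gamma)\in\mathcal{C}} \|\nabla L_n(\sigma,\xi,\gamma)\|_2 \leq C(1\vee\bar\gamma),
\]
for a constant $C$ depending only on $(K_1,K_2)$, and hence $L_n$ is $C(1\vee\bar\gamma)$-Lipschitz on $\mathcal{C}$. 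The only obstacle worth mentioning is bookkeeping: because each derivative contains a $\gamma$-dependent term, one has to be careful to isolate the factor $(1\vee\bar\gamma)$ cleanly; the $\partial_\gamma L_n$ term is harmless precisely because $\gamma$ is bounded below by $L_1>0$ on $\mathcal{C}$, which controls $1/\gamma^2$. \qed
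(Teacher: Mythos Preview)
Your proof is correct and follows exactly the route the paper indicates: the paper omits the argument, saying only that it is ``a straightforward extension of Lemma~\ref{lem:useful-asymptotic-loss}(e),'' and your derivative computations via Lemma~\ref{lem:derivative_prox} together with high-probability control of $\|\pproj\bg\|_2^2/n$ and $\|(Z_{k,i})_{i\le n}\|_2^2/n$ are precisely that extension.

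One small caveat worth flagging: the set $\mathcal{C}$ in Lemma~\ref{lem:costconcentration}(b) actually has $\gamma\in[0,\bar\gamma]$, with no positive lower bound, so your closing sentence ``$\gamma$ is bounded below by $L_1>0$ on $\mathcal{C}$'' is not literally justified by the stated domain---and indeed $(\hat u_i-v_i)^2$ grows like $(\log(1/\gamma))^2$ as $\gamma\downarrow 0$ (cf.\ Lemma~\ref{lem:lbgamma}), so $\partial_\gamma L_n$ is unbounded there. This is an imprecision in the lemma statement rather than a flaw in your method: Lemma~\ref{lem:useful-asymptotic-loss}(e) itself requires $\gamma\in[L_1,U_1]$ with $L_1>0$, and every downstream use of Lemma~\ref{lem:costconcentration}(b) in the paper (e.g.\ Step~3 of the proof of Proposition~\ref{prop:sharp}(a)) restricts to $\gamma\in[\ubar\gamma,\bar\gamma]$ with $\ubar\gamma>0$. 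Reading $\mathcal{C}$ with that lower bound, your argument and the paper's intended one coincide and go through without change.
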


% Proof of the pointwise inequality
\subsubsection{Proof of Lemma~\ref{lem:costconcentration}(a)} \label{sec:proof-costconcentration-a}
We begin by decomposing the scalarized auxiliary loss as
\begin{align}
	\label{eq:pointconcentration_decompL}
	L_n(\sigma, \xi, \gamma) &= \frac{\lambda (\sigma^2 + \xi^2 R^2)}{2}- A_n(\sigma, \xi, \gamma) + B_n(\sigma, \xi, \gamma),
\end{align}
where
%\begin{subequations}
	\begin{align*}
		A_n(\sigma, \xi, \gamma) &:= \frac{\alpha_2  \gamma \sigma^2\bigl\| \pproj \bg\bigr\|_2^2}{2n}, \\%\label{eq:pointconcentration_An}\\
		B_n(\sigma, \xi, \gamma) &:= \frac{1}{n} \sum_{i=1}^{n}\min_{u_i \in \mathbb{R}}\Bigl\{\rho(-y_i u_i) + \frac{\gamma}{2}\Bigl(u_i - \sqrt{\alpha_2} \xi R Z_{1, i} - \sqrt{\alpha_2} \sigma Z_{2,i}\Bigr)^2\Bigr\}.%\label{eq:pointconcentration_Bn}
	\end{align*}
%\end{subequations}
With this shorthand, we claim the following two tail bounds. 
\begin{subequations}
	\begin{align}
		\pr\Bigl\{\Bigl \lvert A_n(\sigma, \xi, \gamma) - \frac{\alpha_2\gamma \sigma^2}{2\delta}  \Bigr \rvert \geq \epsilon \Bigr\} &\leq 4\exp\left\{-c\min(n\epsilon^2 \gamma^{-2}, n\epsilon \gamma^{-1})\right\}, \label{ineq:pointconcentration_An} \\
		\pr\left\{\left \lvert B_n(\sigma, \xi, \gamma) - \E B_n(\sigma, \xi, \gamma) \right \rvert \geq \epsilon \right\} &\leq 2\exp\{-cn\epsilon^2\}.\label{ineq:pointconcentration_Bn}
	\end{align}
\end{subequations}
Note that the bound~\eqref{ineq:pointconcentration_An} is a straightforward application of Bernstein's inequality~\citep[Theorem 2.8.1]{vershynin2018high}.  We prove the second bound at the end of the section.  The desired result follows by combining the decomposition~\eqref{eq:pointconcentration_decompL} with the inequalities~\eqref{ineq:pointconcentration_An} and~\eqref{ineq:pointconcentration_Bn} to obtain
\[
\pr\left\{\left \lvert L_n(\sigma, \xi, \gamma) - \E L_n(\sigma, \xi, \gamma) \right \rvert \geq \epsilon \right\} \leq 2e^{-c n\epsilon^2} + 4\exp\left\{-c\min(n\epsilon^2 \gamma^{-2}, n\epsilon \gamma^{-1})\right\}.
\]
It remains to prove the inequality~\eqref{ineq:pointconcentration_Bn}.

\iffalse
% Straightforward, so omitted
\paragraph{Proof of the inequality~\eqref{ineq:pointconcentration_An}.}
We write explicitly 
\[
A_n(\sigma, \xi, \gamma) = \frac{\alpha_2 \gamma \sigma^2}{2 n}\bigl \| \pproj \bg \bigr \|_2^2 = \frac{\alpha_2 \gamma \sigma^2}{2n}\| \bg \|_2^2 - \frac{\alpha_2 \gamma \sigma^2}{2} \cdot \frac{\< \bg, \bt_0 \>^2}{2n\|\bt_0\|_2^2}.
\]
Now, define the events
\begin{align*}
	\mathcal{A}_1 &:= \Bigl\{\Bigl \lvert \frac{\alpha_2\gamma \sigma^2}{2} \frac{\bigl\|\bg\bigr\|_2^2}{n} - \frac{\alpha_2\gamma \sigma^2}{2\delta}  \Bigr \rvert \geq \frac{\epsilon}{2}
	\Bigr\}\\
	\mathcal{A}_2 &:= \Bigl\{ \Bigl \lvert 
	\frac{\alpha_2\gamma \sigma^2}{2} \frac{\< \bg, \bt_0 \>^2}{n\|\bt_0\|_2^2}
	\Bigr \rvert \geq \frac{\epsilon}{2}\Bigr\}.
\end{align*}
Note that $\frac{1}{n}\norm{\bg}_2^2$ is the sum of sub-exponential random variables, whence we apply~\citet[Theorem 2.8.1]{vershynin2018high} to obtain the inequality
\begin{align}
	%	\label{term:term1_concentration_pointwise}
	\pr\left\{\mathcal{A}_1 \right\} \leq 2\exp\left\{-c\min(n\epsilon^2 \gamma^{-2}, n\epsilon \gamma^{-1})\right\},\nonumber
\end{align}
where we have used the assumption $\sigma \leq M$.  Additionally, note that $\< \bg, \bt_0 \> \sim \mathsf{N}(0, \| \bt_0 \|_2^2)$.  Thus, the standard tail bound $\pr\{\lvert G  \rvert \geq t\} \leq 2e^{-t^2/2}$, for $G \sim \mathsf{N}(0, 1)$, implies the inequality
\begin{align}
	%	\label{term:term2_concentration_pointwise}
	\pr\{\mathcal{A}_2\} \leq 2\exp\{-cn\epsilon \gamma^{-1}\}. \nonumber
\end{align}
Combining these inequalities establishes the inequality~\eqref{ineq:pointconcentration_An}.
\fi

\paragraph{Proof of the inequality~\eqref{ineq:pointconcentration_Bn}.}
First, define the function 
\[
F (Y, Z_1, Z_2) := \min_{u \in \mathbb{R}}\left\{\rho(-Y u) + \frac{\gamma}{2} \left(u -  \sqrt{\alpha_2} \xi R Z_1 - \sqrt{\alpha_2} \sigma Z_2\right)^2\right\},
\]
so that $B_n(\sigma, \xi, \gamma) = \frac{1}{n}\sum_{i=1}^{n} F(y_i, Z_{1, i}, Z_{2, i})$.
%\begin{align}
%	\label{eq:B_n_alt_def}
%	B_n(\sigma, \xi, \gamma) = \frac{1}{n}\sum_{i=1}^{n} F(y_i, Z_{1, i}, Z_{2, i}).
%\end{align}
We now introduce the decomposition
\begin{align}
	\label{eq:control_Bn_T1T2_def}
	F(Y, Z_1, Z_2) - \E F(Y, Z_1, Z_2) = T_1 + T_2,
\end{align}
where
\begin{align*}
	T_1 &:= F(Y, Z_1, Z_2) - \E \left\{F(Y, Z_1, Z_2) \mid Z_1, Z_2 \right\},\\
T_2 &:= \E \left\{F(Y, Z_1, Z_2) \mid Z_1, Z_2 \right\} - \E F(Y, Z_1, Z_2).
\end{align*}
The remainder of the proof consists of bounding the Orlicz norms $\| T_1 \|_{\psi_2}$ and $\| T_2 \|_{\psi_2}$ and concluding via Hoeffding's inequality~\citep[Theorem 2.6.3]{vershynin2018high}.

\smallskip
\noindent \underline{Bounding $\| T_1 \|_{\psi_2}$.}
Note that
\begin{align*}
%	\label{ineq:control_Bn_lip}
	\norm{\nabla_{(Z_1, Z_2)} F(y, Z_1, Z_2)}_2 \overset{\mathsf{(i)}}{\leq} \alpha_2 \sqrt{\xi^2 R^2 + \sigma^2} \leq M\alpha_2\sqrt{R^2 + 1},
\end{align*}
where step $\1$ follows by applying Lemma~\ref{lem:derivative_prox}(b) to compute partial derivatives of $F$ and subsequently using the fact that $\sup_{t \in \reals} \lvert \rho'(t) \rvert \leq 1$.  Consequently, for fixed $y \in \{\pm 1\}$, the function $F(y, \cdot, \cdot)$ is at most $M\alpha_2\sqrt{R^2 + 1}$--Lipschitz in $(Z_1, Z_2)$.
Note that
\begin{align}
	%	\label{ineq:control_BnT1_ineq1}
	\left \lvert F(Y, Z_1, Z_2) - \E \left\{F(Y, Z_1, Z_2) \mid Z_1, Z_2 \right\}\right \rvert &\overset{\1}{\leq} \left \lvert F(Y, Z_1, Z_2) - F(-Y, Z_1, Z_2)\right \rvert\nonumber\\
	&\overset{\mathsf{(ii)}}{\leq} M\alpha_2\sqrt{R^2 + 1} \cdot \sqrt{Z_1^2 + Z_2^2}\nonumber,
\end{align}
where step $\1$ follows since $Y \in \{ \pm 1\}$ and step $\2$ follows by noting that $F(1, Z_1, Z_2) = F(-1, -Z_1, -Z_2)$ and subsequently applying Lipschitz continuity of the function $F(y, \cdot, \cdot)$.  Evidently, since $Z_1, Z_2 \sim \mathsf{N}(0, 1)$, combining the elements yields
\begin{align}
	\label{ineq:control_BnT1_psi2}
	\norm{T_1}_{\psi_2} = \norm{F(Y, Z_1, Z_2) - \E \left\{F(Y, Z_1, Z_2) \mid Z_1, Z_2 \right\}}_{\psi_2} \leq C'.
\end{align}

\smallskip
\noindent \underline{Bounding $\| T_2 \|_{\psi_2}$.}
Fix $y \in \{\pm 1\}$ and note the inequality
\begin{align}
	\label{ineq:control_BnT2_ineq1}
	\norm{p(Y=y \mid Z_1, Z_2) F(y, Z_1, Z_2)}_{\psi_2} \overset{\1}{\leq} \norm{F(y, Z_1, Z_2)}_{\psi_2} \overset{\2}{\leq} C'',
\end{align}
where step $\1$ follows since $\left \lvert p(Y=y \mid Z_1, Z_2) \right \rvert \leq 1$ and step $\2$ follows by noting the Lipschitz continuity of the function $F(y, \cdot, \cdot)$ and subsequently applying~\citet[Theorem 5.2.2]{vershynin2018high}.
Expanding $T_2$ and applying the triangle inequality in conjunction with~\citet[Lemma 2.6.8]{vershynin2018high} and the inequality~\eqref{ineq:control_BnT2_ineq1} yields the inequality
\begin{align}
	\label{ineq:control_BnT2_psi2}
	\hspace*{-0.2cm}\norm{T_2}_{\psi_2} &= \Bigl\|\sum_{y \in \{\pm 1\}} p(Y=y \mid Z_1, Z_1) F(y, Z_1, Z_2) - \E \left\{p(Y=y \mid Z_1, Z_2) F(y, Z_1, Z_2)\right\}\Bigr\|_{\psi_2} \leq C''.
\end{align}

\smallskip
\noindent \underline{Putting the pieces together.}
We invoke the triangle inequality in conjunction with the decomposition~\eqref{eq:control_Bn_T1T2_def} and the inequalities~\eqref{ineq:control_BnT1_psi2} and~\eqref{ineq:control_BnT2_psi2} to obtain the upper bound
\begin{align*}
	%	\label{ineq:control_Bn_overall_psi2}
	\norm{F(Y, Z_1, Z_2) - \E F(Y, Z_1, Z_2)}_{\psi_2} \leq \norm{T_1}_{\psi_2} + \norm{T_2}_{\psi_2} \leq C' + C''.
\end{align*}
The conclusion follows by Hoeffding's inequality~\cite[Theorem 2.6.3]{vershynin2018high}.
%\begin{align*}
%	%	\label{ineq:prob_B_n}
%	\pr\left\{\left \lvert B_n(\sigma, \xi, \gamma) - \E B_n(\sigma, \xi, \gamma) \right \rvert \geq \epsilon \right\} \leq 2\exp\{-cn\epsilon^2\}.
%\end{align*}
\qed

% Proof of the uniform inequality
\subsubsection{Proof of Lemma~\ref{lem:costconcentration}(b)} \label{sec:proof-costconcentration-b}
We employ a straightforward epsilon-net argument.  To this end, let $\mathcal{N}_{\sigma}, \mathcal{N}_{\xi}, \mathcal{N}_{\gamma}$ be $\epsilon(1 \vee \widebar{\gamma})^{-1}/C$--nets of the domain of $\sigma$, $\xi$, and $\gamma$, respectively.  Now, for any $(\sigma, \xi, \gamma) \in \mathcal{C}$, let $\pi(\sigma) \in \mathcal{N}_{\sigma}$ denote the projection of $\sigma$ onto $\mathcal{N}_{\sigma}$ so that $\lvert \sigma - \pi(\sigma) \rvert \leq \epsilon(1 \vee \widebar{\gamma})^{-1}/C$, with $\pi(\xi), \pi(\gamma)$ defined analogously.  Then, define the events
\begin{align*}
	\mathcal{A}_1 &:= \Bigl\{\sup_{(\sigma, \xi, \gamma) \in \mathcal{C}} \bigl \lvert L_n(\sigma, \xi, \gamma) - L_n(\pi(\sigma), \pi(\xi), \pi(\gamma)) \bigr\rvert \geq \frac{\epsilon}{3}\Bigr\},\\
\mathcal{A}_2 &:= \Bigl\{\sup_{(\sigma, \xi, \gamma) \in \mathcal{C}} \bigl \lvert L(\sigma, \xi, \gamma) - L(\pi(\sigma), \pi(\xi), \pi(\gamma)) \bigr\rvert \geq \frac{\epsilon}{3}\Bigr\},\\
	\mathcal{A}_3 &:= \Bigl\{\max_{(\sigma, \xi, \gamma) \in \mathcal{N}_{\sigma} \times \mathcal{N}_{\xi} \times \mathcal{N}_{\gamma}} \bigl \lvert L_n(\sigma, \xi, \gamma) - L(\sigma, \xi, \gamma) \bigr\rvert \geq \frac{\epsilon}{3}\Bigr\},\\
	\mathcal{A} &:= \Bigl\{\sup_{(\sigma, \xi, \gamma) \in \mathcal{C}} \bigl \lvert L_n(\sigma, \xi, \gamma) - L(\sigma, \xi, \gamma) \bigr\rvert \geq \epsilon\Bigr\},
\end{align*}
and note that $\mathcal{A} \implies \mathcal{A}_1 \cup \mathcal{A}_2 \cup \mathcal{A}_3$.
We proceed to bound the probabilities $\pr\{\mathcal{A}_1\}, \pr\{\mathcal{A}_2\},$ and $\pr\{\mathcal{A}_3\}$ in turn.  First, note that for all $(\sigma, \xi, \gamma) \in \mathcal{C}$, $
\| (\sigma, \xi, \gamma) - (\pi(\sigma), \pi(\xi), \pi(\gamma))\|_2 \leq C\epsilon(1 \vee \widebar{\gamma})^{-1}$.
Consequently, Lemma~\ref{lem:lipschitz-L} implies $\pr\{\mathcal{A}_1\} \leq 8e^{-cn}$ and $\pr\{\mathcal{A}_2\} = 0$.  Turning to event $\mathcal{A}_3$, we have
\begin{align}
	\pr\left\{\mathcal{A}_3\right\} &= \pr\biggl\{\bigcup_{\sigma \in \mathcal{N}_{\sigma}, \xi \in \mathcal{N}_{\xi}, \gamma \in \mathcal{N}_{\gamma}}\lvert L_n(\sigma, \xi, \gamma) - L(\sigma, \xi, \gamma)\rvert \geq \frac{\epsilon}{3}\biggr\}\nonumber \leq \frac{C(1 \vee \widebar{\gamma})^3}{\epsilon^3}\exp\left\{-c\min(n\epsilon^2 \widebar{\gamma}^2, n\epsilon \widebar{\gamma})\right\},
\end{align}
where the final inequality follows by applying the union bound in conjunction with Lemma~\ref{lem:costconcentration}(a).  Finally, since  $\mathcal{A} \implies \mathcal{A}_1 \cup \mathcal{A}_2 \cup \mathcal{A}_3$, we apply the union bound to obtain the inequality
\[
\pr\{\mathcal{A}\} \leq \frac{C(1 \vee \widebar{\gamma})^3}{\epsilon^3}\exp\left\{-c\min(n\epsilon^2 \widebar{\gamma}^2, n\epsilon \widebar{\gamma})\right\} + 8e^{-cn}.
\]
We conclude by increasing the constant $C$ in the display above. \qed

\subsection{Proof of Lemma~\ref{claim:constrain-gamma}}\label{sec:proof-lem-constrain-gamma}
We require the following technical lemma, which provides several useful properties of the loss function $L_n$, and whose proof we provide in Section~\ref{sec:proof-lem-useful-L-n}. 
\begin{lemma}
	\label{lem:useful-L-n}
	Consider the loss function $L_n$~\eqref{def:aux_scalar} and let $M$ denote a positive scalar.  Let \sloppy\mbox{$\widehat{u}_{i}(\gamma) = \prox_{\rho(-y_i\cdot)}(\sqrt{\alpha_2}\xi R Z_{1,i} + \sqrt{\alpha_2}\sigma Z_{2,i}; \gamma)$}.  The following hold.
	\begin{itemize}
		\item[(a)] For any fixed $\sigma \in [0, M]$ and $\xi \in [-M,M]$, the function $\gamma \mapsto \max_{\gamma \geq 0} L_n(\sigma, \xi, \gamma)$ admits a unique maximizer $\gamma_n(\sigma, \xi)$ which satisfies the fixed point equation
		\[
		\frac{1}{2n}\sum_{i=1}^{n} \bigl[ \rho'\bigl(-y_i \widehat{u}_i(\gamma_n(\sigma, \xi))\bigr)\bigr]^2 = \frac{\gamma_n(\sigma, \xi)^2 \alpha_2 \sigma^2 \| \pproj \bg \|_2^2}{2n}. 
		\]
%		\[
%		\gamma_n(\sigma, \xi) := \argmax_{\gamma \geq 0}\; L_n(\sigma, \xi, \gamma) = \frac{1}{\sigma} \sqrt{\frac{n}{\alpha_2 \| \pproj \bg \|_2^2} \cdot \frac{1}{n} \sum_{i=1}^{n}\bigl[\rho'(-y_i \widehat{u}_i)\bigr]^2}.
%		\]
		\item[(b)] There exists a tuple of positive constants $(c, C, \ubar{\gamma})$ such that with probability at least $1 - Ce^{-cn}$, it holds that for all $\sigma \in [0, M]$ and \sloppy\mbox{$\xi \in [-M, M]$}, $\gamma_n(\sigma, \xi) \geq  \ubar{\gamma} \cdot \Bigl( 1 \vee \frac{1}{\sigma} \sqrt{\frac{\delta}{\alpha_2}} \Bigr)$.
		\item[(c)] There exists a tuple of positive constants $(c, c_1, C, C_1)$ such that with probability at least $1 - ce^{-Cn}$, it holds that for all $\sigma \in [0, c_1]$ and $\xi \in [-M,M]$, we have $\lvert \frac{\partial}{\partial \sigma} \gamma_n(\sigma, \xi) \rvert \leq C_1/\sigma^2$.
	\end{itemize}
\end{lemma}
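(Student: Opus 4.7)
\emph{Part (a).} By the envelope theorem and Lemma~\ref{lem:derivative_prox}, $\partial_\gamma L_n = -\alpha_2\sigma^2\|\pproj\bg\|_2^2/(2n) + (2n)^{-1}\sum_i(\widehat{u}_i - V_i)^2$, where $V_i := \sqrt{\alpha_2}\xi R Z_{1,i} + \sqrt{\alpha_2}\sigma Z_{2,i}$; substituting the prox stationarity $y_i\rho'(-y_i\widehat{u}_i) = \gamma(\widehat{u}_i - V_i)$ and setting the expression to zero yields the stated fixed-point equation. A second application of Lemma~\ref{lem:derivative_prox} with $\partial_\gamma\widehat{u}_i = -(\widehat{u}_i - V_i)/(\gamma + \rho''(-y_i\widehat{u}_i))$ gives $\partial_\gamma^2 L_n = -\gamma^{-2}n^{-1}\sum_i [\rho'(-y_i\widehat{u}_i)]^2/(\gamma + \rho''(-y_i\widehat{u}_i)) < 0$ since $\rho', \rho'' \geq 0$, so $\gamma \mapsto L_n(\sigma, \xi, \gamma)$ is strictly concave and the maximizer (which exists because $L_n \to -\infty$ as $\gamma \to \infty$) is unique.

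\emph{Part (b).} By strict concavity from (a), it suffices to show $\partial_\gamma L_n(\sigma, \xi, \gamma_0) > 0$ at the candidate $\gamma_0 := \ubar{\gamma}(1 \vee \sigma^{-1}\sqrt{\delta/\alpha_2})$, equivalently $n^{-1}\sum_i [\rho'(-y_i\widehat{u}_i(\gamma_0))]^2 > \gamma_0^2 \alpha_2\sigma^2\|\pproj\bg\|_2^2/n$. I split at $\sigma = \sqrt{\delta/\alpha_2}$. When $\sigma \geq \sqrt{\delta/\alpha_2}$ and $\gamma_0 = \ubar{\gamma}$ is small, I invert the scalar stationarity $w + \rho'(w)/\ubar{\gamma} = s_i$ (with $s_i = -y_iV_i$) to show that on $\{|s_i| \leq T\}$ the root satisfies $w = \log\ubar{\gamma} + O(\log|\log\ubar{\gamma}|)$ and hence $\rho'(w) = \ubar{\gamma}(s_i - w) \geq \tfrac{1}{2}\ubar{\gamma}|\log\ubar{\gamma}|$ for $\ubar{\gamma}$ small. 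A Hoeffding bound ensures a constant fraction of indices satisfy $|V_i| \leq T$ with failure probability $\leq 2e^{-cn}$, delivering $n^{-1}\sum[\rho'(-y_i\widehat{u}_i(\ubar{\gamma}))]^2 \gtrsim \ubar{\gamma}^2 (\log\ubar{\gamma})^2$, which beats the RHS $\leq \ubar{\gamma}^2 \alpha_2 M^2/\delta$ once $\ubar{\gamma}$ is chosen small enough in $(M, \alpha_2, \delta)$. When $\sigma < \sqrt{\delta/\alpha_2}$ the candidate $\gamma_0$ is large, so $|\widehat{u}_i(\gamma_0) - V_i| \leq 1/\gamma_0 \ll 1$ and Hoeffding-type concentration pins $n^{-1}\sum[\rho'(-y_i\widehat{u}_i(\gamma_0))]^2$ near the strictly positive constant $\E[\rho'(-YV)]^2$; meanwhile the RHS collapses to $\ubar{\gamma}^2 \cdot \alpha_2\sigma^2\|\pproj\bg\|_2^2/n \cdot (\delta/(\alpha_2\sigma^2)) \leq 2\ubar{\gamma}^2$ on $\{\|\pproj\bg\|_2^2 \leq 2p\}$, and is beaten for $\ubar{\gamma}$ sufficiently small.

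\emph{Part (c).} I apply the implicit function theorem to $\Psi := \partial_\gamma L_n$ at $\gamma_n$, giving $\partial_\sigma \gamma_n = -\Psi_\sigma/\Psi_\gamma$. Using (a) together with $\rho'' \leq 1/4$ and the fixed-point identity, $|\Psi_\gamma(\sigma, \xi, \gamma_n)| \asymp \gamma_n^{-1}\alpha_2\sigma^2\|\pproj\bg\|_2^2/n$; taking $\sigma \leq c_1$ so that part (b) pins $\gamma_n \asymp \sigma^{-1}\sqrt{\delta/\alpha_2}$, this is $\asymp \sigma^3$ (up to problem constants). Differentiating the prox stationarity in $\sigma$ yields $\partial_\sigma \widehat{u}_i = \gamma\sqrt{\alpha_2}Z_{2,i}/(\gamma + \rho''(-y_i\widehat{u}_i))$ and $\partial_\sigma(\widehat{u}_i - V_i) = -\sqrt{\alpha_2}Z_{2,i}\rho''/(\gamma + \rho'')$, so $|\Psi_\sigma| \lesssim \alpha_2\sigma\|\pproj\bg\|_2^2/n + \gamma_n^{-1}\sqrt{\alpha_2}\cdot n^{-1}\sum_i |Z_{2,i}| \lesssim \sigma$ on the intersection of $\{\|\pproj\bg\|_2^2 \leq 2p\}$, a sub-exponential Bernstein event for $n^{-1}\sum|Z_{2,i}|$, and the event of (b). Dividing gives the claimed $|\partial_\sigma \gamma_n| \lesssim \sigma^{-2}$.

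\textbf{Main obstacle.} The key technical point is the sharp bound $\rho'(-y_i\widehat{u}_i(\gamma)) \gtrsim \gamma|\log\gamma|$ in the first regime of part (b). The naive estimate $|\widehat{u}_i - V_i| \leq 1/\gamma$ coming from $|\rho'| \leq 1$ yields only $\rho'(-y_i\widehat{u}_i) \geq e^{-T - 1/\gamma}$, which is exponentially small in $1/\gamma$ and far too weak to compensate the $\gamma^2$ factor on the right of the fixed-point equation. Extracting the correct logarithmic rate requires a quantitative inversion of the implicit equation $w + \rho'(w)/\gamma = s$, with constants that remain uniform over the parameter range of Assumption~\ref{asm:regularity}.
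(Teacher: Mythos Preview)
Your plan is essentially the paper's, with one organizational difference in part~(b) that introduces a small gap.

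\textbf{Parts (a) and (c).} These match the paper exactly. For (a) the paper computes the same second derivative via Lemma~\ref{lem:derivative_prox} and reads off the fixed-point equation from stationarity. For (c) the paper differentiates the fixed-point equation by hand rather than invoking the implicit function theorem abstractly, but arrives at the same expression $\partial_\sigma\gamma_n = -\Psi_\sigma/\Psi_\gamma$ and bounds numerator and denominator exactly as you describe, using $\gamma_n\asymp 1/\sigma$ from part~(b).

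\textbf{Part (b): the difference and the gap.} The paper does not split on $\sigma$. Instead it runs two sequential steps valid for \emph{all} $\sigma\in[0,M]$: first it shows $\gamma_n\geq\ubar\gamma$ by lower bounding $\partial_\gamma L_n$ at small $\gamma$ via the estimate $(\widehat u_i-V_i)^2\gtrsim\log^2\gamma$ on a truncation event (this is Lemma~\ref{lem:lbgamma}, which is precisely the ``quantitative inversion of $w+\rho'(w)/\gamma=s$'' you flag as the main obstacle; your bound $\rho'(-y_i\widehat u_i)\gtrsim\gamma|\log\gamma|$ is an equivalent reformulation). Second, with $\gamma_n\geq\ubar\gamma$ in hand, it plugs into the fixed-point identity at $\gamma_n$ itself: since the Moreau envelope is coercive for $\gamma\geq\ubar\gamma$, the prox is bounded on the truncation event $\{|Z_{1,i}|+|Z_{2,i}|\leq T'\}$, so $\rho'(-y_i\widehat u_i(\gamma_n))\geq\rho'(-C)>0$, and the fixed-point equation gives $\gamma_n\gtrsim 1/\sigma$ directly.

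Your second regime has a gap: when $\sigma$ is just below $\sqrt{\delta/\alpha_2}$ the candidate $\gamma_0=\ubar\gamma\sigma^{-1}\sqrt{\delta/\alpha_2}$ is close to $\ubar\gamma$, which is \emph{small}, so the claim $|\widehat u_i(\gamma_0)-V_i|\leq 1/\gamma_0\ll 1$ fails and you cannot conclude $\widehat u_i\approx V_i$ or invoke concentration toward $\E[\rho'(-YV)]^2$. The paper's remedy is immediate for you too: since $\gamma_0\geq\ubar\gamma$ throughout the regime, coercivity bounds $|\widehat u_i(\gamma_0)|\leq C$ on the truncation event, whence $\rho'(-y_i\widehat u_i(\gamma_0))\geq\rho'(-C)>0$ uniformly in $(\sigma,\xi)$, and the Hoeffding event on the truncation indicators finishes the lower bound on the left-hand side. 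No separate concentration of $n^{-1}\sum[\rho']^2$ is needed. Note also that the truncation event should be stated in terms of $|Z_{1,i}|,|Z_{2,i}|$ rather than $|V_i|$, so that a single high-probability event serves uniformly over $(\sigma,\xi)$; the paper does this explicitly.
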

Let $\mathcal{A}_1$ denote the event on which the guarantees of Lemma~\ref{lem:useful-L-n}(b),(c) hold and let $\mathcal{A}_2 = \{ \| \pproj \bg \|_2^2 \geq n/(2 \delta)\}$, which holds with probability at least $1 -2e^{-cn}$, by Bernstein's inequality.  We see that on $\mathcal{A}_1 \cap \mathcal{A}_2$, for any $\sigma \in [0, M]$ and $\xi \in [-M, M]$, 
\[
\ubar{\gamma} \cdot \Bigl( 1 \vee \frac{1}{\sigma} \sqrt{\frac{\delta}{\alpha_2}} \Bigr) \leq \gamma_n(\sigma, \xi) \leq \frac{C}{\sigma},
\]  
where the upper bound follows from Lemma~\ref{lem:useful-L-n}(a) since $\sup_{t \in \mathbb{R}} \{\rho'(t)\} \leq 1$.  It thus suffices to show that there exists a constant $\bar{\sigma}$ such that for all $0 \leq \sigma \leq \bar{\sigma}$, $\frac{\partial}{\partial \sigma} L_n(\sigma, \xi, \gamma_n(\sigma, \xi)) < 0$.  To this end, we apply Lemma~\ref{lem:derivative_prox} to find, on $\mathcal{A}_1 \cap \mathcal{A}_2$, 
\begin{align} \label{ineq:bound-derivative-sigma}
	\frac{\partial}{\partial \sigma}\; L_n\bigl(\sigma, \xi, \gamma_n(\sigma, \xi)\bigr) &= \lambda \sigma - \frac{\alpha_2 \gamma_n(\sigma, \xi) \sigma \| \pproj \bg \|_2^2}{n} - \frac{\sqrt{\alpha_2}}{n} \sum_{i=1}^{n} y_i Z_{2, i} \rho'(-y_i \widehat{u}_i) \nonumber \\
	&\leq \lambda \sigma -\frac{ \ubar{\gamma}}{2\sqrt{\alpha_2 \delta}} - \frac{\sqrt{\alpha_2}}{n} \sum_{i=1}^{n} y_i Z_{2, i} \rho'(-y_i \widehat{u}_i),
\end{align}  
Focusing our attention on the final term on the RHS of the above display, we apply Taylor's theorem with remainder in conjunction with Lemma~\ref{lem:derivative_prox} to obtain that for some $s \in [0, \sigma]$, (writing $\gamma_n$ as shorthand for $\gamma_n(\sigma, \xi)$)
\[
\rho'(-y_i \widehat{u}_i(\sigma)) = \rho'(-y_i \widehat{u}_i(0))- \sigma \cdot y_i \rho''(-y_i \widehat{u}_i(s)) \cdot \biggl\{ \frac{\gamma_n \sqrt{\alpha_2} Z_{2,i}}{\gamma_n + \rho''(\widehat{u}_i(s))} - \frac{y_i \rho'(-y_i \widehat{u}_i(s)) \frac{\partial}{\partial \sigma} \gamma_n}{\gamma_n \cdot (\gamma_n + \rho''(\widehat{u}_i(s)))}\biggr\}.
\]
Consequently, 
\[
\frac{1}{n} \sum_{i=1}^{n} y_i Z_{2, i} \rho'(-y_i \widehat{u}_i) =T_1 + T_2,
\]
where
%\begin{subequations}
	\begin{align*}
		T_1 &= \frac{1}{n} \sum_{i=1}^{n} y_i Z_{2, i} \rho'(-y_i \widehat{u}_i(0)),\\
		T_2 &= - \frac{\sigma}{n} \sum_{i=1}^{n} Z_{2, i}  \rho''(-y_i \widehat{u}_i(s)) \cdot  \biggl\{ \frac{\gamma_n \sqrt{\alpha_2} Z_{2,i}}{\gamma_n + \rho''(\widehat{u}_i(s))} - \frac{y_i \rho'(-y_i \widehat{u}_i(s)) \frac{\partial}{\partial \sigma} \gamma_n}{\gamma_n \cdot (\gamma_n + \rho''(\widehat{u}_i(s)))}\biggr\}.
	\end{align*}
%\end{subequations}
Note that $y_i \rho'(-y_i \widehat{u}_i(0))$ is bounded and independent of $Z_{2, i}$ so that $\E[y_i Z_{2, i} \rho'(-y_i \widehat{u}_i(0))] = 0$ and $\| y_i Z_{2, i} \rho'(-y_i \widehat{u}_i(0))\|_{\psi_1} \leq C$, whence applying Bernstein's inequality~\citep[Theorem 2.8.1]{vershynin2018high} yields $\lvert T_1 \rvert \leq \ubar{\gamma}/4(\sqrt{\alpha_2 \delta})$ with probability at least $1 - 2e^{-cn}$.  

Next,  let $\mathcal{A}_3 = \{\frac{1}{n} \sum_{i=1}^{n} Z_{2, i}^2 \vee \frac{1}{n} \sum_{i=1}^{n} \lvert Z_{2, i} \rvert \leq 2\}$, which by Bernstein's inequality satisfies $\mathbb{P}(\mathcal{A}_3) \geq 1 - 2e^{-cn}$.  Then, on $\mathcal{A}_1 \cap \mathcal{A}_2 \cap \mathcal{A}_3$, we apply the triangle inequality to deduce that,
\begin{align*} %\label{ineq:bound-T2-useful-1}
	\lvert T_2 \rvert \leq \sigma \frac{1}{n} \sum_{i=1}^{n} \lvert Z_{2, i} \rvert \cdot \bigl(\sqrt{\alpha_2} \lvert Z_{2, i} \rvert + C \bigr) \leq C \sigma
\end{align*}
Substituting these upper bounds into the inequality~\eqref{ineq:bound-derivative-sigma} yields that on $\mathcal{A}_1 \cap \mathcal{A}_2 \cap \mathcal{A}_3$, 
\[
\frac{\partial}{\partial \sigma} \; L_n\bigl(\sigma, \xi, \gamma_n(\sigma, \xi)\bigr) \leq C\sigma - \frac{ \ubar{\gamma}}{4\sqrt{\alpha_2 \delta}}.
\]
Putting the pieces together implies that for \sloppy\mbox{$\bar{\sigma} = \frac{\ubar{\gamma}}{4C \sqrt{\alpha_2 \delta}} $}, $L_n(\sigma, \xi, \gamma_n(\sigma, \xi))$ is decreasing for all $\sigma \in [0, \ubar{\sigma}]$ and $\xi \in [-M, M]$, which completes the proof. \qed 

\subsubsection{Proof of Lemma~\ref{lem:useful-L-n}}\label{sec:proof-lem-useful-L-n}
We prove each part in turn. 

\paragraph{Proof of Lemma~\ref{lem:useful-L-n}(a)}
Consider fixed $\xi$ and $\sigma$ and define $\psi: \mathbb{R}_{+} \rightarrow \mathbb{R}$ as $\psi(\gamma) = L_n(\sigma, \xi, \gamma)$.  Applying Lemma~\ref{lem:derivative_prox} yields
\[
\frac{\partial^2}{\partial \gamma^2} \psi(\gamma) = -\frac{1}{\gamma^2}\cdot \frac{1}{n} \sum_{i=1}^{n} \frac{\bigl(\rho'(-y_i \widehat{u}_i)\bigr)^2}{\gamma + \rho''(-y_i \widehat{u}_i)} < 0,
\]
whence we deduce that $\psi$ is strictly convex and admits a unique maximizer.  By the first order conditions, this maximizer satisfies the fixed point equation in the statement, as desired.  

\paragraph{Proof of Lemma~\ref{lem:useful-L-n}(b)}
We will use the shorthand $X_i = \sqrt{\alpha_2}\xi R Z_{1,i} +\sqrt{\alpha_2} \sigma Z_{2,i}$.
Additionally, define the event $\mathcal{A}_1 = \{\| \bg \|_2 \leq 2\sqrt{n}\}$, which occurs with probability at least $1 - 2e^{-cn}$.  Further, let $T > 0$ be a constant large enough to ensure that $ \mathbb{P}\bigl\{\lvert Z_{1,i} \rvert + \lvert Z_{2, i} \rvert \leq T \cdot \bigl[\sqrt{\alpha_2} M (1 \vee R)\bigr]^{-1} \bigr\} \geq \frac{1}{2}$ and define the event
\[
\mathcal{A}_2 = \biggl\{ \frac{1}{n} \sum_{i=1}^{n} \mathbbm{1}\Bigl\{\lvert Z_{1,i} \rvert + \lvert Z_{2, i} \rvert \leq T \cdot \bigl[\sqrt{\alpha_2} M (1 \vee R)\bigr]^{-1} \Bigr\} \geq \frac{1}{4}\biggr\},
\]
which by Hoeffding's inequality satisfies $\mathbb{P}(\mathcal{A}_2) \geq 1 - 2e^{-c n}$.  For the remainder of the proof, we will work on the event $\mathcal{A} = \mathcal{A}_1 \cap \mathcal{A}_2$.  

We first show that, on $\mathcal{A}$,  there exists a constant $\ubar{\gamma}$ such that for all $\sigma \in [0,M], \xi \in [-M, M]$, $\gamma_n(\sigma, \xi) \geq \ubar{\gamma}$.  To this end, define the function $\zeta: \mathbb{R}_{\geq 0} \rightarrow \mathbb{R}$ as 
\[
\zeta(\gamma) = - \frac{\gamma \alpha_2}{2n}\bigl\|\pproj \bg \bigr\|_2^2 \sigma^2 + \frac{1}{n}\sum_{i=1}^{n}\min_{u_i \in \mathbb{R}}\Bigl\{\rho(-y_i u_i) + \frac{\gamma}{2}(u_i - X_i)^2\Bigr\}.
\]
We will show that there exists a constant $\ubar{\gamma} > 0$ such that the function $\zeta$ is increasing for all $\gamma \leq \ubar{\gamma}$, from which the conclusion follows immediately.  To this end, we compute the derivative
\[
\zeta'(\gamma) = -\frac{\alpha_2}{2n}\bigl\| \pproj\bg\|_2^2 \sigma^2 + \frac{1}{2n} \sum_{i=1}^{n} \bigl(\prox_{\rho(-y_i \cdot)}(X_i; \gamma) - X_i\bigr)^2.
\]
Since $\bt \in \mathbb{B}_2(C)$ and on the event $\mathcal{A}$, $\norm{\bg}_2^2 \leq 2n$, we deduce that
\begin{align} \label{ineq:penultimate-zeta-prime}
\zeta'(\gamma) \geq - C + \frac{1}{2n} \sum_{i=1}^{n} &\bigl(\prox_{\rho(-y_i \cdot)}(X_i; \gamma) - X_i\bigr)^2 \nonumber\\
&\geq - C + \frac{1}{2n} \sum_{i=1}^{n} \bigl(\prox_{\rho(-y_i \cdot)}(X_i; \gamma) - X_i\bigr)^2 \mathbbm{1}_{\lvert X_i \rvert \leq T}.
\end{align}
Applying Lemma~\ref{lem:lbgamma} in turn yields a constant $\ubar{\gamma}_T$ (which may depend on the truncation level $T$) such that the following lower bound holds
\[
\zeta'(\gamma) \geq - C + \frac{1}{2n} \sum_{i=1}^{n} \frac{1}{64} \log^2(\gamma)\mathbbm{1}_{\lvert X_i \rvert \leq T} \qquad \text{ for all } \qquad \gamma \leq \ubar{\gamma}_T.
\]
Consequently, since $0 \leq \sigma \leq M, \lvert \xi \rvert \leq M$, we have, for all $\gamma \in [0, \ubar{\gamma}_T]$ that on
\[
\zeta'(\gamma) \geq - C + \frac{\log^2(\gamma)}{128n} \sum_{i=1}^{n} \mathbbm{1}_{\lvert X_i \rvert \leq T} \geq -C + \frac{\log^2(\gamma)}{128 n} \sum_{i=1}^{n}  \mathbbm{1}\Bigl\{\lvert Z_{1,i} \rvert + \lvert Z_{2, i} \rvert \leq T \cdot \bigl[\sqrt{\alpha_2} M (1 \vee R)\bigr]^{-1} \Bigr\},
\]
so that on $\mathcal{A}$, $\zeta'(\gamma) \geq -C + \frac{\log^2(\gamma)}{512} $.  Setting $\ubar{\gamma}$ as a small enough positive constant yields the desired result.  

We next show that, on $\mathcal{A}$, for all $\sigma \in [0, M]$ and $\xi \in [-M, M]$, $\gamma_n(\sigma, \xi) \geq c /\sigma$.  To this end, note that the first order condition in part (a) implies
\[
\gamma_n(\sigma, \xi) = \frac{1}{\sigma} \cdot \sqrt{\frac{n}{\alpha_2 \| \pproj \bg \|_2^2}} \cdot \frac{1}{n} \sum_{i=1}^{n} \bigl[\rho'(-y_i \widehat{u}_i(\gamma_n(\sigma, \xi))\bigr)\bigr]^2 \geq \frac{c}{\sigma} \cdot \frac{1}{n} \sum_{i=1}^{n} \bigl[\rho'(-y_i \widehat{u}_i(\gamma_n(\sigma, \xi))\bigr)\bigr]^2.
\]
Now, note that for $\gamma > 0$, the Moreau envelope $x \mapsto M_{-y_i \cdot}(x; \gamma)$ is coercive so that on the truncation event $\lvert X_i \rvert \leq T$, the proximal operator is absolutely bounded by a constant: $\lvert \widehat{u}_i(\gamma)\rvert \leq C$.  Hence, since $\rho'$ is a non-negative and increasing function, we deduce that on $\mathcal{A}$, 
\[
\frac{1}{n} \sum_{i=1}^{n} \bigl[\rho'(-y_i \widehat{u}_i)\bigr]^2 \geq \frac{1}{n} \sum_{i=1}^{n} \bigl[\rho'(-y_i \widehat{u}_i)\bigr]^2 \mathbbm{1}\{\lvert X_i \rvert \leq T\} \geq \frac{\bigl[\rho'(-C)\bigr]^2}{n} \sum_{i=1}^{n} \mathbbm{1}\{\lvert X_i \rvert \leq T\} \geq c.
\]
Combining the previous two displays then yields the desired result. \hfill \qed

\paragraph{Proof of Lemma~\ref{lem:useful-L-n}(c)}
Recall the notation $X_i$ from the proof of part (b) as well as \sloppy\mbox{$\widehat{u}_i \equiv \widehat{u}_i(\gamma, \sigma, \xi) = \prox_{\rho(-y_i \cdot)}(X_i; \gamma)$}.  When clear, we will write $\gamma_n$ in place of $\gamma_n(\sigma, \xi)$.  Using Lemma~\ref{lem:derivative_prox}(a), we re-write the first order condition in part (a) as
\[
\frac{1}{2n} \sum_{i=1}^{n} \gamma_n(\sigma, \xi)^2 \cdot \bigl[\widehat{u}_i(\gamma_n(\sigma, \xi), \sigma, \xi) - X_i\bigr]^2 - \frac{1}{2n} \gamma_n(\sigma, \xi)^2 \sigma^2 \alpha_2 \| \pproj \bg \|_2^2 = 0.
\]
Differentiating the above expression in $\sigma$ thus yields $I + II + III + IV = 0$, where
\begin{align*}
	I &= \gamma_n \cdot \Bigl(\frac{\partial}{\partial \sigma} \gamma_n\Bigr) \cdot \frac{1}{n} \sum_{i=1}^{n} \bigl(\widehat{u}_i - X_i\bigr)^2 =  \gamma_n^{-1} \cdot \Bigl(\frac{\partial}{\partial \sigma} \gamma_n\Bigr) \cdot \frac{1}{n} \sum_{i=1}^{n} \bigl[\rho'\bigl(-y_i \widehat{u}_i \bigr)\bigr]^2,\\
	II &= \frac{\gamma_n^2}{n} \sum_{i=1}^{n} \bigl(\widehat{u}_i - X_i\bigr) \cdot \Bigl\{ \frac{\partial}{\partial \sigma} \widehat{u}\bigl(\gamma_n, \sigma) + \frac{\partial}{\partial \sigma} \gamma_n(\sigma, \xi) \cdot \frac{\partial}{\partial \gamma} \widehat{u}_i(\gamma_n, \sigma) - \sqrt{\alpha_2} Z_{2,i}\Bigr\}\\
	&= - \Bigl(\frac{\partial}{\partial \sigma} \gamma_n \Bigr) \frac{1}{n} \sum_{i=1}^{n} \frac{\bigl[\rho'(-y_i \widehat{u}_i)\bigr]^2}{\gamma_n + \rho''(-y_i \widehat{u}_i)} +  \frac{\gamma_n^2}{n} \sum_{i=1}^{n} \frac{y_i \rho'(-y_i \widehat{u}_i) \sqrt{\alpha_2} Z_{2,i}}{\gamma_n + \rho''(-y_i \widehat{u}_i)} - \frac{\gamma_n}{n} \sqrt{\alpha_2} \sum_{i=1}^{n} y_i \rho'(-y_i \widehat{u}_i) Z_{2, i},\\
	III &= - \gamma_n \Bigl( \frac{\partial}{\partial \sigma} \gamma_n\Bigr) \frac{\alpha_2 \sigma^2 \| \pproj \bg \|_2^2}{n} = -\gamma_n^{-1}  \Bigl( \frac{\partial}{\partial \sigma} \gamma_n\Bigr) \frac{1}{n} \sum_{i=1}^{n} \bigl[\rho'(-y_i \widehat{u}_i)\bigr]^2, \\
	IV &= -\frac{\gamma_n^2 \alpha_2 \sigma \| \pproj \bg \|_2^2}{n}.
\end{align*}
Re-arranging gives
\begin{align*}
\frac{\partial}{\partial \sigma} \gamma_n(\sigma, \xi) = \biggl( \frac{1}{n} \sum_{i=1}^{n} \frac{\bigl[\rho'(-y_i \widehat{u}_i)\bigr]^2}{\gamma_n + \rho''(-y_i \widehat{u}_i)} \biggr)^{-1} \cdot  &\biggl\{ \frac{\gamma_n^2}{n} \sum_{i=1}^{n} \frac{y_i \rho'(-y_i \widehat{u}_i) \sqrt{\alpha_2} Z_{2,i}}{\gamma_n + \rho''(-y_i \widehat{u}_i)} \\
&- \frac{\gamma_n}{n} \sqrt{\alpha_2} \sum_{i=1}^{n} y_i \rho'(-y_i \widehat{u}_i) Z_{2, i} - \frac{1}{\sigma n} \sum_{i=1}^{n} \bigl[\rho'(- y_i \widehat{u}_i)\bigr]^2
\biggr\}.
\end{align*}
Let $\mathcal{A}_1, \mathcal{A}_2$ be as in the proof of part (b), so that on $\mathcal{A}_1 \cap \mathcal{A}_2$, there exists a constant $c' > 0$ such that for all $\sigma \in [0, c']$ and all $\xi \in [-M, M]$, $\gamma_n(\sigma, \xi) \geq 2$.  Then, applying the triangle inequality yields
\[
\Bigl \lvert \frac{\partial}{\partial \sigma} \gamma_n(\sigma, \xi) \Bigr \rvert \leq 2 \gamma_n \biggl(\frac{1}{n} \sum_{i=1}^{n}  \bigl[\rho'(- y_i \widehat{u}_i)\bigr]^2 \biggr)^{-1} \cdot \biggl\{  \frac{\gamma_n}{n} \sum_{i=1}^{n} \sqrt{\alpha_2} \lvert Z_{2, i} \rvert \biggr\} + \frac{1}{\sigma} \leq \frac{c \gamma_n^2}{n} \sum_{i=1}^{n} \lvert Z_{2, i} \rvert + \frac{\gamma_n}{\sigma}. 
\]
Next, define the events $\mathcal{A}_3 = \{ \| \pproj \bg \|_2^2 \geq n/2 \}$ and $\mathcal{A}_4 = \{\frac{1}{n} \sum_{i=1}^{n} \lvert Z_i \rvert \leq 2\}$, noting that by Bernstein's inequality, $\mathbb{P}(\mathcal{A}_3) \wedge \mathbb{P}(\mathcal{A}_4) \geq 1 - 2e^{-cn}$.  On $\mathcal{A}_3$, since $\sup_{t \in \mathbb{R}} \rho'(t) \leq 1$, we deduce from the first order condition in Lemma~\ref{lem:useful-L-n}(a) that
$\gamma_n(\sigma, \xi) \leq \frac{1}{\sigma\sqrt{\alpha_2}} \sqrt{\frac{n}{\| \pproj \bg\|_2^2}} \leq \frac{C}{\sigma}$.
Putting the pieces together, we deduce that on $\bigcup_{\ell = 1}^{4} \mathcal{A}_{\ell}$, for all $\sigma \in [0, c']$ and $ \xi \in [-M, M]$, $\bigl \lvert \frac{\partial}{\partial \sigma} \gamma_n(\sigma, \xi) \bigr \rvert \leq C/\sigma^2$, as desired.  \qed

% Universality proofs
\section{Auxiliary proofs for universality: Proof of Lemma~\ref{lem:universality}} \label{sec:aux-universality}
This appendix is dedicated to the proof of Lemma~\ref{lem:universality}.  The organization is as follows: In Section~\ref{sec:aux-universality-preliminaries} we provide preliminary definitions and lemmas, we then prove Lemma~\ref{lem:universality} in Section~\ref{sec:proof-universality} and prove the lemmas introduced in Section~\ref{sec:aux-universality-preliminaries} in later subsections.

\subsection{Preliminaries}\label{sec:aux-universality-preliminaries}
The main workhorse of the proof is the following theorem, which implements the Lindeberg principle with correlation.  The proof is provided in Section~\ref{sec:proof-lindeberg-correlation}.
\begin{theorem}[\cite{lindeberg1922neue,chatterjee2006generalization}]
	\label{thm:lindeberg_correlation}
	Suppose $\bx, \widebar{\bx}, \bz, \widebar{\bz}$ are zero-mean random vectors in $\mathbb{R}^n$ with independent components.  Assume that for $1 \leq i \leq n$, 
	\begin{align*}
	(a)\;\; \E\{x_i^2\} = \E\{\widebar{x}_i^2\}, \qquad  (b)\;\; \E\{z_i^2\} = \E\{\widebar{z}_i^2\}, \qquad \text{ and } \qquad (c)\;\; \E\{x_i z_i\} = \E\{\widebar{x}_i \widebar{z}_i\}.
	\end{align*}
	Let $f: \mathbb{R}^{2n} \rightarrow \mathbb{R}$, $(\bx, \bz) \mapsto f(x_1, z_1, \dots, x_i, z_i, \dots, x_n, z_n)$ be thrice continuously differentiable.  Further, assume that the third-order partial derivatives are uniformly bounded as 
	\[
	\sup_{\bx \in \mathbb{R}^n, \bz \in \mathbb{R}^n} \bigl\lvert \partial_{2i}^{q} \partial_{2i + 1}^{3-q} f(\bx, \bz) \bigr\rvert \leq L \qquad \text{ for all } \qquad 1 \leq i \leq n \;\; \text{ and } \;\; q \in \{0, 1, 2, 3\},
	\]
	where $\partial_k$ denotes partial differentiation with respect to the $k$th coordinate.
	Then, the following bound holds.
	\begin{align}
		\label{term:lindeberg_thm_term}
		\left\lvert \E f(\bx, \bz) - \E f(\widebar{\bx}, \widebar{\bz}) \right\rvert \leq \frac{8nL}{3} \cdot \max_{\substack{1 \leq i \leq n,\\ q = \{0, 1, 2, 3\}}} \bigl\{\E \lvert x_i^{q}z_i^{3-q} \rvert, \E \vert \bar{x}_i^q \bar{z}_i^{3-q}\rvert\bigr\}.
	\end{align}
\end{theorem}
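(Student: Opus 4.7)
The statement is a Lindeberg-type swap theorem with a correlation twist (the cross moment $\E\{x_iz_i\}=\E\{\widebar{x}_i\widebar{z}_i\}$ is required), so I will follow the classical Lindeberg replacement scheme of \cite{lindeberg1922neue,chatterjee2006generalization}, swapping one \emph{pair} $(x_i,z_i)\leftrightarrow(\widebar{x}_i,\widebar{z}_i)$ at a time rather than a single scalar. Concretely, I define hybrid random vectors $\bw^{(0)},\bw^{(1)},\dots,\bw^{(n)}\in\reals^{2n}$ with $\bw^{(0)}=(\widebar{x}_1,\widebar{z}_1,\dots,\widebar{x}_n,\widebar{z}_n)$ and $\bw^{(n)}=(x_1,z_1,\dots,x_n,z_n)$, where $\bw^{(k)}$ agrees with $\bw^{(n)}$ in coordinates $1,\dots,2k$ and with $\bw^{(0)}$ in coordinates $2k+1,\dots,2n$. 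Telescoping,
\[
\E f(\bx,\bz)-\E f(\widebar{\bx},\widebar{\bz})=\sum_{k=1}^{n}\Bigl(\E f(\bw^{(k)})-\E f(\bw^{(k-1)})\Bigr),
\]
and it suffices to bound each summand by $\tfrac{8L}{3}\cdot\max_{q}\{\E|x_k^qz_k^{3-q}|,\E|\widebar{x}_k^q\widebar{z}_k^{3-q}|\}$.

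To bound a single term, let $\bw^{(k,0)}$ denote $\bw^{(k)}$ with coordinates $(2k-1,2k)$ zeroed out, and note $\bw^{(k,0)}=\bw^{(k-1,0)}$, that $(x_k,z_k)$ is independent of $\bw^{(k,0)}$ (by independence of the components of $\bx,\bz$ and the fact that the other coordinates of $\bw^{(k)}$ involve only $(x_j,z_j)$ for $j<k$ and $(\widebar{x}_j,\widebar{z}_j)$ for $j>k$), and similarly for $(\widebar{x}_k,\widebar{z}_k)$. Taylor-expand $f$ in coordinates $(2k-1,2k)$ around $\bw^{(k,0)}$ to second order with a third-order Lagrange remainder:
\[
f(\bw^{(k)})=\sum_{m=0}^{2}\frac{1}{m!}\sum_{q=0}^{m}\binom{m}{q}\partial_{2k-1}^{\,q}\partial_{2k}^{\,m-q}f(\bw^{(k,0)})\,x_k^{q}z_k^{m-q}+R_k,
\]
where $|R_k|\le\tfrac{L}{6}\sum_{q=0}^{3}\binom{3}{q}|x_k|^{q}|z_k|^{3-q}$, and an identical identity holds for $\bw^{(k-1)}$ with $(\widebar{x}_k,\widebar{z}_k)$ in place of $(x_k,z_k)$.

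Now take expectations. The $m=0$ terms cancel exactly. By independence of $(x_k,z_k)$ from $\bw^{(k,0)}$, the $m=1$ terms yield multiples of $\E\{x_k\},\E\{z_k\}$, which vanish by the mean-zero assumption, and likewise for the barred versions. For $m=2$, the three coefficients that appear are $\E\{x_k^2\},\E\{z_k^2\},\E\{x_kz_k\}$ (respectively their barred analogues); by hypotheses (a), (b), (c) these pairwise coincide, so the $m=2$ terms also cancel. Only the remainders survive, giving
\[
\bigl|\E f(\bw^{(k)})-\E f(\bw^{(k-1)})\bigr|\le\frac{L}{6}\sum_{q=0}^{3}\binom{3}{q}\Bigl(\E|x_k^{q}z_k^{3-q}|+\E|\widebar{x}_k^{q}\widebar{z}_k^{3-q}|\Bigr)\le\frac{8L}{3}\max_{q}\bigl\{\E|x_k^{q}z_k^{3-q}|,\E|\widebar{x}_k^{q}\widebar{z}_k^{3-q}|\bigr\},
\]
using $\sum_{q=0}^{3}\binom{3}{q}=8$. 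Summing the telescoping bound over $k=1,\dots,n$ yields the stated estimate~\eqref{term:lindeberg_thm_term}. The only mildly delicate point is the bookkeeping that the mixed second-moment cancellation genuinely uses condition (c)---this is the reason one must swap \emph{pairs} rather than individual scalars---but everything else is routine Taylor remainder control.
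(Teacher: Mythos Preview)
Your proof is correct and follows essentially the same Lindeberg swapping scheme as the paper: define hybrids, telescope, Taylor-expand to second order in the pair of coordinates, cancel low-order terms using the moment-matching hypotheses (a)--(c), and bound the third-order remainder. The only cosmetic difference is that you do a single bivariate Taylor expansion with Lagrange remainder in coordinates $(2k-1,2k)$, whereas the paper performs an iterated univariate expansion (first in $z_i$ around $\widebar{W}_i(x_i,0)$, then in $x_i$ around $\widebar{W}_i(0,0)$); the resulting remainder terms and constants coincide.
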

We next present several approximation lemmas.  Recall the set $\mathbb{T}$~\eqref{eq:T-def} and the minimum over that set $\mathfrak{M}_T$~\eqref{eq:perturbed-difference}.
Subsequently, consider a positive scalar $\epsilon$ and apply~\citet[Corollary 4.2.13]{vershynin2018high} to obtain $\mathbb{T}_{\epsilon}$---an $\epsilon$-net of the set $\mathbb{T}$.  Then, define the quantity $\mathfrak{M}_{\epsilon}(s, \bZ)$, the minimum over the $\epsilon$-net $\mathbb{T}_{\epsilon}$, as
\begin{align}
	\label{eq:perturbed-difference-eps}
	 \mathfrak{M}_{\epsilon}(s, \bZ) = \min_{\bt \in \mathbb{T}_{\epsilon}}\; \mathfrak{L}_n(\bt; s, \bZ, \lambda).
\end{align}
We next define a smoothing of the quantity $\mathfrak{M}_{\epsilon}$ to enable the computation of derivatives as required by Theorem~\ref{thm:lindeberg_correlation}.  To this end, let $F: \mathbb{R} \rightarrow \mathbb{R}$ be a thrice continuously differentiable function with the properties\footnote{Although we will not require an explicit expression for $F(x)$, one such function that satisfies the required properties is $F(x) = \zeta(x+1) + \zeta(x) - 1$, where $\zeta(x) = 140\int_{x}^{1} u^3(1-u)^3 \mathrm{d}u$.}
\begin{align}
	\label{prop:F_prop}
	F(x) = \begin{cases} 1, \; x \leq -1\\ -1, \; x \geq 1 \end{cases} \qquad \text{ and } \qquad \Bigl\|\frac{\partial^i}{\partial x^i} F(x) \Bigr\|_{\infty} \leq C' \text{ for } i \in \{1, 2, 3\},
\end{align}
where $C'$ denotes a universal positive constant.  Additionally, consider the collection of random variables $(U_i)_{i=1}^{n} \overset{\mathsf{i.i.d.}}{\sim} \mathsf{Unif}[0, 1]$ and note the relation
\[
y_i \overset{\mathsf{d}}{=} \lim_{r \rightarrow 0}\; F\Bigl(r^{-1}\cdot \bigl[U_i - \rho'(\langle \bx_i, \bt_0 \rangle) \bigr]\Bigr).
\]
Thus, the map $r \mapsto F(r^{-1}\cdot [U_i - \rho'(\langle \bx_i, \bt_0 \rangle) ])$ can be understood as a smoothing of the discrete labels $y_i$.  Use this smoothing to define the smoothed loss $\mathcal{L}_r^{\mathsf{smooth}}: \mathbb{R}^{p} \times \mathbb{R}^{n \times p} \times \mathbb{R}^{n \times p} \rightarrow \mathbb{R}$ (cf. the loss $\mathcal{L}_n$~\eqref{eq:loss}) as
\begin{align}
	\label{def:L_tilde}
	\mathcal{L}_r^{\mathsf{smooth}}(\bt, \bZ, \bX) = \frac{1}{n} \sum_{i=1}^{n} \rho\Bigl(- F\Bigl(r^{-1}\cdot \bigl[U_i - \rho'(\langle \bx_i, \bt_0 \rangle) \bigr]\Bigr) \cdot \langle \bz_i, \bt \rangle\Bigr) + \frac{\lambda}{2p} \norm{\bt}_2^2.
\end{align}
Use this smoothed loss to define a partition function $Z_{\beta}: \mathbb{R}^{n \times p} \times \mathbb{R}^{n \times p} \rightarrow \mathbb{R}$ as
\begin{align} \label{eq:partition}
	Z_{\beta}(\bZ, \bX) = \sum_{\bt \in \mathbb{T}_{\epsilon}} \exp\Bigl\{-\beta \cdot \bigl(\mathcal{L}_r^{\mathsf{smooth}}(\bt, \bZ, \bX) + s \psi(\bt) \bigr)\Bigr\},
\end{align}
as well as a smoothed minimum $f$ as
	\begin{align}\label{def:f-smoothed-min}
	f(\beta, \bZ, \bX) = -\frac{1}{\beta} \log Z_{\beta}(\bZ, \bX).
\end{align}
Note that the interpretation of $f$ as a smoothed minimum relation comes from the limiting relation 
\[
\lim_{\beta \rightarrow \infty} f(\beta, \bZ, \bX) = \min_{\bt \in \mathbb{T}_{\epsilon}}\; \mathcal{L}_r^{\mathsf{smooth}}(\bt, \bZ, \bX) + s\psi(\bt).
\]
With these definitions in hand, we state two key technical lemmas.  The first bounds the approximation error of the smoothing over both the labels (through the parameter $r$) as well as the minimum (through the parameter $\beta$).  Its proof is provided in Section~\ref{sec:proof-smoothing-label-min}.
\begin{lemma}
	\label{lem:smoothing-label-min}
	Fix positive smoothing parameters $r$ and $\beta$ and scalar $\epsilon$ and consider the smoothed minimum $f$~\eqref{def:f-smoothed-min} as well as the discretized minimum $\mathfrak{M}_{\epsilon}$~\eqref{eq:perturbed-difference-eps}.  Let $h_K: \mathbb{R} \rightarrow \mathbb{R}$ be a thrice continuously differentiable function which satisfies 
	$\sup_{t \in \mathbb{R}} \lvert h_K^{(i)}(t) \rvert \leq K^i$ for $i \in \{1, 2, 3\}$.  There exists a pair of universal positive constants $(c, C)$ such that for all scalars $t \in \mathbb{R}$ the following holds with probability at least $1 - 2e^{-cnr}$.  
	\[
	\bigl \lvert h_K\bigl(\mathfrak{M}_{\epsilon}(s, \bZ) - t\bigr) - h_K\bigl(f(\beta, \bZ, \bX) -t \bigr) \bigr \rvert \leq C \cdot \Bigl[\sqrt{r} + \frac{Kn}{\beta} \cdot \log\Bigl(1 + \frac{2M_1\sqrt{n}}{\epsilon}\Bigr)\Bigr].
	\]
\end{lemma}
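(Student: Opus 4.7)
The plan is to split the approximation error into two pieces by inserting an intermediate smoothed--discretized minimum,
$$\mathfrak{M}_{\epsilon}^{\mathsf{smooth}}(s, r, \bZ, \bX) := \min_{\bt \in \mathbb{T}_{\epsilon}} \Bigl\{ \mathcal{L}_r^{\mathsf{smooth}}(\bt, \bZ, \bX) + s\psi(\bt) \Bigr\},$$
and using the triangle inequality to write
$$\bigl\lvert h_K(\mathfrak{M}_{\epsilon} - t) - h_K(f - t) \bigr\rvert \leq \bigl\lvert h_K(\mathfrak{M}_{\epsilon} - t) - h_K(\mathfrak{M}_{\epsilon}^{\mathsf{smooth}} - t) \bigr\rvert + \bigl\lvert h_K(\mathfrak{M}_{\epsilon}^{\mathsf{smooth}} - t) - h_K(f - t) \bigr\rvert.$$
Since $\|h_K'\|_{\infty} \leq K$, $h_K$ is $K$-Lipschitz, so each piece reduces to controlling the corresponding differences inside $h_K$.

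For the soft-min term (the second summand), I would use the standard log-sum-exp sandwich
$$0 \;\leq\; \min_{\bt \in \mathbb{T}_{\epsilon}} G(\bt) - \Bigl(-\tfrac{1}{\beta}\log \textstyle\sum_{\bt \in \mathbb{T}_{\epsilon}} e^{-\beta G(\bt)}\Bigr) \;\leq\; \tfrac{\log |\mathbb{T}_{\epsilon}|}{\beta},$$
applied to $G(\bt) = \mathcal{L}_r^{\mathsf{smooth}}(\bt, \bZ, \bX) + s\psi(\bt)$. Since $\mathbb{T} \subseteq \mathbb{B}_2(M_1\sqrt{n})$, a standard volumetric estimate gives $|\mathbb{T}_{\epsilon}| \leq (1 + 2M_1\sqrt{n}/\epsilon)^{p}$, so together with $p \asymp n$ and $K$-Lipschitzness of $h_K$ this produces the $Kn\beta^{-1}\log(1 + 2M_1\sqrt{n}/\epsilon)$ contribution to the bound. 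This step is purely deterministic.

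For the label-smoothing term (the first summand), the decisive observation is that the smoothed label $\widetilde{y}_i(r) := F\bigl(r^{-1}[U_i - \rho'(\langle \bx_i, \bt_0\rangle)]\bigr)$ coincides \emph{exactly} with $y_i$ whenever $|U_i - \rho'(\langle \bx_i, \bt_0 \rangle)| > r$, because $F \equiv \pm 1$ outside $[-1,1]$. The losses $\mathcal{L}_n$ and $\mathcal{L}_r^{\mathsf{smooth}}$ therefore differ only on the set of "bad" indices $\mathcal{B}_r := \{i : |U_i - \rho'(\langle \bx_i, \bt_0\rangle)| \leq r\}$. Since $U_i \sim \mathsf{Unif}[0,1]$ is independent of $\bx_i$, each $\mathbbm{1}\{i \in \mathcal{B}_r\}$ is Bernoulli with mean $\leq 2r$, so a multiplicative Chernoff/Bernstein bound yields $|\mathcal{B}_r| \leq Cnr$ with probability at least $1 - 2e^{-cnr}$. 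On each bad index, the 1-Lipschitzness of $\rho$, the bound $|y_i|, |\widetilde{y}_i(r)| \leq 1$, and the third defining property of $\mathbb{T}$ (namely $\|\bZ \bt\|_{\infty} \leq M_3\sqrt{\log n}$) give
$$\tfrac{1}{n}\bigl|\rho(-y_i\langle\bz_i,\bt\rangle) - \rho(-\widetilde{y}_i(r)\langle\bz_i,\bt\rangle)\bigr| \;\leq\; \tfrac{2M_3\sqrt{\log n}}{n},$$
uniformly in $\bt \in \mathbb{T}_{\epsilon}$. Summing over $\mathcal{B}_r$ then taking the min yields $|\mathfrak{M}_{\epsilon} - \mathfrak{M}_{\epsilon}^{\mathsf{smooth}}| \lesssim r\sqrt{\log n}$, and a final application of $K$-Lipschitz continuity of $h_K$ closes out this term.

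The main obstacle I expect is matching the precise $\sqrt{r}$ dependence stated in the lemma: the direct counting argument above produces $r\sqrt{\log n}$, and absorbing $\sqrt{\log n}$ into the $\sqrt{r}$ factor either requires a restriction on the range of $r$ relative to $\log n$ (which is compatible with how this lemma is invoked inside the proof of Lemma~\ref{lem:universality}, where $r$ is eventually tuned to a small negative power of $n$) or a sharper Bernstein-type concentration of $|\mathcal{B}_r|$ around its mean that trades the mean contribution against the fluctuation contribution. By contrast, the log-sum-exp step is completely routine, and the concentration of $|\mathcal{B}_r|$ is a clean application of a Chernoff bound for independent Bernoulli sums with bounded mean.
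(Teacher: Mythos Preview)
Your decomposition and the paper's are essentially the same: both insert an intermediate object between $\mathfrak{M}_\epsilon$ and $f$, split via the triangle inequality, and handle the soft-min approximation by the log-sum-exp sandwich (the paper phrases this as integrating $|\partial_\beta g|$ from $\beta$ to $\infty$, but the bound $\log|\mathbb{T}_\epsilon|/\beta$ is identical). The only cosmetic difference is the choice of intermediate: the paper uses the soft-min of the \emph{original} loss, whereas you use the hard-min of the \emph{smoothed} loss; the two orderings are interchangeable.

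The substantive difference---and the resolution of your stated obstacle---is in the label-smoothing step. You bound
\[
\tfrac{1}{n}\sum_{i=1}^n |y_i - \widetilde{y}_i(r)|\,|\langle \bz_i,\bt\rangle|
\]
via H\"older $\ell_1/\ell_\infty$, invoking the third constraint of $\mathbb{T}$ (the bound $\|\bZ\bt\|_\infty \leq M_3\sqrt{\log n}$), which yields $r\sqrt{\log n}$. The paper instead applies Cauchy--Schwarz:
\[
\tfrac{1}{n}\sum_{i=1}^n |y_i - \widetilde{y}_i(r)|\,|\langle \bz_i,\bt\rangle|
\;\leq\;
\sqrt{\tfrac{4}{n}\sum_{i=1}^n \mathbbm{1}\{|U_i-\rho'(\langle\bx_i,\bt_0\rangle)|\leq r\}}\;\cdot\;\sqrt{\tfrac{1}{n}\|\bZ\bt\|_2^2},
\]
and then uses the \emph{first two} constraints of $\mathbb{T}$ (namely $\|\bt\|_2 \leq M_1\sqrt{n}$ together with $\|\bZ\|_{\mathsf{op}} \leq M_0$) to bound the second factor by $M_0 M_1$. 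The first factor is $O(\sqrt{r})$ by the same Bernstein argument you give for $|\mathcal{B}_r|$. This delivers $\sqrt{r}$ on the nose, with no $\log n$ factor and no restriction on the range of $r$.

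Your $r\sqrt{\log n}$ bound is in fact tighter than $\sqrt{r}$ once $r \ll 1/\log n$ (which is the regime of interest), so your argument is not wrong---it simply does not reproduce the stated form of the lemma. If you want to match the statement exactly, replace the $\ell_1/\ell_\infty$ step by Cauchy--Schwarz as above.
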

The next lemma bounds the third derivatives of the smoothing over both the labels as well as the minimum.  Its proof is provided in Section~\ref{sec:proof-third-deriv}.
\begin{lemma} \label{lem:third-deriv}
	Consider the smoothed minimum $f$~\eqref{def:f-smoothed-min}, defined for smoothing parameters which satisfy the relation $\beta/n > r$. Let $h_K: \mathbb{R} \rightarrow \mathbb{R}$ be a thrice continuously differentiable function with 
	$\sup_{t \in \mathbb{R}} \lvert h_K^{(i)}(t) \rvert \leq K^i$ for $i \in \{1, 2, 3\}$.  There exists a universal positive constant $C$ such that for all indices $k \in \{1, 2, \dots, n\}$ and $\ell \in \{1, 2, \dots, p\}$ as well as scalars $t \in \mathbb{R}$, the following holds.
		\begin{align}
		\label{ineq:res_lem_third_deriv}
	\Bigl \lvert \partial_{Z_{k\ell}}^i \px^{3 - i} \; \bigl[ h_K\bigl(f(\beta, \bZ, \bX) - t\bigr) \bigr] \Bigr \rvert \leq C \cdot \frac{K \beta^2 \cdot \| \bt_0 \|_{\infty}^3 \cdot (\log{n})^{3/2}}{n^3 r^3} \quad \text{ for all } \quad i \in \{0, 1, 2, 3\}.
	\end{align}
\end{lemma}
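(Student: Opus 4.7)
\textbf{Proof plan for Lemma~\ref{lem:third-deriv}.} The central idea is to view $f$ as a free energy associated with the Gibbs measure $\mu_{\beta}(\bt) \propto \exp\{-\beta[\mathcal{L}_r^{\mathsf{smooth}}(\bt, \bZ, \bX) + s\psi(\bt)]\}$ on $\mathbb{T}_{\epsilon}$, so that partial derivatives of $f$ in the matrix entries become Gibbs cumulants. Writing $g(\bt) = \mathcal{L}_r^{\mathsf{smooth}}(\bt, \bZ, \bX) + s\psi(\bt)$ and denoting the Gibbs average by $\langle \cdot \rangle_{\beta}$, a standard computation yields
\[
\partial_a f = \langle \partial_a g\rangle_{\beta}, \qquad \partial_a\partial_b f = \langle \partial_a\partial_b g\rangle_{\beta} - \beta\,\mathrm{Cov}_{\beta}(\partial_a g,\partial_b g),
\]
\[
\partial_a\partial_b\partial_c f = \langle \partial_a\partial_b\partial_c g\rangle_{\beta} + \beta(\text{mixed 2-cumulants}) + \beta^2\,\kappa_{3,\beta}(\partial_a g,\partial_b g,\partial_c g),
\]
where $\partial_a, \partial_b, \partial_c$ each stand for either $\partial_{Z_{k\ell}}$ or $\partial_{X_{k\ell}}$.

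Applying Faà di Bruno to $h_K(f - t)$ gives a sum of three term types:
\[
h_K'''(f-t)\,f_a f_b f_c \;+\; h_K''(f-t)\bigl[f_{ab} f_c + f_{ac} f_b + f_{bc} f_a\bigr] \;+\; h_K'(f-t)\,f_{abc}.
\]
The plan is to bound each factor by pointwise estimates on derivatives of $g$, uniformly over $\bt \in \mathbb{T}_{\epsilon}\subseteq \mathbb{T}$, and then note that the Gibbs averages inherit any uniform pointwise bound. The key pointwise estimates come from direct differentiation of the single term $\rho(-Y_k(r)\langle \bz_k,\bt\rangle)$, since all three derivatives act on the same row $k$: each $\partial_{Z_{k\ell}}$ brings down a factor of at most $\|\bt\|_{\infty} \leq M_2(\log n)^{3/2}(\|\bt_0\|_{\infty}\vee\log n)$, while each $\partial_{X_{k\ell}}$ brings down a factor of at most $Cr^{-1}\|\bt_0\|_{\infty}$ from differentiating $Y_k(r) = F(r^{-1}[U_k - \rho'(\langle \bx_k,\bt_0\rangle)])$, together with an additional $|\langle\bz_k,\bt\rangle| \leq M_3\sqrt{\log n}$ factor from the outer $\rho'$. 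Using $\sup_t |\rho^{(j)}(t)| \leq 1$ and $\|F^{(j)}\|_{\infty} \leq C'$, this yields, for any $i\in\{0,1,2,3\}$, pointwise bounds of the shape $|\partial_{Z_{k\ell}}^i \partial_{X_{k\ell}}^{3-i} g| \leq C n^{-1} r^{-(3-i)} \|\bt_0\|_{\infty}^{3-i}\,(\log n)^{O(1)}$, and in particular $|\partial_a g| \lesssim \|\bt_0\|_{\infty}\sqrt{\log n}/(nr)$ is the dominant scale.

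Substituting these bounds into the cumulant expressions, $|f_a| \lesssim \|\bt_0\|_{\infty}\sqrt{\log n}/(nr)$, while $|f_{ab}|$ and $|f_{abc}|$ pick up extra multiplicative factors of $\beta$ and $\beta^2$ respectively from the cumulants. The term $h_K'(f-t)\,f_{abc}$ contains the contribution $\beta^2 \kappa_{3,\beta}$ and consequently
\[
|h_K'(f-t)\,\beta^2 \kappa_{3,\beta}(\partial_a g,\partial_b g,\partial_c g)| \leq C\,K\,\beta^2 \cdot \bigl(\|\bt_0\|_{\infty}\sqrt{\log n}/(nr)\bigr)^3,
\]
which is exactly the claimed bound $C K\beta^2\|\bt_0\|_{\infty}^3 (\log n)^{3/2}/(n^3 r^3)$. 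The remaining terms need to be shown to fit within the same bound, using $|h_K^{(j)}|\leq K^j$, the assumption $\beta/n > r$, and the pointwise bounds on $\partial^2 g, \partial^3 g$ derived above.

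\textbf{Main obstacle.} The principal bookkeeping challenge is showing that the non-dominant terms — namely $h_K''' f_a f_b f_c$ (which carries $K^3$ but no $\beta$) and the $h_K'' f_{ab} f_c$ contributions (with $K^2$ and a single $\beta$) — are in fact absorbed by the same bound, since the constants $K$ and $\beta$ are tracked separately. This requires carefully using the assumption $\beta/n > r$ together with the precise polynomial-in-$\log n$ bounds on the constrained set $\mathbb{T}$, and collecting all the mixed terms from the Faà di Bruno expansion without losing powers of $r^{-1}$ or $\|\bt_0\|_{\infty}$. Everything else is a routine, if tedious, application of the product and chain rules combined with the uniform pointwise estimates on derivatives of $g$.
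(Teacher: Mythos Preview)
Your proposal is correct and follows essentially the same route as the paper: introduce the Gibbs average $\langle\cdot\rangle$ on $\mathbb{T}_\epsilon$, write $\partial_a f = \langle\partial_a \mathcal{L}_r^{\mathsf{smooth}}\rangle$ and use the identities $\partial_a\langle g\rangle = \langle\partial_a g\rangle - \beta\,\mathrm{Cov}(g,\partial_a \mathcal{L}_r^{\mathsf{smooth}})$ to expand second and third derivatives of $f$ as cumulants, then combine with the Faà di Bruno expansion of $h_K(f-t)$ and plug in uniform pointwise bounds on $\partial^j \mathcal{L}_r^{\mathsf{smooth}}$ over $\mathbb{T}_\epsilon$ coming from $\|\bt\|_\infty$, $\|\bZ\bt\|_\infty$, $\|F^{(j)}\|_\infty$, $\|\rho^{(j)}\|_\infty$. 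The paper carries out exactly this computation and records the same pointwise bounds $|\partial_Z\mathcal{L}_r|\lesssim \|\bt_0\|_\infty\sqrt{\log n}/n$, $|\partial_X\mathcal{L}_r|\lesssim \|\bt_0\|_\infty\sqrt{\log n}/(nr)$, and analogous second- and third-order bounds, before concluding by ``putting the pieces together'' under $\beta/n>r$.

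One small remark on your stated obstacle: you are right that the Faà di Bruno expansion produces terms scaling like $K^3$, $K^2\beta$, and $K\beta^2$ (each multiplied by $\|\bt_0\|_\infty^3(\log n)^{3/2}/(n^3r^3)$ in the worst case of three $X$-derivatives), and that collapsing these to the single $K\beta^2$ form in the lemma statement implicitly uses $K\lesssim\beta$. The paper does not spell this out either; it is harmless because in the only application (the proof of Lemma~\ref{lem:universality}) one eventually takes $K=k=n^{\tau/6}$ and $\beta=n^{1+\tau/3}$, so $K\ll\beta$. If you prefer a version of the lemma that stands on its own, simply state the bound as $C(K\vee\beta)^3\|\bt_0\|_\infty^3(\log n)^{3/2}/(n^3r^3)$; the downstream argument is unchanged.
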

%\begin{lemma}
%	\label{lem:third_deriv}
%	Let $f$ be as defined in Eq.~\eqref{def:f_func}.  Then,
%	\begin{align}
%		\label{ineq:res_lem_third_deriv}
%		\max_{i \in \{1, 2, 3\}}\left\{\left \lvert \partial_{Z_{k\ell}}^i \px^{3 - i} h_k^{-}(f(\beta, \bZ, \bX) - t) \right \rvert\right\} \leq \frac{Ck \beta^2 \| \bt_0 \|_{\infty}^3 (\log{n})^{9/2}}{n^3 r^2}.
%	\end{align}
%\end{lemma}

Equipped with these tools, we proceed to the proof of Lemma~\ref{lem:universality}.

\subsection{Proof of Lemma~\ref{lem:universality}} \label{sec:proof-universality}
We begin with some preliminaries.  First, consider the function $\zeta: \mathbb{R} \rightarrow \mathbb{R}$ defined as \sloppy\mbox{$\zeta(x) = 140 \int_{x}^{1} u^3 \cdot (1 - u)^3 \mathrm{d}u$} and use this to define the functions $h_k^{-}: \mathbb{R} \rightarrow \mathbb{R}$ and $h_k^{+}: \mathbb{R} \rightarrow \mathbb{R}$ as
\[
h_k^-(x) = \zeta\bigl(\min\{1, kx + 1\}_{+} \bigr) \qquad \text{ and } \qquad h_k^+(x) = \zeta\bigl(\min\{1, kx\}_{+}\bigr).
\]
Note that these functions satisfy the following properties
\begin{itemize}
	\item Let $x \in \mathbb{R}$ and $X$ be a random variable.  The following two sandwich relations hold
	\[
	h_k^{-}(x) \leq \mathbbm{1}_{(-\infty, 0)}(x) \leq h_k^{+}(x) \qquad \text{ and } \qquad \E h_k^-(X) \leq \pr\{X < 0\} \leq \E h_k^+(X).
	\]
	\item There is a universal constant $C'$ such that the first three derivatives of $h_k^{-}$ are bounded as
	\[
	\sup_{t \in \mathbb{R}}\; \bigl \lvert h_k^{-, (i)}(t) \bigr \rvert \leq C' \cdot k^{i} \qquad \text{ for } \qquad i \in \{1, 2, 3\}.
	\]
	\item The two functions $h_k^{-}$ and $h_k^{+}$ satisfy the relation
	\[
	h_k^{+}(x) = h_k^{-}(x - 1/k).
	\]
\end{itemize}
%Turning back to the problem at hand, recall the quantity $\mathfrak{M}_n(s, \bZ)$~\eqref{eq:perturbed-difference} as well as the quantity $\mathfrak{M}_{\mathbb{T}}$~\eqref{eq:M-T} and apply Lemma~\ref{lem:approx-Mn-MT} to obtain the inequality
%\begin{align*}
%		\pr\Bigl\{\mathfrak{M}_n(s, \bZ) < t\Bigr\} \leq \pr\Bigl\{\mathfrak{M}_{\mathbb{T}}(s, \bZ) < t + Cs^2\Bigr\}.
%\end{align*}
We apply the first property of the functions $h_k^{\pm}$ to obtain the inequality
\begin{align} \label{ineq:M_n-upper-bound}
	\pr\Bigl\{\mathfrak{M}_{\mathbb{T}}(s, \bZ) < t\Bigr\} &\leq \E \Bigl\{h_k^+\bigl(\mathfrak{M}_{\mathbb{T}}(s, \bZ) - t \bigr) \Bigr\} = \E \Bigl\{h_k^-\bigl(\mathfrak{M}_{\mathbb{T}}(s, \bZ) - t -k^{-1}\bigr) \Bigr\}.
\end{align}
We next claim the following inequality, deferring its proof to the end of the section
\begin{align}
	\label{ineq:penultimate-universality-lem}
	\Bigl \lvert \E \Bigl\{h_k^-\bigl(\mathfrak{M}_{\mathbb{T}}(s, \bZ) - t - k^{-1}\bigr) \Bigr\} - \E \Bigl\{h_k^-\bigl(\mathfrak{M}_{\mathbb{T}}(s, \bG) - t - k^{-1}\bigr) \Bigr\} \Bigr \rvert \leq C \cdot kn^{-\tau/3} (\log{n})^2.
\end{align}
Substituting the bound~\eqref{ineq:penultimate-universality-lem} into the inequality~\eqref{ineq:M_n-upper-bound}, we obtain the inequality
\begin{align*}
		\pr\Bigl\{\mathfrak{M}_{\mathbb{T}}(s, \bZ) < t\Bigr\} &\leq \E \Bigl\{h_k^-\bigl(\mathfrak{M}_\mathbb{T}(s, \bG) - t - k^{-1}\bigr) \Bigr\} + C \cdot kn^{-\tau/3} (\log{n})^2\\
		&\leq \pr\Bigl\{\mathfrak{M}_{\mathbb{T}}(s, \bG) < t + k^{-1}\Bigr\} + C \cdot kn^{-\tau/3} (\log{n})^2,
\end{align*}
where the final inequality follows by invoking the properties of the function $h_k^-$.  Setting $k = n^{\tau/6}$ and arguing similarly for the right tail yields the desired conclusion.  It remains to prove the inequality~\eqref{ineq:penultimate-universality-lem}.

\paragraph{Proof of the inequality~\eqref{ineq:penultimate-universality-lem}.}
The strategy is to apply the sequence of approximations developed in Section~\ref{sec:aux-universality-preliminaries}.  Recall the quantities $\mathfrak{M}_{\epsilon}(s, \bZ)$~\eqref{eq:perturbed-difference-eps} and note the lower bound
\[
\mathfrak{M}_{\epsilon}(s, \bZ) - \mathfrak{M}_{\mathbb{T}}(s, \bZ) \geq 0,
\]
which holds by definition.  Towards obtaining an upper bound on the same quantity, consider the minimizers
\[
\widehat{\bt}_{\mathbb{T}} = \argmin_{\bt \in \mathbb{T}}\; \mathfrak{L}_n(\bt; s, \bZ, \lambda) \qquad \text{ and } \qquad \widehat{\bt}_{\mathbb{T}_{\epsilon}} = \argmin_{\bt \in \mathbb{T}_{\epsilon}}\; \mathfrak{L}_n(\bt; s, \bZ, \lambda),
\]
and let $\pi: \mathbb{T} \rightarrow \mathbb{T}_{\epsilon}$ denote the projection of a vector $\bt$ onto the $\epsilon$-net $\mathbb{T}_{\epsilon}$.  Expanding the definitions of $\mathfrak{M}_{\epsilon}(s, \bZ)$ and $\mathfrak{M}_{\mathbb{T}}(s, \bZ)$ yields the upper bound
\begin{align*}
	\mathfrak{M}_{\epsilon}(s, \bZ) - \mathfrak{M}_{\mathbb{T}}(s, \bZ) &= \mathcal{L}_n\bigl(\widehat{\bt}_{\mathbb{T}_{\epsilon}}\bigr) - \mathcal{L}_n\bigl(\widehat{\bt}_{\mathbb{T}}\bigr) + s \cdot \psi\bigl(\widehat{\bt}_{\mathbb{T}_{\epsilon}}\bigr) - s \cdot \psi\bigl(\widehat{\bt}_{\mathbb{T}}\bigr)\\
	&\leq \mathcal{L}_n\bigl(\pi\bigl(\widehat{\bt}_{\mathbb{T}}\bigr)\bigr) - \mathcal{L}_n\bigl(\widehat{\bt}_{\mathbb{T}}\bigr) + s \cdot \psi\bigl(\pi\bigl(\widehat{\bt}_{\mathbb{T}}\bigr)\bigr) - s \cdot \psi\bigl(\widehat{\bt}_{\mathbb{T}}\bigr),
\end{align*}
where the inequality follows by minimality of $\widehat{\bt}_{\mathbb{T}_{\epsilon}}$ over the set $\mathbb{T}_{\epsilon}$.  Next, straightforward calculation implies that both $\mathcal{L}_n$ as well as $\psi$ are Lipschitz continuous with Lipschitz constant bounded as $C/\sqrt{p}$, whence we obtain the upper bound
\[
\mathfrak{M}_{\epsilon}(s, \bZ) - \mathfrak{M}_{\mathbb{T}}(s, \bZ) \leq C \cdot \frac{\epsilon}{\sqrt{p}}.
\]
We note that in the inequality above, we have implicitly used the fact that $s \leq 1$.
Consequently, we apply the upper bound on the first derivative of the function $h_k^-$ to deduce the inequality
\[
\Bigl \lvert h_k^-\bigl(\mathfrak{M}_{\epsilon}(s, \bZ) - t - k^{-1}\bigr) - h_k^{-}\bigl(\mathfrak{M}_{\mathbb{T}}(s, \bZ) - t - k^{-1}\bigr)\Bigr \rvert \leq C \cdot \frac{k \epsilon}{\sqrt{p}}.
\]
Subsequently, we apply Lemma~\ref{lem:smoothing-label-min} in conjunction with the triangle inequality to obtain the inequality 
\[
\Bigl \lvert h_k^{-}\bigl( f(\beta, \bZ, \bX)  - t - k^{-1}\bigr) - h_k^{-}\bigl(\mathfrak{M}_{\mathbb{T}}(s, \bZ) - t - k^{-1}\bigr)\Bigr \rvert \leq C \cdot \Bigl[\sqrt{r} + \frac{kn}{\beta} \cdot \log\Bigl(1 + \frac{2M_1 \sqrt{n}}{\epsilon}\Bigr) + \frac{k \epsilon}{\sqrt{p}}\Bigr].
\]
Next, we invoke Lemma~\ref{lem:third-deriv} in conjunction with Theorem~\ref{thm:lindeberg_correlation} to obtain the inequality 
\[
\E\Bigl \{\bigl \lvert h_k^{-}\bigl( f(\beta, \bZ, \bX)  - t -  k^{-1}\bigr) - h_k^{-}\bigl( f(\beta, \bG, \bX)  - t  - k^{-1}\bigr) \bigr \rvert\Bigr\} \leq C \cdot \frac{k \beta^2 \cdot \| \bt_0 \|_{\infty}^3 \cdot (\log{n})^{3/2}}{n^{5/2} r^3}.
\]
Combining the previous three displays yields the inequality 
\begin{align*}
\Bigl \lvert h_k^-\bigl(\mathfrak{M}_{\mathbb{T}}(s, \bZ) - t  - k^{-1}\bigr) - h_k^{-}\bigl(\mathfrak{M}_{\mathbb{T}}(s, \bG) - t - k^{-1}\bigr)\Bigr \rvert \leq\; &  C \cdot \Bigl[\sqrt{r} + \frac{kn}{\beta} \cdot \log\Bigl(1 + \frac{2M_1 \sqrt{n}}{\epsilon}\Bigr) + \frac{k \epsilon}{\sqrt{p}}\\
&+\frac{k \beta^2 \cdot \| \bt_0 \|_{\infty}^3 \cdot (\log{n})^{3/2}}{n^{5/2} r^3}\Bigr].
\end{align*}
Balancing terms, we take $\beta = n^{1  + \tau/3},\ \epsilon = k^{-1}$,\ and $r = n^{-2\tau/3} (\log{n})^{3/2}$, and the result follows immediately. \qed

\subsection{Proof of Theorem~\ref{thm:lindeberg_correlation}} \label{sec:proof-lindeberg-correlation}
The proof is nearly identical to that of~\citet[Theorem 2]{korada2011applications}, and follows a simple swapping argument~\citep{lindeberg1922neue,chatterjee2006generalization}.  To this end, define the random vector $W_i$ as well as the random function $\widebar{W}_i: \mathbb{R}^2 \rightarrow \mathbb{R}$ as
\begin{align*}
	W_i &= (x_1, z_1, \dots, x_i, z_i, \widebar{x}_{i+1},\wz_{i+1}, \dots, \wx_{n}, \wz_{n}) \qquad \text{ and }\\
	\widebar{W}_i(t_1,t_2) &= (x_1, z_1, \dots, x_{i-1}, z_{i-1}, t_1, t_2, \wx_{i+1},\wz_{i+1}, \dots \wx_n, \wz_n).
\end{align*}
Next, decompose the difference $\E f(\bx, \bz) - \E f(\wbx, \wbz)$ into the telescoping series
\begin{align*}
%	\label{eq:lindeberg_corr_proof_term1}
	\E f(\bx, \bz) - \E f(\wbx, \wbz) = \sum_{i=1}^{n} \E f(W_i) - \E f(W_{i-1}) \quad \text{ for all functions } f: \mathbb{R}^{2n} \rightarrow \mathbb{R}.
\end{align*}
Now, note that for an index $j \in \{1, 2, \dots, 2n\}$, the notation $\partial_{j}$ denotes partial differentiation with respect to the $j$th coordinate.  Applying Taylor's theorem yields the expansion
\begin{align*}
%	\label{eq:lindeberg_taylor_1}
	f(W_i) = f\bigl(\widebar{W}_i(x_i, 0)\bigr) + z_i \partial_{2i} f\bigl(\widebar{W}_i(x_i, 0)\bigr) + \frac{1}{2} z_i^2 \partial_{2i}^2 f\bigl(\widebar{W}_i(x_i, 0)\bigr) + \frac{1}{6}z_i^3 \partial_{2i}^3f\bigl(\widebar{W}_i(x_i, u_1)\bigr),
\end{align*}
for some scalar $u_1 \in [0, z_i]$.  Further applying Taylor's theorem to the functions $t \mapsto f\bigl(\widebar{W}_i(t, 0)\bigr)$, $t \mapsto \partial_{2i} f\bigl(\widebar{W}_i(t, 0)\bigr)$, and $t \mapsto \partial_{2i}^2 f\bigl(\widebar{W}_i(t, 0)\bigr)$ yields
\begin{align*}
	f(W_i) &= f\bigl(\widebar{W}_i(0, 0)\bigr) + x_i \partial_{2i - 1} f\bigl(\widebar{W}_i(0, 0)\bigr) + \frac{x_i^2}{2} \partial_{2i-1}^2f\bigl(\widebar{W}_i(0, 0)\bigr)  + \frac{x_i^3}{6} \partial_{2i -1}^3f\bigl(\widebar{W}_i(u_2, 0)\bigr) \\
	&\quad + z_i \cdot \Bigl[\partial_{2i} f\bigl(\widebar{W}_i(0, 0)\bigr) + x_i  \partial_{2i-1}\partial_{2i} f\bigl(\widebar{W}_i(0, 0)\bigr) + \frac{x_i^2}{2} \partial_{2i-1}^2\partial_{2i}f\bigl(\widebar{W}_i(u_3, 0)\bigr) \Bigr]\\
	&\quad + \frac{1}{2}z_i^2 \cdot \Bigl[\partial_{2i}^2 f\bigl(\widebar{W}_i(0, 0)\bigr) + x_i \partial_{2i -1}\partial_{2i}^2 f\bigl(\widebar{W}_i(u_4, 0)\bigr) \Bigr] + \frac{1}{6}z_i^3 \partial_{2i}^3f\bigl(\widebar{W}_i(x_i, u_1)\bigr),  
\end{align*}
for some scalars $u_2$, $u_3$, and $u_4$ contained in the interval $[0, x_i]$.  Note that the random variable $f\bigl(\widebar{W}_i(0, 0)\bigr)$ is independent of both $x_i$ as well as $z_i$.  Consequently, we deduce the equation
\begin{align*}
	\E f(W_i) &= \E \bigl\{f\bigl(\widebar{W}_i(0, 0)\bigr)\bigr\} + \frac{1}{2}\E\bigl\{x_i^2\bigr\} \E\bigl\{\partial_{2i-1}^2f\bigl(\widebar{W}_i(0, 0)\bigr)\bigr\}  + \frac{1}{6}\E\bigl\{x_i^3\cdot \partial_{2i -1}^3f\bigl(\widebar{W}_i(u_2, 0)\bigr)\bigr\} \\
	&\quad + \E\bigl\{ x_i z_i \bigr\} \E\bigl\{ \partial_{2i-1}\partial_{2i} f\bigl(\widebar{W}_i(0, 0)\bigr)\bigr\} + \frac{1}{2}\E\bigl\{x_i^2 z_i \cdot \partial_{2i-1}^2\partial_{2i}f\bigl(\widebar{W}_i(u_3, 0)\bigr)\bigr\} \\
	&\quad + \frac{1}{2}\E\bigl\{z_i^2\bigr\} \E\bigl\{\partial_{2i}^2 f\bigl(\widebar{W}_i(0, 0)\bigr)\bigr\} + \frac{1}{2}\E\bigl\{z_i^2 x_i \cdot \partial_{2i -1}\partial_{2i}^2 f\bigl(\widebar{W}_i(u_4, 0)\bigr)\bigr\} + \frac{1}{6}\E\bigl\{z_i^3 \cdot \partial_{2i}^3 f\bigl(\widebar{W}_i(x_i, u_1)\bigr)\bigr\}.
\end{align*}
We proceed in a parallel manner to evaluate the quantity $\E f(W_{i-1})$, replacing the values $x_i$ and $z_i$ with $\widebar{x}_i$ and $\widebar{z}_i$, respectively.  Under assumptions (a)--(c), we thus obtain that the difference $\E f(W_i) - \E f(W_{i-1})$ consists solely of the terms consisting of third-order partial derivatives.  Thus, we apply the triangle inequality in conjunction with the absolute bound on the third-order partial derivatives of the function $f$ to obtain the bound
\[
\bigl\lvert \E f(W_i) - \E f(W_{i-1}) \bigr\rvert \lesssim L \cdot \Bigl( \max_{\mathfrak{p} = 0, 1, 2, 3} (\E \lvert x_i^{\mathfrak{p}}z_i^{3-\mathfrak{p}} \rvert + \E \vert \bar{x}_i^{\mathfrak{p}} \bar{z}_i^{3-\mathfrak{p}}\rvert\Bigr).
\]
The result follows upon summing over $i$. \qed

\subsection{Proof of Lemma~\ref{lem:smoothing-label-min}} \label{sec:proof-smoothing-label-min}
We first require a technical lemma, whose proof we provde in Section~\ref{sec:proof-r-approx}.
\begin{lemma}
	\label{lem:r-approx}
	Fix a smoothing parameter $r > 0$ and recall the loss $\mathcal{L}_r^{\mathsf{smooth}}$~\eqref{def:L_tilde}, the original loss $\mathcal{L}_n$~\eqref{eq:loss}, and the set $\mathbb{T}$~\eqref{eq:T-def}.  There exists a pair of positive constants $(c, C)$ such that with probability at least $1 - 2e^{-cnr}$, the following holds uniformly for all $\bt \in \mathbb{T}$
	\[
	\bigl \lvert \mathcal{L}_n(\bt; \bZ, \lambda) - \mathcal{L}_r^{\mathsf{smooth}}(\bt, \bZ, \bX) \bigr \rvert \leq C\sqrt{r}.
	\]
\end{lemma}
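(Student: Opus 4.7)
\textbf{Proof plan for Lemma~\ref{lem:r-approx}.}
The plan is to show that $\mathcal{L}_n$ and $\mathcal{L}_r^{\mathsf{smooth}}$ differ only on a small, $\bt$-independent set of ``bad'' indices, and then to bound the contribution of those indices uniformly in $\bt \in \mathbb{T}$ by Cauchy--Schwarz. Define the (random) set of bad indices
\[
\mathcal{B} \;=\; \Bigl\{ i \in \{1,\dots,n\} \,:\, \bigl\lvert U_i - \rho'(\langle \bx_i, \bt_0\rangle) \bigr\rvert < r \Bigr\}.
\]
From the definition of $F$ in~\eqref{prop:F_prop}, whenever $i \notin \mathcal{B}$ we have $F\bigl(r^{-1}[U_i - \rho'(\langle \bx_i, \bt_0\rangle)]\bigr) = y_i$ exactly, so the corresponding summands of $\mathcal{L}_n$ and $\mathcal{L}_r^{\mathsf{smooth}}$ agree. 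Thus the regularization terms cancel and only the bad indices contribute.

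For each $i \in \mathcal{B}$, write $F_i := F\bigl(r^{-1}[U_i - \rho'(\langle \bx_i, \bt_0\rangle)]\bigr)$ and note that $|F_i| \le 1$ so $|y_i - F_i| \le 2$. Since $\rho$ is $1$-Lipschitz,
\[
\bigl\lvert \rho(-y_i \langle \bz_i,\bt\rangle) - \rho(-F_i \langle \bz_i,\bt\rangle) \bigr\rvert \;\le\; \lvert y_i - F_i \rvert \cdot \lvert \langle \bz_i, \bt\rangle \rvert \;\le\; 2\,\lvert \langle \bz_i, \bt\rangle \rvert.
\]
Summing over $i \in \mathcal{B}$ and applying Cauchy--Schwarz yields
\[
\bigl\lvert \mathcal{L}_n(\bt;\bZ,\lambda) - \mathcal{L}_r^{\mathsf{smooth}}(\bt,\bZ,\bX) \bigr\rvert \;\le\; \frac{2}{n}\sum_{i \in \mathcal{B}} \lvert \langle \bz_i, \bt\rangle \rvert \;\le\; \frac{2}{n}\sqrt{\lvert \mathcal{B} \rvert} \cdot \Vert \bZ \bt \Vert_2.
\]

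The two factors on the right are controlled separately and, crucially, $\mathcal{B}$ is independent of $\bt$. For $\bt \in \mathbb{T}$ the definition of $\mathbb{T}$ gives $\|\bt\|_2 \le M_1 \sqrt{n}$; combined with Lemma~\ref{lem:l2_norm_theta_hat}(a) which provides $\|\bZ\|_{\mathsf{op}} \le M_0$ with probability at least $1 - 2e^{-n}$, we obtain $\Vert \bZ \bt \Vert_2 \le M_0 M_1 \sqrt{n}$ uniformly over $\mathbb{T}$. For the cardinality $\lvert \mathcal{B}\rvert$, observe that $\mathbbm{1}\{i \in \mathcal{B}\}$ are independent Bernoulli random variables, and because $U_i \sim \mathsf{Unif}[0,1]$ is independent of $\bx_i$ and $\rho'(\cdot) \in [0,1]$, each has mean at most $2r$. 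A standard Chernoff bound then yields $\lvert \mathcal{B} \rvert \le 4nr$ with probability at least $1 - e^{-cnr}$. Combining the two events via a union bound (and absorbing $e^{-n} \le e^{-nr}$ when $r \le 1$) gives the desired uniform bound
\[
\sup_{\bt \in \mathbb{T}} \bigl\lvert \mathcal{L}_n(\bt;\bZ,\lambda) - \mathcal{L}_r^{\mathsf{smooth}}(\bt,\bZ,\bX) \bigr\rvert \;\le\; \frac{2}{n}\sqrt{4nr}\cdot M_0 M_1 \sqrt{n} \;=\; C\sqrt{r},
\]
with probability at least $1 - 2e^{-cnr}$.

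There is no real obstacle here; the only delicate point is that one must not use the coarser pointwise bound $\lvert \langle \bz_i, \bt\rangle \rvert \le M_3 \sqrt{\log n}$ (from Lemma~\ref{lem:l2_norm_theta_hat}(c)) before summing, since that would produce a spurious $\sqrt{\log n}$ factor and give only $\mathcal{O}(r\sqrt{\log n})$, which is weaker than $\sqrt{r}$ in the regime of interest. Applying Cauchy--Schwarz first, so that the resulting $\Vert \bZ \bt \Vert_2$ factor can be bounded by the operator norm of $\bZ$, is what produces the clean $\sqrt{r}$ rate.
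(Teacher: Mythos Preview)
Your proof is correct and follows essentially the same route as the paper's: identify the set of indices where $F$ differs from $y_i$, use the $1$-Lipschitzness of $\rho$ to bound each bad term by $2|\langle \bz_i,\bt\rangle|$, apply Cauchy--Schwarz, control $\|\bZ\bt\|_2$ via the operator-norm bound, and control the number of bad indices by Bernstein/Chernoff. One small correction to your closing commentary: the alternative $\ell_\infty$ route would in fact give $\mathcal{O}(r\sqrt{\log n})$, which for the paper's choice $r = n^{-2\tau/3}(\log n)^{3/2}$ is \emph{smaller} than $\sqrt{r}$, not larger --- so that route is not weaker, it simply isn't needed since the lemma only asks for $\sqrt{r}$.
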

Next, define the smoothed minimum with respect to the perturbed loss~\eqref{eq:perturbed-loss} (cf. the smoothed minimum $f$~\eqref{def:f-smoothed-min}) as
\begin{align}
	g(\beta, \bZ, \bX) = - \frac{1}{\beta} \log\sum_{\bt \in \mathbb{T}_{\epsilon}}\exp\Bigl\{-\beta \cdot \bigl(\mathcal{L}_n(\bt; \bZ, \lambda) + s\psi(\bt)\bigr)\Bigr\},
\end{align}
and note that $\lim_{\beta \rightarrow \infty} g(\beta, \bZ, \bX) = \mathfrak{M}_{\epsilon}(s, \bZ)$.  Consequently, we obtain the inequality
\begin{align}
	\bigl \lvert \mathfrak{M}_{\epsilon}(s, \bZ) - g(\beta, \bZ, \bX) \bigr \rvert \leq \int_{\beta}^{\infty} \Bigl \lvert \frac{\partial}{\partial \beta}\; g(\beta, \bZ, \bX) \Bigr \rvert \mathrm{d} t &\leq 2 \log \bigl(\lvert \mathbb{T}_{\epsilon} \rvert \bigr) \cdot \int_{\beta}^{\infty} \frac{1}{t^2} \mathrm{d} t\nonumber\\
	&\leq \frac{2n}{\beta} \cdot \log\Bigl(1 + \frac{2M_1\sqrt{n}}{\epsilon}\Bigr),\label{ineq:first-r-approx}
\end{align}
where the final inequality follows by applying~\citet[Corollary 4.2.13]{vershynin2018high} to upper bound the size of the $\epsilon$-net $\lvert \mathbb{T}_{\epsilon} \rvert$.
Next, by definition, we obtain the relation
\[
\left \lvert g(\beta, \bZ, \bX) - f(\beta, \bZ, \bX) \right \rvert = \frac{1}{\beta}\left \lvert \log\frac{\sum_{\bt \in \mathbb{T}_{\epsilon}}\exp\{-\beta (\mathcal{L}_n(\bt; \bZ, \lambda) + s\psi(\bt))\}}{\sum_{\bt \in \mathbb{T}_{\epsilon}}\exp\{-\beta (\mathcal{L}_r^{\mathsf{smooth}}(\bt, \bZ, \bX) + s\psi(\bt))\}}\right \rvert.
\]
Let $Z = \sum_{\bt \in \mathbb{T}_{\epsilon}}\exp\{-\beta (\mathcal{L}_r^{\mathsf{smooth}}(\bt, \bZ, \bX) + s\psi(\bt))\}$ and decompose the argument of the logarithm as
\[
\frac{1}{Z}\sum_{\bt \in \mathbb{T}_{\epsilon}}\exp\{-\beta (\mathcal{L}_r^{\mathsf{smooth}}(\bt, \bZ, \bX) + s\psi(\bt))\} \cdot \exp\{-\beta (\mathcal{L}_n(\bt; \bZ, \lambda) -\mathcal{L}_r^{\mathsf{smooth}}(\bt, \bZ, \bX))\} \leq e^{C\beta\sqrt{r}},
\]
where the inequality follows upon applying Lemma~\ref{lem:r-approx}.
Combining the preceding two displays with Lemma~\ref{lem:r-approx}, we deduce $
\left \lvert g(\beta, \bZ, \bX) - f(\beta, \bZ, \bX) \right \rvert \leq  C\sqrt{r}$, and the result follows upon combining with the inequality~\eqref{ineq:first-r-approx}. \qed

\subsubsection{Proof of Lemma~\ref{lem:r-approx}}\label{sec:proof-r-approx}
Applying the triangle inequality and exploiting the $1$-Lipschitz nature of $\rho$, we obtain
\begin{align*}
	\bigl \lvert \mathcal{L}_n(\bt; \bZ, \lambda) - \mathcal{L}_r^{\mathsf{smooth}}(\bt, \bZ, \bX) \bigr \rvert &\leq \frac{1}{n} \sum_{i=1}^{n} \Bigl\lvert\rho\Bigl(- F\Bigl(r^{-1}\cdot \bigl[U_i - \rho'(\langle \bx_i, \bt_0 \rangle) \bigr]\Bigr) \cdot \langle \bz_i, \bt \rangle\Bigr) - \rho(-y_i \cdot \langle \bz_i, \bt \rangle) \Bigr \rvert\\
	&\leq  \frac{1}{n} \sum_{i=1}^{n} \Bigl\lvert y_i - F\Bigl(r^{-1}\cdot \bigl[U_i - \rho'(\langle \bx_i, \bt_0 \rangle) \bigr]\Bigr) \Bigr \rvert \cdot \bigl \lvert \langle \bz_i, \bt \rangle \bigr \rvert.
\end{align*}
Subsequently, we apply the Cauchy--Schwarz inequality as well as the first property~\eqref{prop:F_prop} of the function $F$ to obtain the bound 
\begin{align*}
		\bigl \lvert \mathcal{L}_n(\bt; \bZ, \lambda) - \mathcal{L}_r^{\mathsf{smooth}}(\bt, \bZ, \bX) \bigr \rvert &\leq \sqrt{\frac{1}{n}\sum_{i=1}^{n} 4\mathbbm{1}\bigl\{\lvert U_i - \rho'(\langle \bx_i, \bt_{0}\rangle)\rvert \leq r\bigr\}}\sqrt{\frac{1}{n}\sum_{i=1}^{n}\lvert \langle \bz_i, \bt \rangle\rvert^2} \\
		&\leq M_0 \cdot M_1 \cdot \sqrt{\frac{1}{n}\sum_{i=1}^{n} 4\mathbbm{1}\bigl\{\lvert U_i - \rho'(\langle \bx_i, \bt_{0}\rangle)\rvert \leq r\bigr\}},
\end{align*}
where the final inequality follows, with probability at least $1-2e^{-n}$, from Lemma~\ref{lem:l2_norm_theta_hat}(a) in conjunction with the fact that $\| \bt \|_2 \leq M_1 \sqrt{n}$ on the set $\mathbb{T}$.  Next, note that 
\begin{align*}
%	\label{ineq:expectation_y_i}
	\E_{U_i}\mathbbm{1}\bigl\{\lvert U_i - \rho'(\langle \bx_i, \bt_{0}\rangle)\rvert \leq r\bigr\} = \int_{(\rho'(\< \bx_i, \bt_0 \>) - r) \vee 0}^{(\rho'(\< \bx_i, \bt_0 \>) + r) \wedge 1} 1 \mathrm{d}u \leq 2r.
\end{align*}
Additionally, applying~\citet[Theorem 2.8.4]{vershynin2018high} yields the probabilistic inequality
\begin{align*}
%	\label{ineq:bounded_bernstein_y_i}
	\pr\biggl\{\Bigl \lvert\frac{1}{n}\sum_{i=1}^{n} 4\mathbbm{1}(\lvert U_i - \rho'(\langle \bx_i, \bt_{0}\rangle)\rvert \leq r) - 4\E\mathbbm{1}(\lvert U_i - \rho'(\langle \bx_i, \bt_{0}\rangle)\rvert \leq r)\Bigr \rvert \geq t\biggr\} \leq 2 \exp\left\{-\frac{nt^2}{r + 4t/3}\right\}.
\end{align*}
Putting the pieces together, we take $t = r$ in the inequality above to obtain the desired result. \qed

\subsection{Proof of Lemma~\ref{lem:third-deriv}} \label{sec:proof-third-deriv}
This lemma consists, primarily, of a lengthy third derivative computation.  To facilitate the computation, for a function $g: \mathbb{T}_{\epsilon} \rightarrow \mathbb{R}$, we will use the shorthand
\begin{align} \label{eq:bracket-notation}
\left\langle g \right\rangle = \frac{\sum_{\bt \in \mathbb{T}_{\epsilon}}g(\bt)\exp\Bigl\{-\beta \cdot \bigl(\mathcal{L}_r^{\mathsf{smooth}}(\bt, \bZ, \bX) + s \psi(\bt) \bigr)\Bigr\}}{\sum_{\bt \in \mathbb{T}_{\epsilon}} \exp\Bigl\{-\beta \cdot \bigl(\mathcal{L}_r^{\mathsf{smooth}}(\bt, \bZ, \bX) + s \psi(\bt) \bigr)\Bigr\}},
\end{align}
to denote the expectation of the function $g$ with respect to the Gibbs measure associated to the partition function $Z_{\beta}$~\eqref{eq:partition}.  

Throughout this section, we will be taking derivatives with respect to the random variables $Z_{k\ell}$ and $X_{k\ell}$.  We conserve notation and write 
\[
\partial_{Z} := \partial_{Z_{k\ell}} \qquad \text{ and } \qquad \partial_{X} := \partial_{X_{k\ell}}.
\]
Additionally, let $\partial_{Z}^{i}$ denote the $i$th partial derivative with respect to $Z_{k\ell}$, $\partial_{X}^{i}$ the $i$th partial derivative with respect to $X_{k\ell}$, and recall that $h_K^{(i)}$ denotes the $i$th derivative of $h_K$.  With this notation, we require upper bounds on the quantities 
\begin{align*}
\partial_{Z}^3 \bigl[h_K\bigl(f(\beta, \bZ, \bX) - t\bigr)\bigr], \qquad \partial_{Z}^2\partial_{X}\bigl[h_K\bigl(f(\beta, \bZ, \bX) - t\bigr)\bigr], \qquad \\
\qquad \partial_{X}^3 \bigl[h_K\bigl(f(\beta, \bZ, \bX) - t\bigr)\bigr] \qquad \text{ and } \qquad \partial_{Z}\partial_{X}^2 \bigl[h_K\bigl(f(\beta, \bZ, \bX) - t\bigr)\bigr].
\end{align*}
To conserve notation, we write $h_K^{(i)}\bigl(f(\beta, \bZ, \bX) - t \bigr)$ as $h_K^{(i)}$.  Straightforward computation subsequently yields
\begin{align*}
%		\label{eq:appendix-third-derivative}
		\partial_{Z}^3 \bigl[h_K\bigl(f(\beta, \bZ, \bX) - t\bigr)\bigr] &= (\partial_{Z}^3 f) \cdot ( h_K^{(1)})+(\partial_{Z} f)^3 \cdot  (h_K^{(3)}) + 3\cdot (\partial_{Z}^2f) \cdot (\partial_{Z}f) \cdot (h_K^{(2)})\\
		\partial_{X}^3 \bigl[h_K\bigl(f(\beta, \bZ, \bX) - t\bigr)\bigr] &= (\partial_{X}^3 f) \cdot ( h_K^{(1)})+(\partial_{X} f)^3 \cdot  (h_K^{(3)}) + 3\cdot (\partial_{X}^2f) \cdot (\partial_{X}f) \cdot (h_K^{(2)})\\
		\partial_{Z}^2\partial_{X}\bigl[h_K\bigl(f(\beta, \bZ, \bX) - t\bigr)\bigr] &= (\partial_{X}\partial_{Z}^2 f)\cdot(h_K^{(1)}) +(\partial_{X} f)\cdot(\partial_{Z}^2f)\cdot(h_K^{(2)}) + (\partial_{X}f)\cdot(\partial_{Z}f)^2 \cdot (h_K^{(3)}) \nonumber\\
		&\quad + 2(\partial_{Z}f)\cdot (\partial_{X}\partial_{Z}f)\cdot (h_K^{(2)})\\
	\partial_{Z}\partial_{X}^2 \bigl[h_K\bigl(f(\beta, \bZ, \bX) - t\bigr)\bigr] &= (\partial_{X}^2\partial_{Z} f) \cdot (h_K^{(1)}) +(\partial_{X}^2 f) \cdot (\partial_{Z}f) \cdot (h_K^{(2)}) + (\partial_{X}f)^2 \cdot (\partial_{Z}f) \cdot (h_K^{(3)}) \nonumber \\
		&\quad + 2(\partial_{X}f) \cdot (\partial_{X}\partial_{Z}f) \cdot (h_K^{(2)}) 
		% Ignore the last computation for now
		%		\partial_{X_{k\ell}}^3 h_k^{-}(f(\beta, \bZ, \bX) - t) &=(\partial_{X_{k\ell}} ^3 f) h_k^{-, (1)} + 3 (\partial_{X_{k\ell}} f) (\partial_{X_{k\ell}}^2 f) h_k^{-, (2)} + (\partial_{X_{k\ell}} f)^3 h_k^{-, (3)}.
\end{align*}
Next, we recall the notation~\eqref{eq:bracket-notation} and compute
\[
\partial_{Z}(f) = \bigl \langle \partial_{Z} \mathcal{L}_r^{\mathsf{smooth}} \bigr\rangle \qquad \text{ and } \qquad \partial_{X}(f) = \bigl \langle \partial_{X} \mathcal{L}_r^{\mathsf{smooth}} \bigr\rangle.
\]
To conserve space, for the remainder of the proof, we will write $\mathcal{L}_r$ in place $\mathcal{L}_r^{\mathsf{smooth}}$.  
We compute the higher order derivatives of $f$ through the pair of identities (which hold for any sufficiently smooth function $g: \mathbb{T}_{\epsilon} \rightarrow \mathbb{R}$)
	\begin{align*}
	%		\label{eq:lindeberg_comp_4}
	\partial_{Z} \langle g \rangle &= \bigl\langle \partial_{Z}(g) \bigr\rangle - \beta \bigl \langle g \cdot  \partial_{Z}(\mathcal{L}_r)\bigr \rangle + \beta \langle g \rangle \cdot  \bigl\langle \partial_{Z}(\mathcal{L}_r)\bigr\rangle \qquad \text{ and }\\
	 	\partial_{X} \langle g \rangle &= \bigl\langle \partial_{X}(g) \bigr\rangle - \beta \bigl \langle g \cdot  \partial_{X}(\mathcal{L}_r)\bigr \rangle + \beta \langle g \rangle \cdot  \bigl\langle \partial_{X}(\mathcal{L}_r)\bigr\rangle.
\end{align*}
Repeated application of the above two identities yields the second order partial derivatives
\begin{align*}
	\partial_{Z}^2 (f) &= \bigl\langle \partial_{Z}^2 (\mathcal{L}_r) \bigr\rangle - \beta \cdot \bigl\langle \bigl[\partial_{Z}( \mathcal{L}_r)\bigr]^2 \rangle + \beta \cdot \bigl \langle \partial_{Z}( \mathcal{L}_r) \bigr\rangle^2\nonumber\\
	\partial_{X}^2 (f) &= \bigl\langle \partial_{X}^2 (\mathcal{L}_r) \bigr\rangle - \beta \cdot \bigl\langle \bigl[\partial_{X}(\mathcal{L}_r)\bigr]^2 \bigr\rangle + \beta \cdot \bigl\langle \partial_{X} (\mathcal{L}_r) \bigr\rangle^2\nonumber\\
	\partial_{Z}\partial_{X} (f) &= \bigl\langle \partial_{X}\partial_{Z} (\mathcal{L}_r) \bigr\rangle - \beta  \cdot \bigl\langle \bigl[\pxn (\wtln)\bigr] \cdot \bigl[\pzn (\wtln)\bigr] \bigr\rangle + \beta \cdot \bigl \langle \pzn( \wtln) \bigr\rangle \cdot \bigl \langle \pxn(\wtln) \bigr\rangle\nonumber.
\end{align*}
Applying the same pair of identities to each term on the RHS of each equation in the display above yields the third order partial derivatives
\begin{align*}
	\partial_{Z}\partial_{X}^2 (f) &= \bigl\langle \pzn \pxn^2 (\wtln) \bigr\rangle - \beta \cdot \bigl \langle \bigl[\pxn^2( \wtln)\bigr] \cdot \bigl[\pzn (\wtln)\bigr] \bigr\rangle + \beta \cdot \bigl \langle \pxn^2 (\wtln)\bigr \rangle \cdot \bigl\langle \pzn (\wtln) \bigr\rangle   + 2\beta \cdot \bigl \langle \pxn( \wtln)  \bigr\rangle \cdot \pzn\pxn (f) \nonumber\\
	&\quad+ \beta^2 \cdot \bigl\langle \bigl[\pzn( \wtln)\bigr] \cdot \bigl[\pxn (\wtln)\bigr]^2 \bigr\rangle- \beta^2 \cdot \bigl\langle \bigl[\pxn (\wtln)\bigr]^2 \bigr\rangle \cdot \bigl\langle \pzn( \wtln) \bigr \rangle - 2\beta\cdot  \bigl\langle \bigl[\pxn (\wtln)\bigr] \cdot \bigl[\pzn \pxn (\wtln)\bigr] \bigr\rangle \nonumber \\
	\partial_{Z}^2\partial_{X} (f) &= \bigl\langle \pzn^2 \pxn (\wtln)\bigr\rangle - \beta  \cdot \bigl \langle \bigl[\pxn( \wtln)\bigr] \cdot \bigl[\pzn^2( \wtln)\bigr] \bigr\rangle + \beta \cdot \bigl \langle \pxn( \wtln) \bigr\rangle \cdot \bigl\langle \pzn^2 (\wtln) \bigr\rangle  - 2\beta\cdot \bigl\langle \bigl[\pzn( \wtln)\bigr] \cdot \bigl[\pzn \pxn (\wtln)\bigr] \bigr\rangle \nonumber\\
	&\quad + \beta^2 \cdot \bigl\langle \bigl[\pxn( \wtln)\bigr] \cdot \bigl[ \pzn( \wtln)\bigr]^2 \bigr\rangle- \beta^2 \cdot \bigl \langle \bigl[\pzn( \wtln)\bigr]^2 \bigr\rangle \cdot \bigl \langle \pxn (\wtln) \bigr\rangle + 2\beta \cdot \bigl\langle \pzn (\wtln) \bigr\rangle \cdot \pzn\pxn (f) \nonumber\\
	\pzn^3 (f) &= \bigl\langle \pzn^3 (\wtln) \bigr\rangle - 3\beta \cdot \bigl\langle \bigl[\pzn (\wtln)\bigr] \cdot \bigl[\pzn^2 (\wtln)\bigr] \bigr\rangle + \beta \cdot  \bigl\langle \pzn( \wtln) \bigr\rangle \cdot \bigl \langle \pzn^2 (\wtln) \bigr\rangle + \beta^2 \cdot \langle \bigl[\pzn (\wtln)\bigr]^3 \bigr\rangle \\
	&\quad - \beta^2 \cdot \bigl \langle \pzn (\wtln) \bigr\rangle \cdot \bigl \langle \bigl[\pzn (\wtln)\bigr]^2 \rangle + 2 \beta\cdot \bigl\langle \pzn( \wtln) \bigr\rangle\cdot \pzn^2 (f)\\
	\pxn^3 (f) &= \bigl\langle \pxn^3 (\wtln) \bigr\rangle - 3\beta \cdot \bigl\langle \bigl[\pxn (\wtln)\bigr] \cdot \bigl[\pxn^2 (\wtln)\bigr] \bigr\rangle + \beta \cdot  \bigl\langle \pxn( \wtln) \bigr\rangle \cdot \bigl \langle \pxn^2 (\wtln) \bigr\rangle + \beta^2 \cdot \langle \bigl[\pxn (\wtln)\bigr]^3 \bigr\rangle \\
	&\quad - \beta^2 \cdot \bigl \langle \pxn (\wtln) \bigr\rangle \cdot \bigl \langle \bigl[\pxn (\wtln)\bigr]^2 \rangle + 2 \beta\cdot \bigl\langle \pxn( \wtln) \bigr\rangle\cdot \pxn^2 (f).
\end{align*}
\begin{subequations} 
	Noting that the computation of the partial derivatives of the loss $\mathcal{L}_r$~\eqref{def:L_tilde} is straightforward, we elect to omit the explicit expressions in favor of coarse upper bounds on the expressions.
	To this end, we now collect some uniform bounds.  First, recall the definition of the set $\mathbb{T}$~\eqref{eq:T-def} and note that $\mathbb{T}_{\epsilon}$ is an $\epsilon$-net of $\mathbb{T}$.  We thus note the bounds
	\begin{align}\label{ineq:unif-bound-a}
		\| \bt \|_{\infty} \leq M_2 \cdot \sqrt{\log{n}} \cdot \| \bt_0 \|_{\infty} \quad \text{ and } \quad \| \bZ \bt \|_{\infty} \leq M_3 \cdot \sqrt{\log{n}}, \quad \text{ for all } \quad \bt \in \mathbb{T}_{\epsilon}.
	\end{align}
Additionally, recall the properties of the function $F$~\eqref{prop:F_prop} as well as of the function $\rho$, and note the bounds
\begin{align}\label{ineq:unif-bound-b}
	\sup_{t \in \mathbb{R}}\; \bigl \lvert F^{(i)}(t) \bigr \rvert \leq C' \quad \text{ and } \quad \sup_{t \in \mathbb{R}}\; \bigl \lvert \rho^{(i)}(t) \bigr \rvert \leq 1, \quad \text{ for all } \quad i \in \{1, 2, 3\}.
\end{align}
\end{subequations}
We now apply the uniform bounds~\eqref{ineq:unif-bound-a}--\eqref{ineq:unif-bound-b} to the computation of the derivatives of the loss $\wtln$.  This yields the bounds on the first derivatives
\begin{align*}
	\bigl \lvert \partial_{Z} (\wtln) \bigr \rvert \leq \frac{C \sqrt{\log{n}} \| \bt_0 \|_{\infty}}{n} \qquad \text{ and } \qquad \bigl\lvert \partial_{X}(\wtln)\bigr\rvert \leq \frac{C\| \bt_0 \|_{\infty} \sqrt{\log{n}}}{nr},
\end{align*}
the bounds on the second derivatives
\begin{align*}
	\bigl\lvert \partial_{Z}^2(\wtln)\bigr\rvert \leq \frac{C\log{n} \| \bt_0 \|_{\infty}^2}{n}, \quad \bigl \lvert \partial_{X}^2(\wtln) \bigr\rvert \leq \frac{C \| \bt_0 \|_{\infty}^2 \log{n}}{nr^2}, \quad \text{ and } \quad \bigl \lvert \partial_{Z}\partial_{X} (\wtln) \bigr\rvert \leq \frac{C \| \bt_0 \|_{\infty}^2 \log{n}}{nr},
\end{align*}
and the bounds on the third derivatives
\begin{align*}
	\bigl\lvert\partial_{Z}^3(\wtln) \bigr\rvert &\leq \frac{C(\log{n})^{3/2} \| \bt_0 \|_{\infty}^3}{n}, \qquad \bigl\lvert \pzn \pxn^2 (\wtln) \bigr\rvert \leq \frac{C \| \bt_0 \|_{\infty}^3 (\log{n})^{3/2}}{nr^2}, \\
	\bigl\lvert\partial_{X}^3(\wtln) \bigr\rvert &\leq \frac{C(\log{n})^{3/2} \| \bt_0 \|_{\infty}^3}{nr^3}  \qquad \text{ and } \quad \bigl \lvert \partial_{Z}^2\partial_{X}(\wtln) \bigr \rvert \leq \frac{C \| \bt_0 \|_{\infty}^{3} (\log{n})^{3/2}}{nr}.
\end{align*}
Next, we apply the triangle inequality to obtain the bound
\begin{align*}
	\bigl \lvert \langle g \rangle \bigr \rvert \leq \max_{\bt \in \mathbb{T}_{\epsilon}}\; \bigl \lvert g(\bt) \bigr \rvert, \quad \text{ for all functions } \quad g: \mathbb{T}_{\epsilon} \rightarrow \mathbb{R}.
\end{align*}
The result follows by recalling the assumptions that the smoothing parameters $r$ and $\beta$ satisfy the relation $\beta/n > r$ and putting the pieces together.  \qed

\subsection{Proof of Lemma~\ref{lem:lipschitz-minimizers}}\label{subsec:proof-lipschitz-minimizers}

\paragraph{Proof of part (a):}  Fix two values $\lambda_1$ and $\lambda_2$ and consider the quantities $\psi\bigl(\widehat{\bt}(\lambda_1)\bigr)$ and $\psi\bigl(\widehat{\bt}(\lambda_2)\bigr)$.  Since $\psi: \mathbb{R}^p \rightarrow \mathbb{R}$ is $C/\sqrt{p}$--Lipschitz, we deduce
%
%
%Invoking the $1$-Lipschitz nature of the function $\phi$ and expanding $\psi$ according to its definition~\eqref{eq:psi}, we obtain the upper bound
%\[
%\Bigl \lvert \psi\bigl(\widehat{\bt}(\lambda_1)\bigr) - \psi\bigl(\widehat{\bt}(\lambda_2)\bigr) \Bigr \rvert \leq \sqrt{
%	\frac{1}{p} \cdot \Bigl(\| \pproj \widehat{\bt}(\lambda_1) \|_2 - \| \pproj \widehat{\bt}(\lambda_2) \|_2 \Bigr)^2 + \frac{1}{R^4 p^2} \cdot \bigl\langle \bt_0, \widehat{\bt}(\lambda_1) - \widehat{\bt}(\lambda_2)\bigr\rangle^2
%}.
%\]
%Next, applying the triangle inequality to the first term contained in the square root and applying the Cauchy--Schwarz inequality to the second term yields the inequality
\begin{align}\label{ineq:penultimate-lipschitz-psi}
	\Bigl \lvert \psi\bigl(\widehat{\bt}(\lambda_1)\bigr) - \psi\bigl(\widehat{\bt}(\lambda_2)\bigr) \Bigr \rvert \leq  \frac{C}{\sqrt{p}} \cdot \| \widehat{\bt}(\lambda_1) - \widehat{\bt}(\lambda_2) \|_2.
\end{align}
Invoking strong convexity of the loss $\mathcal{L}_n$, we obtain the inequality
\[
\frac{\lambda_1}{2p} \cdot \bigl \| \widehat{\bt}(\lambda_1) - \widehat{\bt}(\lambda_2) \bigr\|_2^2 \leq \mathcal{L}_n\bigl(\widehat{\bt}(\lambda_2); \bZ, \lambda_1\bigr) - \mathcal{L}_n\bigl(\widehat{\bt}(\lambda_1); \bZ, \lambda_1\bigr),
\]
On the other hand, we note the upper bound
\begin{align*}
	\mathcal{L}_n\bigl(\widehat{\bt}(\lambda_2); \bZ, \lambda_1\bigr) = \mathcal{L}_n\bigl(\widehat{\bt}(\lambda_2); \bZ, \lambda_2\bigr) + \frac{\lambda_1 - \lambda_2}{2p} \bigl\| \widehat{\bt}(\lambda_2) \|_2^2 &\leq \mathcal{L}_n\bigl(\widehat{\bt}(\lambda_1); \bZ, \lambda_2\bigr) + \frac{\lambda_1 - \lambda_2}{2p} \bigl\| \widehat{\bt}(\lambda_2) \|_2^2
\end{align*}
where the final inequality follows as $\widehat{\bt}(\lambda_2)$ is a minimizer of the loss $\mathcal{L}_n(\cdot; \bZ, \lambda_2)$.  Moreover, we note the equivalence
\begin{align*}
	\mathcal{L}_n\bigl(\widehat{\bt}(\lambda_1); \bZ, \lambda_2\bigr) + \frac{\lambda_1 - \lambda_2}{2p} \bigl\| \widehat{\bt}(\lambda_2) \|_2^2 &= \mathcal{L}_n\bigl(\widehat{\bt}(\lambda_1); \bZ, \lambda_1\bigr) + \frac{\lambda_1 - \lambda_2}{2p} \cdot \bigl( \bigl\| \widehat{\bt}(\lambda_2) \|_2^2 - \bigl\| \widehat{\bt}(\lambda_1) \|_2^2 \bigr).
\end{align*}
Factoring the difference of squares on the RHS of the preceding display and applying Lemma~\ref{lem:l2_norm_theta_hat}(a) yields the bound
\begin{align*}
	\frac{\lambda_1 - \lambda_2}{2p} \cdot \bigl( \bigl\| \widehat{\bt}(\lambda_2) \|_2^2 - \bigl\| \widehat{\bt}(\lambda_1) \|_2^2 \bigr) &\leq  M_1 \cdot \frac{\lvert \lambda_1 - \lambda_2\rvert}{2\sqrt{p}} \cdot \Bigl\lvert \bigl\| \widehat{\bt}(\lambda_2) \|_2 - \bigl\| \widehat{\bt}(\lambda_1) \|_2 \Bigr\rvert \\
	&\leq M_1 \cdot \frac{\lvert \lambda_1 - \lambda_2 \rvert}{2\sqrt{p}} \cdot \bigl\| \widehat{\bt}(\lambda_2) -  \widehat{\bt}(\lambda_1) \|_2,
\end{align*}
where the first inequality holds with probability at least $1 - 2e^{-n}$ and the second inequality follows by applying the triangle inequality.  Putting the pieces together yields the inequality
\[
\frac{\lambda_1}{2p} \cdot \bigl \| \widehat{\bt}(\lambda_1) - \widehat{\bt}(\lambda_2) \bigr\|_2^2 \leq M_1 \cdot \frac{\lvert \lambda_1 - \lambda_2 \rvert}{2\sqrt{p}} \cdot \bigl\| \widehat{\bt}(\lambda_2) -  \widehat{\bt}(\lambda_1) \|_2.
\]
Re-arranging the above inequality and substituting the resulting bound into the RHS of the inequality~\eqref{ineq:penultimate-lipschitz-psi} yields the desired conclusion. \qed

\paragraph{Proof of part (b):} This proof proceeds in a parallel manner to that of part (a).  Fix two values $\lambda_1$ and $\lambda_2$.  Recall the asymptotic loss $L$~\eqref{eq:asymploss} 
and define the function $\Psi(\cdot; \lambda): \mathbb{R}_{\geq 0} \times \mathbb{R} \rightarrow \mathbb{R}$ as
\[
\Psi(\sigma, \xi; \lambda) = \max_{\gamma \geq 0}\; L(\sigma, \xi, \gamma; \lambda),
\]
where we have made the dependence on the regularization parameter $\lambda$ explicit in both definitions.
Next, introduce the vector valued maps $\bzeta_{\star}: \mathbb{R} \rightarrow \mathbb{R}^2$ and $\bzeta_{\star}^{(R)}: \mathbb{R} \rightarrow \mathbb{R}^2$ as
\[
\bzeta_{\star}(\lambda) = [\sigma_{\star}(\lambda), \;\; \xi_{\star}(\lambda)]^{\top} \qquad \text{ and } \qquad \bzeta_{\star}^{(R)}(\lambda) = [\sigma_{\star}(\lambda), \;\; R \cdot \xi_{\star}(\lambda)]^{\top}.
\]
Now, note that the function $\Psi$ is $\lambda \cdot (1 \wedge R)$-strongly convex, by Lemma~\ref{lem:structural_L}(a).  Consequently, we obtain the inequality
\begin{align}\label{ineq:first-asymp-lipschitz}
	\frac{\lambda_1 \cdot (1 \wedge R)}{2} \cdot \| \bzeta_{\star}(\lambda_1) - \bzeta_{\star}(\lambda_2) \|_2^2 \leq \Psi\bigl(\sigma_{\star}(\lambda_2), \xi_{\star}(\lambda_2); \lambda_1\bigr) - \Psi\bigl(\sigma_{\star}(\lambda_1), \xi_{\star}(\lambda_1); \lambda_1\bigr).
\end{align}
Following similar steps to part (a)---this time using $\Psi\bigl(\sigma_{\star}(\lambda_2), \xi_{\star}(\lambda_2) \cdot; \lambda_1\bigr)$ in place of $\mathcal{L}_n\bigl(\widehat{\bt}(\lambda_2); \bZ, \lambda_1 \bigr)$---yields the upper bound
\begin{align}\label{ineq:second-asymp-lipschitz}
	\Psi\bigl(\sigma_{\star}(\lambda_2), \xi_{\star}(\lambda_2); \lambda_1\bigr) &\leq \Psi\bigl(\sigma_{\star}(\lambda_1), \xi_{\star}(\lambda_1); \lambda_1\bigr) + \frac{\lambda_1 - \lambda_2}{2} \cdot \Bigl(\bigl\| \bzeta_{\star}^{(R)}(\lambda_2) \bigr\|_2^2 - \bigl\| \bzeta_{\star}^{(R)}(\lambda_1) \bigr\|_2^2\Bigr).
\end{align}
Moreover, invoking strong convexity of the function $\Psi$ once more, we deduce the bound
\[
\| \bzeta_{\star}(\lambda) \|_2 \leq \sqrt{\frac{2}{\lambda \cdot (1 \wedge R)} \cdot \Psi(0, 0; \lambda)} \leq \sqrt{\frac{2 \log{2}}{\lambda \cdot (1 \wedge R)}}.
\]
Thus, factoring the difference of squares on the RHS of the inequality~\eqref{ineq:second-asymp-lipschitz} and applying the above inequality, we obtain the bound
\[
\bigl\| \bzeta_{\star}^{(R)}(\lambda_2) \bigr\|_2^2 - \bigl\| \bzeta_{\star}^{(R)}(\lambda_1) \bigr\|_2^2 \leq 2 \sqrt{\frac{2 \log{2}}{\lambda_{\min}}} \cdot  \| \bzeta_{\star}(\lambda_1) - \bzeta_{\star}(\lambda_2)\|_2,
\]
where we have additionally used the inequality $\| \bzeta_{\star}^{(R)}(\lambda) \|_2^2 \leq (1 \vee R) \| \bzeta_{\star} \|_2^2$.
Combining the above inequality with the bounds~\eqref{ineq:first-asymp-lipschitz} and~\eqref{ineq:second-asymp-lipschitz} and re-arranging yields the inequality
\[
\| \bzeta_{\star}(\lambda_1) - \bzeta_{\star}(\lambda_2) \|_2 \leq \frac{2}{\lambda_1 \cdot (1 \wedge R)}\sqrt{\frac{2 \log{2}}{\lambda_{\min}}} \cdot \lvert \lambda_1 - \lambda_2 \rvert.
\]
This concludes the proof since $\phi$ is $1$-Lipschitz. \qed
%Combining the above inequality with the $1$-Lipschitz nature of the function $\phi$ and setting
%\[
%L_2 = \frac{2}{\lambda_1 \cdot (1 \wedge R)}\sqrt{\frac{2 \log{2}}{\lambda_{\min}}},
%\]
%yields the desired result. \qed

\section{Geometric properties of the minimizer: proof of Lemma~\ref{lem:l2_norm_theta_hat}}\label{sec:geometric-prop}
This section is organized as follows.  In Section~\ref{sec:proof-thetahat-a}, we prove part (a), in Section~\ref{sec:proof-thetahat-b}, we prove part (b), and in Section~\ref{sec:proof-theta-hat-c}, we prove part (c).

\subsection{Proof of Lemma~\ref{lem:l2_norm_theta_hat}(a)}\label{sec:proof-thetahat-a}
First recall that by Definition~\ref{def:universality}, the entries of the matrix $\bZ$ have Orlicz norm bounded as $\| Z_{ij} \|_{\psi_2} \leq K_1/\sqrt{p}$, for all $i \in \{1, 2, \dots, n\}$ and $j \in \{1, 2 \dots, p\}$.  Since each entry in the matrix $\bZ$ is independent, a straightforward application of~\citet[Theorem 4.4.5]{vershynin2018high} thus furnishes a constant $M_0$ such that $\| \bZ\|_{\mathsf{op}} \leq M_0$, with probability at least $1 - 2e^{-n}$.  Next, note that the estimator $\widehat{\bt}$~\eqref{eq:estimate} satisfies the following KKT condition
\[
0 = -\frac{1}{n} \sum_{i=1}^{n} y_i \bz_i \rho'\bigl(-y_i\langle \bz_i, \widehat{\bt}\rangle\bigr) + \frac{\lambda}{p} \widehat{\bt} = -\frac{1}{n} \bZ^\top \bigl(\by \odot \rho'\bigl(-\by \odot \bZ \widehat{\bt}\bigr)\bigr) + \frac{\lambda}{p} \widehat{\bt}.
\]
Re-arranging, taking the norm of both sides, and applying the high probability bound $\| \bZ \|_{\mathsf{op}} \leq M_0$ yields
Thus, re-arranging and taking the norm of both sides, we obtain the inequality
\begin{align*}
	%	\label{ineq:prop_sharp_op_bound}
	\|\widehat{\bt}\|_2 = \Bigl\|\frac{p}{\lambda n}\bZ^\top (\by \odot \rho'(-\by \odot \bZ \widehat{\bt}))\Bigr\|_2 \leq \frac{p}{\lambda n} \norm{\bZ}_{\mathsf{op}}\|\by \odot \rho'(\by \odot \bZ \widehat{\bt})\|_2 \leq \frac{\sqrt{n}}{\lambda} \delta M_0.
\end{align*}
The conclusion follows upon setting $M_1 = \lambda \delta M_0$.\qed

\subsection{Proof of Lemma~\ref{lem:l2_norm_theta_hat}(b)}\label{sec:proof-thetahat-b}
Let $\widehat{\theta}_1$ denote the first coordinate of the estimator $\widehat{\bt}$.  Without loss of generality, we will prove 
\begin{align}\label{ineq:ultimate-loco}
	\pr\left\{\lvert \hat{\theta}_1 \rvert \geq C (\log{n})^{3/2} (\| \bt_0 \|_{\infty} \vee \log{n})\right\} \leq \frac{1}{n^2},
\end{align}
and recover the desired result via a union bound.  Towards establishing inequality~\eqref{ineq:ultimate-loco}, we begin with some preliminaries.  First, let $\widetilde{y}_i = (y_i + 1)/2$ and re-parameterize the loss $\mathcal{L}_n$~\eqref{eq:loss} as
\begin{align}
	\label{eq:alt-def-Ln}
	\mathcal{L}_n(\bt; \bZ, \lambda) = -\frac{1}{n}\sum_{i=1}^{n}\widetilde{y}_i \<\bz_i, \bt\> + \frac{1}{n}\sum_{i=1}^{n} \rho(\< \bz_i, \bt\>)  + \frac{\lambda}{2p}\|\bt\|_2^2.
\end{align}
Equipped with this re-parameterization, we isolate the contribution of the coordinate through the function $L_1: \mathbb{R} \rightarrow \mathbb{R}$ defined as 
\begin{align*}
%	\label{def:loco-loss}
	L_1(\theta_1) = \min_{\theta_2, \theta_3 \dots, \theta_p} \mathcal{L}_n(\bt; \bZ, \lambda), \qquad \text{ where } \qquad \bt = (\theta_1, \theta_2, \dots, \theta_p).
\end{align*}
Additionally, for any scalar $t \in \mathbb{R}$, define $\widehat{\bt}^{(-1)}(t)$ as
\begin{align}
	\label{eq:loco-est}
	\widehat{\bt}^{(-1)}(t) = \argmin_{\theta_2, \theta_3, \dots, \theta_p} \mathcal{L}_n(\bt^{(-1)}(t); \bZ, \lambda), \qquad \text{ where } \qquad \bt^{(-1)}(t) = (t, \theta_2, \dots, \theta_p).
\end{align}
In the sequel, we will drop the argument and refer to $\widehat{\bt}^{(-1)}(0)$ as $\widehat{\bt}^{(-1)}$.  It will additionally be useful to consider the leave one out vectors $\{\bz_i^{(-1)}\}_{i \leq n} = [0 \; Z_{i2} \; \dots \; Z_{ip}] \in \mathbb{R}^p$ as well as the leave one out matrix $\bZ^{(-1)} = [\bz_1^{(-1)}\; \bz_2^{(-1)} \; \dots \; \bz_n^{(-1)}]^{\top} \in \mathbb{R}^{n \times p}$.
%\begin{subequations}
%\begin{align}\label{eq:loo-zi}
%\bz_{i}^{(-1)} = \begin{bmatrix} 0 & Z_{i2} & \dots & Z_{ip}\end{bmatrix} \qquad \text{ for } \qquad i \in \{1, 2, \dots, n\}, 
%\end{align}
%and use these to define the matrix
%\begin{align}\label{eq:loo-Z}
% \bZ^{(-1)} = \begin{bmatrix} \bz_1^{(-1)} & \bz_2^{(-1)} & \dots & \bz_n^{(-1)}\end{bmatrix}^{\top}.
%\end{align}
%\end{subequations}

Continuing, note that the function $L_1$ is $\lambda/(2p)$--strongly convex~\citep[see, e.g.,][$\S$ 2.4]{HiriartUrrutyLe93} and continuously differentiable, whence
\[
0 \leq L_1(0) - L_1\bigl( \widehat{\theta}_1 \bigr) \leq -L_1'(0) \cdot \widehat{\theta}_1 - \frac{\lambda}{2p} \cdot \bigl(\widehat{\theta}_1\bigr)^2.
\]
Re-arranging yields $\bigl \lvert \widehat{\theta}_1 \bigr \rvert \leq 2p/\lambda \cdot \lvert L_1'(0) \rvert$.  We thus compute
\begin{align*}
%	\label{eq:derivative-L1}
	L_1'(t) &= \frac{\lambda}{p} \cdot t  -\frac{1}{n} \sum_{i=1}^{n} \widetilde{y}_i \cdot Z_{i1} + \frac{1}{n}\sum_{i=1}^{n} \rho'\bigl (\bigl\langle \widehat{\bt}^{(-1)}(t), \bz_i \bigr \rangle \bigr) \cdot Z_{i1},
\end{align*}
and subsequently apply the triangle inequality in conjunction with the aforementioned bound $\bigl \lvert \widehat{\theta}_1 \bigr \rvert \leq 2p/\lambda \cdot \lvert L_1'(0) \rvert$ to obtain
\[
\bigl \lvert \widehat{\theta}_1 \bigr \rvert \leq \frac{2}{\lambda \delta} \Bigl \lvert \sum_{i=1}^{n} \widetilde{y}_i Z_{i1} \Bigr \rvert + \frac{2}{\lambda \delta} \Bigl \lvert \sum_{i=1}^{n} Z_{i1} \rho'\bigl(\bigl\langle \bz_{i}^{(-1)}, \widehat{\bt}^{(-1)}\bigr \rangle \bigr) \Bigr \rvert.
\]
The result follows from the following two inequalities, whose proofs are lengthy and deferred to Sections~\ref{sec:proof-empirical-y-tilde} and~\ref{sec:proof-empirical-loco}, respectively. 
\begin{subequations}
	\begin{align}
		\Bigl \lvert \sum_{i=1}^{n} \widetilde{y}_i Z_{i1} \Bigr \rvert & \leq C \cdot \bigl(\| \bt_0 \|_{\infty} \vee \sqrt{\log{n}}\bigr) \quad \text{ with probability } \quad \geq 1 - \frac{1}{n^2},\label{ineq:empirical-y-tilde}\\
		 \Bigl \lvert \sum_{i=1}^{n} Z_{i1} \rho'\bigl(\bigl\langle \bz_{i}^{(-1)}, \widehat{\bt}^{(-1)}\bigr \rangle \bigr) \Bigr \rvert & \leq C  \| \bt_0 \|_{\infty} \sqrt{\log{n}}  \;\;\;\quad \qquad \text{ with probability } \quad \geq 1 - \frac{1}{n^2}. \label{ineq:empirical-loco}
	\end{align}
\end{subequations}
Combining the pieces yields the result. \qed

\subsubsection{Proof of the inequality~\eqref{ineq:empirical-y-tilde}}\label{sec:proof-empirical-y-tilde}
Since $\bZ$ belongs to the $(\alpha_c, \alpha_2)$--universality class and $\widetilde{y}_i$ is bounded, the product $Z_{i1}\widetilde{y}_i$ is sub-exponential, so applying Bernstein's inequality in conjunction with the triangle inequality yields 
\begin{align}
	\label{ineq:prob_linfty_tA}
	\Bigl \lvert \sum_{i=1}^{n} \widetilde{y}_i \cdot Z_{i1} \Bigr \rvert  \leq \Bigl\lvert \E \Bigl\{\sum_{i=1}^{n} \widetilde{y}_i \cdot Z_{i1}\Bigr\} \Bigr \rvert + C \sqrt{\log{n}}, \quad \text{ with probability } \quad \geq 1 - \frac{2}{n^2}.
\end{align}
The remainder of the proof consists of bounding the first term on the RHS.  To this end, note that the distribution of the label $\widetilde{y}_i$ can be characterized as
\begin{align*}%\label{eq:random-mapping-y}
\widetilde{y}_i = \begin{cases} 1, & \text{ if } U_i \leq \rho'(\langle \bx_i, \bt_0 \rangle)\\
	0, & \text{ else},
\end{cases} \qquad \text{ where } \qquad U_i \sim \mathsf{Unif}[0, 1].
\end{align*}
Additionally, let $\bx_{i}^{(-1)} =  [0 \; X_{i2} \; \dots \; X_{ip}]$ denote a row of the data matrix $\bX$ with its first coordinate left out and let $\bt_{0}^{(-1)} = [0 \; \theta_{0, 2} \; \dots \; \theta_{0, p}]$ denote the ground truth with its first coordinate left out.  Use these to define the label
\begin{align}
	\label{def:y_loo}
	\widetilde{y}_i(t) = \begin{cases} 1, & \text{ if } U_i \leq \rho'\bigl(\langle \bx_i^{(-1)}, \bt_0^{(-1)} \rangle + t\cdot \theta_{0, 1}\bigr)\\
		0, & \text{ else},
	\end{cases} \qquad \text{ for any } \qquad t \in \mathbb{R},
\end{align}
and note that the original label is recovered upon considering the quantity $\widetilde{y}_i(X_{i1})$.  Now, let the random variable $\widebar{X}_{i1}$ denote an independent copy of the entry $X_{i1}$ so that the product $Z_{i1} \cdot \widetilde{y}_i\bigl(\widebar{X}_{i1}\bigr)$ is a zero-mean random variable.  Consequently, 
\[
\Bigl \lvert \E \Bigl\{\sum_{i=1}^{n} \widetilde{y}_i \cdot Z_{i1}\Bigr\} \Bigr \rvert = n \bigl \lvert \E  \bigl\{ \bigl(\widetilde{y}_1(X_{11}) - \widetilde{y}_1\bigl(\widebar{X}_{11}\bigr) \bigr) \cdot Z_{11}\bigr\} \bigr \rvert  \leq n\E  \bigl\{\bigl \lvert \bigl(\widetilde{y}_1(X_{11}) - \widetilde{y}_1\bigl(\widebar{X}_{11}\bigr) \bigr) \cdot Z_{i1}\bigr \rvert\bigr\}.
\]
Applying Taylor's theorem with remainder in conjunction with the representation~\eqref{def:y_loo} yields
\begin{align*}
\EE\bigl\{\bigl\lvert\widetilde{y}_1(X_{11}) - \widetilde{y}_1\bigl(\widebar{X}_{11}\bigr) \bigr\rvert \mid \bX, \bZ\bigr\} &= \bigl \lvert  \rho'\bigl(\langle \bx_1^{(-1)}, \bt_0^{(-1)} \rangle + X_{11}\cdot \theta_{0, 1}\bigr) - \rho'\bigl(\langle \bx_1^{(-1)}, \bt_0^{(-1)} \rangle + \widebar{X}_{11}\cdot \theta_{0, 1}\bigr) \bigr\rvert\\
&\leq \| \bt_0 \|_{\infty} \lvert X_{11} - \widebar{X}_{11} \rvert + \frac{\| \bt_0 \|_{\infty}^2}{2}  \lvert X_{11} - \widebar{X}_{11} \rvert^2.
\end{align*}
To obtain the inequality we have additionally upper bounded $\lvert \theta_{0, 1} \rvert \leq \| \bt_0 \|_{\infty}$ and used the fact that the first three derivatives of $\rho$ are uniformly bounded by one. Substituting this inequality into the previous display and applying the Cauchy--Schwarz inequality yields
\[
\Bigl \lvert \E \Bigl\{\sum_{i=1}^{n} \widetilde{y}_i \cdot Z_{i1}\Bigr\} \Bigr \rvert \leq C \| \bt_0 \|_{\infty} + C \| \bt_0 \|_{\infty}^2 \frac{1}{\sqrt{n}} \leq C \| \bt_0 \|_{\infty},
\]
where the final inequality follows since by assumption $\| \bt_0 \|_{\infty} \lesssim n^{1/6}$.  Substituting this into the inequality~\eqref{ineq:prob_linfty_tA} yields the result. 
\qed

\subsubsection{Proof of the inequality~\eqref{ineq:empirical-loco}} \label{sec:proof-empirical-loco}
Note that $\widehat{\bt}^{(-1)}$ still depends on the entry $Z_{i1}$ through the label $\widetilde{y}_i$.  Towards decoupling the two, we use the representation $\widetilde{y}_i(r)$~\eqref{def:y_loo} and define the loss (which depends on the auxiliary variable $\br$)
	\begin{align*}
%		\label{eq:r-loss}
		\mathfrak{R}_n(\bt; \bZ, \lambda, \br) = -\frac{1}{n}\sum_{i} \widetilde{y}_i(r_i) \bigl\langle \bz_i^{(-1)}, \bt \bigr\rangle + \frac{1}{n}\sum_{i=1}^{n}\rho\Bigl(\bigl\langle \bz_i^{(-1)}, \bt \bigr\rangle\Bigr) + \frac{\lambda}{2p}\| \bt \|_2^2,
	\end{align*}
	as well as its minimizer $\widehat{\btv}(\br) = \argmin_{\btv \in \mathbb{R}^p} \;\mathfrak{R}_n(\bt; \bZ, \lambda, \br)$, noting that $\widehat{\btv}(\bx_{\cdot, 1}) = \widehat{\bt}^{(-1)}$.  By Taylor's theorem with remainder,
	\begin{align*}
	\sum_{i=1}^{n} Z_{i1} \rho'\bigl(\langle \bz_i^{(-1)}, \widehat{\bt}^{(-1)}\rangle\bigr) = \underbrace{\sum_{i=1}^{n} Z_{i1}  \rho'\bigl(\langle \bz_i^{(-1)}, \widehat{\btv}(\boldsymbol{0})\rangle\bigr)}_{=: A} + \underbrace{\Bigl \langle \widehat{\bt}^{(-1)} -  \widehat{\btv}(\boldsymbol{0}), \sum_{i=1}^{n} Z_{i1} \rho''(\zeta_{i}) \bz_{i}^{(-1)}\Bigr \rangle}_{=: B},
	\end{align*}
where the random variable $\zeta_i \in [\langle \bz_i^{(-1)}, \widehat{\btv}(\boldsymbol{0})\rangle, \langle \bz_i^{(-1)}, \widehat{\bt}^{(-1)}\rangle]$.  First bounding term $A$, we note that $(Z_{i1})_{i \leq n}$ are independent and for each $i$, $Z_{i1}$ is independent of $\rho'\bigl(\langle \bz_i^{(-1)}, \widehat{\btv}(\boldsymbol{0})\rangle\bigr)$.  Thus, conditioning and applying Hoeffding's inequality yields 
\begin{align*}
	A = \sum_{i=1}^{n} Z_{i1}  \rho'\bigl(\langle \bz_i^{(-1)}, \widehat{\btv}(\boldsymbol{0})\rangle\bigr) \leq C\sqrt{\log{n}} \qquad \text{ with probability } \qquad \geq 1 - \frac{C}{n^2}.
\end{align*}
Turning to term $B$, we apply the Cauchy--Schwarz inequality to obtain
\begin{align}\label{ineq:bound-B-step1}
B = \Bigl \langle \widehat{\bt}^{(-1)} -  \widehat{\btv}(\boldsymbol{0}), \sum_{i=1}^{n} Z_{i1} \rho''(\zeta_{i}) \bz_{i}^{(-1)}\Bigr \rangle &\leq \bigl \|  \widehat{\bt}^{(-1)} -  \widehat{\btv}(\boldsymbol{0}) \bigr \|_2 \cdot \bigl \| (\bZ^{(-1)})^{\top} \bigl[\bz_{\cdot 1} \odot \rho''(\boldsymbol{\zeta})\bigr] \bigr \|_2 \nonumber\\
&\leq M_0^2 \bigl \|  \widehat{\bt}^{(-1)} -  \widehat{\btv}(\boldsymbol{0}) \bigr \|_2,
\end{align}
where the second inequality follows with probability at least $1 - 2e^{-cn}$ by applying Lemma~\ref{lem:l2_norm_theta_hat}(a) twice.  It remains to bound $\|  \widehat{\bt}^{(-1)} -  \widehat{\btv}(\boldsymbol{0})  \|_2$.  To this end, by strong convexity of $\mathfrak{R}_n(\cdot; \bZ, \lambda, \boldsymbol{0})$, we deduce the sandwich relation
\[
0 \geq \mathfrak{R}_n\bigl(\widehat{\btv}(\boldsymbol{0}); \bZ, \lambda, \boldsymbol{0}\bigr) - \mathfrak{R}_n\bigl(\widehat{\bt}^{(-1)}; \bZ, \lambda, \boldsymbol{0}\bigr) \geq \bigl\langle \nabla \mathfrak{R}_n\bigl(\widehat{\bt}^{(-1)}; \bZ, \lambda, \boldsymbol{0}\bigr), \widehat{\bt}^{(-1)} -  \widehat{\btv}(\boldsymbol{0}) \bigr\rangle + \frac{\lambda}{2p} \bigl \|  \widehat{\bt}^{(-1)} -  \widehat{\btv}(\boldsymbol{0}) \bigr \|_2^2.
\]
Applying Cauchy--Schwarz and re-arranging yields the inequality
\begin{align}\label{ineq:bound-B-step2}
\bigl \|  \widehat{\bt}^{(-1)} -  \widehat{\btv}(\boldsymbol{0}) \bigr \|_2 \leq \frac{2p}{\lambda} \bigl\| \nabla \mathfrak{R}_n\bigl(\widehat{\bt}^{(-1)}; \bZ, \lambda, \boldsymbol{0}\bigr) \bigr\|_2.
\end{align}
From the first order condition $\nabla \mathfrak{R}_n\bigl(\widehat{\bt}^{(-1)}; \bZ, \lambda, \bx_{\cdot, 1}\bigr) = \boldsymbol{0}$, we deduce 
\begin{align} \label{ineq:bound-B-step3}
\bigl\| \nabla \mathfrak{R}_n\bigl(\widehat{\bt}^{(-1)}; \bZ, \lambda, \boldsymbol{0}\bigr) \bigr\|_2 &= \bigl\| \nabla \mathfrak{R}_n\bigl(\widehat{\bt}^{(-1)}; \bZ, \lambda, \boldsymbol{0}\bigr)  - \nabla \mathfrak{R}_n\bigl(\widehat{\bt}^{(-1)}; \bZ, \lambda, \bx_{\cdot, 1}\bigr)\bigr\|_2\nonumber\\
&= \frac{1}{n} \bigl \| \bigl(\bZ^{(-1)}\bigr)^{\top} \bigl(\widetilde{\by}(\bx_{\cdot, 1}) - \widetilde{\by}(\boldsymbol{0})\bigr) \bigr\|_2 \overset{\1}{\leq} \frac{M_0}{n} \bigl \| \widetilde{\by}(\bx_{\cdot, 1}) - \widetilde{\by}(\boldsymbol{0}) \bigr\|_2,
\end{align}
where step $\1$ follows with probability at least $1 - 2e^{-cn}$ upon applying Lemma~\ref{lem:l2_norm_theta_hat}(a) and we have let $\widetilde{\by}(\br) = [\widetilde{y_i}(r_i)]_{i \leq n} \in \{0, 1\}^{n}$.  It remains to bound the norm of the differences in labels $ \| \widetilde{\by}(\bx_{\cdot, 1}) - \widetilde{\by}(\boldsymbol{0}) \|_2$.  From the representation~\eqref{def:y_loo}, we deduce the equivalent expression
\[
\bigl \| \widetilde{\by}(\bx_{\cdot, 1}) - \widetilde{\by}(\boldsymbol{0}) \bigr\|_2^2 = \sum_{i=1}^{n} \mathbbm{1}\Bigl\{U_i \in \bigl[\rho'\bigl(\langle \bx_i^{(-1)}, \bt_0^{(-1)}\rangle\bigr), \rho'\bigl(\langle \bx_i^{(-1)}, \bt_0^{(-1)}\rangle + X_{i1} \theta_{0,1}\bigr)\bigr]\Bigr\} =: \sum_{i=1}^{n} V_i
\]
Note that the summands $V_i$ are bounded and mutually independent.  Thus, we apply Bernstein's inequality for bounded random variables~\citep[][Theorem 2.8.4]{vershynin2018high} to obtain
\begin{align}\label{ineq:bernstein-bounded-Vi}
\pr\Bigl\{ \Bigl \lvert \sum_{i=1}^{n} V_i - \EE V_i \Bigr \rvert \geq t \Bigr\} \leq 2 \exp\Bigl\{ -\frac{t^2/2}{\sum_{i=1}^{n} \EE\{(V_i - \EE[V_i])^2\} + t/3}\Bigr\}.
\end{align}
Note that $\EE\{(V_i - \EE[V_i])^2\} \leq \EE\{V_i^2\} = \EE V_i$ so that it suffices to bound $\EE V_i$.  Applying Taylor's theorem with remainder, we deduce
\begin{align*}
\EE  V_i = \EE\Bigl\{X_{i1} \theta_{01} \rho''\bigl(\langle \bx_i^{(-1)}, \bt_0^{(-1)}\rangle\bigr) + \frac{X_{i1}^2 \theta_{01}^2}{2} \rho'''(\zeta_i)\Bigr\} \overset{\1}{\leq} \frac{C\| \bt_0 \|_{\infty}^2}{n},
\end{align*}
where in the first equality, $\zeta_i$ denotes some element in the interval $\bigl[\rho'\bigl(\langle \bx_i^{(-1)}, \bt_0^{(-1)}\rangle\bigr), \rho'\bigl(\langle \bx_i^{(-1)}, \bt_0^{(-1)}\rangle + X_{i1} \theta_{0,1}\bigr)\bigr]$ and step $\1$ follows from independence of $X_{i1}$ and $\bx_i^{(-1)}$ as well as the second moment condition of Definition~\ref{def:universality}(iv).  Substituting this bound into the inequality~\eqref{ineq:bernstein-bounded-Vi} with $t= C\| \bt_0 \|_{\infty} \log{n}$ and recalling that $\bigl \| \widetilde{\by}(\bx_{\cdot, 1}) - \widetilde{\by}(\boldsymbol{0}) \bigr\|_2^2 = \sum_{i=1}^{n} V_i$, we deduce the inequality
\[
\bigl \| \widetilde{\by}(\bx_{\cdot, 1}) - \widetilde{\by}(\boldsymbol{0}) \bigr\|_2^2 \leq C \| \bt_{0}\|_{\infty}^2 \log{n} \qquad \text{ with probability } \qquad \geq 1 - \frac{C}{n^2}.
\]
Combining this inequality with the inequalities~\eqref{ineq:bound-B-step1}--\eqref{ineq:bound-B-step3}, we deduce that with probability at least $1 - C/n^2$, $B \leq C \| \bt_0 \|_{\infty} \sqrt{\log{n}}$, which concludes the proof.  \qed

\subsection{Proof of Lemma~\ref{lem:l2_norm_theta_hat}(c)}\label{sec:proof-theta-hat-c}
Without loss of generality, we will prove that for a large enough constant $C$,
\begin{align*}
	\pr\left\{\lvert \< \bz_1, \widehat{\bt}\> \rvert \geq C\sqrt{\log{n}}\right\} \leq \frac{C}{n^2},
\end{align*}
and conclude via an application of the union bound.  To this end, we define the leave-one-sample-out loss as
\begin{align}\label{eq:loso-loss}
\mathcal{L}^{(\backslash 1)}_n(\bt; \bZ, \lambda) = \frac{1}{n} \sum_{i=2}^{n} \rho(-y_i \< \bz_i, \bt\>) + \frac{\lambda}{2p} \| \bt \|_2^2,
\end{align}
and its minimizer as $
\hbtloos = \argmin_{\bt \in \reals^{p}} \mathcal{L}^{(\backslash 1)}_n(\bt; \bZ, \lambda)$.  Decomposing and applying the Cauchy--Schwarz inequality yields
\begin{align}
	\label{ineq:first_ineq_sample_linfty}
	\< \bz_1, \widehat{\bt}\> = \< \bz_1, \hbtloos \> + \< \bz_1, \widehat{\bt} - \hbtloos\> \leq \langle \bz_1, \hbtloos \> + \| \bz_1 \|_2 \| \hbt - \hbtloos \|_2.
\end{align}
We bound each term in turn.  First, note that $\bz_1$ and $\hbtloos$ are independent, whence applying Lemma~\ref{lem:l2_norm_theta_hat}(a) in conjunction with Hoeffding's inequality yields $\langle \bz_1, \hbtloos \rangle \leq C \sqrt{\log{n}}$ with probability at least $1 - C/n^2$.

Turning to the second term, we note that by strong convexity of the original objective $\mathcal{L}_n$~\eqref{eq:loss}, 
\begin{align*}
\| \hbt - \hbtloos \|_2 \leq \frac{2p}{\lambda} \| \nabla \mathcal{L}_n(\hbtloos) \|_2 \overset{\1}{=} \frac{2p}{\lambda} \| \nabla \mathcal{L}_n(\hbtloos) - \nabla \mathcal{L}_n^{(\backslash 1)}(\hbtloos)\|_2 &= \frac{2p}{\lambda n} \| \bz_1 \rho'(-y_i\langle \bz_1, \hbtloos \rangle) \|_2,
\end{align*}
where step $\1$ follows from the first order condition for the loss $\mathcal{L}_n^{(\backslash 1)}$~\eqref{eq:loso-loss}.  By boundedness of $\rho'$, it follows from the above display that $\| \hbt - \hbtloos \|_2 \leq C \| \bz_1 \|_2$.  

Combining these bounds with the inequality~\eqref{ineq:first_ineq_sample_linfty} yields
\[
\< \bz_1, \widehat{\bt}\> \leq C\sqrt{\log{n}} + C\| \bz_1 \|_2^2 \overset{\1}{\leq} C\sqrt{\log{n}} \qquad \text{ with probability } \qquad \geq 1 - \frac{C}{n^2},
\]
where in step $\1$, we utilized the sub-Gaussian nature of the vector $\bz_1$ and applied Hoeffding's inequality. The conclusion follows immediately from the above inequality used in conjunction with the union bound. \qed

% Auxiliary lemmas
\section{Auxiliary lemmas}
\begin{lemma}
	\label{lem:derivative_prox}
	Let $f: \mathbb{R} \rightarrow \mathbb{R}$ be convex, bounded below, and smooth.  Additionally, for\\ $x, c \in \mathbb{R} \times \mathbb{R}_{\geq 0}$, let
	\[
	M_f(x; c) = \min_{u \in \mathbb{R}} \left\{f(u) + \frac{c}{2}(u - x)^2\right\}, \;\;\;\;\; \prox_f(x; c) = \argmin_{u \in \mathbb{R}}\left\{f(u) + \frac{c}{2}(u - x)^2\right\}.
	\]
	Then, we have
	\begin{align*}
		&(a)\;f'(\prox_f(x; c)) + c (\prox_f(x; c) - x) = 0,\\
		(b)\;\frac{\partial}{\partial x} M_f(x; c) &= f'(\prox_f(x;c)), \;\;\;\; (c)\; \frac{\partial}{\partial c} M_f(x; c) = \frac{1}{2}\left(\prox_{f}(x; c) - x\right)^2.
	\end{align*}
\end{lemma}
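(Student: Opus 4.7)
All three statements are standard consequences of the first-order optimality condition together with the envelope theorem, so the plan is to derive (a) directly and then use (a) to cancel cross-terms in the derivatives for (b) and (c).

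For part (a), I would fix $x$ and $c > 0$ and observe that the map $u \mapsto f(u) + \frac{c}{2}(u-x)^2$ is $c$-strongly convex (hence admits a unique minimizer, namely $u^\star := \prox_f(x;c)$) and continuously differentiable. Setting the derivative in $u$ to zero at $u^\star$ yields the stated identity $f'(u^\star) + c(u^\star - x) = 0$.

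For parts (b) and (c), I would first justify that $u^\star$, viewed as a function of $(x,c)$, is continuously differentiable. This follows from the implicit function theorem applied to the equation from (a): the map $(u,x,c) \mapsto f'(u) + c(u-x)$ has nonzero partial derivative in $u$ (equal to $f''(u) + c \geq c > 0$ by convexity of $f$), so $u^\star(x,c)$ is $C^1$ on $\mathbb{R} \times \mathbb{R}_{>0}$. Then, differentiating $M_f(x;c) = f(u^\star) + \frac{c}{2}(u^\star - x)^2$ in $x$ using the chain rule and collecting terms gives
\begin{equation*}
\frac{\partial}{\partial x} M_f(x;c) = \bigl[f'(u^\star) + c(u^\star - x)\bigr]\,\frac{\partial u^\star}{\partial x} - c(u^\star - x).
\end{equation*}
The bracket vanishes by (a), and invoking (a) a second time to rewrite $-c(u^\star - x) = f'(u^\star)$ yields (b). The argument for (c) is identical: differentiating in $c$ gives
\begin{equation*}
\frac{\partial}{\partial c} M_f(x;c) = \bigl[f'(u^\star) + c(u^\star - x)\bigr]\,\frac{\partial u^\star}{\partial c} + \tfrac{1}{2}(u^\star - x)^2,
\end{equation*}
and the bracket again vanishes by (a), leaving (c).

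There is no real obstacle here; the only subtle point is handling the boundary case $c = 0$ (where the prox may fail to be a singleton), which I would side-step by restricting to $c > 0$ where all differentiations are legitimate, consistent with every invocation of this lemma in the body of the paper.
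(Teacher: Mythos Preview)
Your proposal is correct and complete. The paper in fact omits the proof of this lemma entirely, remarking only that it ``is standard,'' so there is no approach to compare against; your envelope-theorem argument (first-order condition plus chain rule with the cross terms cancelled by (a)) is exactly the standard derivation the authors have in mind.
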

The proof of the above lemma is standard and we omit it for brevity.

\begin{lemma}
	\label{lem:lbgamma}
	Let $M > 0$ be a positive scalar.  There exists a constant $\ubar{\gamma}_M$ such that for all $\lvert x \rvert \leq M$ and $\gamma \in (0, \ubar{\gamma}_M]$, the following hold.
	\begin{itemize}
		\item[(a)] The quantity $
		\prox^{+}_{\rho}(x; \gamma) := \argmin_{u \in \mathbb{R}} \bigl\{ \rho(u) + \frac{\gamma}{2} (u - x)^2\bigr\}$,
		satisfies the upper bound $
		\prox^{+}_{\rho}(x; \gamma) \leq \frac{1}{4} \log{\gamma}$.
		\item[(b)] 
		The quantity $
		\prox^{-}_{\rho}(x; \gamma) := \argmin_{u \in \mathbb{R}} \bigl\{ \rho(-u) + \frac{\gamma}{2} (u - x)^2\bigr\}$
		satisfies the lower bound $
		\prox^{-}_{\rho}(x; \gamma) \geq \frac{1}{4}\log{\frac{1}{\gamma}}$.
	\end{itemize}
\end{lemma}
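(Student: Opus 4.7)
The plan is to prove part~(a) by a direct first-order-condition analysis of the scalar proximal problem, and then obtain part~(b) as an immediate corollary via a sign-flip symmetry.

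For part~(a), the minimizer $u^{\star}:=\prox_{\rho}^{+}(x;\gamma)$ is uniquely characterized by the stationarity equation $\sigma(u^{\star}) + \gamma(u^{\star}-x)=0$, where $\sigma(t):=\rho'(t)=e^{t}/(1+e^{t})$ is the logistic sigmoid. Because the objective $u\mapsto \rho(u)+\tfrac{\gamma}{2}(u-x)^{2}$ is strictly convex, its derivative is strictly increasing in $u$, so it suffices to exhibit a single point $u_{0}$ at which the derivative is strictly positive; this forces $u^{\star}<u_{0}$. I will take the candidate $u_{0}=\tfrac{1}{4}\log\gamma$ and verify, uniformly over $|x|\leq M$, that
\[
\sigma(u_{0}) \;>\; \gamma(x-u_{0})
\]
for all $\gamma$ below a threshold $\ubar{\gamma}_{M}$ depending only on $M$. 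The left-hand side equals $\gamma^{1/4}/(1+\gamma^{1/4})\geq \tfrac{1}{2}\gamma^{1/4}$ for $\gamma\leq 1$, while the right-hand side is bounded above by $\gamma M+\tfrac{1}{4}\gamma|\log\gamma|$. Since $\gamma^{1/4}$ dominates both $\gamma$ and $\gamma|\log\gamma|$ as $\gamma\downarrow 0$, an explicit choice of $\ubar{\gamma}_{M}$ ensures the strict inequality holds uniformly on $(0,\ubar{\gamma}_{M}]\times[-M,M]$, yielding $u^{\star}\leq \tfrac{1}{4}\log\gamma$.

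For part~(b), the change of variable $v\leftarrow -u$ inside the optimization defining $\prox_{\rho}^{-}$ yields the identity $\prox_{\rho}^{-}(x;\gamma) = -\prox_{\rho}^{+}(-x;\gamma)$. Since $|-x|\leq M$ whenever $|x|\leq M$, applying part~(a) to $-x$ gives $\prox_{\rho}^{+}(-x;\gamma)\leq \tfrac{1}{4}\log\gamma$, and negating both sides produces $\prox_{\rho}^{-}(x;\gamma)\geq \tfrac{1}{4}\log(1/\gamma)$ on the same threshold $\ubar{\gamma}_{M}$, as required.

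The argument is elementary scalar calculus, and I do not expect a real obstacle: the only point worth flagging is that the threshold $\ubar{\gamma}_{M}$ must depend only on $M$ (not on $x$), which is transparent from the uniform comparison $\tfrac{1}{2}\gamma^{1/4}>\gamma M+\tfrac{1}{4}\gamma|\log\gamma|$ once $\ubar{\gamma}_{M}$ is chosen small enough to absorb the $M$-dependent linear term and the logarithmic factor simultaneously. No concentration, convexity transfer, or probabilistic tools are needed; the whole proof lives in one variable.
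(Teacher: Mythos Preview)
Your proposal is correct and follows essentially the same approach as the paper: both arguments evaluate the first-order condition at the candidate point $u_0=\tfrac{1}{4}\log\gamma$ and show the derivative is strictly positive there, forcing the minimizer to lie to its left. Your presentation is slightly cleaner in two places---you evaluate $\sigma(u_0)$ directly rather than passing through the auxiliary bound $\rho'(t)\geq e^{t}/(1+e^{M})$ used in the paper, and you dispatch part~(b) via the explicit symmetry $\prox_{\rho}^{-}(x;\gamma)=-\prox_{\rho}^{+}(-x;\gamma)$ instead of repeating the calculation---but the underlying mechanism is identical.
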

\begin{proof}
We restrict ourselves to the proof of part (a), noting that part (b) follows from nearly identical steps.  

Consider setting $
\ubar{\gamma}_M = \bigl[8 \cdot (1 \vee M) \cdot \bigl(1 + e^M\bigr)\bigr]^{-2}$ and suppose towards a contradiction that 
\begin{align}\label{ineq:contradiction-hypothesis-small-gamma-growth}
	\prox_{\rho}^{+}(x; \gamma) \geq (1/4) \cdot \log{\gamma}, \qquad \text{ for all } \gamma \leq \ubar{\gamma}_M.
\end{align}
Applying the first order conditions, we deduce 
\begin{align}
	\label{exp:stat_prox}
	\rho'(\prox^{+}_{\rho}(x; \gamma)) + \gamma \prox^{+}_{\rho}(x; \gamma) - \gamma x = 0.
\end{align}
Next, we study the LHS of the display above under the hypothesis~\eqref{ineq:contradiction-hypothesis-small-gamma-growth}.  To this end, straightforward calculation yields the chain of inequalities
\begin{align*}
	\rho'(\prox^{+}_{\rho}(x; \gamma)) + \gamma \prox^{+}_{\rho}(x; \gamma) - \gamma x &\geq \frac{e^{\prox^{+}_{\rho}(x; \gamma)}}{1 + e^M} + \gamma \prox^{+}_{\rho}(x; \gamma) - \gamma x\\
	&\overset{\mathsf{(i)}}{\geq} \frac{\gamma^{1/4}}{1 + e^M} + \frac{1}{4}\gamma \log{\gamma} - \gamma x =: f(\gamma),
\end{align*}
where step $\1$ follows under the hypothesis~\eqref{ineq:contradiction-hypothesis-small-gamma-growth}.
Now, note that the function $f$ satisfies $f(0) = 0$; we claim that moreover $f$ is strictly increasing on the interval $(0, \ubar{\gamma}_M]$.  Taking this claim as given, we complete the proof upon noting that for all $0 < \gamma \leq \ubar{\gamma}_M$, $f(\gamma) > 0$, thus contradicting the stationary condition~\eqref{exp:stat_prox}.   

It remains to prove that $f$ is strictly increasing.  To this end, we compute the derivative
\begin{align}\label{eq:derivative-f-numeric}
	f'(\gamma) = \frac{\gamma^{-3/4}}{4(1 + e^M)} + \frac{1}{4} \log{\gamma}  + \frac{1}{4} - x,
\end{align}
and note the following numeric inequality, whose justification is provided at the end of the proof,
	\begin{align}\label{ineq:ubar-gamma-numeric}
	\frac{\gamma^{-3/4}}{8 \cdot (1 + e^M)} > M \vee \frac{1}{4} \cdot \log\Bigl(\frac{1}{\gamma}\Bigr), \qquad \text{ for all } 0 < \gamma \leq \ubar{\gamma}_M.
\end{align}
Substituting the above inequality into the RHS of the derivative computation~\eqref{eq:derivative-f-numeric} yields $f'(\gamma) > 0$ for $0 < \gamma \leq \ubar{\gamma}_M$, which completes the proof.

\bigskip
\noindent \underline{Proof of the inequality~\eqref{ineq:ubar-gamma-numeric}:}  We first show that for all $\gamma \in (0, \ubar{\gamma}_M]$, 
$
\frac{\gamma^{-3/4}}{8 \cdot (1 + e^M)} >\log\Bigl(\frac{1}{\gamma}\Bigr)$.
To this end, consider the map $\Psi: \gamma\mapsto \gamma^{-3/4}/\log(1/\gamma)$, and note that $\Psi$ is strictly decreasing on $\mathbb{R}_{+}$.  Additionally, note that
\[
\Psi(\ubar{\gamma}_M) = \frac{\Bigl[8 \cdot (1 \vee M) \cdot \bigl(1 + e^M\bigr)\Bigr]^{3/2}}{2\log\Bigr(8 \cdot (1 \vee M) \cdot \bigl(1 + e^M\bigr)\Bigr)} \overset{\1}{>} 8 \cdot (1 \vee M) \cdot \bigl(1 + e^M\bigr),
\]
where step $\1$ follows by noting that the map $t \mapsto t^{1/2}/(2 \cdot \log(t))$ is increasing on the domain $[e^2, \infty)$ and is greater than $1$ when evaluated at $e^2$.  

We next show that for all $\gamma \in (0, \ubar{\gamma}_M]$, $
\frac{\gamma^{-3/4}}{8 \cdot (1 + e^M)} > M$.
The function $\gamma \mapsto \gamma^{-3/4}$ is clearly decreasing on $\mathbb{R}_{+}$, whence it suffices to verify the inequality at the right endpoint $\ubar{\gamma}_M$, which is immediate.
\end{proof}

\section{Additional numerical experiments}
\label{sec:additional-numerical-experiments}
In Section~\ref{sec:numerical-illustration}, we provided contour plots in which we fixed the probability with which an entry is missing as $\alpha = 0.704$ and compared the Bayes test error to that of single imputation over a large swath of parameters $\delta$ and $R$.  Here, we do the same, fixing the parameters $\delta$ and $R$ in turn.  Figure~\ref{fig:fixdelta} fixes the parameter $\delta$ and Figure~\ref{fig:fixR} fixes the parameter $R$.
\begin{figure*}[!h]
%	\centering
\begin{subfigure}[b]{0.2\textwidth}
	%		\centering
%	\centerline{\input{figs/overlay-bayes-si-fix-deltasmall.tex}}
	\centerline{\includegraphics[scale=0.5]{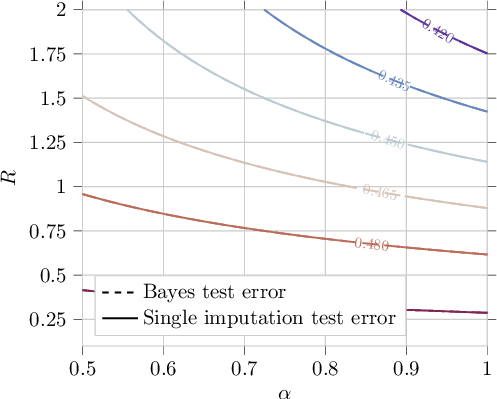}}
	\caption{Overlay of the two test errors, $\delta = 0.4$.}    
	\label{subfig:overlay-fix-deltasmall}
\end{subfigure}
\qquad
\begin{subfigure}[b]{0.2\textwidth}  
	%		\centering 
%	\centerline{\input{figs/difference-bayes-si-fix-deltasmall.tex}}
	\centerline{\includegraphics[scale=0.5]{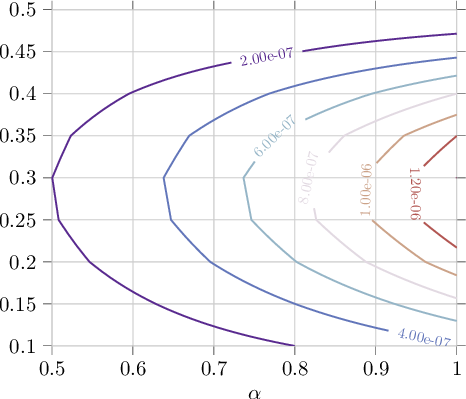}}
	\caption{Difference between test errors, $\delta = 0.4$.}
	\label{subfig:difference-fix-deltasmall}
\end{subfigure}
\qquad
\begin{subfigure}[b]{0.2\textwidth}   
	%		\centering 
%	\centerline{\input{figs/overlay-bayes-si-fix-deltalarge.tex}}
	\centerline{\includegraphics[scale=0.5]{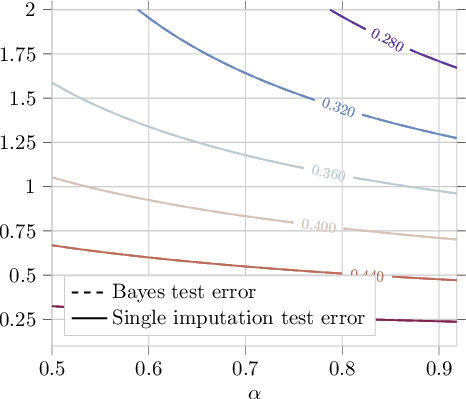}}
	\caption{Overlay of the two test errors, $\delta=20$.}
	\label{subfig:overlay-fix-deltalarge}
\end{subfigure}
\qquad
\begin{subfigure}[b]{0.2\textwidth}   
	%		\centering 
%	\centerline{\input{figs/difference-bayes-si-fix-deltalarge.tex}}
	\centerline{\includegraphics[scale=0.5]{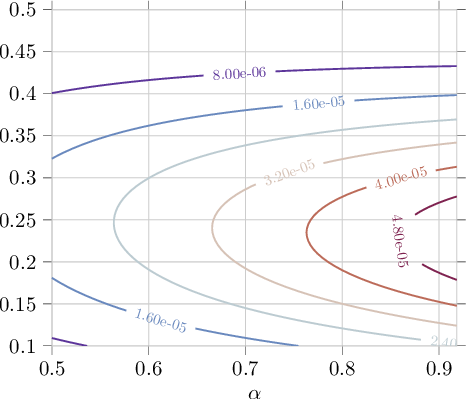}}
	\caption{Difference between test errors, $\delta=20$.}    
	\label{subfig:difference-fix-deltalarge}
\end{subfigure}
%%	\centering
%	\begin{subfigure}[b]{0.48\textwidth}
%%		\centering
%		\centerline{\input{figs/overlay-bayes-si-fix-deltasmall.tex}}
%		\caption{Overlay of the two test errors, $\delta = 0.4$.}    
%		\label{subfig:overlay-fix-deltasmall}
%	\end{subfigure}
%	\hfill
%	\begin{subfigure}[b]{0.48\textwidth}  
%%		\centering 
%		\centerline{\input{figs/difference-bayes-si-fix-deltasmall.tex}}
%		\caption{Difference between test errors, $\delta = 0.4$.}
%		\label{subfig:difference-fix-deltasmall}
%	\end{subfigure}
%	\vskip\baselineskip
%	\begin{subfigure}[b]{0.48\textwidth}   
%%		\centering 
%		\centerline{\input{figs/overlay-bayes-si-fix-deltalarge.tex}}
%		\caption{Overlay of the two test errors, $\delta=20$.}
%		\label{subfig:overlay-fix-deltalarge}
%	\end{subfigure}
%	\hfill
%	\begin{subfigure}[b]{0.48\textwidth}   
%%		\centering 
%		\centerline{\input{figs/difference-bayes-si-fix-deltalarge.tex}}
%		\caption{Difference between test errors, $\delta=20$.}    
%		\label{subfig:difference-fix-deltalarge}
%	\end{subfigure}
	\caption{A comparison of the Bayes test error to the test error of optimally regularized single imputation when the ratio of samples to dimension $\delta = n/p$ is fixed.  Subfigure~\ref{subfig:overlay-fix-deltasmall} plots contours of both test errors on the same plot for a small value of $\delta$ ($\delta  = 0.4$) and Subfigure~\ref{subfig:difference-fix-deltasmall} zooms in to show contours of the difference between the two test errors for this setting.  Subfigures~\ref{subfig:overlay-fix-deltalarge} and~\ref{subfig:difference-fix-deltalarge} paint a similar picture for a much larger value of $\delta$ ($\delta = 20$).}
	\label{fig:fixdelta}
\end{figure*}

\begin{figure*}[!h]
	\begin{subfigure}[b]{0.2\textwidth}
	%		\centering
%	\centerline{\input{figs/overlay-bayes-si-fix-Rsmall.tex}}
	\centerline{\includegraphics[scale=0.5]{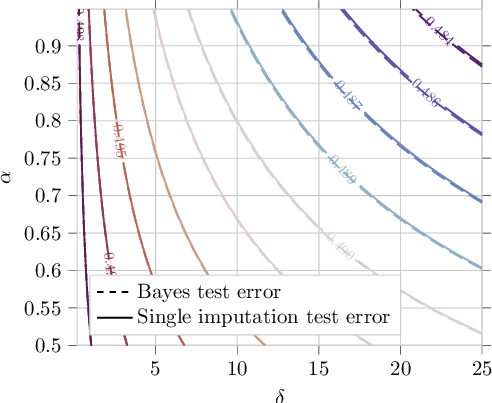}}
	\caption{Overlay of the two test errors, $R = 0.2$.}    
	\label{subfig:overlay-fix-Rsmall}
\end{subfigure}
\qquad
\begin{subfigure}[b]{0.2\textwidth}  
	%		\centering 
%	\centerline{\input{figs/difference-bayes-si-fix-Rsmall.tex}}
	\centerline{\includegraphics[scale=0.5]{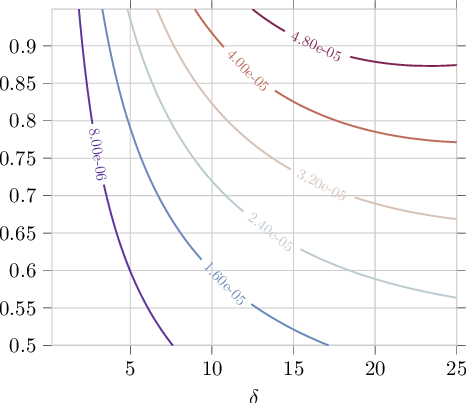}}
	\caption{Difference between test errors, $R = 0.2$.}
	\label{subfig:difference-fix-Rsmall}
\end{subfigure}
\qquad
\begin{subfigure}[b]{0.2\textwidth}   
	%		\centering 
%	\centerline{\input{figs/overlay-bayes-si-fix-Rlarge.tex}}
	\centerline{\includegraphics[scale=0.5]{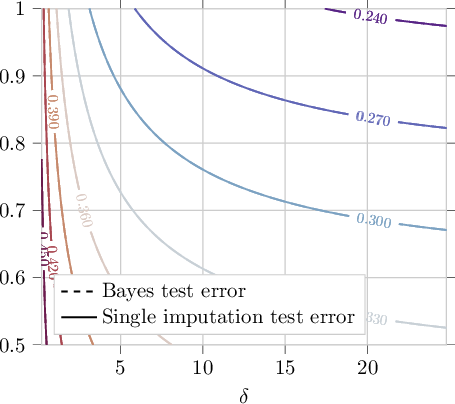}}
	\caption{Overlay of the two test errors, $R = 2$.}
	\label{subfig:overlay-fix-Rlarge}
\end{subfigure}
\qquad
\begin{subfigure}[b]{0.2\textwidth}   
	%		\centering 
%	\centerline{\input{figs/difference-bayes-si-fix-Rlarge.tex}}
	\centerline{\includegraphics[scale=0.5]{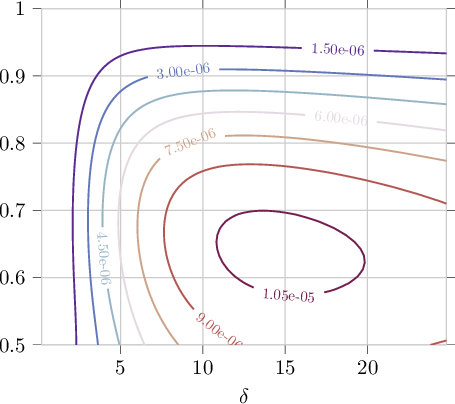}}
	\caption{Difference between test errors, $R=2$.}    
	\label{subfig:difference-fix-Rlarge}
\end{subfigure}
%	\centering
%	\begin{subfigure}[b]{0.48\textwidth}
%%		\centering
%		\centerline{\input{figs/overlay-bayes-si-fix-Rsmall.tex}}
%		\caption{Overlay of the two test errors, $R = 0.2$.}    
%		\label{subfig:overlay-fix-Rsmall}
%	\end{subfigure}
%	\hfill
%	\begin{subfigure}[b]{0.48\textwidth}  
%%		\centering 
%		\centerline{\input{figs/difference-bayes-si-fix-Rsmall.tex}}
%		\caption{Difference between test errors, $R = 0.2$.}
%		\label{subfig:difference-fix-Rsmall}
%	\end{subfigure}
%	\vskip\baselineskip
%	\begin{subfigure}[b]{0.48\textwidth}   
%%		\centering 
%		\centerline{\input{figs/overlay-bayes-si-fix-Rlarge.tex}}
%		\caption{Overlay of the two test errors, $R = 2$.}
%		\label{subfig:overlay-fix-Rlarge}
%	\end{subfigure}
%	\hfill
%	\begin{subfigure}[b]{0.48\textwidth}   
%%		\centering 
%		\centerline{\input{figs/difference-bayes-si-fix-Rlarge.tex}}
%		\caption{Difference between test errors, $R=2$.}    
%		\label{subfig:difference-fix-Rlarge}
%	\end{subfigure}
	\caption{A comparison of the Bayes test error to the test error of optimally regularized single imputation when the radius of the problem $R = \| \bt_0 \|_2/\sqrt{p}$ is fixed.  Subfigure~\ref{subfig:overlay-fix-Rsmall} plots contours of both test errors on the same plot for a small value of $R$ ($R = 0.2$) and Subfigure~\ref{subfig:difference-fix-Rsmall} zooms in to show contours of the difference between the two test errors when $R=0.2$.  In a similar vein, Subfigures~\ref{subfig:overlay-fix-Rlarge} and~\ref{subfig:difference-fix-Rlarge} repeat the same experiment for a larger value of $R$ ($R = 2$).}
	\label{fig:fixR}
\end{figure*}
\end{document}

% --- supplement: figs/plots-eps/difference-bayes-si-fix-Rlarge.tex ---

% [inline block 0: 8 envs, 253407 chars -> data_tex | \begin{tikzpicture} ...]
